\newtheorem{definition}{Definition}[section]
\newtheorem{theorem}{Theorem}[section]
\newtheorem{lemma}[theorem]{Lemma}
\newtheorem{corollary}[theorem]{Corollary}
\newtheorem{proposition}[theorem]{Proposition}
\theoremstyle{remark}
\newtheorem{remark}[theorem]{Remark}
\numberwithin{equation}{section}
\numberwithin{theorem}{section}
\numberwithin{figure}{section}
\newcommand{\ZZ}{\mathbb{Z}}
\newcommand{\NN}{\mathbb{N}}
\newcommand{\RR}{\mathbf{R}}
 \newcommand{\ee}{\mathbf{e}}
 \newcommand{\vv}{\mathbf{V}}
  \newcommand{\uu}{\mathbf{U}}
   \newcommand{\ww}{\mathbf{W}}
    \newcommand{\uuu}{\mathbf{u}}
  \newcommand{\xx}{\mathbf{x}}
    \newcommand{\yy}{\mathbf{y}}
        \newcommand{\zz}{\mathbf{z}}
   \newcommand{\ff}{\mathbf{F}}
\DeclareMathOperator{\Div}{div}
\begin{document}
%%%%%%%%%%%%%%%%%%%%%%%%%%%%%%%%%%%%%%%%%%%%%%%%%%%%%%%%%%%%%%%%%%%%%%%%%%%%%%%%%%%%%%%%%%%%%%%%%%%%
\title[Regularity of the generalized Leray equations ]{Global regularity and  decay behavior for  Leray equations  with critical-dissipation and Its Application to Self-similar Solutions}
\author[C. Miao]{Changxing Miao}

\address{Institute of Applied Physics and Computational Mathematics, P.O. Box 8009, Beijing 100088, P.R. China }

\email{miao\_{}changxing@iapcm.ac.cn}
\author[X. Zheng]{Xiaoxin Zheng}

\address{ School of Mathematical Sciences, Beihang University,  Beijing 100191, P.R. China}
\email{xiaoxinzheng@buaa.edu.cn}

\date{\today}
\subjclass[2010]{35Q30; 35B40; 76D05. }
\keywords{}
\begin{abstract}
 In this paper, we  show the global regularity and the optimal decay of weak solutions to
the   generalized Leray problem with critical dissipation. Our method is based on the maximal smoothing effect,  $L^{p}$-type elliptic regularity  of linearization, and the action of the heat semigroup generated by the fractional powers of Laplace operator on distributions with Fourier transforms supported in an annulus. As a by-product, we shall construct a self-similar solution to the three-dimensional incompressible Navier-Stokes equations, and more importantly, prove the   global regularity and the optimal decay without additional requirement  of existing literatures.
\end{abstract}
\maketitle
%%%%%%%%%%%%%%%%%%%%%%%%%%%%%%%%%%%%%%%%%%%%%%%%%%%%%%%%%%%%%%%%%%%%%%%%%%%%%%%%%%%%%%%%%%%%%%%%%%%
\section{Introduction}\label{INTR}
%%%%%%%%%%%%%%%%%%%%%%%%%%%%%%%%%%%%%%%%%%%%%%%%%%%%%%%%%%%%%%%%%%%%%%%%%%%%%%%%%%%%%%%%%%%%%%
\setcounter{section}{1}\setcounter{equation}{0}
In  this paper, we study the regularity and  decay behavior of weak solutions to the generalized Leray problem in $\RR^{3}$ governed by
\begin{align}\label{E-forwad-leray}
\left\{
\begin{aligned}
&(-\Delta)^{\alpha}\uu- \frac{2\alpha-1}{2\alpha}\uu-\frac{1}{2\alpha}\xx\cdot \nabla \uu+\nabla P+\uu\cdot\nabla \uu=\operatorname{div}\ff, \\
&\mbox{div}\, \uu=0.\\
\end{aligned}\ \right.
\end{align}
It arises from studying the forward self-similar solutions of Cauchy problem to the three-dimensional incompressible fractional Navier-Stokes equations with $\alpha\in(0,1]$, which reads as follows
\begin{equation}\label{NS}
\left\{\begin{aligned}
&\uuu_t+\uuu\cdot\nabla \uuu+(-\Delta)^\alpha \uuu+\nabla \pi=0,\\
&\operatorname{div}\uuu=0,
\end{aligned}\right.
\end{equation}
complemented with the initial condition
\begin{equation}\label{NS-i}
\uuu(\xx,0)=\uuu_{0}(\xx)\ \ \ \ \ \mbox{in}\ \ \ \RR^{3}.
\end{equation}
When $0<\alpha<1$, the fractional Laplacian $(-\Delta)^{\alpha}$ is also called the L\'{e}vy operator
$$
(-\Delta)^{\alpha} \uuu(\xx)= c_{\alpha} {\text{p.v.}}\int_{\RR^{3}}\frac{\uuu(\xx)-\uuu(\yy)}{|\xx-\yy|^{3+2\alpha}}\;{\rm d}\yy,
\;\;\; c_{\alpha}
=\frac{\alpha(1-\alpha)4^{\alpha}\Gamma(\frac{3}{2}+\alpha)}{\Gamma(2-\alpha)\pi^{\frac{3}{2}}}.$$
To illustrate the self-similar solutions to the system \eqref{NS} firstly raised by Leray in \cite{Leray}, let us review the well-known scaling property, that is, if $(\uuu,\pi)$ is a solution to the system~\eqref{NS}, then so is $(\uuu_\lambda,\pi_\lambda)$ for each $\lambda>0,$ where
$$
\uuu_\lambda(\xx,t)= \lambda^{2\alpha-1} \uuu\big(\lambda \xx,\lambda^2t\big) \ \ \text{and}\ \   \pi_\lambda(\xx,t)=\lambda^{2(2\alpha-1)} \pi\big(\lambda \xx,\lambda^2t\big).
$$
It has been shown  that the scaling invariance plays an essential  role in the regularity theory of the incompressible Navier-Stokes equations.  This scaling indicates that the ratio $\frac{|\xx|^2}{t}$ is important and suggests that we search for a special solution  which is scale invariant with respect to the natural scaling, \[\uuu_{\lambda}=\uuu \quad\text{and}\quad  \pi_{\lambda}=\pi \quad\text{for each}\quad  \lambda>0, \] which are often called self-similar solutions. Taking $\lambda(t)=t^{-\frac{1}{2\alpha}}$ with  $t>0$ (or $\lambda(t)=(-t)^{-\frac{1}{2\alpha}}$ with  $t<0$),
 we obtain the forward self-similar solutions (or backward self-similar solutions, resp.) to the system \eqref{NS} having the form
 $$
\uuu(\xx,t)= \lambda^{2\alpha-1}(t)\uuu\big(\lambda(t)\xx,1\big)\quad\text{and}\ \ \  \pi(\xx,t)=\lambda^{2(2\alpha-1)}(t)\pi\big(\lambda(t)\xx,1\big).
$$
Denoting $\uu(\xx)=\uuu(\xx,1)$ and $P(\xx)=\pi(\xx,1)$, one easily verifies that the profile pair $(\uu,P)$  solves either the generalized Leray equations in $\RR^3$
\begin{align}\label{E-forwad}
\left\{
\begin{aligned}
&(-\Delta)^{\alpha}\uu- \frac{2\alpha-1}{2\alpha}\uu-\frac{1}{2\alpha}\xx\cdot \nabla \uu+\nabla P+\uu\cdot\nabla \uu=0, \\
&\operatorname{div}\, \uu=0,\\
\end{aligned}\ \right.
\end{align}
or
\begin{align}\label{E-back}
\left\{
\begin{aligned}
&(-\Delta)^{\alpha}\uu+\frac{2\alpha-1}{2\alpha}\uu+\frac{1}{2\alpha}\xx\cdot \nabla \uu+\nabla P+\uu\cdot\nabla \uu=0, \\
&\operatorname{div}\, \uu=0,\\
\end{aligned}\ \right.
\end{align}
Existence results for the backward self-similar solutions to the system \eqref{E-back}  were obtained in, see, e.g.,  \cite{JMV,Tsai-book}. In this paper, we are interested in the forward self-similar solutions to the problem \eqref{E-forwad}. Due to the singularity arising from  self-similarity, we cannot directly construct a solution   in the Sobolev space. Thus, one decomposes
$$
\uu=\uu_{0}+\vv
$$
with $\uu_0(\xx)=e^{-(-\Delta)^\alpha}\uuu_0(\xx)$ and  the difference $\vv$  satisfies in $\RR^{3}$
\begin{align}\label{E}
\left\{
\begin{aligned}
&(-\Delta)^{\alpha}\vv- \frac{2\alpha-1}{2\alpha}\vv-\frac{1}{2\alpha}\xx\cdot \nabla \vv+\nabla P=-\vv\cdot\nabla \vv +L_{\uu_0}(\vv)+\operatorname{div}\ff_0, \\
&\operatorname{div}\,\vv=0,
\end{aligned}\ \right.
\end{align}
where $\ff_0=-\uu_{0}\otimes \uu_{0}$ and
\[L_{\uu_0}\vv=- \uu_{0} \cdot\nabla \vv-\vv\cdot\nabla \uu_{0}.\]
For such system with $\alpha=1$, existence of weak solutions $\vv\in H^1(\RR^3)$  was firstly shown by  Korobkov and Tsai in \cite{KB}. But they said they did not  prove regularity and decay estimate for the weak solutions.
This problem was later solved in the following theorem by Lai, Miao and Zheng in \cite{LMZ19}.
\begin{theorem}[Theorem 3.6, \cite{LMZ19}]\label{thm-1}
Let $\alpha\in(\frac58,1]$ and $\uuu_{0}=\frac{\sigma(\xx)}{|\xx|^{2\alpha-1}} $ with $\sigma(\xx)=\sigma(\xx/|\xx|)\in C^{1,0}(\mathbb{S}^{2})$.	Then  there exists $C=C\big(\|\sigma\|_{W^{1,\infty}(\mathbb{S}^2)}\big)>0$ such that the system \eqref{E-2} admits at least one weak  solution $\vv\in H^{\alpha}(\RR^{3})$  with
$$
\|\vv\|_{H^{\alpha}(\RR^{3})}\leq C\big(\|\sigma\|_{W^{1,\infty}(\mathbb{S}^2)}\big).
$$
In particular,   for $\alpha=1$,
\begin{equation*}
|\vv|(\xx)\leq C\big(\|\sigma\|_{W^{1,\infty}(\mathbb{S}^2)}\big)(1+|\xx|)^{-3}\log(e+|\xx|).
\end{equation*}
\end{theorem}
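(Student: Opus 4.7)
The plan is to construct $\vv$ via a Galerkin scheme in $H^{\alpha}(\RR^{3})$ with a uniform a priori bound, and then to upgrade regularity and derive the pointwise decay in the critical case $\alpha=1$ by a Green's function representation. As a preliminary I would record pointwise estimates for $\uu_{0}=e^{-(-\Delta)^{\alpha}}\uuu_{0}$: homogeneity of $\uuu_{0}$ at the critical degree $-(2\alpha-1)$ and the fractional heat kernel give $|\uu_{0}(\xx)|\le C\|\sigma\|_{L^{\infty}}(1+|\xx|)^{-(2\alpha-1)}$ and $|\nabla\uu_{0}(\xx)|\le C\|\sigma\|_{W^{1,\infty}}(1+|\xx|)^{-2\alpha}$, together with matching bounds for $\ff_{0}=-\uu_{0}\otimes\uu_{0}$. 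These decay rates, combined with the restriction $\alpha>5/8$, are precisely what make $\uu_{0}\otimes\uu_{0}$ and the linearization $L_{\uu_{0}}\vv$ admissible forcings in the $H^{\alpha}$ energy framework via the fractional Hardy inequality.

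For the Galerkin problem in a basis of divergence-free eigenfunctions, Brouwer's theorem reduces existence in each finite-dimensional subspace to an a priori bound. Testing the Galerkin equation against $\vv_{N}$, the fractional dissipation produces $\|\vv_{N}\|_{\dot H^{\alpha}}^{2}$, the drift pair $-\tfrac{2\alpha-1}{2\alpha}\vv-\tfrac{1}{2\alpha}\xx\cdot\nabla\vv$ contributes the coercive term $\tfrac{5-4\alpha}{4\alpha}\|\vv_{N}\|_{L^{2}}^{2}$ after integration by parts, and the cubic self-interaction $(\vv_{N}\cdot\nabla\vv_{N},\vv_{N})$ together with the transport $(\uu_{0}\cdot\nabla\vv_{N},\vv_{N})$ vanish by incompressibility. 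The remaining contributions
\[
-\int(\vv_{N}\otimes\vv_{N}):\nabla\uu_{0}\,\intd\xx+\int(\uu_{0}\otimes\uu_{0}):\nabla\vv_{N}\,\intd\xx
\]
are controlled by combining the pointwise decay of $\uu_{0}$ and $\nabla\uu_{0}$, the fractional Hardy inequality $\int|\vv|^{2}/|\xx|^{2\alpha}\,\intd\xx\le C\|\vv\|_{\dot H^{\alpha}}^{2}$, and the duality pairing of $\operatorname{div}\ff_{0}$ in $\dot H^{-\alpha}$ against $\vv_{N}\in\dot H^{\alpha}$. Since no smallness on $\sigma$ is assumed, a direct absorption fails; instead I would close the estimate by a Leray--Schauder continuation, treating the problem with parameter $\tau\in[0,1]$ and extracting a uniform bound valid at $\tau=1$.

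After passing to the limit through weak-$\ast$ convergence in $H^{\alpha}$ together with Rellich compactness on bounded sets (sufficient for the quadratic terms since $\uu_{0}$ is locally smooth), I would bootstrap $\vv$ into classical regularity in the case $\alpha=1$ via $L^{p}$ elliptic theory for the operator $-\Delta-\tfrac{1}{2}-\tfrac{1}{2}\xx\cdot\nabla$, and then represent $\vv$ by convolution with the associated Oseen-type Green's function $G$ (homogeneous of degree $-3$ after the natural self-similar change of variables) against the source $-\vv\cdot\nabla\vv+L_{\uu_{0}}\vv+\operatorname{div}\ff_{0}$. The slowest-decaying contribution is $\operatorname{div}(\uu_{0}\otimes\uu_{0})$, and estimating $G\ast\operatorname{div}(\uu_{0}\otimes\uu_{0})$ directly produces the rate $(1+|\xx|)^{-3}\log(e+|\xx|)$, the logarithm arising from the resonance between a $-3$-homogeneous Green's function and a source at the critical scale.

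The hardest step is the a priori bound. The perturbative terms involving $\uu_{0}$ are genuinely critical near $\alpha=5/8$, and avoiding a smallness condition on $\sigma$ forces the topological fixed-point argument above together with a Korobkov--Pileckas-type exclusion of concentration along the continuation path. The logarithmic correction in the $\alpha=1$ decay is a second subtlety, encoding a sharp resonance obstruction that no further algebraic cancellation removes.
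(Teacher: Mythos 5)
This theorem is not proved in the paper: it is cited verbatim from \cite{LMZ19}, and the surrounding text only indicates that the cited work establishes existence of $\vv\in H^{\alpha}(\RR^3)$ ``by using the blowup argument and Leray's degree theory.'' Your topological strategy — Galerkin projections, a coercivity computation yielding $\tfrac{5-4\alpha}{4\alpha}\|\vv\|_{L^2}^2$, Leray--Schauder continuation because no smallness of $\sigma$ is available, and a Korobkov--Pileckas-style exclusion of concentration along the continuation path — matches that description well, and the sign/coefficient computation for the drift pair is correct. That part of the sketch is consistent in spirit with the cited proof, and the $H^\alpha$ a priori bound is plausibly closed in this way.

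The part of your proposal I cannot accept as written is the regularity and decay step at $\alpha=1$. You propose to ``bootstrap $\vv$ into classical regularity... via $L^p$ elliptic theory for the operator $-\Delta-\tfrac12-\tfrac12\xx\cdot\nabla$,'' but the present paper spends its entire introduction arguing that precisely this strategy does not work: because the coefficient $\xx$ is unbounded, the naive perturbation estimate \eqref{E-w2p-pertubated} leaves the term $\|\xx\cdot\nabla\vv\|_{L^p}$ uncontrolled, and the authors replace $L^p$ elliptic theory by a Littlewood--Paley localization in which the drift term is exploited structurally rather than treated as a perturbation (their $L^p$-type estimate \eqref{eq.2ptype} and the commutator identity \eqref{eq-L-Lp-comm} are exactly the device that circumvents the obstruction you are glossing over). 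So the bootstrap you invoke needs to be replaced by either a localized frequency argument of this kind or by the weighted-space estimates of \cite{LMZ21}; as stated it is a genuine gap. Relatedly, your representation formula should be the self-similar Duhamel formula $\vv=\int_0^1 e^{-(-\Delta)^\alpha(1-s)}\mathbb{P}\operatorname{div}(s^{1/\alpha-2}\mathbf{G}(\cdot/s^{1/(2\alpha)}))\,\mathrm{d}s$ used in the paper and in \cite{LMZ21}, not a spatial Green's function for the full drift operator, since the former is what makes the heat-semigroup estimates \eqref{eq-edecay} applicable. The characterization of the logarithm as a homogeneity resonance is acceptable; the paper attributes it more specifically to the Leray projector $\mathbb{P}$, which amounts to the same obstruction seen from the Fourier side. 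Finally, you do not explain where the threshold $\alpha>\tfrac58$ enters; this should come out of the Sobolev/Hardy estimates controlling the $\uu_0$-interaction terms in the energy inequality, and is worth making explicit.
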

In Theorem \ref{thm-1}, it is also proved that the existence of weak solutions $\vv\in H^{\alpha}(\RR^{3})$  to the system  \eqref{E-2} with $\alpha\in(\frac58,1)$ by using the blowup argument and Leray's degree theory. Recently, we further develop some estimates in the weighted framework, and then we obtain the regularity and decay estimate of weak solutions for the case where $\alpha\in(\frac56,1)$, but it is not optimal.
\begin{theorem}[\cite{LMZ21}]\label{thm1.2}
	    Let $\alpha\in(\frac56,1]$ and $\uuu_{0}=\frac{\sigma(\xx)}{|\xx|^{2\alpha-1}} $ with $\sigma(\xx)=\sigma(\xx/|\xx|)\in C^{2,0}(\mathbb{S}^{2})$. Assume that  $\vv\in H^{\alpha}(\RR^{3})$ is the weak solution  established in Theorem~\ref{thm-1}.
		Then we have that $\vv\in H^{1+\alpha} (\RR^3)$ satisfies
		\begin{equation*}
	\big	\|\sqrt{1+|\cdot|}\vv\big\|_{H^{1+\alpha}(\RR^{3})}\leq C\big(\|\sigma\|_{W^{1,\infty}(\mathbb{S}^2)}\big).
		\end{equation*}
Moreover, one has
 $$|\vv(\xx)|\leq  C\big(\|\sigma\|_{W^{1,\infty}(\mathbb{S}^2)}\big)(1+|\xx|)^{-\frac{1}{2}}.$$
 \end{theorem}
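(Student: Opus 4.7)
The plan is to pursue a three-stage argument: first, bootstrap the regularity of $\vv$ from $H^{\alpha}$ to $H^{1+\alpha}$ via an $L^p$-elliptic estimate for the linearised operator; second, promote this to the weighted bound $\sqrt{1+|\cdot|}\,\vv\in H^{1+\alpha}$ through an energy estimate with a polynomial multiplier; and third, extract the pointwise decay by Sobolev embedding. Throughout, I would exploit the structural information $|\uu_0(\xx)|\lesssim (1+|\xx|)^{-(2\alpha-1)}$ and $|\nabla\uu_0(\xx)|\lesssim (1+|\xx|)^{-2\alpha}$ inherited from the $-(2\alpha-1)$-homogeneity of $\uuu_0$ and the smoothing action of $e^{-(-\Delta)^\alpha}$, together with the $H^{\alpha}$-bound on $\vv$ already supplied by Theorem~\ref{thm-1}.

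For the bootstrap step, I would rewrite \eqref{E} as the fractional Stokes system
\[
(-\Delta)^{\alpha}\vv+\nabla P=\mathbf{G},\qquad \Div\vv=0,
\]
with $\mathbf{G}=\tfrac{2\alpha-1}{2\alpha}\vv+\tfrac{1}{2\alpha}\xx\cdot\nabla\vv-\vv\cdot\nabla\vv+L_{\uu_{0}}(\vv)+\Div\ff_{0}$. Since the drift $\xx\cdot\nabla\vv$ is not a bounded perturbation on standard Sobolev spaces, I would decompose dyadically in frequency, apply Bernstein's inequality in each annulus to trade one derivative for the factor $2^{-2\alpha j}$, and sum using the maximal smoothing effect of the semigroup $e^{-t(-\Delta)^{\alpha}}$ on frequency-localised distributions highlighted in the abstract. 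The quadratic term $\vv\cdot\nabla\vv$ is tamed by the embedding $H^{\alpha}\hookrightarrow L^{3}$, which only requires $\alpha>\tfrac{1}{2}$. Iterating the resulting $L^{p}$-elliptic estimate a finite number of times yields $\vv\in H^{1+\alpha}$; in particular $\vv\in L^{\infty}$ because $1+\alpha>\tfrac{3}{2}$.

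The weighted bound is the technical heart. I would test \eqref{E} against $(1+|\xx|)\vv$, use $\Div\vv=0$, and collect the contributions. The drift and zeroth-order terms together produce
\[
\frac{5-4\alpha}{4\alpha}\int(1+|\xx|)|\vv|^{2}\ud\xx+\frac{1}{4\alpha}\int|\xx|\,|\vv|^{2}\ud\xx,
\]
which is nonnegative for every $\alpha\in(0,1]$; the fractional dissipation $\int(1+|\xx|)\vv\cdot(-\Delta)^{\alpha}\vv\ud\xx$ furnishes a coercive $\dot H^{\alpha}$-type bound on $\sqrt{1+|\cdot|}\,\vv$ modulo a commutator $[(1+|\xx|)^{1/2},(-\Delta)^{\alpha/2}]\vv$ controlled by a Kato--Ponce-type estimate; the cubic nonlinearity reduces, after one integration by parts, to $-\tfrac{1}{2}\int(\xx/|\xx|)\cdot\vv\,|\vv|^{2}\ud\xx$ and is absorbed through $\|\vv\|_{L^{3}}\lesssim\|\vv\|_{H^{\alpha}}$; and the contributions from $L_{\uu_{0}}(\vv)$ and $\Div\ff_{0}$ are estimated using the aforementioned decay of $\uu_{0}$ and $\nabla\uu_{0}$. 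Differentiating \eqref{E} and running the same argument with the multiplier $(1+|\xx|)\nabla\vv$ then upgrades the bound to $\sqrt{1+|\cdot|}\,\vv\in H^{1+\alpha}$. The pointwise conclusion $|\vv(\xx)|\leq C(1+|\xx|)^{-1/2}$ is then immediate from the Sobolev embedding $H^{1+\alpha}(\RR^{3})\hookrightarrow L^{\infty}(\RR^{3})$ applied to $\sqrt{1+|\xx|}\,\vv$.

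The main obstacle is the second iteration with the multiplier $(1+|\xx|)\nabla\vv$: one must estimate products of the form $\int(1+|\xx|)|\nabla\vv|^{2}\,|\nabla\vv|\,\ud\xx$, whose control requires $\nabla\vv\in L^{3}$ in tandem with matching weighted decay for $\nabla\uu_{0}$, and balancing the two inside the fractional Leibniz/commutator machinery forces precisely the threshold $\alpha>\tfrac{5}{6}$ stated in the theorem. This is strictly stronger than the $\alpha>\tfrac{5}{8}$ sufficient for plain $H^{\alpha}$-existence in Theorem~\ref{thm-1}, reflecting the cost of upgrading to a weighted higher-regularity setting and accounting for why the present result is, as the authors note, not yet optimal.
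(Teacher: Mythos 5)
The theorem is quoted from \cite{LMZ21} and is not proved in the present paper, so there is no in-document proof to compare against; the closest internal analogues are Theorem~\ref{thm-V-H} and Theorem~\ref{thm-V-H-L2-weight}, which embody the technique that \cite{LMZ21} developed. Against those, your Stage 1 contains a genuine gap. You propose to rewrite~\eqref{E} as $(-\Delta)^{\alpha}\vv+\nabla P=\mathbf{G}$ and put $\xx\cdot\nabla\vv$ inside $\mathbf{G}$, then control $\mathbf{G}$ by dyadic decomposition and Bernstein. Bernstein trades derivatives for frequency factors $2^{2\alpha j}$, but it does nothing to the \emph{unbounded coefficient} $\xx$; even after localizing to a dyadic shell in frequency, $\xx\cdot\nabla\vv_q$ is as unbounded as before, and there is no smallness to sum. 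The paper's own introduction makes exactly this point (``it is not clear how to control the last term'' in~\eqref{E-w2p-pertubated}). The fix is structural, not perturbative: one must keep the drift on the left, localize, and observe that (i) the commutator $[\dot{\Delta}_q,\xx\otimes]\vv$ equals $2^{-q}\bar{\varphi}_q\ast\widetilde{\dot{\Delta}}_q\vv$ hence is \emph{bounded uniformly in $q$}, and (ii) the remaining $\int\xx\cdot\nabla\vv_q\,|\vv_q|^{p-2}\vv_q\,\mathrm{d}\xx$ integrates by parts to a harmless constant times $\|\vv_q\|_{L^p}^p$. Your proposal never invokes either observation, so the bootstrap as written does not close.

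Your Stage 2 has the right energy-estimate architecture and the coefficient bookkeeping (the factor $\tfrac{5-4\alpha}{4\alpha}\geq\tfrac{1}{4\alpha}>0$ and the cubic term $-\tfrac12\int(\xx/|\xx|)\cdot\vv\,|\vv|^{2}$) is correct, but testing directly against $(1+|\xx|)\vv$ is not a priori legitimate when $\vv$ is only known to lie in $H^{\alpha}$: the multiplier is unbounded and the pairing is not obviously finite. The version in Theorem~\ref{thm-V-H-L2-weight} first multiplies by a truncated weight $g_R\langle\xx\rangle^{\beta}$ with $g_R(\xx)=g(\xx/R)$ and only passes $R\to\infty$ after obtaining a uniform bound; your argument should do the same and also needs the commutator lemma (Lemma~\ref{lem-Comm}) in the precise form it is stated, since a bare appeal to Kato--Ponce for $[(1+|\xx|)^{1/2},\Lambda^{\alpha}]$ runs into the issue that $(1+|\xx|)^{1/2}$ is not globally in any $\dot{C}^{\beta}$. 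Stage 3 is fine. Your heuristic for where $\alpha>5/6$ enters is also a bit off: the threshold comes from the scaling criticality $\|\vv_\lambda\|_{\dot H^\alpha}=\lambda^{3\alpha-5/2}\|\vv\|_{\dot H^\alpha}$, equivalently the need for $H^{2\alpha}\hookrightarrow\dot W^{1,18/5}$ (used in~\eqref{eq-inter-2}), rather than from the weighted second-iteration you describe.
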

\begin{remark}
Now we try to explain why $ \alpha>5/6$ by the scaling analysis, namely, if $\vv$ is the solution to the system \eqref{E}, then so does $\vv_\lambda$ for each $\lambda>0,$ where
\[\vv_{\lambda}(\xx)=\lambda^{2 \alpha-1} \vv(\lambda \xx).\]
A direct calculation yields
\[\left\|\vv_{\lambda}\right\|_{\dot{H}^{\alpha}\left(\mathbf{R}^{3}\right)}=\lambda^{2 \alpha-1+\alpha-\frac{3}{2}}\|\vv\|_{\dot{H}^{\alpha}\left(\mathbf{R}^{3}\right)}=\lambda^{3 \alpha-\frac{5}{2}}\|\vv\|_{\dot{H}^{\alpha}\left(\mathbf{R}^{3}\right)},\]
which implies $\alpha=\frac56$ is ``the critical index". Thus the problem \eqref{E} is classified as ``critical" in the fractional powers of Laplace operator $\alpha=\frac56$.
\end{remark}
Therefore, the goal of the present paper is to study the global regularity and decay behaviour  of  weak solution to the   generalized Leray problem with critical dissipation. To do this,  it makes us think of  a very similar problem, the stationary solutions to the four-dimensional Navier-Stokes equations in $\Omega\subset\RR^4$
\begin{align}\label{E-NS}
\left\{
\begin{aligned}
 & -\Delta \vv+ \vv\cdot \nabla \vv+\nabla P=  \ff,  \\
& \operatorname{div}\,\vv=0.
\end{aligned}\right.
\end{align}
In 1979,  Gerhardt \cite{ref-Ger} proved the regularity of weak solutions to the system \eqref{E-NS} and his proof strongly relies on two ingredients.
 One is  the so-called ``compactness lemma" that if $\uu\in L^2(\Omega)$ and $\vv\in H^{1}(\Omega)$, we have
\[\int_{\Omega}|\uu| \cdot|\vv|^{2} \,\mathrm{d} \xx \leq \varepsilon  \|\vv\|_{H^{1 }(\Omega)}^{2}+C_{\varepsilon}  \|\vv\|_{L^2(\Omega)}^{2}\quad\text{for each}\,\,\varepsilon>0.\]
Another is $L^p$-theory of Stokes equations, which is the linear part of the model \eqref{E-NS}, that if $\vv$ satisfies the Dirichlet problem of Stokes equations
\begin{align}\label{E-NS-stokes}
\left\{
\begin{aligned}
  -\Delta \vv+\nabla P=&   \ff, \quad\xx\in\Omega, \\
  \vv=&0,\quad\,\xx\in\partial\Omega,\\
 \operatorname{div}\,\vv=& 0, \quad \,\xx\in\Omega,
\end{aligned}\ \right.
\end{align}
we have by using bounds for   Calder\'on-Zygmund integral singular operators \cite{[Nirenberg]} that
\begin{equation}\label{E-w2p}
\|\vv\|_{W^{2,p}(\Omega)}+\|P\|_{W^{1,p}(\Omega)}\leq C(p,\Omega)\|\ff\|_{L^p(\Omega)}\quad\text{for each}\,\,p\in(1,+\infty).
\end{equation}
Based on the mathematical similarity between \eqref{E-forwad-leray} and \eqref{E-NS-stokes}, we would like to borrow this idea to apply to the weak solutions to the Leray problem \eqref{E-forwad-leray} to get  global  regularity, but it does not work. The first difficulty we meet is that $L^p$ elliptic regularity is no longer available for solution of linearization
\begin{align}\label{E-forwad-leray-linear}
\left\{
\begin{aligned}
&(-\Delta)^{\alpha}\vv- \frac{2\alpha-1}{2\alpha}\vv-\frac{1}{2\alpha}\xx\cdot \nabla \vv+\nabla P = \ff, \\
&\mbox{div}\, \vv=0,\\
\end{aligned}\ \right.
\end{align}
due to the coefficient $\xx$ is unbounded over whole space $\RR^3$. Indeed, if one estimates the solution $\vv$ to the system \eqref{E-forwad-leray-linear}  directly by making use of \eqref{E-w2p} for each $p\in(1,+\infty),$
 \begin{equation}\label{E-w2p-pertubated}
\|\vv\|_{W^{2\alpha,p}(\RR^3)}+\|P\|_{W^{1,p}(\RR^3)}\leq C(p,\alpha)\left(\|\ff\|_{L^p(\RR^3)}+\|\vv\|_{L^p(\RR^3)}+\|\xx\cdot\nabla\vv\|_{L^p(\RR^3)}\right),
\end{equation}
it is not clear how to control the last term. Hence, to proceed  further, we need to look for new methods to obtain $W^{2\alpha,p}(\RR^3)$-regularity for weak solutions to the problem~\eqref{E-forwad-leray-linear}. Viewed from the perspective of microlocal analysis, it can be  thought of  as the maximal smoothing effect in the $L^p(\RR^3)$ framework. This observation suggests us to consider localization of  solution in frequency space. Performing $\dot{\Delta}_q$ to both sides of the first equation of \eqref{E-forwad-leray-linear}, we easily find that  the couple $(\vv_q,P_q):=(\dot{\Delta}_q\vv,\dot{\Delta}_qP)$ is  smooth and solves the following system in $\RR^3$
\begin{align}\label{E-q-L}
\left\{
\begin{aligned}
 &(-\Delta)^{\alpha}\vv_q-\frac{2-\alpha}{2\alpha}\vv_q-\frac{1}{2\alpha}\xx\cdot \nabla \vv_q+\nabla P_q=\ff_q+\mathbf{R}_q,\\
 &\textnormal{div}\,\vv_q=0,
\end{aligned} \right.
\end{align}
where the commutator  $[\dot{\Delta}_q,\xx\otimes ]\vv$ is defined by
\[[\dot{\Delta}_q,\xx\otimes ]\vv=\dot{\Delta}_q\left(\xx\otimes \vv\right)-\xx\otimes\dot{\Delta}_q \vv.\]
Next, taking the standard $L^p$-estimate of $\vv_q$ with $p\geq2$, one has
\begin{equation}\label{eq-L-Lp-intr}
\begin{split}
&\int_{\mathbf{R}^3}(-\Delta)^{\alpha}\vv_q\,|\vv_q|^{p-2}\vv_q\,\mathrm{d}\xx-\frac{2\alpha -1}{ 2\alpha}\left\|\vv_q\right\|_{L^p(\RR^3)}^p\\&-\frac{1}{2\alpha}\int_{\mathbf{R}^3}\xx\cdot \nabla \vv_q\,|\vv_q|^{p-2}\vv_q\,\mathrm{d}\xx\\
=&\int_{\mathbf{R}^3}\left(-\nabla P_q +\ff_q \right)\,|\vv_q|^{p-2}\vv_q\,\mathrm{d}\xx+ \int_{\mathbf{R}^3}\mathbf{R}_q\,|\vv_q|^{p-2}\vv_q\,\mathrm{d}\xx.
\end{split}
\end{equation}
Formally, we will meet a new difficulty  caused by the commutator, excluding  the difficulty arising from  $\xx\cdot\nabla\vv$. In order to overcome this new difficulty, we see that   commutator can be rewritten as
\begin{align*}
[\dot{\Delta}_q,\xx\otimes]\vv=&\int_{\RR^3}\varphi_q(\xx-\yy)\yy\otimes \vv(\xx-\yy)\,\mathrm{d}\yy-\xx\otimes\int_{\RR^3}\varphi_q(\xx-\yy)\vv(\yy)\,\mathrm{d}\yy\\
=&-\int_{\RR^3}(\xx-\yy)\varphi_q(\xx-\yy)\otimes \vv(\yy)\,\mathrm{d}\yy.
\end{align*}
Taking Fourier transform of commutator and using the following property of Fourier transform which is the key point to overcome the first difficulty
 $$(\xx\cdot f)^\wedge(\xi)=i\operatorname{div}_\xi \hat{f}(\xi),$$ we immediately get
\begin{align*}
\mathscr{F}\left([\dot{\Delta}_q,\xx\otimes]\vv \right)(\xi)
= i2^{-q}\nabla_{\xi} \varphi\left(2^{-q} {\xi}  \right) \otimes\widehat{\vv}(\xi)
=   i2^{-q}\nabla_{\xi} \varphi\left({2^{-q}{\xi}} \right) \otimes\widehat{\widetilde{\dot{\Delta}}_q\vv}(\xi),
\end{align*}
which means that
\begin{equation*}
[\dot{\Delta}_q,\xx\otimes]\vv=2^{-q}\bar{\varphi}_q\ast\left(\widetilde{\dot{\Delta}}_q\vv\right)(\xx)\quad\text{with}\quad \bar{\varphi}(\xx)=\xx \varphi(\xx)\in\mathscr{D}(\RR^3).
\end{equation*}
This equality  enables us to bound the commutator  as follows
\begin{equation}\label{eq-L-Lp-comm-I-intr}
\int_{\mathbf{R}^3}\mathbf{R}_q\,|\vv_q|^{p-2}\vv_q\,\mathrm{d}\xx
\leq \frac{C}{2\alpha} \left\|\widetilde{\dot{\Delta}}_q\vv \right\|_{L^p(\RR^3)}\left\|\vv_q\right\|_{L^p(\RR^3)}^{p-1}.
\end{equation}
Next, we deal with  integral term involving $\xx\cdot\nabla\vv$. Integrating by parts, it can bounded as follows
\begin{align*}
-\frac{1}{2\alpha}\int_{\mathbf{R}^3}\xx\cdot \nabla \vv_q\,|\vv_q|^{p-2}\vv_q\,\mathrm{d}\xx=&-\frac{1}{2p\alpha}\int_{\mathbf{R}^3}\xx\cdot \nabla  \,|\vv_q|^{p} \,\mathrm{d}\xx\\
=& \frac{3}{2p\alpha}\int_{\mathbf{R}^3}   \,|\vv_q|^{p} \,\mathrm{d}\xx.
\end{align*}
It is surely a basic point that we employ  the localization method so as to make this happen. This can help us to make best use of the available structure of $\xx\cdot\nabla \vv$ in the equations \eqref{E-forwad-leray-linear}, not as perturbation  in \eqref{E-w2p-pertubated}.

Inserting the estimate \eqref{eq-L-Lp-comm-I-intr} and  the following lower bound concerning  dissipation  from the Bernstein inequality in Lemma \ref{lem-bern-new}
\begin{equation*}
\int_{\mathbf{R}^3}(-\Delta)^{\alpha}\,|\vv_q|^{p-2}\vv_q\,\mathrm{d}\xx\geq c_p2^{2q\alpha} \left\|\vv_q\right\|^p_{L^p(\RR^3)},\quad\forall p\in[2,+\infty)
\end{equation*}
into the equality \eqref{eq-L-Lp-intr} leads to
\begin{equation}\label{eq-L-Lp-sec-int}
 c_p2^{2q\alpha} \left\|\vv_q\right\|_{L^p(\RR^3)}
\leq \left\|\nabla P_q\right\|_{L^p(\RR^3)}+\left\|\ff_q\right\|_{L^p(\RR^3)}+C \left\|\widetilde{\dot{\Delta}}_q\vv \right\|_{L^p(\RR^3)}.
\end{equation}
Since
\begin{equation*}
-\Delta P_q=\operatorname{div}\ff_q\quad\text{in }\RR^3,
\end{equation*}
we have the classical elliptic estimate that for each $p\in[2,\infty),$
\[\left\|P_q\right\|_{\dot{W}^{2,p}(\RR^3)}\leq C\|\operatorname{div}\ff_q\|_{L^p(\RR^3)}. \]
This estimate together with  the estimate \eqref{eq-L-Lp-sec-int} yields  the $L^p$-type elliptic regularity that for each $p\in[2,+\infty),$
\begin{equation}\label{eq.2ptype}
\left\|\vv\right\|_{\dot{B}^{2\alpha}_{p,p}(\RR^3)}+\left\|P\right\|_{\dot{B}^{1}_{p,p}(\RR^3)}\leq C_{p,\alpha}\left(\left\|\ff\right\|_{\dot{B}^{s }_{p,p}(\RR^3)}+\left\|\vv\right\|_{\dot{B}^{0}_{p,p}(\RR^3)}\right).
\end{equation}
 In addition, with the help of Bony's paraproduct decomposition,   we develop a Besov type compactness lemma: for the divergence free vector field $\uu\in  \dot{H}^{\frac{5}{6}}(\RR^3)$ and  $\vv\in {B}^{\frac53}_{p,p}(\RR^3)$ with $2\leq p<\frac92$, we have
\begin{equation}\label{eq-KY-Fractional-BS-intr}
\|\operatorname{div}\left(\uu\otimes \vv\right)\|_{\dot{B}^{0}_{p,p}(\RR^3)}\leq \varepsilon\|\vv\|_{\dot{B}^{\frac{5}{3}}_{p,p}(\RR^3)}+C_{\varepsilon,\,p,\,\uu}\|\vv\|_{\dot{B}^0_{p,p}(\RR^3)}\quad\text{for each}\,\,\varepsilon>0.
\end{equation}
With both estimates \eqref{eq.2ptype} and \eqref{eq-KY-Fractional-BS-intr} in hand,  we establish our first   results stated as follows:
\begin{theorem}\label{thm-leray}
Let $\alpha\in(0,1]$ and $\ff\in \dot{H}^{-1}(\RR^3)$.
Then the problem \eqref{E-forwad-leray}  admits at least one   weak solution $(\uu,P)\in H^\alpha(\RR^3)\times L^{\frac{3}{3\alpha-2}}(\RR^3)$ such that
\begin{itemize}
  \item [(i)] for each the divergence-free vector field $ {\phi}\in \mathscr{D}(\RR^3)$,
   \begin{equation*}
\begin{split}
&\int_{\RR^3}\Lambda^\alpha \uu\cdot\Lambda^\alpha \phi\,\mathrm{d}\yy+\frac{2-\alpha}{\alpha}\int_{\RR^3}\uu\cdot\phi\,\mathrm{d}\yy+\frac{1}{2\alpha}\int_{\mathbf{R}^3}\yy\cdot \nabla\varphi\cdot \uu\,\mathrm{d}\yy\\=&\int_{\RR^3}P\operatorname{div}\phi\,\mathrm{d}\yy+\int_{\RR^3}\ff:\nabla\phi\,\mathrm{d}\yy;
\end{split}
\end{equation*}
  \item [(ii)]  when $p\in[5/6,1],$ we have $(\uu,P)\in H^{2\alpha}(\RR^3)\times H^{1 }(\RR^3)$. Moreover,\medskip

  \begin{enumerate}
    \item[\rm (1)]   if  $\ff\in \dot{B}_{p,p}^{s-1}(\RR^3)$ with $s\geq0$ and $p\in(2,+\infty),$ we have $$(\uu,P)\in B_{p,p}^{s+2\alpha}(\RR^3)\times B_{p,p}^{s+1}(\RR^3).$$

    \item[\rm (2)]  if $\ff\in H _{\langle \xx\rangle^{2\beta}}^{s+1}(\RR^3)$ with $s\geq0$ and $\beta\in(0,\alpha)$,
  we have $$(\uu,P)\in H^{s+\alpha}_{\langle \xx\rangle^{2\beta}}(\RR^3)\times H_{\langle \xx\rangle^{2\beta}}^{s+\alpha}(\RR^3).$$
   \item[\rm (3)]  if $\displaystyle
\sup_{\xx\in\RR^3}\left|| \xx|^{4\alpha-2}\mathbf{F}(\xx)\right|+\sup_{\xx\in\RR^3}\left||\xx|^{4\alpha-1}\nabla \mathbf{F}(\xx)\right|<+\infty,
$ we  have
\begin{equation}\label{eq.decay-loss}
|\uu|(\xx)\leq C\langle \xx\rangle^{-(4\alpha-1)}\log\langle \xx\rangle\ \ \text{for each}\ \  \xx\in\RR^{3}.
\end{equation}
Moreover, if there exists  some  $\gamma\in(0,1)$ such that
\begin{equation*}
 \left\||\cdot|^{4\alpha-1+\gamma}\operatorname{div}\ff\right\|_{\dot{C}^\gamma (\RR^3)}<\infty,
 \end{equation*}
  we  have
\begin{equation}\label{eq.decay-loss-no}
|\uu|(\xx)\leq C\langle \xx\rangle^{-(4\alpha-1)} \ \ \text{for each}\ \  \xx\in\RR^{3}.
\end{equation}
  \end{enumerate}
\end{itemize}
\end{theorem}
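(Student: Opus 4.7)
The plan is to establish the four assertions of Theorem~\ref{thm-leray} in sequence, leaning essentially on the linearized $L^p$-type regularity \eqref{eq.2ptype} and the Besov compactness lemma \eqref{eq-KY-Fractional-BS-intr} developed in the introduction. For the weak solution in (i), I construct Galerkin approximations $\uu^n$ and test against $\uu^n$ itself. The convection term and the pressure vanish by $\operatorname{div}\uu^n=0$, while integration by parts converts the self-similar part into
\begin{equation*}
-\tfrac{2\alpha-1}{2\alpha}\|\uu^n\|_{L^2}^2-\tfrac{1}{2\alpha}\int_{\RR^3}(\xx\cdot\nabla\uu^n)\cdot\uu^n\,\mathrm{d}\xx=\tfrac{5-4\alpha}{4\alpha}\|\uu^n\|_{L^2}^2,
\end{equation*}
a quantity non-negative throughout $\alpha\in(0,1]$. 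Combined with the dissipation $\|\Lambda^\alpha\uu^n\|_{L^2}^2$ this closes an $H^\alpha$-estimate, and standard weak-compactness and diagonal arguments furnish a weak limit $\uu\in H^\alpha(\RR^3)$ solving the weak formulation. The pressure is recovered from the elliptic equation $-\Delta P=\operatorname{div}\operatorname{div}(\uu\otimes\uu-\ff)+(\text{lower order})$ via Calder\'on--Zygmund theory combined with the Sobolev embedding $H^\alpha\hookrightarrow L^{6/(3-2\alpha)}$.

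For the Besov regularity (ii)(1), I read \eqref{E-forwad-leray} as the linearization \eqref{E-forwad-leray-linear} with source $\operatorname{div}\ff-\operatorname{div}(\uu\otimes\uu)$ and apply \eqref{eq.2ptype}. The nonlinear contribution is absorbed by \eqref{eq-KY-Fractional-BS-intr} with $\uu\in\dot H^{5/6}$ as outer factor and $\vv=\uu$, exploiting the freedom to choose $\varepsilon$ small. An incremental bootstrap in $p$ and $s$, relying at each stage on the product estimate for $\uu\otimes\uu$ and on Besov embeddings, extends regularity to the full range $s\geq 0$, $p\in(2,\infty)$. The weighted analogue (ii)(2) follows the same scheme after premultiplying the equation by $\langle\xx\rangle^\beta$: the commutator $[(-\Delta)^\alpha,\langle\xx\rangle^\beta]$ is of order strictly less than $2\alpha$ precisely when $\beta<\alpha$, and the self-similar transport produces $\xx\cdot\nabla\langle\xx\rangle^{2\beta}=2\beta|\xx|^2\langle\xx\rangle^{2\beta-2}$ with the same sign as the dissipation, so the energy and compactness arguments carry over to the weighted setting. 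The principal obstacle is the very first step out of $H^\alpha$ in the unweighted bootstrap: at the critical scaling $\alpha=5/6$ the nonlinearity shares the weight of the dissipation, leaving no smallness to iterate on and forcing an essential use of the compactness lemma.

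Finally, for the pointwise decay (ii)(3), I use the correspondence $\uuu(\xx,t)=t^{-(2\alpha-1)/(2\alpha)}\uu(t^{-1/(2\alpha)}\xx)$ to express $\uu$ as the time-one value of a mild solution of \eqref{NS},
\begin{equation*}
\uu=e^{-(-\Delta)^\alpha}\uuu_0+\int_0^1 e^{-(1-s)(-\Delta)^\alpha}\operatorname{div}\bigl(\ff-\uu\otimes\uu\bigr)\,\mathrm{d}s,
\end{equation*}
decompose dyadically in frequency, and exploit the pointwise action of $e^{-t(-\Delta)^\alpha}$ on frequency-localized distributions combined with the decay hypothesis on $\ff$ and the weighted bounds from (ii)(2), to sum the Littlewood--Paley blocks and produce $|\uu(\xx)|\lesssim\langle\xx\rangle^{-(4\alpha-1)}\log\langle\xx\rangle$. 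The hardest step will be eliminating the logarithm under the additional H\"older hypothesis: this requires replacing the crude dyadic summation by a sharp H\"older-type bound on the kernel applied to $\operatorname{div}\ff$, with $\||\cdot|^{4\alpha-1+\gamma}\operatorname{div}\ff\|_{\dot{C}^\gamma}$ used precisely to close the endpoint loss.
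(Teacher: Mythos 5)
Your plans for parts (i), (ii)(1), and (ii)(2) track the paper's strategy closely. Existence via Galerkin with the energy identity
\begin{equation*}
-\tfrac{2\alpha-1}{2\alpha}\|\uu^n\|_{L^2}^2-\tfrac{1}{2\alpha}\int_{\RR^3}(\xx\cdot\nabla\uu^n)\cdot\uu^n\,\mathrm{d}\xx=\tfrac{5-4\alpha}{4\alpha}\|\uu^n\|_{L^2}^2\geq 0
\end{equation*}
is the standard argument the paper defers to; the Besov regularity via the linear $L^p$-type estimate combined with the compactness lemma and a choice of $\varepsilon$ small, followed by a bootstrap, is Theorems \ref{thm-Lp-U-G}--\ref{thm-Lp-V-H} specialized to $\sigma\equiv 0$; the weighted regularity via commutator control of $[\Lambda^\alpha,\langle\cdot\rangle^\beta]$ with $\beta<\alpha$ is in the spirit of Theorems \ref{thm-V-H-L2-weight}--\ref{thm-V-H-L2-weight-22}. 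One caveat worth flagging: the compactness lemma needs $\uu\in\dot{H}^{5/6}$, which follows from $\uu\in H^\alpha$ only when $\alpha\geq 5/6$; this is why part (ii) restricts to that range (the ``$p\in[5/6,1]$'' in the statement is a typo for $\alpha$), and your bootstrap has to start from the $H^\alpha$ a priori bound rather than assume any extra smallness.

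The genuine gap is in (ii)(3). Your proposed mild-solution formula
\begin{equation*}
\uu=e^{-(-\Delta)^\alpha}\uuu_0+\int_0^1 e^{-(1-s)(-\Delta)^\alpha}\operatorname{div}\bigl(\ff-\uu\otimes\uu\bigr)\,\mathrm{d}s
\end{equation*}
is wrong for this theorem on three counts. First, there is no $\uuu_0$ here: the heat-evolved initial data is part of the perturbed problem \eqref{E} and Theorem \ref{thm-leray-per}, not of \eqref{E-forwad-leray}. Second, the Leray projector $\mathbb{P}$ must appear in front of $\operatorname{div}$, and indeed the paper pinpoints $\mathbb{P}$ as the sole source of the logarithmic loss in \eqref{eq.decay-loss}; dropping it is not cosmetic. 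Third, and decisively, pulling the time-dependent Duhamel formula back to the self-similar profile produces the rescaled integrand $s^{\frac{1}{\alpha}-2}\mathbf{G}\bigl(\cdot/s^{\frac{1}{2\alpha}}\bigr)$ with $\mathbf{G}=\uu\otimes\uu+\mathbf{F}$, as in \eqref{eq.fundamental-solution-l-intr}; without the $s$-dependent dilation of the spatial argument there is no mechanism by which spatial decay of $\mathbf{G}$ converts into the power $\langle\xx\rangle^{-(4\alpha-1)}$ for $\uu$. Moreover, your plan to close the dyadic sum using ``the weighted bounds from (ii)(2)'' is not sufficient: those give at best $|\uu|(\xx)\lesssim\langle\xx\rangle^{-\beta}$ for $\beta<\alpha\leq 1$, far weaker than $4\alpha-1\geq 7/3$. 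What the paper actually does is first establish the pointwise lower decay $|\uu|(\xx)\lesssim\langle\xx\rangle^{-(4\alpha-2)}$ and $|\nabla\uu|(\xx)\lesssim\langle\xx\rangle^{-(4\alpha-1)}$ via Proposition \ref{Prop-optimal-decay-low}, which furnishes the $\sup_{\xx}\langle\xx\rangle^{4\alpha-1}|\operatorname{div}(\uu\otimes\uu)|<\infty$ input needed to run the three-regime frequency split $q\geq 0$, $N_\xx\leq q<0$, $q<N_\xx$ (with $N_\xx=-[\log_2|\xx|]+1$) in Propositions \ref{Prop-optimal-decay} and \ref{prop-decay-begin-third}; the $\log$ is the count $\#\{N_\xx\leq q<0\}$, and the H\"older hypothesis buys a factor $2^{-q\gamma}$ to kill it. This bootstrap through the weaker pointwise decay is the missing step in your plan.
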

\begin{remark}
The existence of weak solutions can be obtained by performing the same argument such as used in \cite{LMZ21,Tsai-book}, so we omit it and mainly focus on the global regularity and the optimal decay estimate through our paper.
\end{remark}
\begin{remark}
In the proof of  the decay estimate \eqref{eq.decay-loss}, we firstly find that the solution $\uu$ can be expressed by
\begin{equation}\label{eq.fundamental-solution-l-intr}
\uu(\xx)= \int_{0}^{1}e^{-(-\Delta)^{\alpha}(1-s)}\mathbb{P}{\rm div}_{\xx}\left(s^{\frac{1}{\alpha}-2}\mathbf{G}\big(\cdot/s^{\frac{1}{2\alpha}}\big)\right)\,{\rm d}s,
\end{equation}
where $\mathbf{G}=\uu\otimes\uu+\mathbf{F}$ and $\mathbb{P}$ is the Leray projector. Secondly, we develop the action of the heat semigroup generated by the fractional powers of Laplace operator on distributions with Fourier transforms supported in an annulus
\begin{equation} \label{eq-edecay}
\left\|\big(|\cdot|^m D^\gamma \dot{\Delta}_q\mathbb{P}G_\alpha\big)\ast f\right\|_{L^p(\RR^3)}\leq C2^{q(|\gamma|-m)}e^{-ct2^{2q\alpha}}\big\| f \big\|_{L^p(\RR^3)}.
\end{equation}
\end{remark}
\begin{remark}
Without the additional assumptions, the  logarithmic loss in  decay estimate~\eqref{eq.decay-loss} is caused by   Leray projector~$\mathbb{P},$ which seems inevitable.
\end{remark}
Similar to the problem \eqref{E-forwad-leray}, we can show the global regularity and decay behaviour for weak solutions to the problem \eqref{E}   following from Theorem \ref{thm-leray} that
\begin{theorem}\label{thm-leray-per}
 Let $\alpha\in[\frac56,1]$ and $\uuu_{0}(\xx)=\frac{\sigma(\xx)}{|\xx|^{2\alpha-1}} $ with $\sigma(\xx)=\sigma(\xx/|\xx|)\in C^{1,0}(\mathbb{S}^{2}).$ Assume that   $(\vv,P)\in H^\alpha(\RR^3)\times L^{\frac{3}{3-2\alpha}}(\RR^3) $ is a weak solution to the problem  \eqref{E} constructed in Theorem \ref{thm-1}. Then we have that for each $s\geq0 $ and $p\in(2,+\infty),$
$$(\vv,P)\in \left(B_{p,p}^{s}(\RR^3)\times B_{p,p}^{s}(\RR^3)\right)\bigcap\big(H^{s}_{\langle \xx\rangle^{2\beta}}(\RR^3)\times H_{\langle \xx\rangle^{2\beta}}^{s}(\RR^3)\big).$$
More importantly, we have the optimal decay estimate
\begin{equation}\label{eq.decay-noloss}
|\vv|(\xx)\leq C\langle \xx\rangle^{-(4\alpha-1)} \ \ \,\, \text{for each}\ \ \xx\in\RR^{3}.
\end{equation}
 \end{theorem}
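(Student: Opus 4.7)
Using the divergence-free conditions $\operatorname{div}\uu_0 = \operatorname{div}\vv = 0$, I would first rewrite
$$L_{\uu_0}(\vv) + \operatorname{div}\ff_0 = \operatorname{div}\widetilde{\ff},\qquad \widetilde{\ff} := -\uu_0 \otimes \vv - \vv \otimes \uu_0 - \uu_0 \otimes \uu_0,$$
so that \eqref{E} takes exactly the form \eqref{E-forwad-leray} with $\ff$ replaced by $\widetilde{\ff}$. The plan is then to verify the hypotheses of Theorem \ref{thm-leray} for this effective force and directly read off the conclusions of Theorem \ref{thm-leray-per}.

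\textbf{Structure of $\uu_0$.} Since $\uuu_0(\xx)=\sigma(\xx)/|\xx|^{2\alpha-1}$ is $(-(2\alpha-1))$-homogeneous with $C^1$ angular profile, the heat extension $\uu_0 = K_1 \ast \uuu_0$ enjoys the pointwise bounds
$$|\nabla^k \uu_0(\xx)| \leq C_k \langle \xx\rangle^{-(2\alpha-1)-k},\qquad k = 0, 1, 2, \ldots,$$
together with matching Hölder estimates. I would derive these by splitting the convolution according to $|\yy| \lessgtr |\xx|/2$ and using the algebraic tail $K_1(\zz) \lesssim \langle \zz\rangle^{-(3+2\alpha)}$ of the fractional heat kernel. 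In particular $\uu_0$ acts as a smooth, rapidly decaying multiplier on all the Besov and weighted Sobolev spaces appearing below.

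\textbf{Bootstrap for regularity.} Starting from $\vv \in H^\alpha(\RR^3)$ supplied by Theorem \ref{thm-1}, the bounds above give that $\vv \in B^r_{p,p}(\RR^3)$ implies $\widetilde{\ff} \in B^r_{p,p}(\RR^3)$ (the cross term $\uu_0\otimes\vv$ is controlled via Bony's paraproduct since $\uu_0$ is a smooth decaying function). Applying Theorem \ref{thm-leray}(ii)(1) with parameter $s = r+1$ upgrades $\vv$ to $B^{r+1+2\alpha}_{p,p}$, a gain of $1 + 2\alpha$ per iteration. Finitely many iterations yield $\vv \in B^s_{p,p}(\RR^3)$ for every $s \geq 0$ and $p \in (2,+\infty)$, and the analogous bootstrap based on Theorem \ref{thm-leray}(ii)(2) produces the weighted Sobolev regularity $\vv\in H^{s}_{\langle\xx\rangle^{2\beta}}$.

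\textbf{Optimal pointwise decay.} Combining the weighted Sobolev estimate with Sobolev embedding yields a preliminary decay $|\vv(\xx)| \lesssim \langle \xx\rangle^{-\beta}$ for any $\beta \in (0,\alpha)$, which together with the decay of $\uu_0$ is enough to verify the pointwise hypothesis of the first part of Theorem \ref{thm-leray}(ii)(3); the slowest-decaying contribution $\uu_0 \otimes \uu_0 \sim \langle \xx\rangle^{-(4\alpha-2)}$ sets the limiting rate. This produces the log-loss bound $|\vv(\xx)| \lesssim \langle \xx\rangle^{-(4\alpha-1)}\log\langle \xx\rangle$. Feeding this improved decay back into the $\vv$-dependent pieces of $\widetilde{\ff}$ and exploiting the Hölder regularity of $\vv$ from the Besov embedding $B^s_{p,p}\hookrightarrow C^\gamma$, one verifies
$$\bigl\||\cdot|^{4\alpha-1+\gamma}\operatorname{div}\widetilde{\ff}\bigr\|_{\dot{C}^\gamma(\RR^3)} < \infty$$
for some $\gamma \in (0,1)$. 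The second part of Theorem \ref{thm-leray}(ii)(3) then removes the logarithmic factor and delivers \eqref{eq.decay-noloss}. The main technical obstacle is this last verification: one must control a Hölder seminorm of a polynomially weighted product involving $\vv$, so the bootstrap of the previous step has to be pushed high enough that the embeddings supply not merely pointwise decay but the requisite Hölder regularity of $\vv$ with the correct weight, and the cross terms $\uu_0\otimes\vv$ must be shown to decay strictly faster than $\uu_0\otimes\uu_0$ in this stronger norm.
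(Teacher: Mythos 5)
Your reformulation of $L_{\uu_0}(\vv)+\operatorname{div}\ff_0$ as $\operatorname{div}\widetilde{\ff}$ is correct, and the regularity bootstrap is structurally parallel to what the paper does in Theorems \ref{thm-Lp-U-G} and \ref{thm-Lp-V-H}. The gap appears at the decay step, and it is a real one. To apply the first part of Theorem \ref{thm-leray}(ii)(3) with $\mathbf{F}=\widetilde{\ff}$ you must verify $\sup_\xx|\xx|^{4\alpha-1}|\nabla\widetilde{\ff}(\xx)|<\infty$, which for the cross term $\uu_0\cdot\nabla\vv$ demands $|\nabla\vv(\xx)|\lesssim|\xx|^{-2\alpha}$. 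Your proposed preliminary decay $|\vv(\xx)|\lesssim\langle\xx\rangle^{-\beta}$, $\beta\in(0,\alpha)$, coming from the weighted Sobolev estimates with weight $\langle\xx\rangle^{2\beta}$ is strictly too weak: for $\alpha<1$ you have $\beta<\alpha<1<2\alpha$, so even after embedding the cross term $|\xx|^{4\alpha-1}|\uu_0\cdot\nabla\vv|\lesssim|\xx|^{2\alpha-\beta}$ blows up. The paper breaks this circle in a way your proposal skips entirely: Proposition \ref{Prop-optimal-decay-low} obtains the much stronger $|\vv|\lesssim\langle\xx\rangle^{-(4\alpha-2)}$ and $|\nabla\vv|\lesssim\langle\xx\rangle^{-(4\alpha-1)}$ directly from the integral representation \eqref{eq.fundamental-solution-l}, the annular decomposition and Lemma \ref{lem-exe-decay}, without needing any a priori decay of $\vv$ as input. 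Only with these bounds do the hypotheses of Proposition \ref{Prop-optimal-decay} (log-loss) become checkable.

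There is a second gap in the log-removal step. You propose to verify $\||\cdot|^{4\alpha-1+\gamma}\operatorname{div}\widetilde{\ff}\|_{\dot{C}^\gamma}<\infty$ for the \emph{full} force, including the $\vv$-dependent pieces. Controlling the H\"older seminorm of a weight-$|\xx|^{4\alpha-1+\gamma}$ multiple of $\uu_0\cdot\nabla\vv$ requires weighted H\"older bounds for $\nabla\vv$, which in practice requires decay of $\nabla^2\vv$ at a rate like $\langle\xx\rangle^{-(2\alpha+\gamma)}$. Neither the weighted Sobolev regularity (weight $\langle\xx\rangle^\beta$, $\beta<\alpha$, far short of $4\alpha-1+\gamma$) nor the pointwise decay estimates (which control only the sup, not the oscillation) supply this, and the unweighted embedding $B^s_{p,p}\hookrightarrow C^\gamma$ tells you nothing about the weighted seminorm. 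The paper avoids this entirely in Proposition \ref{prop-decay-begin-third}: it splits the force as $\mathbf{f}_1+\mathbf{f}_2$, imposes the H\"older condition \eqref{eq.con-cr} only on $\mathbf{f}_2=-\uu_0\otimes\uu_0$ (which it verifies directly from the explicit structure of $\uu_0$ via the paraproduct/Bernstein computation giving claim \eqref{eq.claim-2}), and treats the $\vv$-dependent $\mathbf{f}_1$ with the strictly improved pointwise bound $|\operatorname{div}\mathbf{f}_1|\lesssim|\xx|^{-4\alpha}$ only (see \eqref{eq-decay-f1}). That one extra power of $|\xx|^{-1}$ over the $\mathbf{f}_2$ rate is what beats the dyadic sum $\sum_{N_\xx\leq q<0}1\sim\log\langle\xx\rangle$ in ${\rm II}_{2,1}$, so no H\"older control of $\mathbf{f}_1$ is ever needed. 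To repair your argument you should adopt this split-force version and prove the H\"older bound only for $\uu_0\otimes\uu_0$, rather than for all of $\widetilde{\ff}$.
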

\begin{remark}
Compared with the force in  \eqref{E-forwad-leray},  the problem \eqref{E}  has the very special force $\operatorname{div}(\uu_{0}\otimes \uu_{0})$ with $\uu_0(\xx)=e^{-(-\Delta)^\alpha}\uuu_0(\xx)$.  This symmetric structure and the action of the heat semigroup \eqref{eq-edecay} can help us to obtain the optimal decay estimate \eqref{eq.decay-noloss} without logarithmic loss.
\end{remark}
 Since the forward self-similar solution $\uuu(\xx,t)$ takes the form
\[\uuu(\xx, t) = {{t}^{-\frac{2\alpha-1}{2\alpha}}}\uu\big(\xx/ {t}^{\frac{1}{2\alpha}}\big)={{t}^{-\frac{2\alpha-1}{2\alpha}}}\left( \vv +e^{\Delta}\uuu_0\right)\big(\xx/ {t}^{\frac{1}{2\alpha}}\big),\]
we immediately get by using Theorem \ref{thm-leray-per} that
\begin{corollary}\label{coro-critical}
 Let $\alpha\in[\frac56,1]$ and $\uuu_{0}(\xx)=\frac{\sigma(\xx)}{|\xx|^{2\alpha-1}} $ with $\sigma(\xx)=\sigma(\xx/|\xx|)\in C^{1,0}(\mathbb{S}^{2}).$ Assume that  the  $\uuu\in \mathrm{BC}_{\rm w}\big ([0,+\infty),\,L^{ \frac{3}{2\alpha-1},\infty }(\RR^3)\big)$ is the forward self-similar solution to Cauchy problem \eqref{NS}-\eqref{NS-i} established in \cite{LMZ19}. Then
\begin{itemize}
\item [(1)]  For $\forall p\in  [2,+\infty)$ and   $\forall s\in[0,+\infty)$, we have that $$\uuu(\xx,t)-e^{-t(-\Delta)^{\alpha}}\uuu_{0}(\xx)\in B^{s }_{p,p}(\RR^3)$$  and   $\forall\,k\in\NN,$
$$\sup_{|\alpha|=k}\big\|D^\alpha\left(\uuu(t)-e^{-t(-\Delta)^{\alpha}}\uuu_{0}\right)\big\|_{L^{\infty}(\RR^{3})}\leq Ct^{\frac{2\alpha+k-1}{2\alpha}},$$
which implies that the $\uuu(\xx,t)$ is smooth in $\RR^{3}\times (0,+\infty);$
\item [(2)]    we have the following pointwise estimates
\begin{equation}\label{E1.6-new}
|\uuu(\xx,t)|\leq\frac{C}{|\xx|^{2\alpha-1}+ {t}^{\frac{1}{2\alpha}}}\quad\text{and}\quad \left|\uuu(\xx,t)-e^{t\Delta}\uuu_{0}\right|\leq \frac{C }{|\xx|^{4\alpha-1}+t^{\frac{4\alpha-1}{2\alpha}}}
\end{equation}
for all $(\xx,t)\in \RR^{3}\times(0,+\infty)$.
\end{itemize}
 \end{corollary}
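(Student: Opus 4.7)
The plan is to read the entire corollary as a translation of Theorem \ref{thm-leray-per} through the self-similar ansatz
\[
\uuu(\xx, t) = t^{-\frac{2\alpha-1}{2\alpha}}\,\uu\bigl(\xx/t^{\frac{1}{2\alpha}}\bigr),\qquad \uu = \vv + \uu_0,\qquad \uu_0 := e^{-(-\Delta)^\alpha}\uuu_0.
\]
Because $\uuu_0$ is homogeneous of degree $-(2\alpha-1)$, a direct scaling computation gives the identity $e^{-t(-\Delta)^\alpha}\uuu_0(\xx) = t^{-\frac{2\alpha-1}{2\alpha}}\uu_0\bigl(\xx/t^{\frac{1}{2\alpha}}\bigr)$, so that
\[
\uuu(\xx,t)-e^{-t(-\Delta)^\alpha}\uuu_0(\xx) \;=\; t^{-\frac{2\alpha-1}{2\alpha}}\,\vv\bigl(\xx/t^{\frac{1}{2\alpha}}\bigr).
\]
Every claim of the corollary is then obtained from the corresponding property of $\vv$ supplied by Theorem \ref{thm-leray-per}, transported through this rescaling.

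For part (1), Theorem \ref{thm-leray-per} already provides $\vv\in B^{s}_{p,p}(\RR^3)$ for every $s\geq 0$ and every $p\in[2,\infty)$, so dilation-invariance of the Besov scale yields the stated Besov membership for $\uuu(\cdot,t)-e^{-t(-\Delta)^\alpha}\uuu_0$. For the $L^\infty$-bound on spatial derivatives of order $k$, I would pick $s$ large enough that the embedding $B^{s}_{p,p}(\RR^3)\hookrightarrow C^k(\RR^3)$ places each $D^\beta \vv$, $|\beta|=k$, in $L^\infty$, and then apply the chain rule
\[
D^\beta\!\left(t^{-\frac{2\alpha-1}{2\alpha}}\vv(\cdot/t^{\frac{1}{2\alpha}})\right)(\xx)=t^{-\frac{2\alpha-1+k}{2\alpha}}(D^\beta\vv)\bigl(\xx/t^{\frac{1}{2\alpha}}\bigr)
\]
to read off the claimed time factor; joint smoothness of $\uuu$ on $\RR^3\times(0,\infty)$ then follows from smoothness of $\vv$ together with the standard smoothing of $e^{-t(-\Delta)^\alpha}\uuu_0$ for $t>0$.

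For the first pointwise estimate of part (2) I would combine two inputs. The fractional-heat-kernel decay, together with the homogeneity and angular regularity of $\uuu_0$, yields, by splitting the convolution $K_1\ast\uuu_0$ into near/far regions, the bound $|\uu_0(\yy)|\leq C\langle\yy\rangle^{-(2\alpha-1)}$. Meanwhile Theorem \ref{thm-leray-per} provides the strictly faster $|\vv(\yy)|\leq C\langle\yy\rangle^{-(4\alpha-1)}$, since $4\alpha-1>2\alpha-1$ for $\alpha\in[\tfrac{5}{6},1]$. Hence $|\uu(\yy)|\leq C\langle\yy\rangle^{-(2\alpha-1)}$, and rescaling delivers
\[
|\uuu(\xx,t)|\leq C\,t^{-\frac{2\alpha-1}{2\alpha}}\bigl(1+|\xx|/t^{\frac{1}{2\alpha}}\bigr)^{-(2\alpha-1)}\;\leq\; \frac{C}{|\xx|^{2\alpha-1}+t^{\frac{1}{2\alpha}}},
\]
which is the first bound of \eqref{E1.6-new}.

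The sharp second pointwise bound of \eqref{E1.6-new} is where I expect the main obstacle: naively rescaling $|\vv(\yy)|\leq C\langle\yy\rangle^{-(4\alpha-1)}$ only yields $C\,t\,(t^{\frac{1}{2\alpha}}+|\xx|)^{-(4\alpha-1)}$, which is a whole factor of $t$ worse than the claim at large times, so scaling of $\vv$ alone cannot close the argument. To recover the sharp rate I would instead work directly from the integral representation \eqref{eq.fundamental-solution-l-intr} of $\vv=\uu-\uu_0$ in self-similar coordinates, combined with the dyadic heat-semigroup action \eqref{eq-edecay}. Concretely, performing a Littlewood-Paley decomposition of $\uu\otimes\uu$, bounding each dyadic piece via the first pointwise estimate just proved, and then applying \eqref{eq-edecay} annulus-by-annulus inside the integral in \eqref{eq.fundamental-solution-l-intr} should land exactly on the claimed no-loss decay for $\uuu-e^{-t(-\Delta)^\alpha}\uuu_0$, in the same spirit as the log-free improvement \eqref{eq.decay-noloss} of Theorem \ref{thm-leray-per}. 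The symmetric product structure of the quadratic nonlinearity (emphasized in the remark following Theorem \ref{thm-leray-per}) is what makes this sharpness available without a logarithmic loss.
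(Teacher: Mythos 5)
Your approach coincides with the paper's: Corollary~\ref{coro-critical} carries no separate proof beyond the one-line observation that $\uuu(\xx,t)=t^{-\frac{2\alpha-1}{2\alpha}}\uu(\xx/t^{\frac{1}{2\alpha}})$ together with the homogeneity identity $e^{-t(-\Delta)^\alpha}\uuu_0(\xx)=t^{-\frac{2\alpha-1}{2\alpha}}\uu_0(\xx/t^{\frac{1}{2\alpha}})$, so every assertion is read off from Theorem~\ref{thm-leray-per} by rescaling. Your treatment of part (1) and of the first bound of part (2) is correct and is the intended argument.

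The obstacle you flag in the second bound of part (2), however, is not real, and the workaround you sketch is misdirected. You correctly compute that rescaling $|\vv(\yy)|\le C\langle\yy\rangle^{-(4\alpha-1)}$ yields $|\uuu(\xx,t)-e^{-t(-\Delta)^\alpha}\uuu_0(\xx)|\le Ct\,(|\xx|+t^{\frac{1}{2\alpha}})^{-(4\alpha-1)}$, with an extra $t$ relative to the display in \eqref{E1.6-new}. That factor of $t$ cannot be eliminated by any argument: by self-similarity the difference at the origin is exactly $t^{-\frac{2\alpha-1}{2\alpha}}\vv(0)$, and since $\frac{4\alpha-1}{2\alpha}>\frac{2\alpha-1}{2\alpha}$, a bound of size $Ct^{-\frac{4\alpha-1}{2\alpha}}$ there would force $\vv(0)=0$ as $t\to\infty$, which is not imposed. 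So \eqref{E1.6-new} has a misprint (the numerator should be $T$ rather than a constant), and the naive rescaling is, far from a deficient first attempt, precisely what ``we immediately get by using Theorem~\ref{thm-leray-per}'' delivers, in the only scaling-consistent form. The detour you propose through \eqref{eq.fundamental-solution-l-intr} and \eqref{eq-edecay} is exactly the machinery used \emph{inside} the proof of Theorem~\ref{thm-leray-per} to obtain the log-free profile decay $|\vv|\lesssim\langle\cdot\rangle^{-(4\alpha-1)}$; that budget is already spent, and running it again cannot alter the degree of homogeneity. A similar scaling check shows the first display of \eqref{E1.6-new} is homogeneous of the right degree only when $\alpha=1$: for $\alpha<1$ the $t^{\frac{1}{2\alpha}}$ in the denominator should read $t^{\frac{2\alpha-1}{2\alpha}}$. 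Trust your rescaling computation and record the corrected form $\frac{Ct}{|\xx|^{4\alpha-1}+t^{\frac{4\alpha-1}{2\alpha}}}$ rather than trying to prove a bound that cannot hold.
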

 \begin{remark}
 \begin{itemize}
   \item[(i)] For the subcritical case $\alpha\in(5/6,1]$, we here establish the global regularity $$\uuu(\xx,t)-e^{-t(-\Delta)^{\alpha}}\uuu_{0}(\xx)\in B^{s}_{p,p}(\RR^3)$$ for each $s\geq0$ and $p\geq2.$ This improve the known global regularity $$\uuu(\xx,t)-e^{-t(-\Delta)^{\alpha}}\uuu_{0}(\xx)\in H^{1+\alpha} (\RR^3)$$ shown in \cite{LMZ19,LMZ21}.
   \item[(ii)]  By developing the $L^p$-type elliptic regularity and ``compactness lemma", we firstly prove global regularity and decay behaviour of the forward self-similar solution for the critical problem where $\alpha=\frac56$.
   \item [(iii)]Let us point out that we establish the optimal  decay estimate \eqref{eq.decay-noloss} under the natural   assumption $\sigma(\xx)\in C^{1,0}(\mathbb{S}^{2})$, and we drop   the additional condition  in the existing literature, for example, that $\sigma(\xx)\in C^{1,\gamma}(\mathbb{S}^{2})$ with $\gamma>0$  in \cite{ZP22}.
 \end{itemize}
 \end{remark}
 \noindent\textbf{Notations.}
We first agree that   $\langle \xx\rangle =(e+|\xx|^2)^{\frac12}$, $ {B}_r(\xx_0)=\{\xx\in\RR^3|\,|\xx-\xx_0|<r_2\}$ and
\[\mathscr{C}(\xx_0;r_1,r_2)=\{\xx\in\RR^3|\,r_1<|\xx-\xx_0|<r_2\}.\]
For $s\in\RR,$ we denote $s=[s]+\{s\}$, where $[s] $ is the integer   part and $\{s\}$ is the decimal part. Let $\mathscr{D}\left(\mathbf{R}^{n}\right),\,\mathscr{S}\left(\mathbf{R}^{n}\right)$  and $\mathscr{S}^\prime\left(\mathbf{R}^{n}\right)$ be the space of smooth functions with compact support, the Schwartz space of rapidly decreasing smooth functions and the space of tempered distributions on $\RR^n$ respectively.

The rest of our paper is organized as follows: In the next section, we give some preliminary
lemmas and $B^{2\alpha}_{p,p}(\RR^3)$-regularity of linearization to  the Leray problem which is the key point in our proof. In Section \ref{HighR}, we are devoted to proving the global regularity of weak solutions to the Leray problem.
 Finally, the optimal decay estimates of  weak solutions to Leray problem and the forward self-similar solutions to the Navier-Stokes equations are established in Section \ref{DECAY}.

%%%%%%%%%%%%%%%%%%%%%%%%%%%%%%%%%%%%%%%%%%%%%%%%%%%%%%%%%%%%%%%%%%%%%%%%%%%%%%%%%%%%%%%%%%%%%%%%%%%
\section{Preliminary and $L^p$-theory  of linearization}\label{PRE}
%%%%%%%%%%%%%%%%%%%%%%%%%%%%%%%%%%%%%%%%%%%%%%%%%%%%%%%%%%%%%%%%%%%%%%%%%%%%%%%%%%%%%%%%%%%%%%
\setcounter{section}{2}\setcounter{equation}{0}
%%%%%%%%%%%%%%%%%%%%%%%%%%%%%%%%%%%%%%%%%%%%%%%%%%%%%%%%%%%%%%%%%%%%%%%%%%%%%%%%%%%%%%%%%%%%%%
\subsection{Littlewood-Paley theory}
In this subsection, we will review some analysis statement including the Lttilewood-Paley theory, see for example \cite{BCD,BL76,CWZ-book}. First of all, let us   recall Fourier transform that for each $f\in \mathscr{S}^{\prime}(\mathbf{R}^n)$,
$$
\hat{f}(\xi):=\mathscr{F}(f)(\xi)=\frac{1}{(2 \pi)^{n / 2}} \int_{\mathbf{R}^{n}} e^{-i \xx \cdot \xi} f(\xx)\,\mathrm{d} \xx, \quad\forall\xi \in \mathbf{R}^{n},
$$
and its inverse Fourier transform
$$
\check{f}(\xx):=\mathscr{F}^{-1}(f)(\xx)=\frac{1}{(2 \pi)^{n / 2}} \int_{\mathbf{R}^{n}} e^{i \xx \cdot \xi} f(\xi) \,\mathrm{d} \xi,\quad \forall \xx \in \mathbf{R}^{n}.
$$
 There exist two smooth radial functions   $\hat{h}(\xi) \in \mathscr{D}\big(B_{\frac{4}{3}}(0)\big) $ and $ \hat{\varphi}(\xi) \in \mathscr{D}(\mathscr{C})$  with $\mathscr{C}=\mathscr{C}(0;\frac34,\frac{8}{3})$ such that $0 \leqslant \hat{h}(\xi),\, \hat{\varphi}(\xi) \leqslant 1$ and
\begin{equation*}\label{eq-1-1}
\hat{h}(\xi)+\sum_{j \geqslant 0} \hat{\varphi}\left(2^{-j} \xi\right)=1, \quad \forall \xi \in \mathbf{R}^{n},
\end{equation*}
\begin{equation*}\label{eq-1-2}
 \sum_{j \in \mathbb{Z}} \hat{\varphi}\left(2^{-j} \xi\right)=1, \quad \forall \xi \in \mathbf{R}^{n} \backslash\{0\}.
\end{equation*}
Under the framework of dyadic partition of unity in  frequency-space domain, we introduce some notations concerning the Littlewood-Paley theory.
\begin{itemize}
  \item  Homogeneous decomposition: the dyadic blocks
  \[\dot{\Delta}_{j} f(\xx)= 2^{j n} \int_{\mathbf{R}^{n}} \varphi\left( 2^j y\right) f(\xx-\yy) \,\mathrm{d} \yy, \quad \widetilde{\dot{\Delta}}_j=\sum_{i=-1}^1\dot{\Delta}_{j+i}, \quad\forall j \in \mathbb{Z} \]
  and the low-frequency cut-off operators
   \[\dot{S}_{j} f(\xx) =2^{j n} \int_{\mathbf{R}^{n}} h\left(2^{j} \yy\right) f(\xx-\yy) \,\mathrm{d} \yy, \quad \forall j \in \mathbb{Z}.\]
  \item  Inhomogeneous decomposition:
  \[\Delta_{-1} f(\xx)=\dot{S}_{0} f(\xx)=  \int_{\mathbf{R}^{n}} h\left(  \yy\right) f(\xx-\yy) \,\mathrm{d} \yy, \]
 \[\Delta_{j} f(\xx)=\dot{\Delta}_{j} f(\xx)\quad\text{and}\quad S_{j} f(\xx)=\dot{S}_{j}f(\xx), \quad\forall j \geqslant 0.\]

\end{itemize}
Let us remark that for each $f \in \mathscr{S}^{\prime}\left(\mathbf{R}^{n}\right), $
$$
S_{j} f(\xx)=\sum_{j^{\prime} \leqslant j-1} \Delta_{j^{\prime}} f(\xx), \quad \forall j \in \mathbb{Z},
$$
but for each polynomial $f \in \mathscr{S}^{\prime}\left(\mathbf{R}^{n}\right) $ whose compact support of $\hat{f}(\xi)$ in $\{0\}$,
$$
  \dot{S}_{j} f(\xx) \neq \sum_{j^{\prime} \leqslant j-1} \dot{\Delta}_{j^{\prime}} f(\xx), \quad \forall j \in \mathbb{Z}.
$$
In order to remedy this defect, we introduce a new space of tempered distributions  $$\mathscr{S}_{h}^{\prime}\left(\mathbf{R}^{n}\right):=\mathscr{S}^{\prime}\left(\mathbf{R}^{n}\right)\backslash\mathscr{P},$$ which entails  that for each $f \in \mathscr{S}_h^{\prime}\left(\mathbf{R}^{n}\right) $, we have
$$
\dot{S}_{j} f(\xx)=\sum_{j^{\prime} \leqslant j-1} \dot{\Delta}_{j^{\prime}} f(\xx), \quad \forall j \in \mathbb{Z}.
$$
Therefore, we can write the following   Littlewood-Paley decompositions:
$$
I_d=\sum_{j\geq-1} \Delta_{j} \quad \text { and } \quad I_d=\sum_{j\in\ZZ} \dot{\Delta}_{j},
$$
which make sense in $\mathscr{S}^{\prime}\left(\mathbf{R}^{n}\right)$ and $\mathscr{S}_h^{\prime}\left(\mathbf{R}^{n}\right)$, respectively.  Also, we note that the above dyadic block and low-frequency cut-off operator map $L^{p}\left(\mathbf{R}^{n}\right)$ into $L^{p}\left(\mathbf{R}^{n}\right)$ with norms independent of $j$ and $p$, which is contained in the following Bernstein lemma.
\begin{lemma}[\cite{BCD,CWZ-book}, Bernstein inequality]\label{lem-bern} Let $1 \leq p \leq q\leq\infty$ and $ f \in L^{p}\left(\mathbf{R}^{n}\right)$.
Then there exists a constant $C$ such that for all $ j\in\ZZ $ and $ k \in \mathbb{N}$,
\begin{equation*}\label{eq-bern-1}
\sup _{|\alpha|=k}\big\|D^{\alpha} \dot{S}_{j} f\big\|_{L^{q}\left(\mathbf{R}^{n}\right)} \leq C^{k} 2^{j\left(k+n\left(\frac{1}{p}-\frac{1}{q}\right)\right)}\big\|\dot{S}_{j} f\big\|_{L^{p}\left(\mathbf{R}^{n}\right)},
\end{equation*}
\begin{equation*}\label{eq-bern-2}
C^{-k} 2^{ jk}\big\|\dot{\Delta}_{j} f\big\|_{L^{q}\left(\mathbf{R}^{n}\right)} \leq \sup _{ |\alpha|=k}\big\|D^{\alpha} \dot{\Delta}_{j} f\big\|_{L^{q}\left(\mathbf{R}^{n}\right)} \leq C^{k} 2^{j\left(k+n\left(\frac{1}{p}-\frac{1}{q}\right)\right)}\big\|\dot{\Delta}_{j} f\big\|_{L^{p}\left(\mathbf{R}^{n}\right)}.
\end{equation*}
\end{lemma}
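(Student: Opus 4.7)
The plan is to reduce to the model case $j=0$ by dilation, and then exploit the Fourier localization of $\dot{S}_j f$ and $\dot{\Delta}_j f$ to write derivatives as convolutions against fixed smooth kernels, to which Young's inequality applies.

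First I would note that the spectra of $\dot{S}_j f$ and $\dot{\Delta}_j f$ are contained in $B_{\frac{4}{3}\cdot 2^j}(0)$ and in the annulus $\mathscr{C}\big(0;\tfrac{3}{4}\cdot 2^j,\tfrac{8}{3}\cdot 2^j\big)$, respectively. Pick auxiliary cut-offs $\widetilde{\chi}\in\mathscr{D}(B_2(0))$ equal to one on $\operatorname{supp}\hat h$ and $\psi\in\mathscr{D}(\mathscr{C})$ equal to one on $\operatorname{supp}\hat\varphi$. Then
$$D^\alpha\dot{S}_j f=K_{j,\alpha}\ast\dot{S}_j f,\qquad D^\alpha\dot{\Delta}_j f=\widetilde{K}_{j,\alpha}\ast\dot{\Delta}_j f,$$
where $K_{j,\alpha}:=\mathscr{F}^{-1}\bigl((i\xi)^\alpha\widetilde{\chi}(2^{-j}\xi)\bigr)$ and $\widetilde{K}_{j,\alpha}$ is defined analogously with $\psi$ in place of $\widetilde{\chi}$. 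The change of variable $\xi=2^j\eta$ yields the scaling identity $K_{j,\alpha}(x)=2^{j(n+|\alpha|)}K_{0,\alpha}(2^j x)$, whence for every $r\in[1,\infty]$,
$$\|K_{j,\alpha}\|_{L^r(\mathbf{R}^n)}=2^{j(|\alpha|+n(1-1/r))}\|K_{0,\alpha}\|_{L^r(\mathbf{R}^n)}.$$
Since $K_{0,\alpha}\in\mathscr{S}(\mathbf{R}^n)$ with $\|K_{0,\alpha}\|_{L^r}\leq C^{|\alpha|}$ (a Faà di Bruno count of derivatives of $\xi^\alpha\widetilde{\chi}(\xi)$ over a fixed compact set delivers geometric, not factorial, growth in $|\alpha|$), Young's inequality with $1+1/q=1/p+1/r$ produces the upper bounds in both displayed inequalities.

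For the lower bound $C^{-k}2^{jk}\|\dot{\Delta}_j f\|_{L^q}\leq\sup_{|\alpha|=k}\|D^\alpha\dot{\Delta}_j f\|_{L^q}$, I would exploit that the spectrum of $\dot{\Delta}_j f$ avoids the origin. Using the identity $|\xi|^{2k}=\sum_{|\beta|=k}\binom{k}{\beta}\xi^{2\beta}$, one realises $\dot{\Delta}_j f$ as a finite linear combination of convolutions of $D^{2\beta}\dot{\Delta}_j f$ with the kernel $\mathscr{F}^{-1}\bigl(|\xi|^{-2k}\psi(2^{-j}\xi)\bigr)$; since $|\xi|^{-2k}\psi(\xi)\in\mathscr{D}(\mathscr{C})$, the scaling argument above shows that this kernel has $L^1$-norm bounded by $C^k\,2^{-2jk}$. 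Writing $D^{2\beta}=D^\beta D^\beta$ and applying $L^1\ast L^q\hookrightarrow L^q$ then yields the lower bound with a constant of the form $C^k$.

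The main obstacle I foresee is tracking the dependence on $k$ carefully enough to recover the geometric factor $C^k$ rather than a factorial; this reduces to a combinatorial bound on the derivatives of $\xi^\alpha\widetilde{\chi}(\xi)$ and of $|\xi|^{-2k}\psi(\xi)$ on their (fixed) supports, and demands nothing beyond Faà di Bruno paired with the Schwartz decay of the inverse Fourier transforms. Everything else is a routine verification of the scaling and Young's inequality.
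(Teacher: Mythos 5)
Your argument is correct and reproduces the standard proof of Bernstein's inequality from the cited references \cite{BCD,CWZ-book}; the paper itself does not reprove the lemma but gestures at exactly this route in the following remark, namely Young's inequality applied to convolution with a rescaled Fourier multiplier supported near (resp.\ on an annulus away from) the origin, together with the dilation identity to reduce to $j=0$. The only small slip is terminological: the bound $\|K_{0,\alpha}\|_{L^r}\leq C^{|\alpha|}$ comes from the Leibniz rule applied to the product $(i\xi)^\alpha\widetilde\chi(\xi)$ (there is no composition to differentiate, so Fa\`a di Bruno is not what is invoked), but your conclusion that a fixed number of $\xi$-derivatives yields only geometric, not factorial, growth in $|\alpha|$ is sound, and the same remark applies to $|\xi|^{-2k}\psi(\xi)$ in the lower bound.
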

\begin{remark}
The proof of the inequality \eqref{eq-bern-1} and  the right inequality of \eqref{eq-bern-2} heavily relies on the Young inequality, that is, $\forall\, p,q,r\in[1,\infty],$
\begin{equation}\label{eq.Young}
\|f \ast g\|_{L^{p }\left(\mathbf{R}^{n}\right)} \leq C\|f\|_{L^{q }\left(\mathbf{R}^{n}\right)}\|g\|_{L^{r}\left(\mathbf{R}^{n}\right)},
\end{equation}
where $1+\frac{1}{p}=\frac{1}{q}+\frac{1}{r}.$ This inequality could also be applied in the Lorentz spaces firstly developed by R. O'Neil in \cite{Neil}, and a more general Young inequality in the Lorentz spaces such as in \cite{Grafakos}, whose effect is obviously  better than the classical Young inequality \eqref{eq.Young}.
\begin{lemma}[\cite{Grafakos}, Theorem 1.2.13]
 Let $1\leq\ell \leq s \leq \infty,\, 1 \leq r<\infty$ and $1<p, q<\infty$ with
$$
1+\frac{1}{p}=\frac{1}{q}+\frac{1}{r},
$$
If $f \in L^{q, \ell}\left(\mathbf{R}^{n}\right)$ and $g \in L^{r}\left(\mathbf{R}^{n}\right)$,
then $f * g \in L^{p, s}\left(\mathbf{R}^{n}\right)$, and there exists a constant $C$ depending only on $r, q, s$ and $\ell$ such that
$$
\|f \ast g\|_{L^{p, s}\left(\mathbf{R}^{n}\right)} \leq C\|f\|_{L^{q, \ell}\left(\mathbf{R}^{n}\right)}\|g\|_{L^{r}\left(\mathbf{R}^{n}\right)}.
$$
\end{lemma}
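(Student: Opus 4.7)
The plan is to obtain the Lorentz-space Young inequality via real interpolation applied to the sublinear convolution operator $T_g: f \mapsto f \ast g$ with $g \in L^r(\RR^n)$ fixed. The departure point is the endpoint \emph{weak-type Young estimate}
\[
\|f\ast g\|_{L^{p,\infty}(\RR^n)} \leq C \|f\|_{L^{q,\infty}(\RR^n)} \|g\|_{L^{r}(\RR^n)},
\]
valid whenever $1 < q, r, p < \infty$ satisfy $1 + 1/p = 1/q + 1/r$. I would derive this directly: decompose $f = f_1 + f_2$ along a level set at height $\lambda^{1/q}\|f\|_{L^{q,\infty}}^{-1/q}$ (with $\lambda$ matching the target level of $T_g f$), estimate $\|f_1\|_{L^1}$ and $\|f_2\|_{L^\infty}$ via the distribution function of $f$, then use classical Young $L^1 \ast L^r \to L^r$ and $L^\infty \ast L^r \to L^\infty$ together with Chebyshev's inequality. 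This is essentially O'Neil's argument and is the workhorse estimate.

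Next, I would run a Marcinkiewicz-type real-interpolation argument on $T_g$. Pick $q_0 < q < q_1$ in $(1,\infty)$ and set $p_i$ via $1 + 1/p_i = 1/q_i + 1/r$; the endpoint estimate gives
\[
T_g: L^{q_i,\infty}(\RR^n) \longrightarrow L^{p_i,\infty}(\RR^n), \qquad i = 0,1,
\]
with norm bounded by $C_i\|g\|_{L^r}$. Real interpolation with parameter $\theta \in (0,1)$ chosen so that $1/q = (1-\theta)/q_0 + \theta/q_1$, and hence automatically $1/p = (1-\theta)/p_0 + \theta/p_1$, then yields boundedness
\[
T_g : (L^{q_0,\infty}, L^{q_1,\infty})_{\theta,\ell} \longrightarrow (L^{p_0,\infty}, L^{p_1,\infty})_{\theta,\ell}
\]
with constant $C(q_0,q_1,\ell)\|g\|_{L^r}$. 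Identifying the real-interpolation spaces between weak $L^{q_i}$ spaces as Lorentz spaces, $(L^{q_0,\infty}, L^{q_1,\infty})_{\theta,\ell} = L^{q,\ell}$ and $(L^{p_0,\infty}, L^{p_1,\infty})_{\theta,\ell} = L^{p,\ell}$, produces
\[
\|f\ast g\|_{L^{p,\ell}(\RR^n)} \leq C \|f\|_{L^{q,\ell}(\RR^n)} \|g\|_{L^{r}(\RR^n)}.
\]
To upgrade the right-hand Lorentz second index from $\ell$ to any $s \geq \ell$, I invoke the continuous embedding $L^{p,\ell}(\RR^n) \hookrightarrow L^{p,s}(\RR^n)$, which holds precisely in that range and which is a standard consequence of Hardy's inequality applied to the defining norms.

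The main obstacle I anticipate is not the endpoint estimate nor the embedding, but tracking the dependence of constants through the real-interpolation step so that the final bound depends only on $r, q, s, \ell$ (and not on the auxiliary choices $q_0, q_1$). This I would handle by making a canonical choice $q_0 = (1+q)/2$, $q_1 = 2q$ (say), reducing the constant to a function of $q$ alone; the $\ell$-dependence enters only through the $K$-functional norm used in real interpolation, and the $s$-dependence only through the Hardy-type embedding. As an alternative route, one can bypass abstract interpolation entirely and use O'Neil's pointwise bound $(f\ast g)^{**}(t) \leq t f^{**}(t) g^{**}(t) + \int_t^\infty f^*(u) g^*(u)\,du$, inserting it into the Lorentz norm $\|\cdot\|_{L^{p,s}}$ and applying two Hardy inequalities (one on each term, with exponents dictated by the scaling relation $1 + 1/p = 1/q + 1/r$); this furnishes a more direct proof at the cost of a slightly heavier rearrangement computation.
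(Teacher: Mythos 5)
The paper does not prove this statement; it is quoted verbatim as a reference to Grafakos's book, so there is no in-paper proof to compare against. Your proposal is nevertheless a valid proof of the Lorentz-space Young inequality, and it is worth noting how it relates to the standard argument. Grafakos's proof (the cited Theorem 1.2.13) proceeds by what you call the ``alternative route'': one establishes O'Neil's rearrangement bound
\[
(f\ast g)^{**}(t)\ \le\ t\, f^{**}(t)\, g^{**}(t)\ +\ \int_t^\infty f^*(u)\,g^*(u)\,du,
\]
then feeds it into the $L^{p,s}$ quasi-norm and applies two weighted Hardy inequalities, with the scaling $1+\tfrac1p=\tfrac1q+\tfrac1r$ making the exponents match. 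Your primary route---first proving the weak-type endpoint $L^{q,\infty}\ast L^{r}\to L^{p,\infty}$, then applying real interpolation of the linear operator $T_g$ between two such endpoints to land in $L^{q,\ell}\to L^{p,\ell}$, and finally using the monotonicity $L^{p,\ell}\hookrightarrow L^{p,s}$ for $\ell\le s$---is a genuinely different and equally legitimate path. The interpolation route cleanly isolates where each parameter enters ($\ell$ through the $K$-method, $s$ through the embedding) at the price of invoking the Lorentz-space reiteration theorem $(L^{q_0,\infty},L^{q_1,\infty})_{\theta,\ell}=L^{q,\ell}$; the O'Neil route is more self-contained and delivers the full range of second indices in one computation but requires the rearrangement inequality as a separate lemma. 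One small caveat: the explicit cutoff height you wrote for the level-set decomposition in the endpoint estimate should also carry a factor of $\|g\|_{L^r}$ raised to an appropriate power so that the $L^\infty$ piece is genuinely below $\lambda/2$, but this is a routine normalization and does not affect the soundness of the argument.
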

Applying the above generalized Young inequality instead of \eqref{eq.Young} in the proof of the Bernstein inequality, we would have the generalized Bernstein inequality: Assume $1<p,q<\infty$ and $p<q$, then
\begin{equation} \label{eq.bern-g-infty}
\sup _{|\alpha|=k}\big\|D^{\alpha}\dot{\Delta}_j f\big\|_{L^{\infty}\left(\mathbf{R}^{n}\right)} \leq C 2^{j(k+\frac{n}{p})}\|\dot{\Delta}_jf\|_{L^{p, \infty}\left(\mathbf{R}^{n}\right)},
\end{equation}
\begin{equation} \label{eq.bern-g-non-infty}
\sup _{|\alpha|=k}\big\|D^{\alpha}\dot{\Delta}_j f\big\|_{L^{q, 1}\left(\mathbf{R}^{n}\right)} \leq C 2^{j\left(k+n\left(\frac{1}{p}-\frac{1}{q}\right)\right)}\|\dot{\Delta}_jf\|_{L^{p, \infty}\left(\mathbf{R}^{n}\right)},
\end{equation}
so does the low-frequency cut-off operator $\dot{S}_j.$

Next, we define the homogeneous Besov spaces in terms of the Bernstein inequality.
\end{remark}
\begin{definition} [Homogeneous  Besov space] Assume $s \in \mathbf{R}, 1 \leqslant p, r \leqslant \infty$, one defines
$$
\dot{B}_{p, r}^{s}(\RR^n)=\left\{f \in \mathscr{S}_{h}^{\prime}\left(\mathbf{R}^{n}\right)\, \big|\,\|f\|_{\dot{B}_{p, r}^{s}(\RR^n)}<\infty\right\},
$$
where
\[\|f\|_{\dot{B}_{p, r}^{s}\left(\mathbf{R}^{n}\right)} := \left\|\left(2^{j s}\big\|\dot{\Delta}_{j}f\big\|_{p}\right)_{j \in \mathbb{Z}}\right\|_{\ell^{r}(\ZZ)}.\]
\end{definition}
Similar to the standard function spaces (e.g., Sobolev spaces $H^s(\RR^n)$ or $L^p(\RR^n)$ spaces), the homogeneous Besov spaces have  some properties illustrated by the following proposition.
\begin{proposition}
\begin{itemize}
  \item[(i)] For each $p \geqslant 1$,   $ \dot{B}_{p, r}^{s}(\RR^n) $ with $s<\frac{n}{p}$ or  $\dot{B}_{p, 1}^{ {n}/{p}}(\RR^n)$ is   Banach space.
  \item[(ii)] Inclusion: For each  $0\leq r_{1} \leqslant r_{2}\leq\infty$, \[\|f\|_{\dot{B}_{p, r_2}^{s}(\RR^n)} \leqslant C\|f\|_{\dot{B}_{p, r_{1}}^{s}(\RR^n)}.\]
  \item [(iii)] Embedding: For each $1\leq p_{1} \leqslant p_{2}\leq\infty,$
   $$
\|f\|_{\dot{B}_{p_{2}, r}^{s}(\RR^n)} \leqslant C\|f\|_{ \dot{B}_{p_{1}, r}^{s+n\left(\frac{1}{p_{1}}-\frac{1}{p_{2}}\right)}(\RR^n)}.
$$
\item [(iv)]  The  interpolation inequality: If $s_{2}\geq s_{1}$, there exists $\theta\in[0,1]$ such that
$$
\|f\|_{\dot{B}_{p, r}^{\theta s_{1}+(1-\theta) s_{2}}(\RR^n)} \leqslant C\|f\|_{\dot{B}_{p, r}^{s_{1}}(\RR^n)}^{\theta}\|f\|_{\dot{B}_{p, r}^{s_{2}}(\RR^n)  \quad}^{1-\theta}.
$$
\item[(v)] The sharp interpolation inequality: If $s_{2}> s_{1}$, there exists $\theta\in(0,1)$ such that
$$
\|f\|_{\dot{B}_{p, 1}^{\theta s_{1}+(1-\theta) s_{2}}(\RR^n)} \leqslant \frac{C}{s_{2}-s_{1}}\left(\frac{1}{\theta}+\frac{1}{1- {\theta}}\right)\|f\|_{\dot{B}_{p, \infty}^{s_{1}}(\RR^n)}^{\theta}\|f\|_{\dot{B}_{p, \infty}^{s_{2}}(\RR^n)}^{1-\theta}.
$$
\item [(vi)]For $s\in\RR,$ $\dot{H}^{s}(\RR^n)=\dot{B}_{2,2}^{s}(\RR^n),$  and their norms are equivalent
$$
\frac{1}{C^{|s|+1}}\|f\|_{\dot{B}_{2,2}^{s}(\RR^n)} \leqslant\|f\|_{\dot{H}^{s}(\RR^n)} \leqslant C^{|s|+1}\|f\|_{\dot{B}_{2,2}^{s}(\RR^n)}.
$$
\end{itemize}
\end{proposition}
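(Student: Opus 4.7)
The plan is to verify each of the six properties by combining the dyadic Littlewood--Paley decomposition with the Bernstein inequality in Lemma~\ref{lem-bern}, together with elementary $\ell^r$ inequalities (monotonicity and Hölder) and Plancherel's theorem. Items (ii)--(iv) and (vi) are routine; only (i) and (v) require genuine care.

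For (i) the subtlety lies in the quotient $\mathscr{S}'_h=\mathscr{S}'/\mathscr{P}$. I would first check that whenever $\|f\|_{\dot{B}^s_{p,r}}<\infty$, the partial sums $\sum_{|j|<N}\dot{\Delta}_j f$ converge in $\mathscr{S}'_h$ as $N\to\infty$. The high-frequency half is clear since the $L^p$-norms of the blocks are summable after pairing with rapidly decreasing test functions. The low-frequency half is handled by the Bernstein embedding $\|\dot{\Delta}_j f\|_{L^\infty}\leq C\,2^{jn/p}\|\dot{\Delta}_j f\|_{L^p}$, which yields
\[
\sum_{j\leq 0}\|\dot{\Delta}_j f\|_{L^\infty}\leq C\sum_{j\leq 0} 2^{j(n/p-s)}\bigl(2^{js}\|\dot{\Delta}_j f\|_{L^p}\bigr),
\]
a geometric-type sum which is finite precisely when $s<n/p$, or when $s=n/p$ and $r=1$ (using Hölder in $\ell^1\times\ell^\infty$). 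Completeness then follows from a standard argument: blockwise Cauchy sequences in $L^p$ produce limit blocks $g_j$, Fatou's lemma in $\ell^r$ controls the Besov norm of $\sum_j g_j$, and convergence in $\mathscr{S}'_h$ is recovered using the same low-frequency estimate.

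For (ii), apply the inclusion $\ell^{r_1}(\ZZ)\hookrightarrow\ell^{r_2}(\ZZ)$ to the sequence $(2^{js}\|\dot{\Delta}_j f\|_{L^p})_{j\in\ZZ}$. For (iii), Bernstein gives $\|\dot{\Delta}_j f\|_{L^{p_2}}\leq C\,2^{jn(1/p_1-1/p_2)}\|\dot{\Delta}_j f\|_{L^{p_1}}$; multiplying by $2^{js}$ and taking the $\ell^r$ norm yields the embedding. For (iv), factor
\[
2^{j(\theta s_1+(1-\theta)s_2)}\|\dot{\Delta}_j f\|_{L^p}=\bigl(2^{js_1}\|\dot{\Delta}_j f\|_{L^p}\bigr)^{\theta}\bigl(2^{js_2}\|\dot{\Delta}_j f\|_{L^p}\bigr)^{1-\theta}
\]
and apply Hölder in $\ell^r$ with conjugate exponents $1/\theta$ and $1/(1-\theta)$. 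Finally (vi) is Plancherel combined with the bounded overlap of the supports of $\hat\varphi(2^{-j}\cdot)$ and the relation $|\xi|\sim 2^j$ there, so that $\sum_j 2^{2js}|\hat\varphi(2^{-j}\xi)|^2\sim |\xi|^{2s}$ pointwise, and the $L^2$ identity follows by integrating against $|\hat f(\xi)|^2$.

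The hard part is (v), where the precise constant $C(s_2-s_1)^{-1}\bigl(\theta^{-1}+(1-\theta)^{-1}\bigr)$ must be recovered. The strategy is to split $\sum_j 2^{j\sigma}\|\dot{\Delta}_j f\|_{L^p}$ with $\sigma:=\theta s_1+(1-\theta)s_2\in(s_1,s_2)$ at an integer $N$ to be chosen. Bound the low-frequency tail by
\[
\sum_{j\leq N}2^{j\sigma}\|\dot{\Delta}_j f\|_{L^p}\leq \|f\|_{\dot{B}^{s_1}_{p,\infty}}\sum_{j\leq N}2^{j(\sigma-s_1)}\leq \frac{C\,2^{N(\sigma-s_1)}}{\sigma-s_1}\|f\|_{\dot{B}^{s_1}_{p,\infty}},
\]
and the high-frequency tail analogously by $C(s_2-\sigma)^{-1}2^{N(\sigma-s_2)}\|f\|_{\dot{B}^{s_2}_{p,\infty}}$. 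Choosing $N$ so that the two geometric factors balance, i.e.\ $2^{N(s_2-s_1)}\sim \|f\|_{\dot{B}^{s_2}_{p,\infty}}/\|f\|_{\dot{B}^{s_1}_{p,\infty}}$, reproduces the desired interpolation inequality, once one observes the arithmetic identity $(\sigma-s_1)^{-1}+(s_2-\sigma)^{-1}=(s_2-s_1)^{-1}\bigl(\theta^{-1}+(1-\theta)^{-1}\bigr)$. Aside from this careful bookkeeping of constants and the polynomial-quotient care in (i), no tools beyond Bernstein and Plancherel are required.
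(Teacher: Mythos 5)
The paper does not prove this proposition: it is stated in Section~2 as a review of standard facts about homogeneous Besov spaces, with references to \cite{BCD,BL76,CWZ-book}, so there is no ``paper proof'' to compare against. Your sketch is the standard textbook argument (essentially the one in Bahouri--Chemin--Danchin, Chapter~2) and it is correct in all six items: (ii)--(iv) are the routine $\ell^r$-inclusion, Bernstein, and log-convexity arguments you describe; (vi) is Plancherel plus bounded overlap; and you correctly isolate the two nontrivial points, namely the convergence of the low-frequency tail in (i) and the constant-tracking in (v). One small remark on (v): the inequality $\sum_{j<N}2^{ja}\leq 2^{Na}/(a\log 2)$ you need holds for \emph{all} $a>0$, because $2^{a}-1\geq a\log 2$ by convexity, so no largeness assumption on $s_2-s_1$ is required to obtain the claimed constant; and the shift from $\sum_{j<N}$ to $\sum_{j\le N}$ is absorbed when $N$ is chosen. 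In (i) your completeness step is a legitimate sketch, but to be fully rigorous one must verify that the candidate limit $\sum_j g_j$ actually lies in $\mathscr{S}'_h$ (i.e.\ that $\dot{S}_j$ applied to it tends to zero in $L^\infty$), which is exactly what the low-frequency estimate $\sum_{j\le 0}\|\dot\Delta_j f\|_{L^\infty}\lesssim \sum_{j\le 0}2^{j(n/p-s)}\bigl(2^{js}\|\dot\Delta_j f\|_{L^p}\bigr)$ provides under the hypothesis $s<n/p$ (or $s=n/p$, $r=1$).
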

Next, we will give the definition of the nonhomogeneous Besov spaces, which enjoys most properties which have been proven   for the homogeneous Besov spaces.
\begin{definition}[Inhomogeneous Besov space]
Let $s \in \mathbf{R}, \,p, \,r \geqslant 1$, the inhomogeneous Besov space $B_{p, r}^{s}\left(\mathbf{R}^{n}\right)$ defined as follows
$$
B_{p, r}^{s}\left(\mathbf{R}^{n}\right)=\left\{f(\xx) \mid f \in \mathscr{S}^{\prime}\left(\mathbf{R}^{n}\right) \text{ satisfies }\|f\|_{B_{p, r}^{s}\left(\mathbf{R}^{n}\right)} <\infty\right\},
$$
where
\[\|f\|_{B_{p, r}^{s}\left(\mathbf{R}^{n}\right)} := \left\|\left(2^{j s}\left\|\Delta_{j}f\right\|_{p}\right)_{j \geq-1  }\right\|_{\ell^{r}(\mathbb{N}\cup\{-1\})}.\]
\end{definition}
Now we can   state some  useful properties of the nonhomogeneous Besov spaces.
\begin{proposition}
Assume $s \in \mathbf{R}, 1 \leqslant p, r \leqslant \infty$,  then there hold
\begin{enumerate}
   \item [\rm(i)] $ (B_{p, r}^{s}(\mathbf{R}^{n}),\|\cdot\|_{B_{p, r}^{s}(\mathbf{R}^{n})} )$  is the  Banach space.
   \item [\rm(ii)] The interpolation inequality:  If $s_{1}<s_{2}$,  there exists a constant $C>0$ such that for each $\theta \in(0,1), $
   $$
\|f\|_{B_{p, 1}^{\theta s_{1}+(1-\theta) s_{2}}(\mathbf{R}^{n})} \leqslant \frac{C}{\left(s_{2}-s_{1}\right) \theta(1-\theta)}\|f\|_{B_{p, \infty}^{s_{1}}(\mathbf{R}^{n})}^{\theta}\|f\|_{B_{p, \infty}^{s_{2}}(\mathbf{R}^{n})}^{1-\theta}.
$$
   \item[\rm(iii)] Density: For $p, r<\infty,$ $\mathscr{D}(\RR^n)$ is dense  in  $B_{p, r}^{s}\left(\mathbf{R}^{n}\right).$
       \item [\rm(iv)] Duality: For each $ 1 \leqslant p, r<\infty,$ we have $ B_{p^{\prime}, r^{\prime}}^{-s}\left(\mathbf{R}^{n}\right)=\left(B_{p, r}^{s}\right)^{\prime}\left(\mathbf{R}^{n}\right)$.
       \item [\rm(v)] For $\forall s>0$,  we have from \cite[Chapter 6]{BL76} that
       \[B_{p, r}^{s}\left(\mathbf{R}^{n}\right)=L^{p}\left(\mathbf{R}^{n}\right) \cap \dot{B}_{p, r}^{s}\left(\mathbf{R}^{n}\right).\]
               \item [\rm(vi )]  The general inclusions:
$$
\begin{aligned}
&B_{p, r}^{s}(\mathbf{R}^{n}) \hookrightarrow B_{p, r_{1}}^{s_{1}}(\mathbf{R}^{n}), \quad s_{1}<s \,\,\text{ or }\,\, s_{1}=s, \,r_{1} \geq r, \\
&B_{p, r}^{s} (\mathbf{R}^{n})\hookrightarrow B_{p_{1}, r}^{s-n\left(\frac{1}{p}-\frac{1}{p_{1}}\right)}(\mathbf{R}^{n}), \quad p_{1} \geq p, \\
&B_{\infty, 1}^{0}(\mathbf{R}^{n}) \hookrightarrow C_{b}(\mathbf{R}^{n})=C(\mathbf{R}^{n}) \cap L^{\infty}(\mathbf{R}^{n}).
\end{aligned}
$$
 \end{enumerate}
\end{proposition}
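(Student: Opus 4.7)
The plan is to realize $B^s_{p,r}(\RR^n)$ isometrically as a weighted sequence space via the map $f \mapsto (2^{js}\Delta_j f)_{j\geq -1}$ landing in $\ell^r(\mathbb{N}\cup\{-1\}; L^p(\RR^n))$, and reduce each item to a corresponding property of this ambient $\ell^r(L^p)$-space combined with two facts already in hand: uniform $L^p$-boundedness of $\Delta_j$ (via $\|2^{jn}\varphi(2^j\cdot)\|_{L^1}=\|\varphi\|_{L^1}$, Young's inequality) and Bernstein's inequality (Lemma \ref{lem-bern}).

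For (i), a Cauchy sequence $\{f_n\}$ in $B^s_{p,r}$ yields, for each $j\geq -1$, a Cauchy sequence $\{\Delta_j f_n\}_n$ in $L^p$ with some limit $g_j$; the candidate $f := g_{-1} + \sum_{j\geq 0} g_j$ converges in $\mathscr{S}'(\RR^n)$ (dyadic Fourier support makes the sum locally finite against Schwartz test functions) and satisfies $\Delta_j f = g_j$, after which $\|f_n - f\|_{B^s_{p,r}}\to 0$ follows by dominated convergence in the $\ell^r$-sum. For (ii), I would split $\sum_j 2^{j(\theta s_1+(1-\theta)s_2)r}\|\Delta_j f\|_p^r$ at a cutoff $N$, bound each half by the assumed $B^{s_k}_{p,\infty}$ norms producing two geometric series in $2^{j(s_k-\theta s_1-(1-\theta)s_2)}$, and optimize over $N$; the prefactor $((s_2-s_1)\theta(1-\theta))^{-1}$ is the closed form of those geometric sums. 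For (iii), density of $\mathscr{D}(\RR^n)$ comes from two successive truncations: first $S_N f\to f$ in $B^s_{p,r}$ as $N\to\infty$ (uses $r<\infty$ so the $\ell^r$-tail vanishes), then multiplication by a smooth bump $\chi_R$ (uses $p<\infty$ so dominated convergence applies on each dyadic $L^p$-piece).

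For (v), when $s>0$ the identity $B^s_{p,r}=L^p\cap \dot B^s_{p,r}$ is proved by comparing the two decompositions: $\Delta_{-1}f=\dot S_0 f$ is controlled by $\|f\|_{L^p}$, while $\sum_{j<0}2^{jsr}\|\dot\Delta_j f\|_{L^p}^r$ converges geometrically thanks to $s>0$ and $\|\dot\Delta_j f\|_{L^p}\leq C\|f\|_{L^p}$; for $j\geq 0$ the two decompositions coincide block by block. For (vi), the first inclusion is either $\ell^r\hookrightarrow \ell^{r_1}$ when $r\leq r_1$, or, when $s_1<s$, the pointwise comparison $2^{js_1}\leq 2^{js}$ on $j\geq 0$ combined with uniform $L^p$-boundedness of $\Delta_{-1}$; the second inclusion is Bernstein applied blockwise; and $B^0_{\infty,1}\hookrightarrow C_b$ because $\sum_j\Delta_j f$ converges absolutely in $L^\infty$ with each dyadic block continuous.

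The main obstacle I anticipate is (iv), duality. The forward direction — that each $g\in B^{-s}_{p',r'}$ induces a bounded functional via $\langle f,g\rangle=\sum_j\int \Delta_j f\cdot \widetilde\Delta_j g\,\mathrm{d}\xx$, using the quasi-orthogonality $\Delta_j=\Delta_j\widetilde\Delta_j$ — is immediate from Hölder in $L^p$ and $\ell^r$. For surjectivity I would introduce the retraction/section pair $R: f\mapsto (2^{js}\Delta_j f)_j$ and $S:(u_j)_j\mapsto \sum_j 2^{-js}\widetilde\Delta_j u_j$ between $B^s_{p,r}$ and $E:=\ell^r(\mathbb{N}\cup\{-1\};L^p(\RR^n))$ with $SR=\mathrm{id}$; then any $\Lambda\in (B^s_{p,r})'$ extends by Hahn-Banach to $\tilde\Lambda\in E'$, the standard mixed-norm duality $E'\cong \ell^{r'}(L^{p'})$ (which requires $p,r<\infty$) yields a representing sequence, and reassembling it through $S$ produces an element of $B^{-s}_{p',r'}$. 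The delicate point is showing that the recovered distribution is independent of the Hahn-Banach extension: two extensions differ on $(I-RS)(E)$, which must be shown to pair trivially with $R(B^s_{p,r})$, so that the ambiguity collapses.
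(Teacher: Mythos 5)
Your proposal is correct and gives the standard textbook proof of these facts; the paper itself states the proposition without argument, deferring to \cite{BCD} and \cite{BL76}, which proceed exactly along the route you outline (realization in $\ell^r(L^p)$, the Bernstein inequality, the almost-orthogonality $\Delta_j=\Delta_j\widetilde{\Delta}_j$, and the retraction/coretraction argument for duality). The only imprecision worth flagging is in item (i): the series $\sum_{j\ge 0} g_j$ is not locally finite against a Schwartz test function $\phi$; rather it converges in $\mathscr{S}'(\RR^n)$ because $\langle g_j,\phi\rangle=\langle g_j,\widetilde{\Delta}_j\phi\rangle$ by Fourier support, and $\|\widetilde{\Delta}_j\phi\|_{L^{p'}}=O(2^{-jN})$ for every $N$ since $\phi\in\mathscr{S}$, so the sum is rapidly convergent rather than eventually stationary — the conclusion you need ($f\in\mathscr{S}'$ with $\Delta_j f=g_j$) is unaffected.
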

After stating the Besov type spaces, we will discuss the simpler H\"older spaces which are defined as follows.
\begin{definition}[H\"older space $C^{k, \gamma}\left(\Omega\right)$]
 Assume $k \in \mathbb{N}, \,\gamma \in(0,1)$ and $\Omega\subset\RR^n$. Let us define  the H\"older space $C^{k, \gamma}\left(\Omega\right)$ as follows:
\[C^{k, \gamma}\left(\Omega\right)=\left\{f(\xx) \in C_{b}^{k}\left(\Omega\right) \mid\|f\|_{C^{k, \gamma}(\Omega )}<\infty\right\}, \]
where
\[\|f\|_{C^{k, \gamma}\left(\Omega\right)}=\|f\|_{C^{k}\left(\Omega\right)}+\sup _{\xx \neq \yy} \sup _{|\alpha|=k} \frac{\left|D^{\alpha} f(\xx)-D^{\alpha} f(\yy)\right|}{|\xx-\yy|^{\gamma}} =:\|f\|_{C^{k}\left(\Omega\right)}+\|f\|_{\dot{C}^{k, \gamma}\left(\Omega\right)}. \]
\end{definition}
\begin{remark}
Let us point out that if $\gamma=r-[r]>0$, we have $$\frac{1}{C}\| f \|_{\dot{B}_{\infty, \infty}^{r}\left(\mathbf{R}^{n}\right)}\leq\| f \|_{\dot{C} ^{[r], \gamma}\left(\mathbf{R}^{n}\right)} \leqslant C_{r} \| f \|_{\dot{B}_{\infty, \infty}^{r}\left(\mathbf{R}^{n}\right)}, $$
where $C_{r}=C_{[r]}\big(\frac{1}{\gamma}-\frac{1}{1-\gamma}\big).$
\end{remark}
Next, we introduce the definition of a weighted Hilbert space.
\begin{definition} Let $s\geq0$  and $\textnormal{w}(\xx)$ be the nonzero weighted function. Then we define the weighted space $H_{\rm w}^{s}\left(\mathbf{R}^{n}\right)$ as
$$
H_{\rm w}^{s}\left(\mathbf{R}^{n}\right) :=\left\{f \in \mathscr{S}^{\prime}\left(\mathbf{R}^{n}\right) \mid\|f\|_{H_{\rm w}^{s}\left(\mathbf{R}^{n}\right)}< \infty\right\},
$$
where
$$
\|f\|_{H_{\rm w}^{s}\left(\mathbf{R}^{n}\right)}:=\left(\int_{\RR^n} |f|^2(\xx) \mathrm{w}(\xx)\,\mathrm{d}\xx\right)^{\frac12}+\left(\int_{\RR^n} |(-\Delta)^{\frac{s}{2}}  f|^2(\xx) \mathrm{w}(\xx)\,\mathrm{d}\xx\right)^{\frac12}.
$$
\end{definition}
Lastly we recall Bony's paraproduct algorithm \cite{Bony}, which is one of the most powerful tools of paradifferential calculus. In terms of the  Littlewood-Paley decomposition,
$$
f=\sum_{j\in\ZZ} \dot{\Delta}_{j} f  \quad\text{and}\quad
 g=\sum_{j\in\ZZ} \dot{\Delta}_{j} g,
$$
so that, we can write formally,
$$
f g=\sum_{j\in\ZZ}\left(\dot{S}_{j+1} f \dot{S}_{j+1} g-\dot{S}_{j} f \dot S_{j} g\right).
$$
After some simplifications, we readily get
$$
\begin{aligned}
f g &=\sum_{j\in\ZZ}\left(\dot{\Delta}_{j} f \dot S_{j} g+\dot{\Delta}_{j} g \dot S_{j} f+\dot{\Delta}_{j} f \dot{\Delta}_{j} g\right) \\
&=\sum_{j\in\ZZ} \dot{\Delta}_{j}f \dot{S}_{j-1} g+\sum_{j\in\ZZ} \dot{\Delta}_{j} g \dot{S}_{j-1} f+\sum_{\left|j-j^{\prime}\right| \leqslant 1}\dot{\Delta}_{j^{\prime}} f \dot{\Delta}_{j} g.
\end{aligned}
$$
As a result, the product of two tempered distributions is decomposed into two paraproducts, respectively, the paraproduct term
$$
\dot{T}_fg=\sum_{j\in\ZZ} \dot{\Delta}_{j} f \dot{S}_{j-1} g
$$
and the reminder term
$$
\dot{R}(f,g)=\sum_{\left|j-j^{\prime}\right| \leqslant 1}\dot{\Delta}_{j^{\prime}} f \dot{\Delta}_{j} g=\sum_{j\in\ZZ}\widetilde{\dot{\Delta}}_{j } f \dot{\Delta}_{j} g.
$$
In  summary, the product of two tempered distributions can be split into three parts as follows
\[fg=\dot{T}_fg+\dot{T}_g f+\dot{R}(f,g).\]
\subsection{Some useful lemmas }
%%%%%%%%%%%%%%%%%%%%%%%%%%%%%%%%%%%%%%%%%%%%%%%%%%%%%%%%%%%%%%%%%%%%%%%%%%%%%%%%%%%%%%%%%%%%%%
In this subsection, we are devoted to showing some useful lemmas which play an important role in our proof. Let us begin with a compactness lemma which is a vital cornerstone  for the critical case where $\alpha=\frac56.$
\begin{lemma}[Compactness Lemma]\label{lem-SYB}
Assume the divergence free vector field $\uu\in  \dot{H}^{\frac{5}{6}}(\RR^3)$ and  $\vv\in {B}^{\frac53}_{p,r}(\RR^3)$ with $2\leq p<\frac92$ and $1\leq r\leq\infty$. Then, for any $\varepsilon>0,$
there exists a constant $C>0$ depending only on $\varepsilon, p$ and $\uu$ such that
\begin{equation}\label{eq-KY-Fractional-BS}
\|\operatorname{div}\left(\uu\otimes \vv\right)\|_{\dot{B}^{0}_{p,r}(\RR^3)}\leq \varepsilon\|\vv\|_{\dot{B}^{\frac{5}{3}}_{p,r}(\RR^3)}+C_{\varepsilon,\,p,\,\uu}\|\vv\|_{\dot{B}^0_{p,r}(\RR^3)}.
\end{equation}
\end{lemma}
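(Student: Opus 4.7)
The plan is a two-step reduction: first I derive the scaling-invariant bilinear estimate
\[
\|\operatorname{div}(\uu\otimes\vv)\|_{\dot{B}^{0}_{p,r}(\RR^{3})}\leq C\|\uu\|_{L^{9/2}(\RR^{3})}\|\vv\|_{\dot{B}^{5/3}_{p,r}(\RR^{3})}
\]
via Bony's paraproduct decomposition, and then introduce the small parameter $\varepsilon$ by splitting $\uu$ into high- and low-frequency parts. Using $\operatorname{div}\uu=0$, I rewrite $\operatorname{div}(\uu\otimes\vv)=\uu^{j}\partial_{j}\vv$ and apply Bony's decomposition componentwise,
\[
\uu^{j}\partial_{j}\vv=\dot{T}_{\uu^{j}}\partial_{j}\vv+\dot{T}_{\partial_{j}\vv}\uu^{j}+\dot{R}(\uu^{j},\partial_{j}\vv),
\]
then use the divergence-free property once more to recast the remainder as $\partial_{j}\dot{R}(\uu^{j},\vv)$, pulling the derivative outside the bilinear block. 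Each term is estimated in $L^{p}$ via H\"older's inequality with exponents $9/2$ and $q_{2}$ satisfying $1/q_{2}=1/p-2/9>0$ (valid because $p<9/2$), combined with Bernstein's inequality producing the shift $2^{2q/3}$ when passing from $L^{p}$ to $L^{q_{2}}$ on the $q$-th dyadic block. Summing in $\ell^{r}(\mathbb{Z})$ and invoking the Sobolev embedding $\dot{H}^{5/6}(\RR^{3})\hookrightarrow L^{9/2}(\RR^{3})$ delivers the claimed scaling-invariant bound.

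Next I split $\uu=\uu_{L}+\uu_{H}$ with $\uu_{L}=\dot{S}_{N}\uu$ and $\uu_{H}=\uu-\uu_{L}$, both divergence-free since $\dot{S}_{N}$ is a Fourier multiplier. Applying the scaling-invariant estimate to $\uu_{H}$, and noting $\|\uu_{H}\|_{L^{9/2}}\leq C\|\uu_{H}\|_{\dot{H}^{5/6}}\to 0$ as $N\to\infty$, I choose $N=N(\varepsilon,\uu)$ so that $\|\operatorname{div}(\uu_{H}\otimes\vv)\|_{\dot{B}^{0}_{p,r}}\leq\tfrac{\varepsilon}{2}\|\vv\|_{\dot{B}^{5/3}_{p,r}}$. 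For the low-frequency piece, compact Fourier support together with Bernstein's inequality give $\|\uu_{L}\|_{W^{k,\infty}(\RR^{3})}\leq C_{N,k}\|\uu\|_{L^{9/2}(\RR^{3})}$ for every $k\in\mathbb{N}$, so standard paraproduct estimates with a smooth multiplier yield $\|\operatorname{div}(\uu_{L}\otimes\vv)\|_{\dot{B}^{0}_{p,r}}\leq C(\uu_{L})\|\vv\|_{\dot{B}^{1}_{p,r}}$. The sharp Besov interpolation inequality then supplies $\|\vv\|_{\dot{B}^{1}_{p,r}}\leq\eta\|\vv\|_{\dot{B}^{5/3}_{p,r}}+C_{\eta}\|\vv\|_{\dot{B}^{0}_{p,r}}$ for any $\eta>0$; taking $\eta$ with $C(\uu_{L})\eta<\varepsilon/2$ completes the estimate.

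The main technical obstacle lies in the $\dot{T}_{\partial_{j}\vv}\uu^{j}$ and remainder blocks of the first step, where one must control the low-frequency cutoff $\dot{S}_{q'-1}\partial_{j}\vv$ in $L^{q_{2}}$ uniformly across the summation range, which is delicate for a general Besov exponent $r$. The divergence-free rewriting of the remainder as $\partial_{j}\dot{R}(\uu^{j},\vv)$ is indispensable, since otherwise bounding $\dot{R}(\uu^{j},\partial_{j}\vv)$ directly would demand $\vv$ at a strictly higher regularity than $5/3$. Finally, the constraint $p<9/2$ is exactly the threshold guaranteeing that the H\"older exponent $q_{2}$ stays finite, so that the Bernstein shift $2^{2q/3}$ remains available to match the target regularity $5/3$ of $\vv$, which is why $\alpha=5/6$ is precisely the critical index.
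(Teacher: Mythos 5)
Your overall strategy—split $\uu=\uu_L+\uu_H$, exploit smallness of the high-frequency tail via $\uu\in\dot H^{5/6}$, and interpolate on the low-frequency piece—is the right idea and is essentially what the paper does. The problem is that Step~1, the ``scaling-invariant bilinear estimate''
\[
\|\operatorname{div}(\uu\otimes\vv)\|_{\dot B^{0}_{p,r}}\lesssim\|\uu\|_{L^{9/2}}\|\vv\|_{\dot B^{5/3}_{p,r}},
\]
is not true, and your own proof sketch already runs into the obstruction without resolving it.

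The difficulty sits in $\dot T_{\partial_j\vv}\uu^{j}$, where $\uu$ is the \emph{high}-frequency factor. For the $q$-th block you must estimate $\|\dot S_{q-1}\partial_j\vv\|_{L^{q_1}}\|\dot\Delta_q\uu^j\|_{L^{9/2}}$ with $1/p=1/q_1+2/9$, because $L^{9/2}$ control on $\uu$ gives no $L^{s}$ bound on $\dot\Delta_q\uu$ for $s<9/2$: Bernstein on a dyadic block only moves the Lebesgue index upward, and H\"older on the whole space also goes the wrong way. This forces the low-frequency factor into $L^{q_1}$ with $q_1>p$, and $\partial_j\vv\in\dot B^{2/3}_{p,r}$ embeds \emph{exactly} into $\dot B^{0}_{q_1,r}$ — the critical index zero. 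For $r>1$ one does not have $\|\dot S_m(\partial_j\vv)\|_{L^{q_1}}\lesssim\|\partial_j\vv\|_{\dot B^{0}_{q_1,r}}$ uniformly in $m$; the partial sums $\sum_{k<m}\|\dot\Delta_k\partial_j\vv\|_{L^{q_1}}$ need not converge. Even for $r\le 2$, where uniform $L^{q_1}$ control is available via $\dot B^0_{q_1,2}\hookrightarrow L^{q_1}$-type facts, the $\ell^r$ sum over $q$ produces $\|\uu\|_{\dot B^{0}_{9/2,r}}$, which is \emph{not} bounded by $\|\uu\|_{L^{9/2}}$ when $9/2>2$. Rewriting the remainder as $\partial_j\dot R(\uu^j,\vv)$ is indeed useful and works, and so does $\dot T_{\uu^j}\partial_j\vv$; only $\dot T_{\partial_j\vv}\uu^j$ breaks.

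The paper avoids this precisely by carrying the frequency split of $\uu$ \emph{into} the paraproduct estimate rather than applying a bilinear bound post hoc. After $\operatorname{div}\dot T_{\vv}\uu=\dot T_{\nabla\vv}\uu$, they place $\dot\Delta_k\uu^{\sharp,\natural}$ in $L^p$ (Bernstein from $L^2$, using $\|\dot\Delta_k\uu\|_{L^2}\lesssim 2^{-5k/6}\|\uu\|_{\dot H^{5/6}}$) and $\dot S_{k-1}\nabla\vv$ in $L^\infty$. This yields the Besov index $-\bigl(\tfrac56+3(\tfrac1p-\tfrac12)\bigr)=\tfrac23-\tfrac3p<0$ for $p<\tfrac92$, strictly negative, so the $\ell^r$ summation becomes a convolution against a summable geometric kernel, valid for all $r\in[1,\infty]$. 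If you replace $\|\uu\|_{L^{9/2}}$ by a dyadic norm of $\dot H^{5/6}$ type and redo the $L^p$--$L^\infty$ pairing along the paper's lines, your two-step architecture survives; as written, Step~1 is an actual gap, not merely a delicate point. A smaller issue: your Step~2 bound $\|\operatorname{div}(\uu_L\otimes\vv)\|_{\dot B^0_{p,r}}\lesssim C(\uu_L)\|\vv\|_{\dot B^1_{p,r}}$ also brushes against a critical index in $\dot T_{\nabla\vv}\uu_L$; targeting $\dot B^{1+\eta}_{p,r}$ for small $\eta>0$ (still strictly below $5/3$, so interpolation still applies) sidesteps this.
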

\begin{proof}
According to the Bony paraproduct decomposition, we split $\uu\otimes\vv$ into three parts as follows
\[\uu\otimes\vv=\dot{T}_{\uu}\vv+\dot{T}_{\vv}\uu+\dot{R}\big(\uu,\,\vv\big).\]
By the H\"older inequality, we can infer that for $2\leq p<\frac{9}{2},$
\begin{equation}\label{eq-l-1-1}
\left\|
\operatorname{div}\dot{T}_{\uu}\vv\right\|_{\dot{B}^{0}_{p,r}(\RR^3)}\leq C \left\|\|\dot{S}_{k-1}\uu\|_{L^{\infty}(\RR^3)}\|\dot{\Delta}_k \vv\|_{L^{p}(\RR^3)}\right\|_{\ell^r(\ZZ)}.
\end{equation}
Inserting
\[\uu=\sum_{k<N}\dot{\Delta}_k\uu+\sum_{k\geq N}\dot{\Delta}_k\uu=: \uu^\sharp +\uu^\natural,\]
into \eqref{eq-l-1-1} gives
\begin{equation}\label{eq-l-1-1-1-2}
\begin{split}
\left\|
\operatorname{div}\dot{T}_{\uu}\vv\right\|_{\dot{B}^0_{p,r}(\RR^3)}\leq &C \left\| 2^k \|\dot{S}_{k-1}\uu^\sharp\|_{L^{\infty}(\RR^3)}\|\dot{\Delta}_k \vv\|_{L^{p}(\RR^3)}\right\|_{\ell^r(\ZZ)}\\
&+C\left\| 2^k\|\dot{S}_{k-1}\uu^\natural\|_{L^{\infty}(\RR^3)}\|\dot{\Delta}_k \vv\|_{L^{p}(\RR^3)}\right\|_{\ell^r(\ZZ)}.
\end{split}
\end{equation}
On one hand,
\begin{align*}
\left\|2^k \|\dot{S}_{k-1}\uu^\sharp\|_{L^{\infty}(\RR^3)}\|\dot{\Delta}_k \vv\|_{L^{p}(\RR^3)}\right\|_{\ell^r(\ZZ)}\leq &C\|\dot{S}_{N}\uu\|_{L^{\infty}(\RR^3)}\left\|2^k\|\dot{\Delta}_k \vv\|_{L^{p}(\RR^3)}\right\|_{\ell^r(\ZZ)}\\
\leq&C2^{\frac{2N}{3}}\|\uu\|_{\dot{B}^{-\frac23}_{\infty,\infty}(\RR^3)}\|\uu\|_{\dot{B}^1_{p,r}(\RR^3)}.
\end{align*}
By the sharp interpolation inequality, one has
\begin{equation*}\label{eq-sharp-interpolation}
\|\uu\|_{\dot{B}^1_{p,1}(\RR^3)}\leq \|\uu\|^{\frac25}_{\dot{B}^0_{p,\infty}(\RR^3)}\|\uu\|^{\frac35}_{\dot{B}^\frac{5}{3}_{p,\infty}(\RR^3)}.
\end{equation*}
  We furthermore have by using  Young's inequality that
\begin{equation}\label{eq-l-1-1-1-3}
\begin{split}
 &\left\|2^k \|\dot{S}_{k-1}\uu^\sharp\|_{L^{\infty}(\RR^3)}\|\dot{\Delta}_k \vv\|_{L^{p}(\RR^3)}\right\|_{\ell^r(\ZZ)}\\
\leq&C2^{\frac{2N}{3}}\|\uu\|_{\dot{B}^{\frac{5}{6}}_{2,\infty}(\RR^3)}\|\uu\|^{\frac25}_{\dot{B}^0_{p,2}(\RR^3)}
\|\uu\|^{\frac35}_{\dot{B}^\frac{5}{3}_{p,2}(\RR^3)}\\
\leq&C_\varepsilon 2^{\frac{5N}{3}}\|\uu\|^{\frac52}_{\dot{B}^{\frac{5}{6}}_{2,\infty}(\RR^3)}\|\uu\|_{\dot{B}^0_{p,2}(\RR^3)}
+\frac{\varepsilon}{2}
\|\uu\|_{\dot{B}^\frac{5}{3}_{p,2}(\RR^3)}.
\end{split}
\end{equation}
On the other hand,
\begin{equation}\label{eq-l-1-1-1-4}
\begin{split}
\left\| 2^k\|\dot{S}_{k-1}\uu^\natural\|_{L^{\infty}(\RR^3)}\|\dot{\Delta}_k \vv\|_{L^{p}(\RR^3)}\right\|_{\ell^r(\ZZ)}
\leq &C\left\|\uu^\natural\right\|_{\dot{B}^{-\frac23}_{\infty,\infty}(\RR^3)}\|\uu\|_{\dot{B}^{\frac{5}{3}}_{p,r}(\RR^3)}\\
\leq &C\sup_{k\geq N}2^{\frac{5k}{6}}\|\dot{\Delta}_k\uu \|_{L^2(\RR^3)}\|\uu\|_{\dot{B}^{\frac{5}{3}}_{p,r}(\RR^3)}.
\end{split}
\end{equation}
Plugging \eqref{eq-l-1-1-1-3} and \eqref{eq-l-1-1-1-4} into \eqref{eq-l-1-1-1-2}, we obtain
\begin{equation}\label{eq-l-1-1-1-5}
\begin{split}
\left\|
\operatorname{div}\dot{T}_{\uu}\vv\right\|_{\dot{B}^0_{p,r}(\RR^3)}\leq& C\sup_{k\geq N}2^{\frac{5k}{6}}\|\dot{\Delta}_k\uu \|_{L^2(\RR^3)}\|\uu\|_{\dot{B}^{\frac{5}{3}}_{p,r}(\RR^3)}\\
&+C_\varepsilon 2^{\frac{5N}{3}}\|\uu\|^{\frac52}_{\dot{B}^{\frac{5}{6}}_{2,\infty}(\RR^3)}\|\uu\|_{\dot{B}^0_{p,r}(\RR^3)}
+\frac{\varepsilon}{2}
\|\uu\|_{\dot{B}^\frac{5}{3}_{p,r}(\RR^3)}.
\end{split}
\end{equation}
The divergence-free  condition allows us to write
\[\operatorname{div}\dot{T}_{\vv}\uu=\dot{T}_{\nabla\vv}\uu.\]
Moreover, we have by the H\"older inequality that
\begin{equation}\label{eq-l-1-2}
\begin{split}
\left\|\operatorname{div}(\dot{T}_{\vv}\uu)\right\|_{\dot{B}^0_{p,r}(\RR^3)}\leq& C \left\|\|\dot{\Delta}_k \uu^\sharp\|_{L^{p}(\RR^3)}\|\dot{S}_{k-1}\nabla \vv\|_{L^{\infty}(\RR^3)}\right\|_{\ell^r(\ZZ)}\\
&+C \left\|\|\dot{\Delta}_k \uu^\natural\|_{L^{p}(\RR^3)}\|\dot{S}_{k-1}\nabla \vv\|_{L^{\infty}(\RR^3)}\right\|_{\ell^r(\ZZ)}.
\end{split}
\end{equation}
The Bernstein inequality in Lemma \ref{lem-bern} can be applied to tackle the low-frequency part
\begin{equation}\label{eq-l-1-2-1}
\begin{split}
  \left\|\|\dot{\Delta}_k \uu^\sharp\|_{L^{p}(\RR^3)}\|\dot{S}_{k-1}\nabla \vv\|_{L^{\infty}(\RR^3)}\right\|_{\ell^r(\ZZ )}\leq& C\|\uu^\sharp\|_{\dot{B}^{1+\frac{3}{p}}_{p,\infty}(\RR^3)}  \|\nabla\vv\|_{\dot{B}^{-(1+\frac{3}{p})}_{\infty,r}(\RR^3)}  \\
  \leq& C2^{\frac{5N}{3} }\|\uu \|_{\dot{B}^{\frac{5}{6}}_{2,\infty}(\RR^3)}  \| \vv\|_{\dot{B}^{0}_{p,r}(\RR^3)}.
\end{split}
\end{equation}
By the H\"older inequality, we obtain
\begin{equation}\label{eq-l-1-2-2}
\begin{split}
&\left\|\|\dot{\Delta}_k \uu^\natural\|_{L^{p}(\RR^3)}\|\dot{S}_{k-1}\nabla \vv\|_{L^{\infty}(\RR^3)}\right\|_{\ell^r(\ZZ)}\\\leq& C\|\uu^\natural\|_{\dot{B}^{\frac{5}{6}+3\left(\frac{1}{p}-\frac{1}{2}\right)}_{p,\infty}(\RR^3)}  \|\nabla\vv\|_{\dot{B}^{-\frac{5}{6}-3\left(\frac{1}{p}-\frac{1}{2}\right)}_{\infty,r}(\RR^3)}
  \leq  C\sup_{k\geq N}2^{\frac{5k}{6}}\|\dot{\Delta}_k\uu \|_{L^2(\RR^3)}  \| \vv\|_{\dot{B}^{\frac{5}{3}}_{p,r}(\RR^3)}.
\end{split}
\end{equation}
Putting \eqref{eq-l-1-2-1} and \eqref{eq-l-1-2-2} into \eqref{eq-l-1-2}  to get
\begin{equation}\label{eq-l-1-2-3}
\begin{split}
\left\|\operatorname{div}(\dot{T}_{\vv}\uu)\right\|_{\dot{B}^0_{p,r}(\RR^3)}\leq& C\sup_{k\geq N}2^{\frac{5k}{6}}\|\dot{\Delta}_k\uu \|_{L^2(\RR^3)}  \| \vv\|_{\dot{B}^{\frac{5}{3}}_{p,r}(\RR^3)}\\
&+ C2^{\frac{5N}{3} }\|\uu \|_{\dot{B}^{\frac{5}{6}}_{2,\infty}(\RR^3)}  \| \vv\|_{\dot{B}^{0}_{p,r}(\RR^3)}.
\end{split}
\end{equation}
For the remainder term, the Bernstein inequality in Lemma \ref{lem-bern} and the H\"older inequality help us to conclude that
\begin{align*}
&\left\|\operatorname{div}\big(\dot{R}(\uu,\vv)\big)\right\|_{\dot{B}^0_{p,r}(\RR^3)} \\
\leq&C\left\| 2^{\frac{5k}{2} }\|\dot{\Delta}_{k} \uu^\sharp\|_{L^{2}(\RR^3)}\left\|\widetilde{\dot{\Delta}}_k \vv\right\|_{L^{p}(\RR^3)}\right\|_{\ell^r(\ZZ)}+C\left\| 2^{\frac{5k}{2} }\|\dot{\Delta}_{k} \uu^\natural\|_{L^{2}(\RR^3)}\left\|\widetilde{\dot{\Delta}}_k \vv\right\|_{L^{p}(\RR^3)}\right\|_{\ell^r(\ZZ)}\\
\leq&C2^{\frac{5N}{3} }\|\uu \|_{\dot{B}^{\frac{5}{6}}_{2,\infty}(\RR^3)}  \| \vv\|_{\dot{B}^{0}_{p,r}(\RR^3)}+C\sup_{k\geq N}2^{\frac{5k}{6}}\|\dot{\Delta}_k\uu \|_{L^2(\RR^3)}  \| \vv\|_{\dot{B}^{\frac{5}{3}}_{p,r}(\RR^3)}.
\end{align*}
This estimate together with \eqref{eq-l-1-1-1-5} and \eqref{eq-l-1-2-3}, we immediately obtain that
\begin{equation}\label{eq-collecting}
\begin{split}
\|\operatorname{div}\left(\uu\otimes \vv\right)\|^2_{\dot{B}^{0}_{p,2}(\RR^3)}\leq &\frac{\varepsilon}{2}
\|\uu\|_{\dot{B}^\frac{5}{3}_{p,2}(\RR^3)}+C\sup_{k\geq N}2^{\frac{5k}{6}}\|\dot{\Delta}_k\uu \|_{L^2(\RR^3)}\|\uu\|_{\dot{B}^{\frac{5}{3}}_{p,r}(\RR^3)}\\
&+C_\varepsilon 2^{\frac{5N}{3}}\|\uu\|^{\frac52}_{\dot{B}^{\frac{5}{6}}_{2,\infty}(\RR^3)}\|\uu\|_{\dot{B}^0_{p,r}(\RR^3)}.
  \end{split}
\end{equation}
Since $\ell^r(\ZZ)\subset\ell^\infty(\ZZ)$  for each $r\in[1,\infty)$ and $\uu\in \dot{H}^{\frac{5}{6}}(\RR^3)$, we have that, for any $\varepsilon>0,$  there exists a suitable integer $N$ such that
\[\sup_{k\geq N}2^\frac{5k}{6}\|\dot{\Delta}_k \uu\|^2_{L^{2}(\RR^3)}\leq\bigg(\sum_{k\geq N}2^\frac{5k}{3}\|\dot{\Delta}_k \uu\|^2_{L^{2}(\RR^3)}\bigg)^{\frac12}\leq \frac{\varepsilon}{2C},\]
which together with \eqref{eq-collecting} implies the desired result \eqref{eq-KY-Fractional-BS} in Lemma \ref{lem-SYB}.
\end{proof}
The action of the heat semigroup $e^{-t(-\Delta)^\alpha}$ on distributions with Fourier transforms supported in an annulus is described in following lemma.
\begin{lemma}\label{lem-exe-decay}
Let $\alpha\in(0,1]$ and  $p,\,r\in[1,\infty]$. Then there exists two positive constants $c$  and $C$ such that for $p\geq r,$
\begin{equation}\label{eq-exe-decay}
\left\|\big(|\cdot|^m D^\gamma O_{\alpha,q}\big)\ast f\right\|_{L^p(\RR^3)}\leq C2^{q(|\gamma|-m+3(1/r-1/p))}e^{-ct2^{2q\alpha}}\big\| f \big\|_{L^r(\RR^3)},
\end{equation}
where $G_{\alpha,q} $ is the heat kernel with fractional diffusion $(-\Delta)^\alpha$ and  $O_{\alpha,q}=\dot{\Delta}_q\mathbb{P}G_\alpha$.
\end{lemma}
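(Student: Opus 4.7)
The strategy combines a scaling reduction that maps the annulus at frequency $2^{q}$ to a fixed unit annulus with Young's convolution inequality, followed by a pointwise decay estimate on the rescaled kernel. This mirrors the classical heat kernel lemma in Besov spaces (cf.\ \cite{BCD,CWZ-book}), adapted here to absorb the weight $|x|^{m}$, the derivative $D^{\gamma}$, and the Leray projector $\mathbb{P}$.

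First, I would write the kernel $K_{q}(x,t):=(|\cdot|^{m}D^{\gamma}O_{\alpha,q})(x,t)$ in Fourier coordinates,
\[
K_{q}(x,t)=|x|^{m}\,\mathscr{F}^{-1}\!\Big[(i\xi)^{\gamma}\hat{\varphi}(2^{-q}\xi)\mathbb{P}(\xi)e^{-t|\xi|^{2\alpha}}\Big](x),
\]
where $\mathbb{P}(\xi)=I-\xi\otimes\xi/|\xi|^{2}$ is the symbol of the Leray projector. The substitution $\xi=2^{q}\eta$, combined with the $0$-homogeneity $\mathbb{P}(2^{q}\eta)=\mathbb{P}(\eta)$, gives the scaling identity
\[
K_{q}(x,t)=2^{q(|\gamma|-m+3)}\,\Psi\!\big(2^{q}x,\,t\,2^{2q\alpha}\big),\qquad \Psi(y,\tau):=|y|^{m}\,\mathscr{F}^{-1}\!\big[(i\cdot)^{\gamma}\hat{\varphi}\,\mathbb{P}\,e^{-\tau|\cdot|^{2\alpha}}\big](y),
\]
which isolates the $q$-dependence outside a single profile living on the fixed annulus $\mathscr{C}$.

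Second, for $p\geq r$ I set $1/s:=1+1/p-1/r\in[0,1]$ and apply Young's inequality; combined with a dilation in $L^{s}$, the scaling identity yields
\[
\|K_{q}(\cdot,t)\ast f\|_{L^{p}}\leq \|K_{q}(\cdot,t)\|_{L^{s}}\|f\|_{L^{r}}=2^{q(|\gamma|-m+3(1/r-1/p))}\,\|\Psi(\cdot,t\,2^{2q\alpha})\|_{L^{s}}\,\|f\|_{L^{r}},
\]
which already carries the exact power of $2^{q}$ announced in \eqref{eq-exe-decay}. The proof thus reduces to the uniform bound $\|\Psi(\cdot,\tau)\|_{L^{s}(\RR^{3})}\leq Ce^{-c\tau}$ for all $s\in[1,\infty]$ and $\tau\geq 0$.

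This uniform bound is the main obstacle, and I would obtain it through a pointwise decay estimate on $\Psi$. The key observation is that $\hat{\varphi}(\eta)\mathbb{P}(\eta)(i\eta)^{\gamma}$ is smooth and compactly supported in $\mathscr{C}\subset\{|\eta|\geq 3/4\}$, because the vanishing of $\hat{\varphi}$ near the origin absorbs the singularity of $\mathbb{P}$. Splitting
\[
e^{-\tau|\eta|^{2\alpha}}=e^{-c_{\alpha}\tau}\,e^{-\tau(|\eta|^{2\alpha}-c_{\alpha})},\qquad c_{\alpha}:=(3/4)^{2\alpha},
\]
the second factor is bounded by $1$ on $\mathscr{C}$ and its $\eta$-derivatives up to order $N$ are bounded by $C_{N}(1+\tau)^{N}$. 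Integrating by parts via $(-\Delta_{\eta})^{N}e^{iy\cdot\eta}=|y|^{2N}e^{iy\cdot\eta}$ then yields
\[
|\Psi(y,\tau)|\leq C_{N,m}(1+\tau)^{N}(1+|y|)^{-N}e^{-c_{\alpha}\tau}\qquad\forall\,N\in\NN.
\]
Choosing $N>3$ makes $\langle y\rangle^{-N}$ integrable in $L^{s}(\RR^{3})$ for every $s\in[1,\infty]$, and absorbing the factor $(1+\tau)^{N}$ into the exponential by passing to any $c\in(0,c_{\alpha})$ produces $\|\Psi(\cdot,\tau)\|_{L^{s}}\leq Ce^{-c\tau}$. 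Setting $\tau=t\,2^{2q\alpha}$ completes \eqref{eq-exe-decay}. The only subtlety is the bookkeeping in this last step: each $\eta$-derivative of $e^{-\tau(|\eta|^{2\alpha}-c_{\alpha})}$ on the annulus contributes only a polynomial in $\tau$, which is ultimately swallowed by the exponential, while the Leray projector plays no harmful role because $\hat{\varphi}$ kills its singularity at the origin.
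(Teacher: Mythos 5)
Your proof is correct and follows essentially the same route as the paper's: rescale the kernel to a fixed annulus so the $q$-dependence factors out, apply Young's inequality, and then bound the rescaled profile uniformly via Fourier-side estimates exploiting that $\hat{\varphi}$ vanishes near the origin (neutralising the Leray projector's singularity) and is supported away from zero (giving the exponential gain $e^{-c\tau}$). The one minor difference is how the weight $|x|^m$ is absorbed: the paper moves it to the Fourier side as $(\Delta_\xi)^{k_0}$ for even $m$ and then interpolates in $m$ to reach the general case, whereas you keep $|y|^m$ on the physical side and simply push the number $N$ of integration-by-parts steps past $(3+m)/2$, which avoids the parity case split and the interpolation step. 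Both are standard and yield the same bound; your bookkeeping of the power of $(1+\tau)$ (you write $N$ where $2N$ derivatives actually land on the exponential) is slightly loose but harmless since any polynomial in $\tau$ is swallowed by shrinking $c<c_\alpha$.
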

\begin{proof}
First of all,  we take  the Fourier transform  of $D^\gamma O_{\alpha,q}f$ yielding
\[\mathscr{F}\left(D^\gamma O_{\alpha,q}f\right)=2^{q|\gamma|}\mathscr{F}\left(\mathbb{P}D^\gamma \varphi\right)(2^{q}\xi)e^{-t|\xi|^{2\alpha}}\hat{f}.\]
Denoting $\phi(\xx):=\mathbb{P}D^\gamma\varphi$, we easily verify  that the $\phi(\xx)$ is a smooth function with Fourier transform compactly supported in the ring $\mathscr{C}(0;3/4,8/3)$. Hence we have
\begin{equation}\label{eq.idenity}
D^\gamma O_{\alpha,q}f=2^{q|\gamma|}2^{3q}g\left(2^{2\alpha q}t,\,2^{q}\cdot\right)\ast f,
\end{equation}
where $$g(t,\xx)=\mathscr{F}^{-1}\left(\hat{\phi}(\xi)e^{-t|\xi|^{2\alpha}}\right).$$
When $m=2k_0$ is nonnegative even number, we take the Fourier transform to obtain
\begin{align*}
|\xx|^m g(t,\xx)=&\frac{1}{(1+|\xx|^2)^3}
\frac{1}{(2\pi)^{\frac32}}\int_{\mathbf{R}^3}e^{i\xx\cdot\xi}(I_d-\Delta_\xi)^3(\Delta_\xi)^{k_0}\left(\hat{\phi}(\xi)e^{-t|\xi|^{2\alpha}}\right)\,\mathrm{d}\xi.
\end{align*}
Thanks to the Leibiniz rule, one has
\begin{equation*}
(\Delta_\xi)^{k_0}\left(\hat{\phi}(\xi)e^{-t|\xi|^{2\alpha}}\right)= \sum_{j=1}^3\sum_{i=0}^{m}C_{m}^{i}\left(\partial_{x_j}^{m-i} \hat{\phi}(\xi)\right)\left(\partial^{i}_{x_j}e^{-t|\xi|^{2\alpha}} \right).
\end{equation*}
Moreover, we have
\begin{align*}
&(I_d-\Delta_\xi)^3(\Delta_\xi)^{k_0}\left(\hat{\phi}(\xi)e^{-t|\xi|^{2\alpha}}\right)\\
=&\sum_{\beta_1\leq\beta,\,|\beta|\leq6}C_{\beta}^{\beta_1}\bigg(\sum_{j=1}^3\sum_{i=0}^{m}C_{m}^{i}\partial^{\beta-\beta_1}\left(\partial_{x_j}^{m-i} \hat{\phi}(\xi)\right)\partial^{\beta_1}\left(\partial^{i}_{x_j}e^{-t|\xi|^{2\alpha}} \right)\bigg).
\end{align*}
On the other hand, we have by the Fa\'a-di-Bruno formula that
\begin{equation*}
e^{t|\xi|^{2\alpha}}\partial^{\beta_1}\left(\partial^{i}_{x_j}e^{-t|\xi|^{2\alpha}} \right) =\sum_{\substack{\beta_{11}+\cdots+\beta_{1\ell}=|\beta_1|\\|\beta_{1k}|\geq0}}(-t)^{|\beta_1|+i}\prod_{k=1}^{\ell}\partial^{\beta_{1k} }\partial^i_{x_j}\big( |\xi|^{2\alpha}\big).
\end{equation*}
Since $\operatorname{supp}\hat{\phi}(\xi)\subset\mathscr{C}(0;3/4,8/3)$, we have that for each $\xi\in \operatorname{supp}\hat{\phi}(\xi), $
\begin{align*}
 \left|\partial^{\beta-\beta_1}\left(\partial_{x_j}^{m-i} \hat{\phi}(\xi)\right)\partial^{\beta_1}\left(\partial^{i}_{x_j}e^{-t|\xi|^{2\alpha}} \right)\right|
\leq&C\left(1+t\right)^{|\beta_1|+i}e^{-t|\xi|^{2\alpha}}\\ \leq& C\left(1+t\right)^{|\beta_1|+i}e^{-\frac{9}{16}t }.
\end{align*}
Since $$\lim_{t\to+\infty}\left(1+t\right)^{6+m}e^{-\frac{9}{32}t }=0,$$
we immediately have the following pointwise estimate that for each even nonnegative integer  $m=2k_0$,
\begin{equation}\label{eq-even-pe}
|\xx|^m g(t,\xx)\leq C e^{-ct}\frac{1}{(1+|\xx|^2)^3}.
\end{equation}
If the real number $m$ is not an even integer, there exists $\theta\in(0,1)$ such that
\[m=\theta\overline{m}+(1-\theta)\big(\overline{m}+2\big),\]
where $\overline{m}=2[m/2].$

Furthermore, we get by the interpolation inequality and \eqref{eq-even-pe} that for each $m\geq0,$
\begin{equation*}
|\xx|^m g(t,\xx)=\left(|\xx|^{\overline{m}} g(t,\xx)\right)^{\theta } \left(|\xx|^{\overline{m}+2} g(t,\xx)\right)^{(1-\theta) }  \leq C e^{-ct}\frac{1}{(1+|\xx|^2)^3}.
\end{equation*}
From this estimate, it follows that
\begin{equation}\label{eq-all-pe}
\big\||\cdot|^m g(t,\cdot)\big\|_{L^p(\RR^3)}\leq Ce^{-\frac{9}{32}t}\quad\text{for each}\,\,p\in[1,\infty].
\end{equation}
Recall from \eqref{eq.idenity}, one writes
\begin{align*}
\big(|\cdot|^m D^\gamma O_{\alpha,q}\big)\ast f=&\left(|\cdot|^m 2^{q|\gamma|}2^{3q}g\left(2^{2\alpha q}t,\,2^{q}\cdot\right)\right)\ast f\\
=&2^{q(|\gamma|-m)}2^{3q}\left(|\cdot|^m g\right)\left(2^{2\alpha q}t,\,2^{q}\cdot\right)\ast f.
\end{align*}
This equality together with \eqref{eq-all-pe} enables us to conclude that for each $1\leq r\leq p\leq \infty,$
\begin{equation*}
\left\|\big(|\cdot|^m D^\gamma O_{\alpha,q}\big)\ast f\right\|_{L^p(\RR^3)}\leq C2^{q(|\gamma|-m+3(1/r-1/p))}e^{-ct2^{2q\alpha}}\big\| f \big\|_{L^r(\RR^3)}.
\end{equation*}
So we finish the proof of Lemma \ref{lem-exe-decay}.
\end{proof}
\begin{lemma}[\cite{ref-CMZ07}, New Bernstein's inequality]\label{lem-bern-new}
Let $p \in[2, +\infty)$ and $\alpha \in(0,1].$ Then there exist two positive constants $c_{p}$ and $C_{p}$ such that for any $f \in \mathscr{S}^{\prime}(\RR^n)$ and $j \in \mathbb{Z}$,
$$
c_{p} 2^{\frac{2\alpha j}{p}}\big\|\dot{\Delta}_{j} f\big\|_{L^p(\RR^n)} \leq\left\|\Lambda^{\alpha}\left(\big|\dot{\Delta}_{j} f\big|^{\frac{p}{2}}\right)\right\|_{L^2(\RR^n)}^{\frac{2}{p}} \leq C_{p} 2^{\frac{2 \alpha j}{p}}\big\|\dot{\Delta}_{j} f\big\|_{L^p(
\RR^n)}.
$$
\end{lemma}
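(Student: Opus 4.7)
The plan is to reduce the inequality by scaling to the unit scale $j=0$, and then to establish both directions via a pointwise convexity inequality for $|a|^{p-2}a$ combined with the nonlocal quadratic form of $\Lambda^{2\alpha}$. Setting $g(x):=(\dot\Delta_j f)(2^{-j}x)$, so that $g$ has Fourier support in the fixed annulus $\operatorname{supp}\hat\varphi$, a change of variables gives $\|\dot\Delta_j f\|_{L^p}^p=2^{-nj}\|g\|_{L^p}^p$ and $\|\Lambda^\alpha|\dot\Delta_j f|^{p/2}\|_{L^2}^2=2^{(2\alpha-n)j}\|\Lambda^\alpha|g|^{p/2}\|_{L^2}^2$, so the factor $2^{2\alpha j/p}$ is absorbed and it suffices to prove, uniformly over such $g$,
$$c_p\|g\|_{L^p(\RR^n)}\le \|\Lambda^\alpha|g|^{p/2}\|_{L^2(\RR^n)}^{2/p}\le C_p\|g\|_{L^p(\RR^n)}.$$

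For the upper bound, the central ingredient is the pointwise algebraic inequality, valid for $p\ge 2$ and all $a,b\in\RR$,
$$c_p\bigl(|a|^{p/2}-|b|^{p/2}\bigr)^2\le (a-b)\bigl(|a|^{p-2}a-|b|^{p-2}b\bigr).$$
Combined with the singular integral representation for $\alpha\in(0,1)$,
$$\int_{\RR^n}\Lambda^{2\alpha}u\cdot v\,\mathrm{d}x=c_{n,\alpha}\iint\frac{(u(x)-u(y))(v(x)-v(y))}{|x-y|^{n+2\alpha}}\,\mathrm{d}x\,\mathrm{d}y,$$
(with integration by parts replacing this formula when $\alpha=1$), the choice $u=g$, $v=|g|^{p-2}g$ gives
$$\int_{\RR^n}\Lambda^{2\alpha}g\cdot|g|^{p-2}g\,\mathrm{d}x\ge c_p\bigl\|\Lambda^\alpha|g|^{p/2}\bigr\|_{L^2}^2.$$
On the other hand, H\"older's inequality together with the classical Bernstein estimate $\|\Lambda^{2\alpha}g\|_{L^p}\le C\|g\|_{L^p}$ (available since $g$ has Fourier support in the fixed annulus) gives
$$\int_{\RR^n}\Lambda^{2\alpha}g\cdot|g|^{p-2}g\,\mathrm{d}x\le \|\Lambda^{2\alpha}g\|_{L^p}\|g\|_{L^p}^{p-1}\le C\|g\|_{L^p}^p.$$
Combining these two estimates and extracting the $(2/p)$-th root produces the upper bound.

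The lower bound is more delicate, because $|g|^{p/2}\ge 0$ carries in general a nontrivial zero-frequency component and so its $L^2$-mass need not be concentrated where $\Lambda^\alpha$ is large. To overcome this, I would exploit the identity $g=\widetilde{\dot\Delta}_0 g$: since $\widetilde\varphi$ has Fourier support in an annulus separated from the origin, the kernel of $\Lambda^{-\alpha}\widetilde{\dot\Delta}_0$ is Schwartz, yielding reverse Bernstein estimates of the form $\|g\|_{L^p}\le C\|\Lambda^\alpha g\|_{L^p}$. Coupling this with a Bony paraproduct decomposition of $|g|^{p/2}=(|g|^{p/2-1}\operatorname{sgn}g)\cdot g$ allows one to transfer the annular localization of $g$ to a quantitative lower bound on the high-frequency portion of $|g|^{p/2}$, and a Gagliardo--Nirenberg-type interpolation then closes the loop to give
$$\|g\|_{L^p}^p=\bigl\||g|^{p/2}\bigr\|_{L^2}^2\le C_p\bigl\|\Lambda^\alpha|g|^{p/2}\bigr\|_{L^2}^2.$$
The hard part will be precisely this last step: because the chain rule for $\Lambda^\alpha$ is only an approximate paradifferential identity, one must keep careful track of the commutator/remainder terms in the paraproduct decomposition and verify that they are absorbed by the quantities one is trying to bound. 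Once this is done, the full inequality follows by undoing the scaling of Step~1.
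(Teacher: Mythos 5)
Your upper bound argument is correct and is essentially the standard C\'ordoba--C\'ordoba/Ju method. After scaling to $j=0$, the pointwise convexity inequality together with the singular integral representation of $\Lambda^{2\alpha}$ (and integration by parts at $\alpha=1$) gives
$\|\Lambda^\alpha|g|^{p/2}\|_{L^2}^2 \le C_p\int_{\RR^n}\Lambda^{2\alpha}g\cdot|g|^{p-2}g\,\mathrm{d}x$,
and the right-hand side is bounded by $C\|g\|_{L^p}^p$ via H\"older and the classical Bernstein inequality since $g$ has Fourier support in a fixed annulus. That direction is sound.

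The lower bound, however, is where the content of the lemma lies, and what you have written is a plan, not a proof: the step you describe as ``a Gagliardo--Nirenberg-type interpolation then closes the loop'' is the entire difficulty, and as sketched it does not close. There are two concrete problems. First, the inequality is false without the implicit assumption that $f$ is \emph{real-valued}: take $\xi_0$ in the annulus where $\hat\varphi\ne0$, a fixed nonnegative Schwartz bump $\psi$ with compactly supported Fourier transform, and $g_k(\xx)=e^{2\pi i\,\xx\cdot\xi_0}\psi(\xx/k)$; for $k$ large $g_k=\dot\Delta_0 f_k$ for suitable $f_k\in\mathscr{S}$, yet $|g_k|^{p/2}=(\psi^{p/2})(\cdot/k)$ is slowly varying and a direct scaling computation gives $\|\Lambda^\alpha|g_k|^{p/2}\|_{L^2}^2\big/\|g_k\|_{L^p}^p\sim k^{-2\alpha}\to0$. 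So phase cancellation matters: any correct proof must use that a real $g$ with Fourier support in an annulus satisfies $\int g=0$, hence changes sign, hence $|g|^{p/2}$ has kinks and therefore genuine high-frequency content. Second, your route --- reverse Bernstein applied to $g$, then a Bony decomposition of $|g|^{p/2}=(|g|^{p/2-1}\operatorname{sgn}g)\cdot g$ --- does not exploit this sign change at all; the factor $|g|^{p/2-1}\operatorname{sgn}g$ is no better frequency-localized than $|g|^{p/2}$ itself, so the paraproduct does not transfer the annular localization the way you hope, and the low-frequency part of $|g|^{p/2}$ remains uncontrolled. For orientation: this paper cites \cite{ref-CMZ07} and gives no proof; downstream it only uses the consequence \eqref{eq-L-Lp-diff-lower}, i.e.\ $\int_{\RR^3}(-\Delta)^\alpha\vv_q\cdot|\vv_q|^{p-2}\vv_q\,\mathrm{d}\xx\ge c_p2^{2q\alpha}\|\vv_q\|_{L^p}^p$. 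You should consult the Chen--Miao--Zhang argument directly, which handles the lower bound via a more delicate frequency analysis than the one you outline.
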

\begin{lemma}[\cite{LMZ21}]\label{lem-Comm}
	Let $\alpha\in(0,1),\, f\in L^2(\RR^3)$ and $\phi\in \dot{C}^{0,\beta}(\RR^3)\cap \dot{W}^{1,\infty}(\RR^3)$. Then there exists a constant $C>0$ such that for each $ \beta\in(0,\alpha),$
	\[\big\|\left[\Lambda^\alpha,\,\phi\right]f\big\|_{L^2(\RR^3)}\leq C\max\left\{\|\phi\|_{\dot{C}^{\beta}(\RR^3)},\,\|\phi\|_{\dot{W}^{1,\infty}(\RR^3)}\right\}\|f\|_{L^2(\RR^3)}.\]
\end{lemma}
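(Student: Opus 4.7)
The plan is to prove the commutator estimate by passing to the pointwise Lévy representation of $\Lambda^\alpha$ and then splitting the resulting singular integral into a near and far regime, so that Lipschitz regularity controls the contribution where $|x-y|$ is small while Hölder regularity controls the contribution where $|x-y|$ is large.

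First I would use the integral representation of the fractional Laplacian to write, for suitable $f$,
\[
[\Lambda^\alpha,\phi]f(x) = c_{\alpha,3}\int_{\RR^3}\frac{(\phi(x)-\phi(y))\,f(y)}{|x-y|^{3+\alpha}}\,\mathrm{d}y,
\]
noting that the principal value disappears from the commutator because the factor $\phi(x)-\phi(y)$ cancels the singularity at $y=x$ whenever $\phi$ is locally Lipschitz. Next I would split the domain of integration at the scale $|x-y|=1$ and estimate the kernel $K(x,y):=c_{\alpha,3}(\phi(x)-\phi(y))/|x-y|^{3+\alpha}$ by
\[
|K(x,y)| \leq C\|\phi\|_{\dot W^{1,\infty}}|x-y|^{-2-\alpha}\,\mathbf{1}_{|x-y|<1} + C\|\phi\|_{\dot C^\beta}|x-y|^{\beta-3-\alpha}\,\mathbf{1}_{|x-y|\geq 1}.
\]
Setting $M:=\max\{\|\phi\|_{\dot W^{1,\infty}},\|\phi\|_{\dot C^\beta}\}$, the right-hand side is bounded by $M\cdot K_0(x-y)$ with a translation-invariant kernel $K_0$.

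Then I would verify that $K_0\in L^1(\RR^3)$: in polar coordinates the two pieces evaluate to $\omega_2(1-\alpha)^{-1}$ and $\omega_2(\alpha-\beta)^{-1}$ respectively, both finite precisely because $\alpha<1$ and $\beta<\alpha$. The pointwise bound $|[\Lambda^\alpha,\phi]f(x)|\leq M(K_0\ast|f|)(x)$ combined with Young's convolution inequality yields
\[
\big\|[\Lambda^\alpha,\phi]f\big\|_{L^2(\RR^3)} \leq M\,\|K_0\|_{L^1(\RR^3)}\,\|f\|_{L^2(\RR^3)},
\]
and a standard density argument extends this from Schwartz $f$ to arbitrary $f\in L^2$. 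The main difficulty, or rather the conceptual point driving the hypothesis, is recognizing that neither regularity hypothesis alone suffices: the Hölder bound fails to give an integrable kernel near the diagonal (since $\beta<\alpha$ makes $r^{\beta-\alpha-1}$ non-integrable at $0$), while the Lipschitz bound fails at infinity (since $r^{-\alpha}$ is non-integrable at $\infty$). The restriction $\beta\in(0,\alpha)$ and $\alpha\in(0,1)$ in the statement is exactly the one that makes the mixed kernel $K_0$ lie in $L^1(\RR^3)$, which is the only place in the argument where those constraints are genuinely used.
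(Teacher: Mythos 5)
Your proof is correct and uses the standard route: write the commutator via the L\'evy representation of $\Lambda^\alpha$, split the kernel at scale $1$, use the Lipschitz seminorm to tame the singularity near the diagonal and the H\"older seminorm to control the tail, then apply Young's inequality. The paper does not reprove this lemma (it is cited from \cite{LMZ21}), but your argument is essentially the same decomposition used in the paper's own proof of the companion Lemma~\ref{lem-com-two}, which splits the commutator integral into near and far regions of the same character; your version is simply cleaner because the general hypotheses $\phi\in \dot{C}^{\beta}\cap\dot{W}^{1,\infty}$ feed directly into an $L^1$ bound on a radial kernel, with the constraints $\alpha<1$ and $\beta<\alpha$ entering exactly and only to make $K_0$ integrable at $0$ and at infinity respectively, which you correctly identify as the heart of the matter.
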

\begin{lemma}\label{lem-com-two}
Let $s\in(0,1)$, $\beta\in(0,1]$ and $f\in L^2_{\langle\cdot\rangle^{2\beta}}(\RR^3)$. Then we have
\begin{itemize}
  \item [(i)] for each $\beta<s,$ \[\left\|\left[\langle\cdot\rangle^\beta, \Lambda^s\right] f\right\|_{L^2(\RR^3)}\leq C\left\|  f\right\|_{L^2(\RR^3)};\]
  \item [(ii)]for each $\beta\geq s,$  \[\left\|\left[\langle\cdot\rangle^\beta, \Lambda^s\right] f\right\|_{L^2(\RR^3)}\leq C\left\|\langle\cdot\rangle^{\beta} f\right\|_{L^2(\RR^3)}.\]
\end{itemize}
\end{lemma}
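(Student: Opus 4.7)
The plan is to split by the relationship between $\beta$ and $s$: case (i) reduces to Lemma \ref{lem-Comm}, whereas case (ii) calls for a direct analysis of the singular-integral representation.

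For (i), I will verify that $\phi(\xx):=\langle \xx\rangle^\beta$ satisfies the hypotheses of Lemma \ref{lem-Comm}. The bound $|\nabla\langle \xx\rangle^\beta|=\beta|\xx|\langle \xx\rangle^{\beta-2}\leq\beta\langle \xx\rangle^{\beta-1}\leq\beta$ (since $\beta\leq 1$) gives $\phi\in\dot{W}^{1,\infty}(\RR^3)$, while the elementary inequality $|a^\beta-b^\beta|\leq|a-b|^\beta$ for $a,b>0,\,\beta\in(0,1]$, combined with $|\langle \xx\rangle-\langle \yy\rangle|\leq|\xx-\yy|$, yields $\phi\in\dot{C}^{0,\beta}(\RR^3)$. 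Since $\beta<s$, applying Lemma \ref{lem-Comm} with its $\alpha$ set to $s$ and its H\"older exponent set to $\beta$ immediately produces $\|[\langle\cdot\rangle^\beta,\Lambda^s]f\|_{L^2}\leq C\|f\|_{L^2}$.

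For (ii), the growth of $\langle\cdot\rangle^\beta$ prevents it from lying in $\dot{C}^{0,\gamma}$ for any $\gamma<\beta$, so Lemma \ref{lem-Comm} is not available. Instead I will use
\[
[\langle\cdot\rangle^\beta,\Lambda^s]f(\xx)=c_s\int_{\RR^3}\frac{\langle \yy\rangle^\beta-\langle \xx\rangle^\beta}{|\xx-\yy|^{3+s}}f(\yy)\,\mathrm{d}\yy,
\]
substitute $g:=\langle\cdot\rangle^\beta f$ (which belongs to $L^2$ by hypothesis), and split the integration into the local region $|\xx-\yy|\leq 1$ and the non-local region $|\xx-\yy|>1$. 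On the local region the Lipschitz bound $|\langle \xx\rangle^\beta-\langle \yy\rangle^\beta|\leq \beta|\xx-\yy|$ together with $\langle \yy\rangle^{-\beta}\leq 1$ reduces the problem to convolution with the $L^1$-kernel $|\zz|^{-(2+s)}\mathbf{1}_{|\zz|\leq 1}$ (integrable since $s<1$), and Young's inequality controls this piece by $C\|g\|_{L^2}$. On the non-local region the H\"older bound $|\langle \xx\rangle^\beta-\langle \yy\rangle^\beta|\leq|\xx-\yy|^\beta$ reduces the problem to the estimation of
\[
\int_{|\xx-\yy|>1}\frac{\langle \yy\rangle^{-\beta}|g(\yy)|}{|\xx-\yy|^{3+s-\beta}}\,\mathrm{d}\yy.
\]
When $\beta>s$ this is a truncated Riesz potential of order $\beta-s\in(0,1)$ acting on $h:=\langle\cdot\rangle^{-\beta}|g|$, which by Hardy-Littlewood-Sobolev sends $L^p$ to $L^2$ with $1/p=1/2+(\beta-s)/3$; H\"older's inequality closes the estimate provided $\|\langle\cdot\rangle^{-\beta}\|_{L^{3/(\beta-s)}}<\infty$, a condition equivalent to $\beta\cdot 3/(\beta-s)>3$, i.e.\ $s>0$, and hence satisfied. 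The critical subcase $\beta=s$, where the non-local kernel is exactly $|\zz|^{-3}\mathbf{1}_{|\zz|>1}\in L^{1+\varepsilon}\setminus L^1$, is handled by perturbation: Young's inequality with exponents $(1+\varepsilon,b,2)$ and H\"older's inequality (the integrability of $\langle\cdot\rangle^{-s}$ at power $r\gg 1$ forcing $\varepsilon<s/3$) still yields the desired bound. Adding the local and non-local contributions gives $\|[\langle\cdot\rangle^\beta,\Lambda^s]f\|_{L^2}\leq C\|g\|_{L^2}=C\|\langle\cdot\rangle^\beta f\|_{L^2}$.

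The main obstacle I expect is the borderline exponent $\beta=s$: both endpoint Young and endpoint Hardy-Littlewood-Sobolev fail there, and one must trade a small amount of weight integrability (through $\varepsilon$) for $L^{1+\varepsilon}$-summability of the truncated kernel. This is precisely where the quantitative role of the hypothesis $s>0$ becomes visible, and it is the step where one cannot avoid using either Lorentz-space convolution bounds or the weight $\langle\cdot\rangle^{-s}$ with $s r>3$ to absorb the logarithmic divergence of the bare kernel $|\zz|^{-3}$.
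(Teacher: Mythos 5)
Your case (i) coincides with the paper's proof: both verify $\langle\cdot\rangle^\beta\in\dot{C}^{0,\beta}\cap\dot{W}^{1,\infty}$ and invoke Lemma~\ref{lem-Comm}. For case (ii) you take a genuinely different route. The paper also starts from the singular-integral representation, but its decomposition of the non-local region is \emph{geometric relative to the origin}: after splitting off $B_{100}(\xx)$, it further divides $\RR^3\setminus B_{100}(\xx)$ into $|\yy|<|\xx|/2$ and $|\yy|\geq|\xx|/2$, uses the algebraic bound $|\langle\xx\rangle^\beta-\langle\yy\rangle^\beta|\leq|\xx|^\beta+|\yy|^\beta$ to absorb the weight when $|\yy|\gtrsim|\xx|$, and when $|\yy|\ll|\xx|$ exploits $|\xx-\yy|\sim|\xx|$ to reduce to an explicit pointwise decay $|\mathrm{II}_1|\lesssim|\xx|^{-3/2-s}\|\langle\cdot\rangle^\beta f\|_{L^2}$ followed by integration in $\xx$. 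You instead split only by distance to the singularity $\yy=\xx$ and, in the non-local piece, identify a truncated Riesz potential of order $\beta-s$ acting on $\langle\cdot\rangle^{-\beta}|g|$, closing with Hardy--Littlewood--Sobolev and weighted H\"older. Your argument is shorter and rests on standard boundedness theorems rather than hands-on pointwise computations, which some readers will find cleaner; the cost is that the HLS exponent degenerates at $\beta=s$, forcing a separate endpoint argument via $L^{1+\varepsilon}$-Young (your bookkeeping there is correct --- the constraint is $\varepsilon<s/(3-s)$, and your more conservative $\varepsilon<s/3$ suffices). The paper's decomposition handles $\beta\geq s$ uniformly without an endpoint case, at the expense of a more intricate region analysis. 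Both arguments are valid.
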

\begin{proof}
If $\beta<s,$  we immediately have by Lemma \ref{lem-Comm} that
 \[\left\|\left[\langle\cdot\rangle^\beta, \Lambda^s\right] f\right\|_{L^2(\RR^3)}\leq C\left\|f\right\|_{L^2(\RR^3)}.\]
 Now, it remains for us to consider the case where $\beta\geq s.$
Since $\Lambda=\sqrt{-\Delta},$ we have in terms of definition of fractional Laplacian that
\begin{align*}
\left[\langle\xx\rangle^\beta, \Lambda^s\right] f(\xx)=-c_s\int_{\RR^3}\frac{\langle\xx\rangle^\beta-\langle\yy\rangle^\beta}{|\xx-\yy|^{3+s}}f(\yy)\,\mathrm{d}\yy.
\end{align*}
We decompose it into the following two parts
\begin{align*}
\int_{\RR^3}\frac{\langle\xx\rangle^\beta-\langle\yy\rangle^\beta}{|\xx-\yy|^{3+s}}f(\yy)\,\mathrm{d}\yy=&
\left(\int_{B_{100}(\xx)} +\int_{\RR^3\backslash B_{100}(\xx)}\right)\frac{\langle\xx\rangle^\beta-\langle\yy\rangle^\beta}{|\xx-\yy|^{3+s}}f(\yy)\,\mathrm{d}\yy\\
=:&{\rm I}+{\rm II}.
\end{align*}
The first term ${\rm I}$ can be bounded by
\begin{align*}
|{\rm I}|\leq \beta\int_{B_{100}(\xx)}\frac{1}{|\xx-\yy|^{2+s}}|f|(\yy)\,\mathrm{d}\yy,
\end{align*}
which together with the Young inequality yields
\begin{equation}\label{eq-2022-I}
\|{\rm I}\|_{L^2(\RR^3)}\leq C\|f\|_{L^2(\RR^3)}.
\end{equation}
As for term ${\rm II}$,  we split it into two parts as follows
\begin{align*}
{\rm II}=&
\bigg(\int_{ B_{\frac{|\xx|}{2}}(0)\backslash B_{100}(\xx) } +\int_{B^c_{\frac{|\xx|}{2}}(0)\backslash B_{100}(\xx)}\bigg)\frac{\langle\xx\rangle^\beta-\langle\yy\rangle^\beta}{|\xx-\yy|^{3+s}}f(\yy)\,\mathrm{d}\yy\\
=:&{\rm II}_1+{\rm II}_2,
\end{align*}
where $B^c_{\frac{|\xx|}{2}}(0)=\RR^3\backslash B_{\frac{|\xx|}{2}}(0).$

Since
\begin{align*}
\left|\langle\xx\rangle^\beta-\langle\yy\rangle^\beta \right|\leq \left||\xx|^2-|\yy|^2\right|^{\frac{\beta}{2}}
\leq  \left|\xx\right|^\beta+\left |\yy\right|^\beta,
\end{align*}
  we see that
\begin{align*}
| {\rm II}_2|\leq \left(1+2^\beta\right)\int_{\RR^3\backslash B_{100}(\xx)}\frac{1}{|\xx-\yy|^{3+s }}\langle\yy\rangle^{\beta}|f|(\yy)\,\mathrm{d}\yy.
\end{align*}
Moreover, by the Young inequality again, we get
\begin{equation}\label{eq-2022-II-2}
\|{\rm II}_2\|_{L^2(\RR^3)}\leq C\left\|\langle\cdot\rangle^{\beta} f\right\|_{L^2(\RR^3)}.
\end{equation}
Now, we turn to bound the term ${\rm II}_1.$ For $\yy\in  B_{\frac{|\xx|}{2}}(0)$, we observe by the triangle inequality that
\begin{equation}\label{eq-triangle-11}
\frac12|\xx|\leq|\xx|-|\yy|\leq|\xx-\yy|\leq |\xx|+|\yy|\leq \frac32|\xx|.
\end{equation}
From this inequality, we have
\begin{align*}
\left|{\rm II}_1\right|\leq 4\int_{\RR^3\backslash B_{100}(\xx)}\frac{1}{|\xx-\yy|^{3+s-\beta }}|f|(\yy)\,\mathrm{d}\yy.
\end{align*}
By the H\"older inequality, the Young inequality and the fact $3+s-\beta>2$, one obtains
\begin{equation}\label{eq-2022-II-1-s}
\left\|{\rm II}_1\right\|_{L^2(B_1(0))}\leq C\left\|{\rm II}_1\right\|_{L^\infty(B_1(0))}\leq C\|f\|_{L^2(\RR^3)}.
\end{equation}
In terms of \eqref{eq-triangle-11}, we find that
\begin{align*}
\left|{\rm II}_1\right|\leq& 4\int_{B_{\frac{|\xx|}{2}}(0)}\frac{1}{|\xx-\yy|^{3+s-\beta }}|f|(\yy)\,\mathrm{d}\yy\leq \frac{C}{|\xx|^{3+s-\beta }}\int_{B_{\frac{|\xx|}{2}}(0)}|f|(\yy)\,\mathrm{d}\yy.
\end{align*}
By the Cauchy-Schwarz inequality, one gets
\begin{align*}
\left|{\rm II}_1\right|\leq& \frac{C}{|\xx|^{3+s-\beta }}\int_{B_{\frac{|\xx|}{2}}(0)}\frac{1}{|\yy|^\beta}\left(|\yy|^\beta|f|\right)(\yy)\,\mathrm{d}\yy\\
\leq& \frac{C}{|\xx|^{3+s-\beta }} \bigg(\int_{B_{\frac{|\xx|}{2}}(0)}\frac{1}{|\yy|^{2\beta}}\,\mathrm{d}\yy\bigg)^{\frac12}\left\|\langle\cdot\rangle^{\beta} f\right\|_{L^2(\RR^3)}\leq\frac{C}{|\xx|^{\frac32+s  }}\left\|\langle\cdot\rangle^{\beta} f\right\|_{L^2(\RR^3)}.
\end{align*}
Hence, we have
\begin{equation} \label{eq-2022-II-1-l}
\left\|{\rm II}_1\right\|_{L^2(\RR^3\backslash B_1(0))}\leq C\left\|\langle\cdot\rangle^{\beta} f\right\|_{L^2(\RR^3)}\bigg(\int_{\RR^3\backslash B_1(0)}\frac{1}{|\xx|^{ 3 +2s  }}\,\mathrm{d}\xx\bigg)^{\frac12} \leq C\left\|\langle\cdot\rangle^{\beta} f\right\|_{L^2(\RR^3)}.
\end{equation}
Collecting the above estimates \eqref{eq-2022-I}, \eqref{eq-2022-II-2}, \eqref{eq-2022-II-1-s} and \eqref{eq-2022-II-1-l}, we eventually obtain
\[\left\|\left[\langle\cdot\rangle^\beta, \Lambda^s\right] f\right\|_{L^2(\RR^3)}\leq C\left\|\langle\cdot\rangle^{\beta} f\right\|_{L^2(\RR^3)},\]
which implies the second desired estimate of Lemma \ref{lem-com-two}.
\end{proof}
\begin{lemma}\label{weighted-h-equiv}
Let $\beta\in(0,1]$ and $f\in\mathscr{D}(\RR^3)$. There exists a constant  $C>0$  such that
\begin{equation}\label{eq-w20220817}
\frac{1}{C }\big\|\langle\cdot\rangle\Lambda f\big\|_{L^2(\RR^3)}\leq \big\|\langle\cdot\rangle\nabla f\big\|_{L^2(\RR^3)}\leq C \big\|\langle\cdot\rangle\Lambda f\big\|_{L^2(\RR^3)}.
\end{equation}
\end{lemma}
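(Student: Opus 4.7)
The plan is to reduce the weighted equivalence to the unweighted Plancherel identity $\|\nabla g\|_{L^2}=\|\Lambda g\|_{L^2}$ applied to the conjugated function $g:=\langle\cdot\rangle f$, at the cost of a commutator error that must be controlled. The two product-rule decompositions
$$\nabla g = \langle\cdot\rangle\nabla f + (\nabla\langle\cdot\rangle)f,\qquad \Lambda g = \langle\cdot\rangle\Lambda f + [\Lambda,\langle\cdot\rangle]f,$$
combined with the pointwise bound $|\nabla\langle x\rangle|=|x|/\langle x\rangle\leq 1$ and the triangle inequality, yield
$$\bigl|\,\|\langle\cdot\rangle\nabla f\|_{L^2}-\|\langle\cdot\rangle\Lambda f\|_{L^2}\,\bigr|\leq \|f\|_{L^2}+\|[\Lambda,\langle\cdot\rangle]f\|_{L^2}.$$
So it suffices to bound the commutator in $L^2$ by $\|f\|_{L^2}$, and then absorb the lower-order $\|f\|_{L^2}$ remainder into the weighted norms on each side using $\langle\cdot\rangle\geq\sqrt{e}>1$.

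For the commutator, I would mimic the kernel-splitting strategy developed in the proof of Lemma~\ref{lem-com-two}. Starting from the integral representation
$$[\Lambda,\langle\cdot\rangle]f(x)=c\int_{\RR^3}\frac{\langle x\rangle-\langle y\rangle}{|x-y|^{4}}f(y)\,\mathrm{d}y,$$
I would split the domain into the local piece $|x-y|\leq 100$ and the two non-local pieces $|y|\leq|x|/2$ and $|y|>|x|/2$ outside that ball. The non-local parts are handled exactly as in Lemma~\ref{lem-com-two}: the Lipschitz bound $|\langle x\rangle-\langle y\rangle|\leq|x-y|$ together with the triangle inequality $|x-y|\geq|x|/2$ on $\{|y|\leq|x|/2\}$ reduces them to convolutions against either weakly singular or decaying kernels, on which Young's inequality delivers the $L^2$-bound. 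The local piece at $s=1$ falls outside the range $s\in(0,1)$ covered by Lemma~\ref{lem-com-two}, so I would supply there the classical Calder\'on first commutator estimate, which gives $L^2$-boundedness with constant proportional to $\|\nabla\langle\cdot\rangle\|_{L^\infty}\leq 1$.

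The main obstacle is precisely this borderline commutator bound at $s=1$: the pointwise kernel bound $|x-y|^{-3}$ sits right at the critical integrability threshold in $\RR^3$, so Young's inequality just fails and a genuine Calder\'on-type cancellation argument is required. An alternative, and arguably slicker, route avoiding the commutator altogether is the Muckenhoupt $A_{2}$ approach: since $\langle x\rangle^{2}\in A_{2}(\RR^3)$ (because $|x|^{\alpha}\in A_{2}(\RR^3)$ iff $-3<\alpha<3$), the Riesz transforms are bounded on $L^{2}(\RR^3;\langle\cdot\rangle^{2}\,\mathrm{d}x)$, and the equivalence then follows in one line from the Fourier identities $\nabla f=R\,\Lambda f$ and $\Lambda f=\sum_{j}R_{j}\partial_{j}f$. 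I would present whichever variant harmonizes best with the kernel-based techniques already deployed in Section~\ref{PRE}.
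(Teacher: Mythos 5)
Your alternative route via $A_2$ weights and Riesz transforms is precisely the paper's proof: the paper writes $\Lambda f=-\sum_{i}\mathscr{R}_i\partial_{x_i}f$ and $\partial_{x_i}f=-\mathscr{R}_i\Lambda f$, observes that $|\xx|^{2\beta}\in A_2(\RR^3)$ for $\beta\in(0,1]$ (so the weight $\langle\cdot\rangle^{2\beta}$ is $A_2$), and applies the weighted $L^2$-boundedness of the Riesz transforms. If you present that variant, you reproduce the paper's argument.

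Your primary commutator route, however, has a gap beyond the borderline Calder\'on commutator estimate you already flag: the absorption step as stated does not close. From Plancherel and the two product-rule decompositions you do get
\begin{equation*}
\bigl|\,\|\langle\cdot\rangle\nabla f\|_{L^2}-\|\langle\cdot\rangle\Lambda f\|_{L^2}\,\bigr|\leq \|f\|_{L^2}+\|[\Lambda,\langle\cdot\rangle]f\|_{L^2}\leq C\|f\|_{L^2},
\end{equation*}
but the pointwise bound $\langle\cdot\rangle\geq\sqrt{e}>1$ only controls $\|f\|_{L^2}$ by the zeroth-order weighted norm $\|\langle\cdot\rangle f\|_{L^2}$, which is not one of the two first-order weighted norms appearing in the lemma, so the remainder cannot be absorbed the way you propose. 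To close that route you would in addition need the weighted Hardy inequalities $\|f\|_{L^2}\leq C\,\||\cdot|\nabla f\|_{L^2}$ and $\|f\|_{L^2}\leq C\,\||\cdot|\Lambda f\|_{L^2}$. Both are true: the first follows from $\int|f|^2=\tfrac13\int(\operatorname{div}\xx)|f|^2=-\tfrac23\int \xx\cdot f\nabla f$ and Cauchy--Schwarz, giving $\|f\|_{L^2}\leq\tfrac23\||\cdot|\nabla f\|_{L^2}$; the second follows from the classical Hardy inequality on the Fourier side applied to $u=|\xi|\hat f$, which yields $\|\hat f\|_{L^2}\leq 2\|\nabla_\xi(|\xi|\hat f)\|_{L^2}$, and $\nabla_\xi(|\xi|\hat f)$ is, up to normalization, the transform of $\xx\,\Lambda f$. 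With these two inequalities supplied, the commutator route is valid, but it invokes two nontrivial tools (Calder\'on's first commutator plus Hardy) where the $A_2$ route is a one-line deduction from the Riesz-transform identity; the paper takes the latter.
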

\begin{proof}
According to the fact  that $\displaystyle\sum_{i=1}^3\frac{(-i\xi)(i\xi)}{|\xi|^2}=1$, we  write
\[\Lambda f=-\sum_{i=1}^3\mathscr{R}_i\partial_{x_i}   f,\]
where $\mathscr{R}_i$ is the Riesz   operator.

 Recall from \cite[Chapter V-6.4]{Stein} that  the function $|\xx|^a$ belong to the $A_p$ class with $p>1,$ if and only if $-n<a<n(p-1).$
Thus,  the function $|\xx|^{2\beta}$ belongs to the $A_2$ class as long as $\beta\in(0,1]$, and then we immediately have by the weighted inequality for singular integral established in  \cite[Chapter V-4.2]{Stein}  that
 \begin{equation}\label{eq-w20220817-1}
 \left\|\langle\cdot\rangle^{\beta}\Lambda f\right\|_{L^{2}(\RR^3)}\leq \sum_{i=1}^3\left\|\langle\cdot\rangle^{\beta}\mathscr{R}_i\partial_{x_i}f\right\|_{L^{2}(\RR^3)}
 \leq C\sum_{i=1}^3\left\|\langle\cdot\rangle^{\beta} \partial_{x_i}f\right\|_{L^{2}(\RR^3)},
 \end{equation}
 which gives the first inequality in \eqref{eq-w20220817}.

To see the inverse inequality, we write $\partial_{x_i}f$ in terms of Fourier transform as
\[\mathscr{F}(\partial_{x_i}f)(\xi)=-i\xi_i\hat{f}(\xi)=\frac{-i\xi_i}{|\xi|}|\xi|\hat{f}(\xi)=-\mathscr{F}\left(\mathscr{R}_i\Lambda f\right)(\xi) \]
for  each $i\in\{1,2,3\}.$

In parallel with \eqref{eq-w20220817-1}, we conclude that for  each $i\in\{1,2,3\},$
\begin{align*}
\left\|\langle\cdot\rangle^{\beta}\partial_{x_i} f\right\|_{L^{2}(\RR^3)}\leq \left\|\langle\cdot\rangle^{\beta}\mathscr{R}_i\Lambda f\right\|_{L^{2}(\RR^3)}
 \leq C\left\|\langle\cdot\rangle^{\beta} \Lambda f\right\|_{L^{2}(\RR^3)}.
\end{align*}
The sum of all $i$ is the second inequality in \eqref{eq-w20220817}.
\end{proof}
%%%%%%%%%%%%%%%%%%%%%%%%%%%%%%%%%%%%%%%%%%%%%%%%%%%%%%%%%%%%%%%%%%%%%%%%%%%%%%%%%%%%%%%%%%%%%%
\subsection{$L^p$-type theory of the linearized  Leray problem}
%%%%%%%%%%%%%%%%%%%%%%%%%%%%%%%%%%%%%%%%%%%%%%%%%%%%%%%%%%%%%%%%%%%%%%%%%%%%%%%%%%%%%%%%%%%%%%
In this subsection, we are going to investigate the high regularity of the Leray operator in $\RR^3$
\begin{align}\label{E-L}
\left\{
\begin{aligned}
& (-\Delta)^{\alpha}\vv- \frac{2\alpha-1}{2\alpha}\vv-\frac{1}{2\alpha}\xx\cdot \nabla \vv
+\uu\cdot\nabla \vv+\nabla P=\ff, \\
& \textnormal{div}\,\vv=0.
\end{aligned}\ \right.
\end{align}
Let the couple $(\vv,P)\in H^\alpha(\RR^3)\times L^{\frac{3}{3-2\alpha}}(\RR^3)$ be a weak solution to the system \eqref{E-L} which satisfies
\begin{equation}\label{eq.weak}
\begin{split}
&\int_{\RR^3}\Lambda^\alpha \vv\cdot\Lambda^\alpha \phi\,\mathrm{d}\yy+\frac{2-\alpha}{\alpha}\int_{\RR^3}\vv\cdot\phi\,\mathrm{d}\yy+\frac{1}{2\alpha}\int_{\mathbf{R}^3}\yy\cdot \nabla\varphi\cdot \vv\,\mathrm{d}\yy\\=&\int_{\RR^3}P\operatorname{div}\phi\,\mathrm{d}\yy+\int_{\RR^3}\ff\cdot\phi\,\mathrm{d}\yy+\int_{\RR^3}(\uu\cdot\nabla)\phi\cdot\vv\,\mathrm{d}\yy
\end{split}
\end{equation}
for each  divergence-free vector field $ {\phi}\in \mathscr{D}(\RR^3)$.
\begin{theorem}\label{thm-V-H}
Let $s\in\RR,\,p\in[2,\infty),\,r\in[1,\infty]$ and $\uu\equiv0$.
Assume that $(\vv,P)\in H^\alpha(\RR^3)\times L^{\frac{3}{3-2\alpha}}(\RR^3)$ is a weak solution to the system~\eqref{E-L} with $\alpha\in(0,1]$. Then there exists a constant $C=C_{p,\alpha}>0$ such that
\begin{equation}\label{eq-V-H}
\left\|\vv\right\|_{\dot{B}^{s+2\alpha}_{p,r}(\RR^3)}+\left\|P\right\|_{\dot{B}^{s+1}_{p,r}(\RR^3)}\leq C_{p,\alpha}\left(\left\|\ff\right\|_{\dot{B}^{s }_{p,r}(\RR^3)}+\left\|\vv\right\|_{\dot{B}^{s}_{p,r}(\RR^3)}\right).
\end{equation}
\end{theorem}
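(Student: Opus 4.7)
\smallskip

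\noindent\textbf{Plan of proof.}
The overall strategy is the frequency localization argument sketched in the introduction: apply $\dot{\Delta}_q$ to the system, carry out an $L^p$ energy estimate on the localized equation, and then sum up in Besov norm. The technical subtlety is handling the transport term $\xx\cdot\nabla\vv$, whose unbounded coefficient forbids a direct perturbative treatment but becomes benign once properly localized.

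\smallskip

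\noindent\textbf{Step 1: Localized system.} I would first apply $\dot{\Delta}_q$ to both sides of \eqref{E-L} (with $\uu\equiv 0$) and obtain a smooth pair $(\vv_q,P_q)=(\dot{\Delta}_q\vv,\dot{\Delta}_q P)$ satisfying a system of the form \eqref{E-q-L}, where the forcing picks up an additional commutator term
\[
\mathbf{R}_q = \tfrac{1}{2\alpha}\operatorname{div}\bigl([\dot{\Delta}_q,\xx\otimes]\vv\bigr).
\]
Using $(\xx f)^{\wedge}(\xi)=i\nabla_{\xi}\hat f(\xi)$ as recalled in the introduction yields the identity
\[
[\dot{\Delta}_q,\xx\otimes]\vv = 2^{-q}\bar{\varphi}_q\ast\bigl(\widetilde{\dot{\Delta}}_q\vv\bigr),
\qquad \bar{\varphi}(\xx)=\xx\,\varphi(\xx)\in\mathscr{D}(\RR^3),
\]
so that $\mathbf{R}_q$ is bounded in $L^p$ by $C\|\widetilde{\dot{\Delta}}_q\vv\|_{L^p}$ (no power of $2^q$ is lost, and this is precisely the point).

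\smallskip

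\noindent\textbf{Step 2: $L^p$ energy estimate on $\vv_q$.} I test the localized momentum equation against $|\vv_q|^{p-2}\vv_q$ with $p\ge 2$. The dissipative term is bounded below via the new Bernstein inequality (Lemma \ref{lem-bern-new})
\[
\int_{\RR^3}(-\Delta)^{\alpha}\vv_q\cdot |\vv_q|^{p-2}\vv_q\,\mathrm{d}\xx \;\ge\; c_p 2^{2q\alpha}\|\vv_q\|_{L^p}^p.
\]
The crucial observation is that the transport term, \emph{after} localization, can be handled by integration by parts:
\[
-\tfrac{1}{2\alpha}\!\int_{\RR^3}\!\xx\cdot\nabla\vv_q\,|\vv_q|^{p-2}\vv_q\,\mathrm{d}\xx
= \tfrac{3}{2p\alpha}\|\vv_q\|_{L^p}^p,
\]
which absorbs cleanly together with the $-\tfrac{2\alpha-1}{2\alpha}\|\vv_q\|_{L^p}^p$ contribution into the damping side. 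Combined with the commutator bound of Step 1 and H\"older for the remaining terms, this yields
\[
c_p 2^{2q\alpha}\|\vv_q\|_{L^p}
\;\le\; \|\nabla P_q\|_{L^p}+\|\ff_q\|_{L^p}+C\|\widetilde{\dot{\Delta}}_q\vv\|_{L^p}.
\]

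\smallskip

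\noindent\textbf{Step 3: Pressure estimate and assembly.} Taking the divergence of \eqref{E-L} and using $\operatorname{div}\vv=0$ together with $\operatorname{div}(\xx\cdot\nabla\vv)=0$ (since $\operatorname{tr}(\nabla\vv)=\operatorname{div}\vv=0$), I get $-\Delta P=\operatorname{div}\ff$ in $\RR^3$. Localizing gives $-\Delta P_q=\operatorname{div}\ff_q$, and by the standard Calder\'on--Zygmund bounds
\[
\|\nabla P_q\|_{L^p}\le C\|\ff_q\|_{L^p}.
\]
Plugging this into the estimate from Step 2, multiplying by $2^{q(s+2\alpha)}$ (for $\vv$) and $2^{q(s+1)}$ (for $P$, using the analogous elliptic bound $\|P_q\|_{\dot{W}^{1,p}}\lesssim 2^{-q}\|\operatorname{div}\ff_q\|_{L^p}$), and taking the $\ell^r(\ZZ)$-norm in $q$, the Fatou/quasi-triangle property of $\ell^r$ together with the fact that $\widetilde{\dot{\Delta}}_q$ is a finite sum of $\dot{\Delta}_{q'}$'s around $q$ produces the desired bound \eqref{eq-V-H}.

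\smallskip

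\noindent\textbf{Main obstacle.} The genuinely delicate point is that the coefficient $\xx$ in $\xx\cdot\nabla\vv$ is unbounded, so a frequency-localized formulation inevitably generates a commutator that looks dangerous at first sight; the success of the whole scheme hinges on the Fourier-side identity $(\xx f)^{\wedge}=i\nabla_\xi \hat f$, which exhibits the commutator as a convolution against $2^{-q}\bar{\varphi}_q$ and thus \emph{gains} a factor $2^{-q}$ rather than losing frequency. Once this is in hand, the rest of the proof is structurally parallel to the standard $L^p$ theory for the Stokes system.
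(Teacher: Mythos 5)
Your proposal follows essentially the same path as the paper's own proof: localize via $\dot{\Delta}_q$, exploit the Fourier-side identity to see that the commutator $[\dot{\Delta}_q,\xx\otimes]\vv=2^{-q}\bar{\varphi}_q\ast\widetilde{\dot{\Delta}}_q\vv$ gains a factor $2^{-q}$, run an $L^p$ energy estimate with $|\vv_q|^{p-2}\vv_q$ using the generalized Bernstein inequality for the dissipation and integration by parts for the transport term, and close with the pressure Laplacian. The only (helpful) refinement over the paper's exposition is that you write $\mathbf{R}_q$ explicitly as $\tfrac{1}{2\alpha}\operatorname{div}([\dot{\Delta}_q,\xx\otimes]\vv)$ and justify $-\Delta P=\operatorname{div}\ff$ by noting $\operatorname{div}(\xx\cdot\nabla\vv)=0$, both of which the paper leaves implicit.
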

\begin{proof}
Taking $\phi(\yy)=\ee_i \varphi_q(\yy)$ with $ \varphi_q(\yy)=2^{3q}\varphi(2^q\yy)$ in the above equality and denoting $f_q=\dot{\Delta}_qf$ for each $f$, it yields by a simple calculation that for each $i=1,2,3,$
\begin{equation*}
 (-\Delta)^{\alpha} \vv^{(i)}_q-\frac{2-\alpha}{2\alpha}\dot{\Delta}_q\vv^{(i)}-\frac{1}{2\alpha}\xx\cdot \nabla \vv^{(i)}_q+\partial_{x_i}P_q= \ff^{(i)}_q+\mathbf{R}^{(i)}_q.
\end{equation*}
This implies that the couple $(\vv_q,P_q):=(\dot{\Delta}_q\vv,\dot{\Delta}_qP)$ is  smooth and solves the following system in the whole space
\begin{align}\label{E-q-L}
\left\{
\begin{aligned}
& (-\Delta)^{\alpha}\vv_q-\frac{2-\alpha}{2\alpha}\vv_q-\frac{1}{2\alpha}\xx\cdot \nabla \vv_q+\nabla P_q=\ff_q+\mathbf{R}_q,\\
& \textnormal{div}\,\vv_q=0,
\end{aligned} \right.
\end{align}
where the commutator  $R_q$ defined by
\[R_q=[\dot{\Delta}_q,\xx\otimes ]\vv=\dot{\Delta}_q\left(\xx\otimes \vv\right)-\xx\otimes\dot{\Delta}_q \vv.\]
Since the Fourier transform $\mathscr{F}:\,\mathscr{S}\left(\mathbf{R}^{3}\right)\to \mathscr{S}\left(\mathbf{R}^{3}\right)$ is a bounded linear  operator,  we know that  each term in \eqref{E-q-L}  is a Schwarz function.
So, multiplying the first equation of \eqref{E-q-L} by $|\vv_q|^{p-2}\vv_q$ with $p\geq2$ and then integrating the resulting equality in space variable over the whole space, we get
\begin{equation}\label{eq-L-Lp}
\begin{split}
&\int_{\mathbf{R}^3}(-\Delta)^{\alpha}\vv_q\,|\vv_q|^{p-2}\vv_q\,\mathrm{d}\xx-\frac{2\alpha -1}{ 2\alpha}\left\|\vv_q\right\|_{L^p(\RR^3)}^p\\&-\frac{1}{2\alpha}\int_{\mathbf{R}^3}\xx\cdot \nabla \vv_q\,|\vv_q|^{p-2}\vv_q\,\mathrm{d}\xx\\
=&\int_{\mathbf{R}^3}\left(-\nabla P_q +\ff_q \right)\,|\vv_q|^{p-2}\vv_q\,\mathrm{d}\xx+ \int_{\mathbf{R}^3}\mathbf{R}_q\,|\vv_q|^{p-2}\vv_q\,\mathrm{d}\xx.
\end{split}
\end{equation}
The new Bernstein inequality in Lemma \ref{lem-bern-new} enables us to infer the lower bound of the first quantity in the left side of equality \eqref{eq-L-Lp} that for each $p\in[2,\infty),$
\begin{equation}\label{eq-L-Lp-diff-lower}
\int_{\mathbf{R}^3}(-\Delta)^{\alpha}\,|\vv_q|^{p-2}\vv_q\,\mathrm{d}\xx\geq c_p2^{2q\alpha} \left\|\vv_q\right\|^p_{L^p(\RR^3)}.
\end{equation}
Integrating by parts yields
\begin{equation}\label{eq-L-Lp-self}
\begin{split}
&-\frac{2\alpha -1}{2 \alpha}\left\|\vv_q\right\|_{L^p(\RR^3)}^p-\frac{1}{2\alpha}\int_{\mathbf{R}^3}\xx\cdot \nabla \vv_q\,|\vv_q|^{p-2}\vv_q\,\mathrm{d}\xx\\
=&-\frac{2\alpha-1}{ 2\alpha}\left\|\vv_q\right\|_{L^p(\RR^3)}^p+\frac{3}{2p\alpha}\left\|\vv_q\right\|_{L^p(\RR^3)}^p
=\frac{3+p-2p\alpha}{2p\alpha}\left\|\vv_q\right\|_{L^p(\RR^3)}^p.
\end{split}
\end{equation}
By the H\"older inequality, one has
\begin{equation}\label{eq-L-Lp-f}
\int_{\mathbf{R}^3}\left(-\nabla P_q +\ff_q \right) \,|\vv_q|^{p-2}\vv_q\,\mathrm{d}\xx
\leq \left(\left\|\nabla P_q\right\|_{L^p(\RR^3)}+\left\|\ff_q\right\|_{L^p(\RR^3)}\right)\left\|\vv_q\right\|_{L^p(\RR^3)}^{p-1}.
\end{equation}
Now  we turn to deal with the integral term involving commutator which takes
\begin{align*}
[\dot{\Delta}_q,\xx\otimes]\vv=&\int_{\RR^3}\varphi_q(\xx-\yy)\yy\otimes \vv(\xx-\yy)\,\mathrm{d}\yy-\xx\otimes\int_{\RR^3}\varphi_q(\xx-\yy)\vv(\yy)\,\mathrm{d}\yy\\
=&-\int_{\RR^3}(\xx-\yy)\varphi_q(\xx-\yy)\otimes \vv(\yy)\,\mathrm{d}\yy.
\end{align*}
Letting $\bar{\varphi}(\xx)=\xx \varphi(\xx)\in\mathscr{D}(\RR^3),$   taking the Fourier transform and using the support property of $\varphi$, we find
\begin{align*}
\mathscr{F}\left([\dot{\Delta}_q,\xx\otimes]\vv \right)(\xi)=&2^{-q}\widehat{ \bar{\varphi}_q }(\xi)\widehat{\vv}(\xi)
= i2^{-q}\operatorname{div}_{\xi} \varphi\left( {2^{-q}}{\xi} \right) \otimes\widehat{\vv}(\xi)\\
= & i2^{-q}\operatorname{div}_{\xi} \varphi\left( {2^{-q}}{\xi} \right) \otimes\widehat{\widetilde{\dot{\Delta}}_q\vv}(\xi),
\end{align*}
which means that
\begin{equation}\label{eq-L-Lp-comm}
[\dot{\Delta}_q,\xx\otimes]\vv=2^{-q}\bar{\varphi}_q\ast\left(\widetilde{\dot{\Delta}}_q\vv\right)(\xx).
\end{equation}
With the equality \eqref{eq-L-Lp-comm} in hand, we infer by the H\"older inequality and the Bernstein inequality in Lemma \ref{lem-bern} that
\begin{equation}\label{eq-L-Lp-comm-I}
\begin{split}
\int_{\mathbf{R}^3}\mathbf{R}_q\,|\vv_q|^{p-2}\vv_q\,\mathrm{d}\xx
\leq&\frac{1}{2\alpha}2^{-q}\left\|\bar{h}_q\ast\nabla\left(\widetilde{\dot{\Delta}}_q\vv\right)\right\|_{L^p(\RR^3)}\left\|\vv_q\right\|_{L^p(\RR^3)}^{p-1}\\
\leq&\frac{C}{2\alpha} \left\|\widetilde{\dot{\Delta}}_q\vv \right\|_{L^p(\RR^3)}\left\|\vv_q\right\|_{L^p(\RR^3)}^{p-1}.
\end{split}
\end{equation}
Inserting estimates \eqref{eq-L-Lp-diff-lower}, \eqref{eq-L-Lp-self}, \eqref{eq-L-Lp-f} and \eqref{eq-L-Lp-comm-I} into the equality \eqref{eq-L-Lp} leads to
\begin{equation}\label{eq-L-Lp-sec}
\begin{split}
&c_p2^{2q\alpha} \left\|\vv_q\right\|_{L^p(\RR^3)}+\frac{3+p-2p\alpha}{2p\alpha}\left\|\vv_q\right\|_{L^p(\RR^3)}\\
\leq& \left\|\nabla P_q\right\|_{L^p(\RR^3)}+\left\|\ff_q\right\|_{L^p(\RR^3)}+\frac{C}{2\alpha} \left\|\widetilde{\dot{\Delta}}_q\vv \right\|_{L^p(\RR^3)}.
\end{split}
\end{equation}
Our task is now to bound the quantity concerning the pressure $P.$ Applying the divergence operator to the first equation of \eqref{E-q-L}, we readily obtain by a simple calculation that
\begin{equation}\label{eq-L-Lp-P}
-\Delta P_q=\operatorname{div}\ff_q\quad\text{in }\RR^3.
\end{equation}
 The standard $L^p$-estimate for elliptic equations for each $p\in[2,\infty),$
\[\left\|P_q\right\|_{\dot{W}^{2,p}(\RR^3)}\leq C\|\operatorname{div}\ff_q\|_{L^p(\RR^3)} \]
means that
\begin{equation}\label{eq-L-Lp-P-est}
2^{q}\left\|P_q\right\|_{L^p(\RR^3)}\leq C\|\ff_q\|_{L^p(\RR^3)}\quad\text{for each}\,\,p\in[2,\infty).
\end{equation}
Plugging estimate \eqref{eq-L-Lp-P-est} into \eqref{eq-L-Lp-sec}, we immediately have
\begin{equation}\label{eq-L-Lp-third}
\begin{split}
&c_p2^{2q\alpha} \left\|\vv_q\right\|_{L^p(\RR^3)}+\frac{3+p-2p\alpha}{2p\alpha}\left\|\vv_q\right\|_{L^p(\RR^3)}+2^q\left\| P_q\right\|_{L^p(\RR^3)}\\
\leq&C\left\|\ff_q\right\|_{L^p(\RR^3)}+\frac{C}{2\alpha} \left\|\widetilde{\dot{\Delta}}_q\vv \right\|_{L^p(\RR^3)}.
\end{split}
\end{equation}
Multiplying \eqref{eq-L-Lp-third} by $2^{qs}$ and then taking the $\ell^r$-norm of the resulting inequality with respect to $q$, we eventually obtain that for each $p\in[2,\infty),$
\[\left\|\vv\right\|_{\dot{B}^{s+2\alpha}_{p,r}(\RR^3)}+\left\|P\right\|_{\dot{B}^{s+1}_{p,r}(\RR^3)}\leq C_{p,\alpha}\left(\left\|\ff\right\|_{\dot{B}^{s }_{p,r}(\RR^3)}+\left\|\vv\right\|_{\dot{B}^{s}_{p,r}(\RR^3)}\right),\]
which implies the desired estimate in Theorem \ref{thm-V-H}.
\end{proof}
Next, we give the estimate of weak solution $\vv$ in the weighted Hilbert space. In forthcoming part of this subsection, we always assume that
\[\ff=\ff_1+\Div\ff_2.\]
\begin{theorem}\label{thm-V-H-L2-weight}
Assume  the divergence free vector field $\uu\in  L^\infty(\RR^3)\cap \dot{W}^{1,\frac3\alpha}(\RR^3)$.
Let the couple  $(\vv,P)\in H^{1}(\RR^3)\times H^1(\RR^3)$ be a weak solution to the system \eqref{E-L} with $\alpha\in(0,1]$. Then there exists  a constant  $C >0$ such that
\begin{itemize}
  \item[(i)]  if $\alpha\in(0,1)$, $\ff_1\in L_{\langle\xx\rangle^{2\beta}}^2(\RR^3)$  and $\ff_2\in H^1(\RR^3)$  satisfying $$\left\|\langle \cdot\rangle^{ \beta}\Div\ff_{2}\right\|_{H^{-\alpha}(\RR^3)}<\infty,$$ we have that  for each $\beta\in(0,\alpha),$
\begin{equation}\label{eq-V-H-weight}
  \left\|\langle \cdot\rangle^{\beta} \vv\right\| _{H^\alpha(\RR^3)}
  \leq   C\Big(  \left\|\ff_1\right\|_{L^2_{\langle\xx\rangle^{2\beta}}(\RR^3)}+\left\|\langle \cdot\rangle^{ \beta}\Div\ff_{2}\right\|_{H^{-\alpha}(\RR^3)}+ \|\ff_2\|_{H^1(\RR^3)}\Big);
\end{equation}
  \item  [(ii)]  if $\alpha=1,$ $\ff_1\in L_{\langle\xx\rangle^2}^2(\RR^3)$  and $\ff_2\in H^1(\RR^3)$  satisfying $$\left\|\langle \cdot\rangle^{ \beta}\Div\ff_{2}\right\|_{H^{-1}(\RR^3)}<\infty,$$ we have
\begin{equation}\label{eq-V-H-weight-1}
\left\|\langle \cdot\rangle \vv\right\| _{H^1(\RR^3)}
  \leq   C\Big(\left\|\ff_1\right\|_{L^2_{\langle\xx\rangle^2}(\RR^3)}+\left\|\langle \cdot\rangle\Div\ff_{2}\right\|_{\dot{H}^{-1}(\RR^3)}+ \|\ff_2\|_{H^1(\RR^3)}\Big).
\end{equation}
\end{itemize}
\end{theorem}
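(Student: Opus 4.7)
The plan is to establish both estimates by a weighted $L^{2}$ energy argument on \eqref{E-L}, in the spirit of the un-weighted proof of Theorem \ref{thm-V-H} but tested against the multiplier $\langle\xx\rangle^{2\beta}\vv$ (respectively $\langle\xx\rangle^{2}\vv$ in case (ii)). Since $(\vv,P)\in H^{1}(\RR^{3})\times H^{1}(\RR^{3})$, this multiplier is admissible after a standard cutoff/mollification. Integration by parts on the drift term gives
\[-\frac{1}{2\alpha}\int(\xx\cdot\nabla\vv)\cdot\langle\xx\rangle^{2\beta}\vv\,\mathrm{d}\xx
 =\frac{1}{4\alpha}\int\Big(3\langle\xx\rangle^{2\beta}+2\beta|\xx|^{2}\langle\xx\rangle^{2\beta-2}\Big)|\vv|^{2}\,\mathrm{d}\xx,\]
and combined with $-\tfrac{2\alpha-1}{2\alpha}\|\langle\cdot\rangle^{\beta}\vv\|_{L^{2}}^{2}$ yields the strictly positive coefficient $\tfrac{5-4\alpha}{4\alpha}\ge\tfrac14$ of $\|\langle\cdot\rangle^{\beta}\vv\|_{L^{2}}^{2}$, supplying the needed $L^{2}$-coercivity.

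For the fractional Laplacian I decompose
\[\int\Lambda^{2\alpha}\vv\cdot\langle\xx\rangle^{2\beta}\vv\,\mathrm{d}\xx
 =\big\|\langle\cdot\rangle^{\beta}\Lambda^{\alpha}\vv\big\|_{L^{2}}^{2}
 +\int\Lambda^{\alpha}\vv\cdot[\Lambda^{\alpha},\langle\xx\rangle^{2\beta}]\vv\,\mathrm{d}\xx,\]
and use the commutator product rule
\[[\Lambda^{\alpha},\langle\cdot\rangle^{2\beta}]\vv=[\Lambda^{\alpha},\langle\cdot\rangle^{\beta}](\langle\cdot\rangle^{\beta}\vv)+\langle\cdot\rangle^{\beta}[\Lambda^{\alpha},\langle\cdot\rangle^{\beta}]\vv\]
to split the commutator integral into two pieces. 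Each piece involves $[\Lambda^{\alpha},\langle\cdot\rangle^{\beta}]$ with $\beta<\alpha$, so Lemma \ref{lem-com-two}(i) gives the $L^{2}$-bounds $C\|\langle\cdot\rangle^{\beta}\vv\|_{L^{2}}$ and $C\|\vv\|_{L^{2}}$ respectively. Cauchy--Schwarz and Young's inequality majorize the two pieces by $\varepsilon\|\langle\cdot\rangle^{\beta}\Lambda^{\alpha}\vv\|_{L^{2}}^{2}+C_{\varepsilon}\bigl(\|\Lambda^{\alpha}\vv\|_{L^{2}}^{2}+\|\vv\|_{L^{2}}^{2}+\|\langle\cdot\rangle^{\beta}\vv\|_{L^{2}}^{2}\bigr)$. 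One further application of Lemma \ref{lem-com-two}(i) converts $\|\langle\cdot\rangle^{\beta}\Lambda^{\alpha}\vv\|_{L^{2}}$ into $\|\Lambda^{\alpha}(\langle\cdot\rangle^{\beta}\vv)\|_{L^{2}}$, so the left-hand side of the identity dominates $\|\langle\cdot\rangle^{\beta}\vv\|_{H^{\alpha}}^{2}$ up to absorbable errors and the un-weighted $H^{\alpha}$-norm of $\vv$.

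The remaining lower-order, pressure, and force contributions are routine. Using $\Div\uu=0$ and $\Div\vv=0$, the transport and pressure terms become $-\beta\int(\uu\cdot\xx)\langle\cdot\rangle^{2\beta-2}|\vv|^{2}$ and $-2\beta\int P\,\xx\cdot\vv\,\langle\cdot\rangle^{2\beta-2}$, both bounded by $C(\|\uu\|_{L^{\infty}}+\|P\|_{L^{2}})\|\langle\cdot\rangle^{\beta}\vv\|_{L^{2}}^{2}$ since $|\xx|\langle\cdot\rangle^{2\beta-2}\le\langle\cdot\rangle^{2\beta-1}\le\langle\cdot\rangle^{2\beta}$ (as $\beta\le 1$); the norm $\|P\|_{L^{2}}$ is controlled via the elliptic identity $-\Delta P=\Div\ff-\nabla\uu:\nabla\vv$, the hypothesis $\uu\in L^{\infty}\cap\dot{W}^{1,3/\alpha}(\RR^{3})$, Sobolev embedding, and the un-weighted Theorem \ref{thm-V-H}. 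For the right-hand side, $\ff_{1}$ pairs directly against $\langle\cdot\rangle^{\beta}\vv$ to give $\|\ff_{1}\|_{L^{2}_{\langle\cdot\rangle^{2\beta}}}\|\langle\cdot\rangle^{\beta}\vv\|_{L^{2}}$; for $\Div\ff_{2}$ one integrates by parts and separates
\[\int\Div\ff_{2}\cdot\langle\cdot\rangle^{2\beta}\vv\,\mathrm{d}\xx=\int\langle\cdot\rangle^{\beta}\Div\ff_{2}\cdot\langle\cdot\rangle^{\beta}\vv\,\mathrm{d}\xx-\int(\nabla\langle\cdot\rangle^{\beta}\cdot\ff_{2})\cdot\langle\cdot\rangle^{\beta}\vv\,\mathrm{d}\xx,\]
handling the first by the duality pairing $\|\langle\cdot\rangle^{\beta}\Div\ff_{2}\|_{H^{-\alpha}}\|\langle\cdot\rangle^{\beta}\vv\|_{H^{\alpha}}$ and the second (together with its twin obtained from the remaining integration by parts) by $C\|\ff_{2}\|_{H^{1}}\|\langle\cdot\rangle^{\beta}\vv\|_{H^{\alpha}}$ thanks to $|\nabla\langle\xx\rangle^{\beta}|\le C$. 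Collecting everything and absorbing small multiples of $\|\langle\cdot\rangle^{\beta}\vv\|_{H^{\alpha}}^{2}$ into the LHS coercivity yields \eqref{eq-V-H-weight}.

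For the endpoint (ii), the indices $\alpha=\beta=1$ sit exactly on the boundary where Lemma \ref{lem-com-two}(i) fails; in its place, Lemma \ref{weighted-h-equiv} (the equivalence $\|\langle\cdot\rangle\nabla f\|_{L^{2}}\sim\|\langle\cdot\rangle\Lambda f\|_{L^{2}}$) plays the analogous role in the dissipative step, and the remaining steps specialize without change to produce \eqref{eq-V-H-weight-1}. The main obstacle throughout is precisely this commutator/weight interplay: because $2\beta$ may exceed $\alpha$, Lemma \ref{lem-com-two}(i) cannot be applied to the full weight $\langle\cdot\rangle^{2\beta}$ at once, and the product-rule splitting into two half-weight commutators, each admissible under $\beta<\alpha$, is what ultimately makes the scheme close.
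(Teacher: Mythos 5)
Your overall scheme — test the weak form against $\langle\cdot\rangle^{2\beta}\vv$ (suitably cut off), isolate the weighted $H^\alpha$-coercivity, estimate the fractional-Laplacian commutator via Lemma~\ref{lem-com-two}, and collect lower-order terms — is the same strategy the paper follows (there the cutoff is the explicit $g_R=\big(1+|\cdot|^2/R^2\big)^{-1/4-\beta/2}$ and the commutator is taken directly with the uniformly Lipschitz/$C^\beta$ function $g_R\langle\cdot\rangle^\beta$ via Lemma~\ref{lem-Comm}, but your product-rule split of $[\Lambda^\alpha,\langle\cdot\rangle^{2\beta}]$ into two half-weight commutators is an acceptable variant). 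However, two steps as you wrote them do not close.

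\textbf{The transport bound cannot be absorbed.} You write $|\xx|\langle\cdot\rangle^{2\beta-2}\le\langle\cdot\rangle^{2\beta}$ and conclude that the term $-\beta\int(\uu\cdot\xx)\langle\cdot\rangle^{2\beta-2}|\vv|^2$ is bounded by $C\|\uu\|_{L^\infty}\|\langle\cdot\rangle^\beta\vv\|_{L^2}^2$. That is a term \emph{quadratic} in the unknown weighted norm with coefficient $\|\uu\|_{L^\infty}$, and the theorem imposes no smallness on $\uu$; so this cannot be absorbed into the coercivity $\tfrac{5-4\alpha}{4\alpha}\|\langle\cdot\rangle^\beta\vv\|_{L^2}^2\ge\tfrac14\|\langle\cdot\rangle^\beta\vv\|_{L^2}^2$. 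The fix is the sharper split $|\xx|\langle\cdot\rangle^{2\beta-2}\le\langle\cdot\rangle^{2\beta-1}=\langle\cdot\rangle^{\beta}\langle\cdot\rangle^{\beta-1}\le\langle\cdot\rangle^{\beta}$ (since $\beta\le1$), putting one half-weight on each factor of $|\vv|$, which yields the \emph{linear} bound $\beta\|\uu\|_{L^\infty}\,\|\langle\cdot\rangle^\beta\vv\|_{L^2}\,\|\vv\|_{L^2}$ that Young's inequality does absorb. The paper's computation at the stage $\int\uu\cdot\nabla\langle\xx\rangle^{2\beta}\,g_R^2|\vv|^2\le 2\beta\|\uu\|_{L^\infty}\|\vv\|_{L^2}\|\vv_{\beta,R}\|_{L^2}$ carries out exactly this sharper split.

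\textbf{The pressure treatment has a hole.} You integrate by parts on the full pressure and reduce it to $\|P\|_{L^2}\,\|\langle\cdot\rangle^\beta\vv\|_{L^2}$, then assert that $\|P\|_{L^2}$ is controlled ``via the elliptic identity, \ldots, and the un-weighted Theorem~\ref{thm-V-H}.'' But the contribution of $\ff_1$ to the pressure is $\Delta^{-1}\Div\ff_1$, a degree $-1$ operator, whose $L^2$-norm is \emph{not} controlled by $\|\ff_1\|_{L^2_{\langle\cdot\rangle^{2\beta}}}$; Theorem~\ref{thm-V-H} gives only $\dot H^1$ control on $P$ from $\ff\in L^2$, not $L^2$ control. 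The paper circumvents this by splitting $P=P_1+P_2+P_3$: for $P_2=\Delta^{-1}\Div\ff_1$ it does \emph{not} integrate by parts, instead keeping $\nabla P_2$ and using the $A_2$-weighted Riesz-transform bound $\|\langle\cdot\rangle^\beta\nabla P_2\|_{L^2}\le C\|\langle\cdot\rangle^\beta\ff_1\|_{L^2}$; only for $P_1,P_3$ (which arise from $\Div\Div$ of an $L^2$ tensor, hence have genuine $L^2$ bounds by Calder\'on–Zygmund) does it integrate by parts. Without this distinction your estimate is not quantitative in the data, which is what the theorem requires.

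The rest — the commutator splitting and absorption, the coercivity constant, the $\Div\ff_2$ duality pairing, the $\alpha=1$ case via Lemma~\ref{weighted-h-equiv} — is sound and matches the paper's mechanism.
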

\begin{proof}
Since the proof of \eqref{eq-V-H-weight-1} was shown in \cite{LMZ21}, so we focus on the proof of the case where $\alpha\in(0,1).$
Denoting $g_R(\xx)=g(\xx/R)$ with $g(\xx)=\frac{1}{ (1+ {|\xx|^2} )^{\frac14+\frac{\beta}{2}}}$ and $R\gg1$,
\[\vv_R:=g_R\vv,\quad P_R := g_RP\quad\text{and}\quad  \ff_R := g_R\ff,\]
we easily find by taking  the test function as $\langle \xx\rangle^{2\beta} g_R(\xx)\vv_R$ in   \eqref{eq.weak}  that
\begin{equation}\label{eq-L2-weight-1}
\begin{split}
&\int_{\RR^3}\Lambda^{\alpha}\vv\cdot\Lambda^\alpha\left(g_R\langle \xx\rangle^{\beta} \vv_{\beta,R}\right)\,\mathrm{d}\xx-\frac{2-\alpha}{2\alpha}\left\|  \vv_{\beta,R}\right\|^2_{L^2(\RR^3)} \\&-\frac{1}{2\alpha}\int_{\RR^3}\xx\cdot\nabla\vv\left(g^2_R\langle \xx\rangle^{2\beta}\vv\right)\,\mathrm{d}\xx+\int_{\RR^3}\uu\cdot\nabla\vv\left(g^2_R\langle \xx\rangle^{2\beta}\vv\right)\,\mathrm{d}\xx\\
=& \int_{\RR^3} \ff_R\langle \xx\rangle^{2\beta} \vv_R\,\mathrm{d}\xx-\int_{\RR^3} g_R  \langle \xx\rangle^{\beta}\nabla P  \vv_{\beta,R}\,\mathrm{d}\xx,
\end{split}
\end{equation}
where and what in follows, we denote $f_{\beta,R}(\xx)=g_R\langle \xx\rangle^{ \beta}f(\xx).$

Firstly, we rewrite the first term in the left side of the above equality as
\begin{align*}
&\int_{\RR^3}\Lambda^\alpha\vv \cdot\Lambda^\alpha\left(g_R\langle \xx\rangle^{\beta} \vv_{\beta,R}\right)\,\mathrm{d}\xx\\
=&\int_{\RR^3}\left(g_R\langle \xx\rangle^{\beta}\right)\Lambda^\alpha\vv\cdot\Lambda^\alpha \vv_{\beta,R} \,\mathrm{d}\xx+\int_{\RR^3}\left[\Lambda^\alpha,g_R\langle \xx\rangle^{\beta}\right]\Lambda^\alpha\vv   \cdot\vv_{\beta,R}\,\mathrm{d}\xx\\
=& \left\|\Lambda^\alpha \vv_{\beta,R}\right\|_{L^2(\RR^3)}^2+\int_{\RR^3}\left[\Lambda^\alpha,g_R\langle \xx\rangle^{\beta}\right]\vv\cdot \Lambda^\alpha\vv_{\beta,R} \,\mathrm{d}\xx\\
 &+\int_{\RR^3}\left[\Lambda^\alpha,g_R\langle \xx\rangle^{\beta}\right]\Lambda^\alpha\vv \cdot  \vv_{\beta,R} \,\mathrm{d}\xx.
\end{align*}
By the Cauchy-Schwarz inequality and Lemma \ref{lem-Comm}, we find that for each $\beta<\alpha,$
\begin{align*}
 \int_{\RR^3}\left[\Lambda^\alpha,g_R\langle \xx\rangle^{\beta}\right]\vv\cdot \Lambda^\beta \vv_{\beta,R} \,\mathrm{d}\xx
\leq&\left\|\left[\Lambda^\alpha,g_R\langle \cdot\rangle^{\beta}\right]\vv\right\|_{L^2(\RR^3)}\left\|\Lambda^\alpha \left(\langle \cdot\rangle^{\beta}\vv_R\right)\right\|_{L^2(\RR^3)} \\
\leq &C\|\vv\|_{L^2(\RR^3)}\left\| \vv_{\beta,R} \right\|_{\dot{H}^{\alpha}(\RR^3)}.
\end{align*}
Also, we have that for each $\beta<\alpha,$
\begin{align*}
 \int_{\RR^3}\left[\Lambda^\alpha,g_R\langle \xx\rangle^{\beta}\right]\Lambda^\alpha\vv\cdot\vv_{\beta,R} \,\mathrm{d}\xx
\leq&\left\|\left[\Lambda^\alpha,g_R\langle \cdot\rangle^{\beta}\right]\Lambda^\alpha\vv \right\|_{L^2(\RR^3)}\left\| \vv_{\beta,R}\right\|_{L^2(\RR^3)}\\
\leq& C\left\|\vv\right\|_{\dot{H}^{\alpha}(\RR^3)}\left\|\vv_{\beta,R}\right\|_{L^2(\RR^3)}.
\end{align*}
Thus, we have
\begin{equation}\label{eq-L2-weight-2}
\int_{\RR^3}\Lambda^\alpha\vv \cdot\Lambda^\alpha\left(g_R\langle \xx\rangle^{2\beta} \vv_R\right)\,\mathrm{d}\xx\geq\left\|\Lambda^\alpha \vv_{\beta,R}\right\|_{L^2(\RR^3)}^2-C\|\vv\|_{H^\alpha(\RR^3)}\left\| \vv_{\beta,R} \right\|_{ {H}^{\alpha}(\RR^3)}.
\end{equation}
Noting that
\begin{align*}
&-\frac{2\alpha-1}{2\alpha}\left\|  \vv_{\beta,R}\right\|^2_{L^2(\RR^3)}-\frac{1}{2\alpha}\int_{\RR^3}\xx\cdot\nabla\vv g^2_R\langle \xx\rangle^{2\beta}\vv\,\mathrm{d}\xx\\
=&-\frac{2\alpha-1}{2\alpha}\left\|  \vv_{\beta,R}\right\|^2_{L^2(\RR^3)}-\frac{1}{4\alpha}\int_{\RR^3}\xx\cdot\nabla\left|\vv\right|^2\left(g^2_R\langle \xx\rangle^{2\beta}\right) \,\mathrm{d}\xx,
\end{align*}
one has by the integrating by parts that
\begin{align*}
&-\frac{2\alpha-1}{2\alpha}\left\|  \vv_{\beta,R}\right\|^2_{L^2(\RR^3)}-\frac{1}{2\alpha}\int_{\RR^3}\xx\cdot\nabla\vv g^2_R\langle \xx\rangle^{2\beta}\vv\,\mathrm{d}\xx\\
=&\frac{5-4\alpha}{4\alpha}\left\|\langle \cdot\rangle^{\beta} \vv_R\right\|^2_{L^2(\RR^3)}+
 \frac{1}{4\alpha}\int_{\RR^3}\xx\cdot\nabla  \langle \xx\rangle^{2\beta} |\vv_R|^2\,\mathrm{d}\xx+\frac{1}{4\alpha}\int_{\RR^3}\xx\cdot\nabla g^2_R \langle \xx\rangle^{2\beta} |\vv|^2 \,\mathrm{d}\xx.
\end{align*}
A simple calculation yields
\begin{align*}
\frac{1}{4\alpha}\int_{\RR^3}\xx\cdot\nabla  \langle \xx\rangle^{2\beta} \vv^2_R\,\mathrm{d}\xx=&\frac{\beta}{2\alpha}\int_{\RR^3} |\xx|^2 \langle \xx\rangle^{2(\beta-1)}\vv^2_R\,\mathrm{d}\xx\\
=&\frac{\beta}{2\alpha}\left\| \vv_{\beta,R}\right\|^2_{L^2(\RR^3)}-\frac{e\beta}{2\alpha}\int_{\RR^3} \langle \xx\rangle^{2(\beta-1)}\vv^2_R\,\mathrm{d}\xx,
\end{align*}
and
\begin{align*}
\frac{1}{4\alpha}\int_{\RR^3}\xx\cdot\nabla g^2_R \langle \xx\rangle^{2\beta} \vv^2 \,\mathrm{d}\xx=&-\frac{1+2\beta}{4\alpha}\int_{\RR^3} \frac{\frac{|\xx|^2}{R^2}}{1+\frac{|\xx|^2}{R^2}} \vv^2_{\beta,R} \,\mathrm{d}\xx
\geq-\frac{1+2\beta}{4\alpha}\left\|\vv_{\beta,R}\right\|^2_{L^2(\RR^3)}.
\end{align*}
Therefore, we have
\begin{equation}\label{eq-L2-weight-3}
\begin{split}
&-\frac{2\alpha-1}{2\alpha}\left\|  \vv_{\beta,R}\right\|^2_{L^2(\RR^3)}+\frac{1}{2\alpha}\int_{\RR^3}\xx\cdot\nabla\vv\varphi^2_R\langle \xx\rangle^{2\beta}\vv\,\mathrm{d}\xx\\
\geq&\frac{1- \alpha}{ \alpha}\left\|\langle \cdot\rangle^{\beta} \vv_R\right\|^2_{L^2(\RR^3)}-\frac{e\beta}{2\alpha}\int_{\RR^3} \langle \xx\rangle^{2(\beta-1)}\vv^2_R\,\mathrm{d}\xx.
\end{split}
\end{equation}
Integrating by parts, one obtains
\begin{align*}
-\int_{\RR^3}\uu\cdot\nabla\vv\left(g^2_R\langle \xx\rangle^{2\beta}\vv\right)\,\mathrm{d}\xx=&\int_{\RR^3}\uu\cdot\nabla g^2_R\left(\langle \xx\rangle^{2\beta}|\vv|^2\right)\,\mathrm{d}\xx\\
&+\int_{\RR^3}\uu\cdot\nabla \langle \xx\rangle^{2\beta}\left(g^2_R|\vv|^2\right)\,\mathrm{d}\xx.
\end{align*}
On one hand, we see that
\begin{align*}
\int_{\RR^3}\uu\cdot\nabla g^2_R\left(\langle \xx\rangle^{2\beta}|\vv|^2\right)\,\mathrm{d}\xx=&-(1+2\beta)\int_{\RR^3} \frac{1}{1+\frac{|\xx|^2}{R^2}}\frac{ \xx\cdot\uu }{R^2} \vv^2_{\beta,R} \,\mathrm{d}\xx\\
\leq&\frac{1+2\beta}{2R}\|\uu\|_{L^\infty(\RR^3)}\|\vv_{\beta,R}\|_{L^2(\RR^3)}^2.
\end{align*}
On the other hand, we get by using $\beta\in(0,1)$ that
\begin{align*}
\int_{\RR^3}\uu\cdot\nabla \langle \xx\rangle^{2\beta}\left(g^2_R|\vv|^2\right)\,\mathrm{d}\xx=&2\beta\int_{\RR^3}(\uu\cdot\xx) \langle \xx\rangle^{2(\beta-1)}\left(g^2_R|\vv|^2\right)\,\mathrm{d}\xx\\
\leq&2\beta\|\uu\|_{L^\infty(\RR^3)}\|\vv\|_{L^2(\RR^3)} \|\vv_{\beta,R}\|_{L^2(\RR^3)}.
\end{align*}
Thus we have
\begin{equation}\label{eq-convec}
-\int_{\RR^3}\uu\cdot\nabla\vv\left(g^2_R\langle \xx\rangle^{2\beta}\vv\right)\,\mathrm{d}\xx\leq \frac{C}{R}\|\vv_{\beta,R}\|_{L^2(\RR^3)}^2+\|\vv\|_{L^2(\RR^3)} \|\vv_{\beta,R}\|_{L^2(\RR^3)}.
\end{equation}
By the Cauchy-Schwarz inequality, one has
\begin{equation}\label{eq-L2-weight-4}
 \int_{\RR^3}\ff_{1R}\langle \xx\rangle^{2\beta} \vv_R\,\mathrm{d}\xx
\leq \left\|\langle \cdot\rangle^{ \beta}\ff_{1}\right\|_{L^2(\RR^3)}\left\|\langle \cdot\rangle^{ \beta}\vv_R\right\|_{L^2(\RR^3)}.
\end{equation}
While
\begin{align*}
 \int_{\RR^3}\Div\ff_{2R}\langle \xx\rangle^{2\beta} \vv_R\,\mathrm{d}\xx
\leq& \left\|\langle \cdot\rangle^{ \beta}\Div\ff_{2}\right\|_{H^{-\alpha}(\RR^3)}\left\|g_R\langle \cdot\rangle^{ \beta}\vv_R\right\|_{H^\alpha(\RR^3)}\\
\leq&C\left\|g_R\right\|_{W^{\alpha,\infty}(\RR^3)} \left\|\langle \cdot\rangle^{ \beta}\Div\ff_{2}\right\|_{H^{-\alpha}(\RR^3)}\left\|\langle \cdot\rangle^{ \beta}\vv_R\right\|_{ {H}^\alpha(\RR^3)},
\end{align*}
and this estimate  implies
\begin{equation*}
\int_{\RR^3}\Div\ff_{2R}\langle \xx\rangle^{2\beta} \vv_R\,\mathrm{d}\xx\leq C\left\|\langle \cdot\rangle^{ \beta}\Div\ff_{2}\right\|_{\dot{H}^{-\alpha}(\RR^3)}\left\|\langle \cdot\rangle^{ \beta}\vv_R\right\|_{ {H}^\alpha(\RR^3)}.
\end{equation*}
Recall that
\begin{equation}\label{eq-pres-2}
-\Delta P=-\operatorname{div}\left(\uu\cdot\nabla\vv-\ff_1-\Div\ff_2\right)=:-\Delta P_1 -\Delta P_2-\Delta P_3.
\end{equation}
Since $\beta\in(0,1)$, the weight $|\xx|^{2 \beta}$ belongs to the $A_2$-class, and then we obtain
\begin{equation}\label{eq-L2-weight-5}
\begin{split}
-\int_{\RR^3} g_R  \langle \xx\rangle^{\beta}\nabla P_2  \vv_{\beta,R}\,\mathrm{d}\xx\leq &\left\|\langle \cdot\rangle^{\beta}\nabla P_2\right\|_{L^2(\RR^3)}\left\|\vv_{\beta,R}\right\|_{L^2(\RR^3)}\\
\leq &C\left\|\langle \cdot\rangle^{\beta}\ff_1\right\|_{L^2(\RR^3)}\left\|\vv_{\beta,R}\right\|_{L^2(\RR^3)}.
\end{split}
\end{equation}
We have by the integration by parts that
\begin{align*}
-\int_{\RR^3} g_R  \langle \xx\rangle^{\beta}\nabla P_1  \vv_{\beta,R}\,\mathrm{d}\xx=& \int_{\RR^3}\left(\vv\cdot \nabla g^2_R \right) \langle \xx\rangle^{2\beta} P_1  \,\mathrm{d}\xx+\int_{\RR^3}\left(\vv\cdot \nabla \langle \xx\rangle^{2\beta}\right)g^2_R P_1  \,\mathrm{d}\xx.
\end{align*}
By the H\"older inequality and the fact the weight $|\xx|^{ 2\beta}$ belongs to $A_2$-class, we have
\begin{align*}
\int_{\RR^3}\left(\vv\cdot \nabla g^2_R \right) \langle \xx\rangle^{2\beta} P_1  \,\mathrm{d}\xx=&-(1+2\beta)\int_{\RR^3} \frac{1}{1+\frac{|\xx|^2}{R^2}}\frac{ \xx\cdot\vv }{R^2} \langle \xx\rangle^{2\beta}P_1 \,\mathrm{d}\xx\\
\leq&\frac{C}{R^{1-\beta}}\left\|\vv_{\beta,R}\right\|_{L^2(\RR^3)}\|P_1\|_{L^2(\RR^3)}\\
\leq& \frac{C}{R^{1-\beta}}\|\uu\|_{L^\infty(\RR^3)}\left\|\vv_{\beta,R}\right\|_{L^2(\RR^3)}\|\vv\|_{L^2(\RR^3)}
\end{align*}
and
\begin{align*}
\int_{\RR^3}\left(\vv\cdot \nabla \langle \xx\rangle^{2\beta}\right)g^2_R P_1  \,\mathrm{d}\xx=&2\beta\int_{\RR^3}(\vv\cdot\xx) \langle \xx\rangle^{2(\beta-1)}\left(g^2_RP_1\right)\,\mathrm{d}\xx\\
\leq& C \left\|\vv_{\beta,R}\right\|_{L^2(\RR^3)}\|P_1\|_{L^2(\RR^3)}\\
\leq&  C \|\uu\|_{L^\infty(\RR^3)}\left\|\vv_{\beta,R}\right\|_{L^2(\RR^3)}\|\vv\|_{L^2(\RR^3)}.
\end{align*}
Therefore
\begin{equation}\label{eq-L2-weight-55}
-\int_{\RR^3} g_R  \langle \xx\rangle^{\beta}\nabla P_1  \vv_{\beta,R}\,\mathrm{d}\xx\leq C \|\uu\|_{L^\infty(\RR^3)}\left\|\vv_{\beta,R}\right\|_{L^2(\RR^3)}\|\vv\|_{L^2(\RR^3)}.
\end{equation}
In the same way as in the proof of \eqref{eq-L2-weight-55}, the term involving $P_3$ can be bounded as follows:
\begin{equation}\label{eq-L2-weight-55-3}
\begin{split}
-\int_{\RR^3} g_R  \langle \xx\rangle^{\beta}\nabla P_3  \vv_{\beta,R}\,\mathrm{d}\xx\leq& C \left\|\vv_{\beta,R}\right\|_{L^2(\RR^3)}\|P_3\|_{L^2(\RR^3)}\\
\leq& C \left\|\vv_{\beta,R}\right\|_{L^2(\RR^3)}\|\ff_2\|_{L^2(\RR^3)}.
\end{split}
\end{equation}
 Inserting estimates \eqref{eq-L2-weight-2}-\eqref{eq-L2-weight-55-3} into the equality \eqref{eq-L2-weight-1}, we readily get that for large $R$ and $\forall\,\varepsilon>0,$
 \begin{equation}\label{eq-L2-weight-6}
 \begin{split}
& \left\|\Lambda^\alpha \vv_{\beta,R}\right\|_{L^2(\RR^3)}^2+\frac{1-\alpha}{\alpha}\left\|\langle \cdot\rangle^{\beta} \vv_R\right\|^2_{L^2(\RR^3)}\\
 \leq&  \left\|\langle \cdot\rangle^{ \beta}\ff_R\right\|_{L^2(\RR^3)}\left\|\langle \cdot\rangle^{ \beta}\vv_R\right\|_{L^2(\RR^3)}  +C\|\vv\|_{H^\alpha(\RR^3)}\left\| \vv_{\beta,R} \right\|_{ {H}^{\alpha}(\RR^3)}
 +\frac{e\beta}{2\alpha}\|\vv_R\|^2_{L^2(\RR^3)} \\
 \leq &C\left(\|\vv\|^2_{H^\alpha(\RR^3)}+ \left\|\langle \cdot\rangle^{\beta}\ff_1\right\|^2_{L^2(\RR^3)}+\left\|\langle \cdot\rangle^{ \beta}\Div\ff_{2}\right\|^2_{H^{-\alpha}(\RR^3)}+ \|\ff_2\|^2_{L^2(\RR^3)}\right)\\&+ \varepsilon\left\| \vv_{\beta,R} \right\|^2_{ {H}^{\alpha}(\RR^3)}+\frac{C}{R} \left\| \vv_{\beta,R} \right\|^2_{ {H}^{\alpha}(\RR^3)}.
 \end{split}
 \end{equation}
Since $\beta\in(0,1)$, we know that  the coefficient $\frac{1-\alpha}{ \alpha}>0$ and $g_R\langle\xx\rangle^{\beta}\vv $ meets the requirement of test function \eqref{eq-L2-weight-1}. Moreover, \eqref{eq-L2-weight-6} becomes
 \begin{align*}
  &\left\|\Lambda^\alpha  \vv_{\beta,R}\right\|_{L^2(\RR^3)}^2+\left\|\langle \cdot\rangle^{\beta} \vv_R\right\|^2_{L^2(\RR^3)}\\
  \leq  &C\left(\|\vv\|^2_{H^\alpha(\RR^3)}+ \left\|\langle \cdot\rangle^{\beta}\ff_1\right\|^2_{L^2(\RR^3)}+\left\|\langle \cdot\rangle^{ \beta}\Div\ff_{2}\right\|^2_{ H^{-\alpha}(\RR^3)}+ \|\ff_2\|^2_{L^2(\RR^3)}\right).
  \end{align*}
According to this estimate  and the following estimate from Theorem \ref{thm-V-H}
\begin{equation*}
\left\|\vv\right\|_{H^{ 2\alpha} (\RR^3)} \leq C \big(\left\|\ff\right\|_{L^2(\RR^3)} \big),
\end{equation*}
 we can show the desired estimate \eqref{eq-V-H-weight}  by performing the above process again and sending $R\to\infty.$
\end{proof}
\begin{theorem}\label{thm-V-H-L2-weight-22}
Let $s>0$, $\alpha\in(0,1]$,  $\ff_1\in H^s(\RR^3)$  and $\ff_2\in H^{s+1}(\RR^3)$ and $\uu\equiv 0$.
Assume that    $(\vv,P)\in H^{s+1}(\RR^3)\times H^{s+1}(\RR^3)$ is a weak solution to the system \eqref{E-L}.
\begin{itemize}
  \item[(i)]  When $\alpha\in(0,1 )$, if   $$\sup_{|\gamma|=[s]} \left( \left\|\langle \cdot\rangle^{\beta}(I_{d}+\Lambda^{\{s\}}D^\gamma)\ff_1\right\|_{L^2(\RR^3)}
 +   \left\|\langle \cdot\rangle^{\beta}\Lambda^{\{s\}}D^\gamma\Div\ff_2\right\|_{H^{-\alpha}(\RR^3)}\right)<\infty,\,\,\,\forall\beta\in(0,\alpha),$$  we have
 \begin{equation}\label{eq-V-H-weight-h-s}
 \begin{split}
 \sup_{|\gamma|=[s]} \left\|\langle \cdot\rangle^{\beta} \Lambda^{\{s\}} D^\gamma\vv\right\| _{H^\alpha(\RR^3)}
   \leq &C\left\|\ff_2\right\|_{H^{1+s}(\RR^3)}+C   \sup_{|\gamma|=[s]}  \left\|(I_{d}+\Lambda^{\{s\}}D^\gamma)\ff_1\right\|_{L^2_{\langle \cdot\rangle^{2\beta}}(\RR^3)}\\
&+C \sup_{|\gamma|=[s]} \left\|\langle \cdot\rangle^{\beta}\Lambda^{\{s\}}D^\gamma\Div\ff_2\right\|_{H^{-\alpha}(\RR^3)}.
\end{split}
\end{equation}
  \item  [(ii)]  When $\alpha=1$, if   $$\sup_{|\gamma|=[s]} \left( \left\|\langle \cdot\rangle (I_{d}+\Lambda^{\{s\}}D^\gamma)\ff_1\right\|_{L^2(\RR^3)}
 +   \left\|\langle \cdot\rangle\Lambda^{\{s\}}D^\gamma\Div\ff_2\right\|_{H^{-1}(\RR^3)}\right)<\infty,$$
 we have
 \begin{equation}\label{eq-V-H-weight-h-s-delta}
 \begin{split}
\sup_{|\gamma|=[s]} \left\|\langle \cdot\rangle  \Lambda^{\{s\}} D^\gamma\vv\right\| _{H^1(\RR^3)}
   \leq &C\left\|\ff_2\right\|_{H^{1+s}(\RR^3)}+C   \sup_{|\gamma|=[s]}  \left\|(I_{d}+\Lambda^{\{s\}}D^\gamma)\ff_1\right\|_{L^2_{\langle \cdot\rangle^2}(\RR^3)}\\
&+C \sup_{|\gamma|=[s]} \left\|\langle \cdot\rangle \Lambda^{\{s\}}D^\gamma\Div\ff_2\right\|_{H^{-1}(\RR^3)}.
\end{split}
\end{equation}
\end{itemize}
\end{theorem}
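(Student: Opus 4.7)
The plan is to apply $\Lambda^{\{s\}}D^\gamma$ to the linearised equation and reduce the statement to Theorem~\ref{thm-V-H-L2-weight}. The first ingredient is the commutator identity
\[
[\Lambda^{\{s\}}D^\gamma,\ \xx\cdot\nabla] = (\{s\}+|\gamma|)\Lambda^{\{s\}}D^\gamma = s\,\Lambda^{\{s\}}D^\gamma,
\]
which I establish on the Fourier side from the relation $\widehat{\xx\cdot\nabla f}(\xi)=-(3+\xi\cdot\nabla_\xi)\hat f(\xi)$ together with the homogeneities $\xi\cdot\nabla_\xi|\xi|^{\{s\}}=\{s\}|\xi|^{\{s\}}$ and $\xi\cdot\nabla_\xi \xi^\gamma=|\gamma|\xi^\gamma$. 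Setting $w:=\Lambda^{\{s\}}D^\gamma\vv$ and $Q:=\Lambda^{\{s\}}D^\gamma P$, the assumption $(\vv,P)\in H^{s+1}\times H^{s+1}$ ensures $(w,Q)\in H^1\times H^1$, $\Div w=0$, and the pair solves
\[
(-\Delta)^{\alpha}w-\frac{2\alpha-1+s}{2\alpha}w-\frac{1}{2\alpha}\xx\cdot\nabla w+\nabla Q = \Lambda^{\{s\}}D^\gamma\ff_1+\Div\bigl(\Lambda^{\{s\}}D^\gamma\ff_2\bigr).
\]
This is exactly the linearised Leray system \eqref{E-L} with $\uu\equiv 0$, except for a shifted zero-order coefficient.

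Next I rerun the weighted $L^2$ energy argument from the proof of Theorem~\ref{thm-V-H-L2-weight}, testing the $w$-equation against $\langle\xx\rangle^{2\beta}g_R^2 w$. Every computation carries over verbatim except that, in the analogue of \eqref{eq-L2-weight-3}, the coefficient of $\|w_{\beta,R}\|_{L^2}^2$ becomes $\frac{5-4\alpha-2s+2\beta}{4\alpha}$ instead of $\frac{5-4\alpha+2\beta}{4\alpha}$. When this remains nonnegative the argument closes and reproduces \eqref{eq-V-H-weight-h-s} with right-hand side built from $\Lambda^{\{s\}}D^\gamma\ff_1$ and $\Lambda^{\{s\}}D^\gamma\ff_2$. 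When it turns negative (for large $s$), I transfer the excess $L^2$ term to the right-hand side and bound it by first applying Theorem~\ref{thm-V-H-L2-weight} to the original $\vv$-equation (this gives $\|\vv\|_{L^2_{\langle\cdot\rangle^{2\beta}}}$ in terms of $\|\ff_1\|_{L^2_{\langle\cdot\rangle^{2\beta}}}$ etc.) and then invoking Theorem~\ref{thm-V-H} at $p=2$ to propagate up to the derivative $\|\Lambda^{\{s\}}D^\gamma\vv\|_{L^2_{\langle\cdot\rangle^{2\beta}}}$. This absorption step is precisely what forces the hypothesis to read $(I_d+\Lambda^{\{s\}}D^\gamma)\ff_1$ rather than $\Lambda^{\{s\}}D^\gamma\ff_1$ alone: the identity summand accounts for the lower-order contribution entering through Theorem~\ref{thm-V-H-L2-weight} applied to $\vv$.

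For the borderline case $\alpha=1$, the same scheme applies, but the weighted $H^{1}$ norm of $w$ is reformulated via $\|\langle\cdot\rangle\nabla w\|_{L^2}$ using Lemma~\ref{weighted-h-equiv}, which is precisely the variant used in Theorem~\ref{thm-V-H-L2-weight}(ii). The commutator bounds in Lemmas~\ref{lem-Comm}-\ref{lem-com-two} take care of the cross terms produced when one slides the weight $\langle\cdot\rangle^{\beta}$ past the nonlocal operators $\Lambda^\alpha$ and $\Lambda^{\{s\}}D^\gamma$; this is legitimate because $\beta<\alpha$.

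The main technical obstacle is the sign-reversal of the zero-order coefficient when $s$ exceeds the threshold $\tfrac{5-4\alpha+2\beta}{2}$: the absorption has to be arranged so that it only couples to quantities genuinely dominated by the unweighted estimate of Theorem~\ref{thm-V-H} and by the stated hypotheses on $(\ff_1,\ff_2)$, otherwise the final bound becomes circular. Once this delicate perturbative step is in place, taking the supremum over $|\gamma|=[s]$ delivers \eqref{eq-V-H-weight-h-s} in case (i) and \eqref{eq-V-H-weight-h-s-delta} in case (ii).
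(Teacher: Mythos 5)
Your reduction is the right one, and your Fourier computation of the commutator is both correct and slicker than the paper's. You establish $[\Lambda^{\{s\}}D^\gamma,\xx\cdot\nabla]=s\,\Lambda^{\{s\}}D^\gamma$ in a single stroke using the homogeneity of the symbol, whereas the paper (see \eqref{E-L-Appr-diff} and \eqref{E-L-Appr-diff-Lam-2}) splits this into a Leibniz step for $D^\gamma$ giving $[s]D^\gamma\vv_N$ and a separate Fourier-divergence step for $\Lambda^{\{s\}}$ giving the additional $\frac{\{s\}}{2\alpha}\Lambda^{\{s\}}D^\gamma\vv_N$. Whether you absorb the resulting $\frac{s}{2\alpha}w$ into a shifted zero-order coefficient (as you do) or keep it on the right-hand side as a forcing term (as the paper does) is cosmetic; both lead to the same energy inequality with an uncontrolled term $\|\langle\cdot\rangle^{\beta}\Lambda^{\{s\}}D^\gamma\vv\|_{L^2}$ of lower order but large constant.

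The gap is in how you propose to close this term. You say you would bound $\|\Lambda^{\{s\}}D^\gamma\vv\|_{L^2_{\langle\cdot\rangle^{2\beta}}}$ by first invoking Theorem~\ref{thm-V-H-L2-weight} (which gives the weighted $H^\alpha$ norm of $\vv$) and then ``invoking Theorem~\ref{thm-V-H} at $p=2$ to propagate up to the derivative.'' This does not work: Theorem~\ref{thm-V-H} at $p=2$ is a purely unweighted $H^{s+2\alpha}$ estimate, and there is no interpolation between a weighted $H^\alpha$ bound and an unweighted $H^{s+2\alpha}$ bound that produces a weighted $H^s$ bound. What the paper actually does is a two-level induction — first in the fractional part $\{s\}$, incrementing by at most $\alpha$ each step, then in the integer part $[s]$ — and at each step uses Lemma~\ref{lem-com-two} to slide the weight $\langle\cdot\rangle^{\beta}$ past an increment $\Lambda^{\sigma}$ with $\sigma\le\alpha$. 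It is this stepwise gain (from weighted $H^{k\alpha}$ to weighted $H^{(k+1)\alpha}$, roughly) that makes the closing non-circular; see \eqref{eq-w-23}, \eqref{eq-w-23-2}, and the surrounding iteration from \eqref{eq-V-H-weight-2-small-22-33} onward. Your sketch acknowledges the circularity danger but does not supply this inductive mechanism, and the concrete recovery you write down would not produce the needed bound for $s>\alpha$. Two minor additional points: your threshold $s>\frac{5-4\alpha+2\beta}{2}$ for sign-reversal omits the (negative) contribution from $\xx\cdot\nabla g_R^2$ that the paper retains uniformly in $R$, so the effective coefficient is $\frac{2(1-\alpha)-s}{2\alpha}$ and goes negative already for small $s$; and the paper runs the entire argument for a smoothed force $S_N\ff$ and a corresponding smooth solution $\vv_N$ before passing to the limit, a regularization you do not mention but which is needed to justify differentiating the weak solution.
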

\begin{proof}
First of all, we consider the following   approximate system in the whole space
\begin{align}\label{E-L-Appr}
\left\{
\begin{aligned}
& (-\Delta)^{\alpha}\vv_N- \frac{2\alpha-1}{2\alpha}\vv_N-\frac{1}{2\alpha}\xx\cdot \nabla \vv_N
+\nabla P_N=S_N\ff, \\
& \textnormal{div}\,\vv_N=0.
\end{aligned}\ \right.
\end{align}
Since $\ff\in H^s(\RR^3)$, we know that $ S_N\ff\in H^\infty(\RR^3):= \cap_{s\geq0} H^s(\RR^3)$ belongs to the Schwarz class. Moreover, following the proof of Theorem \ref{thm-V-H},  we can get $\vv_N\in H^\infty(\RR^3)$ is a smooth solution to the system \eqref{E-L-Appr}. Performing
differential operator $D^\gamma$ with $|\gamma|=[s]$ to the approximate system yields
\begin{equation}\label{E-L-Appr-diff}
\left\{
\begin{aligned}
 &(-\Delta)^{\alpha}D^\gamma\vv_N- \frac{2\alpha-1}{2\alpha}D^\gamma\vv_N-\frac{1}{2\alpha}\xx\cdot \nabla D^\gamma\vv_N
+\nabla D^\gamma P_N\\
=&D^\gamma S_N\ff
+\sum_{|\gamma_1|=[s]-1}C_{\gamma}^{\gamma_1}(D\xx)\cdot\nabla \partial^{\gamma_1}\vv_N,\\
 &\operatorname{div}D^\gamma\vv_N=0.
\end{aligned}\right.
\end{equation}
Denoting $\{s\}=s-[s]$, one easily verifies that
\begin{align}\label{E-L-Appr-diff-Lam}
\left\{
\begin{aligned}
 &(-\Delta)^{\alpha}\Lambda^{\{s\}}D^\gamma\vv_N- \frac{2\alpha-1}{2\alpha}\Lambda^{\{s\}}D^\gamma\vv_N-\frac{1}{2\alpha}\Lambda^{\{s\}}\left(\xx\cdot \nabla D^\gamma \vv_N\right)    \\
=&-\nabla \Lambda^{\{s\}}D^\gamma P_N+\Lambda^{\{s\}}D^\gamma S_N\ff+\sum_{|\gamma_1|=[s]-1}C_{\gamma}^{\gamma_1}\Lambda^{\{s\}}\big((D\xx)\cdot\nabla \partial^{\gamma_1} \vv_N\big),\\
 &\operatorname{div}\Lambda^{\{s\}}D^\gamma\vv_N=0.
\end{aligned}\right.
\end{align}
In terms of Fourier transform, we easily find that
\begin{align*}
\mathscr{F}\left(\Lambda^{\{s\}}\left(\xx\cdot \nabla D^\gamma \vv_N\right)\right)=&|\xi|^{\{s\}}\operatorname{div}_\xi\left(\xi \mathscr{F}(D^\gamma \vv_N)(\xi)\right)\\
=&\operatorname{div}_\xi\left(\xi |\xi|^{\{s\}}\mathscr{F}(D^\gamma \vv_N)(\xi)\right)-\{s\}|\xi|^{\{s\}}\mathscr{F}(D^\gamma \vv_N)(\xi).
\end{align*}
Plugging this equality into \eqref{E-L-Appr-diff-Lam} leads to
\begin{align}\label{E-L-Appr-diff-Lam-2}
\left\{
\begin{aligned}
 &(-\Delta)^{\alpha}\Lambda^{\{s\}}D^\gamma\vv_N- \frac{2\alpha-1}{2\alpha}\Lambda^{\{s\}}D^\gamma\vv_N-\frac{1}{2\alpha}\xx\cdot \nabla\Lambda^{\{s\}} D^\gamma \vv_N
 +\nabla \Lambda^{\{s\}}D^\gamma P_N\\
=&\Lambda^{\{s\}}D^\gamma S_N\ff +\sum_{|\gamma_1|=[s]-1}C_{\gamma}^{\gamma_1}\Lambda^{\{s\}}\left((D\xx)\cdot\nabla \partial^{\gamma_1} \vv_N\right)+\frac{\{s\}}{2\alpha}\Lambda^{\{s\}}D^\gamma\vv_N,\\
 &\operatorname{div}\Lambda^{\{s\}}D^\gamma\vv_N=0.
\end{aligned}\right.
\end{align}
In particular, when $s\in(0,1),$ problem \eqref{E-L-Appr-diff-Lam-2}  can be reduced to
\begin{align}\label{E-L-Appr-diff-Lam-2-special}
\left\{
\begin{aligned}
 &(-\Delta)^{\alpha}\Lambda^{s}\vv_N- \frac{2\alpha-1}{2\alpha}\Lambda^{s}\vv_N-\frac{1}{2\alpha}\xx\cdot \nabla\Lambda^{s}  \vv_N
 +\nabla \Lambda^{s} P_N
\\=&\Lambda^{s} S_N\ff +\frac{s}{2\alpha}\Lambda^{s}\vv_N,\\
 &\operatorname{div}\Lambda^{s}\vv_N=0.
\end{aligned}\right.
\end{align}
Furthermore, we apply \eqref{eq-V-H-weight} to $\Lambda^{s}\vv_N$ in \eqref{E-L-Appr-diff-Lam-2-special} to get
\begin{equation}\label{eq-V-H-weight-2-small}
\begin{aligned}
\left\|\langle \cdot\rangle^{\beta} \Lambda^{s}\vv_N\right\| _{H^\alpha(\RR^3)}
  \leq&  C \|\Lambda^{s}\vv_N\|_{H^\alpha(\RR^3)}+ C\left\|\langle \cdot\rangle^{\beta}S_N\Lambda^{s}\ff_1\right\|_{L^2(\RR^3)}+C\|\Lambda^s\ff_2\|_{L^2(\RR^3)}\\
 &+ C\left\|\langle \cdot\rangle^{\beta}S_N\Div\Lambda^{s}\ff_2\right\|_{H^{-\alpha}(\RR^3)}+C\left\|\langle \cdot\rangle^{\beta}\Lambda^{s}\vv_N\right\|_{L^2(\RR^3)}\\
& + C\left\|\langle \cdot\rangle^{\beta}\left[\Lambda^{ s },\uu\cdot\nabla\right] \vv_N\right\|_{L^2(\RR^3)}.
  \end{aligned}
\end{equation}
Since
\begin{align*}
\big|| \xx |^{\beta}S_N\Lambda^{s}\ff_1\big|=&\left|| \xx |^{\beta}\int_{\RR^3}{h}_N(\xx-\yy)\left( \Lambda^{s}\ff_1\right)(\yy)\,\mathrm{d}\yy\right|\\
\leq& \int_{\RR^3}| \xx-\yy |^{\beta}\left| {h}_N\right|(\xx-\yy)\left| \Lambda^{s}\ff_1\right|(\yy)\,\mathrm{d}\yy\\
& +\int_{\RR^3}\left|h_N\right|(\xx-\yy)| \yy |^{\beta}\left|\Lambda^{s}\ff_1\right|(\yy)\,\mathrm{d}\yy,
\end{align*}
we have
\begin{equation}\label{eq-w-22}
\left\|\langle \cdot\rangle^{\beta}S_N\Lambda^{s}\ff_1\right\|_{L^2(\RR^3)}\leq C2^{-\beta N}\left\| \Lambda^{s}\ff_1\right\|_{L^2(\RR^3)}+C\left\|| \cdot|^{\beta}\Lambda^{s}\ff_1\right\|_{L^2(\RR^3)}.
\end{equation}
Since $$S_N\Div\Lambda^{s}\ff_2= \Div\Lambda^{s}\ff_2+\sum_{k\geq N}\dot{\Delta}_k\Div\Lambda^{s}\ff_2,$$ we have by the triangle inequality that
\begin{align*}
\left\|\langle \cdot\rangle^{\beta}S_N\Div\Lambda^{s}\ff_2\right\|_{H^{-\alpha}(\RR^3)}\leq&\left\|\langle \cdot\rangle^{\beta}\Div\Lambda^{s}\ff_2\right\|_{ H^{-\alpha}(\RR^3)}+\sum_{k\geq N}\left\|\langle \cdot\rangle^{\beta}\dot{\Delta}_k\Div\Lambda^{s}\ff_2\right\|_{H^{-\alpha}(\RR^3)}.
\end{align*}
Furthermore, we calculate
\begin{align*}
&\sum_{k\geq N}\left\|\langle \cdot\rangle^{\beta}\dot{\Delta}_k\Div\Lambda^{s}\ff_2\right\|_{ H^{-\alpha}(\RR^3)} \\
\leq&\sum_{k\geq N}\left\|\dot{\Delta}_k\langle \cdot\rangle^{\beta}\Div\Lambda^{s}\ff_2\right\|_{ H^{-\alpha}(\RR^3)}
+\sum_{k\geq N}\left\|[\dot{\Delta}_k,\langle \cdot\rangle^{\beta}]\Div\Lambda^{s}\ff_2\right\|_{H^{-\alpha}(\RR^3)}\\
\leq& C\left\|\langle \cdot\rangle^{\beta}\Div\Lambda^{s}\ff_2\right\|_{ H^{-\alpha}(\RR^3)} +\sup_{k\geq N}\left\|[\dot{\Delta}_k,\langle \cdot\rangle^{\beta}]\Div\Lambda^{s}\ff_2\right\|_{L^{2}(\RR^3)}\\
\leq& C\left\|\langle \cdot\rangle^{\beta}\Div\Lambda^{s}\ff_2\right\|_{ H^{-\alpha}(\RR^3)} +C\sup_{k\geq N}2^{-k}\left\| \Div\Lambda^{s}\ff_2\right\|_{L^{2}(\RR^3)}.
\end{align*}
Thus, it follows that
\begin{equation}\label{eq-w-22-3}
\left\|\langle \cdot\rangle^{\beta}S_N\Div\Lambda^{s}\ff_2\right\|_{H^{-\alpha}(\RR^3)}\leq C\left\|\langle \cdot\rangle^{\beta}\Div\Lambda^{s}\ff_2\right\|_{ H^{-\alpha}(\RR^3)} +C \left\| \Div\Lambda^{s}\ff_2\right\|_{L^{2}(\RR^3)}.
\end{equation}
Inserting \eqref{eq-w-22} and \eqref{eq-w-22-3} into \eqref{eq-V-H-weight-2-small} yields
\begin{equation}\label{eq-V-H-weight-2-small-22}
\begin{aligned}
   \left\|\langle \cdot\rangle^{\beta} \Lambda^{s}\vv_N\right\| _{H^\alpha(\RR^3)}
  \leq&   C \|\Lambda^{s}\vv_N\|_{H^\alpha(\RR^3)}+ C\left\|\langle \cdot\rangle^{\beta} \Lambda^{s}\ff_1\right\|_{L^2(\RR^3)}+C\|\Lambda^s\ff_2\|_{H^1(\RR^3)}\\
 &+ C\left\|\langle \cdot\rangle^{\beta} \Div\Lambda^{s}\ff_2\right\|_{H^{-\alpha}(\RR^3)}+C\left\|\langle \cdot\rangle^{\beta}\Lambda^{s}\vv_N\right\|_{L^2(\RR^3)}.
  \end{aligned}
\end{equation}
Since $\ff_1\in H^s(\RR^3)$ and $\ff_2\in H^{s+1}(\RR^3)$, we get from Theorem \ref{thm-V-H} and Theorem \ref{thm-V-H-L2-weight} that
\begin{equation}\label{eq-V-H-weight-22}
\left\|\vv_N\right\| _{H^{s+2\alpha}(\RR^3)}+\left\| \nabla P_N\right\| _{H^s(\RR^3)}
  \leq  C\|\ff_1\|_{H^s(\RR^3)}+ C\|\ff_2\|_{H^{s+1}(\RR^3)}.
\end{equation}
and
\begin{equation}\label{eq-V-H-weight-2233}
\begin{split}
 & \left\|\langle \cdot\rangle^{\beta} \vv_N\right\| _{H^\alpha(\RR^3)}\\
  \leq&  C\left(\|\vv_N\|_{H^\alpha(\RR^3)}+ \left\|\langle \cdot\rangle^{\beta}\ff_1\right\|_{L^2(\RR^3)}+\left\|\langle \cdot\rangle^{ \beta}\Div\ff_{2}\right\|_{H^{-\alpha}(\RR^3)}+ \|\ff_2\|_{L^2(\RR^3)}\right).
  \end{split}
\end{equation}
For each $s\in(0,1),$ we have by Lemma \ref{lem-com-two} that
\begin{equation}\label{eq-w-23}
\left\|\langle \cdot\rangle^{\beta}\Lambda^{s}\vv_N\right\|_{L^2(\RR^3)}\leq C\left\|\Lambda^{s}\left(\langle \cdot\rangle^{\beta}\vv_N\right)\right\|_{L^2(\RR^3)}+C\left\|\langle \cdot\rangle^{\beta}\vv_N\right\|_{L^2(\RR^3)}.
\end{equation}
Inserting \eqref{eq-V-H-weight-22}, \eqref{eq-V-H-weight-2233} and \eqref{eq-w-23} into \eqref{eq-V-H-weight-2-small-22}, we obtain that for each $s\in[0,\alpha],$
\begin{equation}\label{eq-V-H-weight-2-small-22-33}
\begin{split}
 \left\|\langle \cdot\rangle^{\beta} \Lambda^{s}\vv_N\right\| _{H^\alpha(\RR^3)}
  \leq&  C \left\|\langle \cdot\rangle^{\beta}(I_{d}+\Lambda^{s})\ff_1\right\|_{L^2(\RR^3)}+C\|\Lambda^{s}\ff_2\|_{H^1(\RR^3)}\\
   &+ C \left\|\langle \cdot\rangle^{\beta}(I_{d}+\Lambda^{s})\Div\ff_2\right\|_{H^{-\alpha}(\RR^3)}.
  \end{split}
\end{equation}
For each $s\in(\alpha,\min\{1,2\alpha\}),$ we have by Lemma \ref{lem-com-two} again that
\begin{equation}\label{eq-w-23-2}
\left\|\langle \cdot\rangle^{\beta}\Lambda^{s}\vv_N\right\|_{L^2(\RR^3)}\leq C\left\|\Lambda^{s-\alpha}\left(\langle \cdot\rangle^{\beta}\Lambda^{\alpha}\vv_N\right)\right\|_{L^2(\RR^3)}+C\left\|\langle \cdot\rangle^{\beta}\vv_N\right\|_{L^2(\RR^3)}.
\end{equation}
Plugging \eqref{eq-V-H-weight-22}, \eqref{eq-V-H-weight-2-small-22-33} and \eqref{eq-w-23-2} into \eqref{eq-V-H-weight-2-small-22} leads to that
for each $s\in(\alpha,\min\{1,2\alpha\}),$
\begin{equation*}
\begin{split}
 \left\|\langle \cdot\rangle^{\beta} \Lambda^{s}\vv_N\right\| _{H^\alpha(\RR^3)}
 \leq&  C \left\|\langle \cdot\rangle^{\beta}(I_{d}+\Lambda^{s})\ff_1\right\|_{L^2(\RR^3)}+C\|\Lambda^{s}\ff_2\|_{H^1(\RR^3)}\\
   &+ C \left\|\langle \cdot\rangle^{\beta}(I_{d}+\Lambda^{s})\Div\ff_2\right\|_{H^{-\alpha}(\RR^3)}.
  \end{split}
\end{equation*}
Proceeding step by step, we can show that for each $s\in(0,1),$
\begin{equation}\label{eq-V-H-weight-2-small-22-33-stepf}
\begin{split}
 \left\|\langle \cdot\rangle^{\beta} \Lambda^{s}\vv_N\right\| _{H^\alpha(\RR^3)}
  \leq & C \left\|\langle \cdot\rangle^{\beta}(I_{d}+\Lambda^{s})\ff_1\right\|_{L^2(\RR^3)}+C\|\Lambda^{s}\ff_2\|_{H^1(\RR^3)}\\
   &+ C \left\|\langle \cdot\rangle^{\beta}(I_{d}+\Lambda^{s})\Div\ff_2\right\|_{H^{-\alpha}(\RR^3)}.
   \end{split}
\end{equation}
Next, we consider the case where  $s\geq1$. Suppose there exists an integer $k\in\NN^+$ such that for $0\leq s<k,$
\begin{equation}\label{eq-2222}
\begin{split}
 \sup_{|\gamma|=[s]} \left\|\langle \cdot\rangle^{\beta} \Lambda^{\{s\}}D^{\gamma}\vv_N\right\| _{H^\alpha(\RR^3)}
  \leq  &  C \left\|\langle \cdot\rangle^{\beta}(I_{d}+\Lambda^{\{s\}}D^{\gamma})\ff_1\right\|_{L^2(\RR^3)}+C\|\Lambda^{s}\ff_2\|_{H^1(\RR^3)}\\
   & + C \left\|\langle \cdot\rangle^{\beta}(I_{d}+\Lambda^{\{s\}}D^{\gamma})\Div\ff_2\right\|_{H^{-\alpha}(\RR^3)}.
  \end{split}
\end{equation}
Employing \eqref{eq-V-H-weight}  in the system \eqref{E-L-Appr-diff}, we readily have
\begin{equation}\label{eq-V-H-weight-2-1}
\begin{aligned}
& \sup_{|\gamma|=k} \left\|\langle \cdot\rangle^{\beta}  D^\gamma\vv_N\right\| _{H^\alpha(\RR^3)} \\
  \leq&  C \sup_{|\gamma|=k}\left(\| D^\gamma\vv_N\|_{H^\alpha(\RR^3)}+ \left\|\langle \cdot\rangle^{\beta}S_ND^\gamma\ff_1\right\|_{L^2(\RR^3)}+\left\|\langle \cdot\rangle^{\beta}D^\gamma\vv_N\right\|_{L^2(\RR^3)}\right)\\
& + C \sup_{|\gamma|=k}\left( \left\|\langle \cdot\rangle^{\beta}S_ND^\gamma\Div\ff_2\right\|_{H^{-\alpha}(\RR^3)}+\left\|D^\gamma \ff_2\right\|_{H^1(\RR^3)}\right).
  \end{aligned}
\end{equation}
From \eqref{eq-w-22}, it is easy to infer that
\begin{equation}\label{eq-w-22-2}
\left\|\langle \cdot\rangle^{\beta}S_ND^{\gamma}\ff_1\right\|_{L^2(\RR^3)}\leq C2^{-\beta N}\left\| D^{\gamma}\ff_1\right\|_{L^2(\RR^3)}+\left\|| \cdot|^{\beta}D^{\gamma}\ff_1\right\|_{L^2(\RR^3)}.
\end{equation}
The same way as deriving \eqref{eq-w-22-3} enables us to conclude that
\begin{equation}\label{eq-w-22-3-R}
 \left\|\langle \cdot\rangle^{\beta}S_ND^\gamma\Div\ff_2\right\|_{H^{-\alpha}(\RR^3)}\leq C \left\|\langle \cdot\rangle^{\beta}D^\gamma\Div\ff_2\right\|_{H^{-\alpha}(\RR^3)} + C\left\| D^\gamma\Div \ff_2\right\|_{L^{2}(\RR^3)}.
\end{equation}
Since $\alpha\in(0,1)$ and $k\in\NN^+$, we have by the Leibniz rule that for each $M\in\NN^+,$
\begin{align*}
\left\|\langle \cdot\rangle^{\beta}D^\gamma\vv_N\right\|_{L^2(\RR^3)}\leq & \sum_{k\geq M}\left\|\dot{\Delta}_k\left(\langle \cdot\rangle^{\beta}D^\gamma\vv_N\right)\right\|_{L^2(\RR^3)}+\left\|S_M\left(\langle \cdot\rangle^{\beta}D^\gamma\vv_N\right)\right\|_{L^2(\RR^3)}\\
\leq & C2^{-M\alpha}\left\|\langle \cdot\rangle^{\beta}D^\gamma\vv_N\right\|_{\dot{H}^\alpha(\RR^3)}+\sup_{|\gamma|=k-1}\left\|S_M D\left(\langle \cdot\rangle^{\beta}D^\gamma\vv_N\right)\right\|_{L^2(\RR^3)}\\&+\sup_{|\gamma|=k-1}\left\|S_M\left( D\langle \cdot\rangle^{\beta}D^\gamma\vv_N\right)\right\|_{L^2(\RR^3)},
\end{align*}
which implies
\begin{equation}\label{eq-w-22-23}
\begin{split}
&\sup_{|\gamma|=k}\left\|\langle \cdot\rangle^{\beta}D^\gamma\vv_N\right\|_{L^2(\RR^3)}\\
\leq &C\sup_{|\gamma|=k-1}\left\|\langle \cdot\rangle^{\beta}D^\gamma\vv_N\right\|_{L^2(\RR^3)}+C2^{-M\alpha}\sup_{|\gamma|=k}\left\|\langle \cdot\rangle^{\beta}D^\gamma\vv_N\right\|_{\dot{H}^\alpha(\RR^3)}.
\end{split}
\end{equation}
Inserting \eqref{eq-w-22-2}-\eqref{eq-w-22-23} into \eqref{eq-V-H-weight-2-1} and then taking the suitable $M$, we get
\begin{equation}\label{eq-w-22-24}
\begin{aligned}
 \sup_{|\gamma|=k}\left\|\langle \cdot\rangle^{\beta}  D^\gamma\vv_N\right\| _{H^\alpha(\RR^3)}
  \leq&  C \sup_{|\gamma|=k}\left(\| D^\gamma\vv_N\|_{H^\alpha(\RR^3)}+ \left\|\langle \cdot\rangle^{\beta}D^\gamma\ff_1\right\|_{L^2(\RR^3)}\right)\\
  & + C \sup_{|\gamma|=k}\left( \left\|\langle \cdot\rangle^{\beta}D^\gamma\Div\ff_2\right\|_{H^{-\alpha}(\RR^3)}+\left\|D^\gamma \ff_2\right\|_{H^1(\RR^3)}\right)\\
  &+C \sup_{|\gamma|=k-1}\left\|\langle \cdot\rangle^{\beta}D^\gamma\vv_N\right\|_{L^2(\RR^3)}.
  \end{aligned}
\end{equation}
With estimates \eqref{eq-V-H-weight-22} and \eqref{eq-w-22-23} in hand, the estimate \eqref{eq-w-22-24} becomes
\begin{equation}\label{eq-w-22-25}
\begin{aligned}
 \sup_{|\gamma|=k} \left\|\langle \cdot\rangle^{\beta}  D^\gamma\vv_N\right\| _{H^\alpha(\RR^3)}    \leq& C  \sup_{|\gamma|=k}\left( \left\|\langle \cdot\rangle^{\beta}(I_{d}+D^{\gamma})\ff_1\right\|_{L^2(\RR^3)}+\left\|D^\gamma \ff_2\right\|_{H^1(\RR^3)}\right)\\&+ C \sup_{|\gamma|=k} \left\|\langle \cdot\rangle^{\beta}D^\gamma\Div\ff_2\right\|_{H^{-\alpha}(\RR^3)}.
\end{aligned}
\end{equation}
Employing \eqref{eq-V-H-weight} in the system \eqref{E-L-Appr-diff-Lam-2}, we readily have that for each $s\in(k,k+1),$
\begin{equation}\label{eq-V-H-weight-217}
\begin{aligned}
 &\sup_{|\gamma|=k} \left\|\langle \cdot\rangle^{\beta} \Lambda^{\{s\}}D^\gamma\vv_N\right\| _{H^\alpha(\RR^3)}  \\
  \leq&  C\sup_{|\gamma|=k} \left(\|\Lambda^{\{s\}}D^\gamma\vv_N\|_{H^\alpha(\RR^3)}+ \left\|\langle \cdot\rangle^{\beta}S_N\Lambda^{\{s\}}D^\gamma\ff_1\right\|_{L^2(\RR^3)}\right)\\
& + C \sup_{|\gamma|=k}\left( \left\|\langle \cdot\rangle^{\beta}S_N\Lambda^{\{s\}}D^\gamma\Div\ff_2\right\|_{H^{-\alpha}(\RR^3)}+\left\|\Lambda^{\{s\}}D^\gamma \ff_2\right\|_{H^1(\RR^3)}\right)\\
&+ C \sup_{|\gamma|=k}\left\|\langle \cdot\rangle^{\beta}\Lambda^{\{s\}}D^\gamma\vv_N\right\|_{L^2(\RR^3)}.
  \end{aligned}
\end{equation}
By mimicking \eqref{eq-w-22}, we can show
\begin{equation}\label{eq-w-22-2-17}
\left\|\langle \cdot\rangle^{\beta}S_N\Lambda^{\{s\}}D^{\gamma}\ff_1\right\|_{L^2(\RR^3)}\leq C2^{-\beta N}\left\| \Lambda^{\{s\}}D^{\gamma}\ff_1\right\|_{L^2(\RR^3)}+C\left\|\langle \cdot\rangle^{\beta}\Lambda^{\{s\}}D^{\gamma}\ff_1\right\|_{L^2(\RR^3)}.
\end{equation}
From \eqref{eq-w-22-3},  we get
\begin{equation}\label{eq-w-22-3-RR}
\begin{split}
& \left\|\langle \cdot\rangle^{\beta}S_N\Lambda^{\{s\}}D^\gamma\Div\ff_2\right\|_{H^{-\alpha}(\RR^3)}\\
 \leq &C \left\|\langle \cdot\rangle^{\beta}\Lambda^{\{s\}}D^\gamma\Div\ff_2\right\|_{H^{-\alpha}(\RR^3)} + C\left\| \Lambda^{\{s\}}D^\gamma\Div \ff_2\right\|_{L^{2}(\RR^3)}.
 \end{split}
\end{equation}
In the similar method as in proof of \eqref{eq-w-22-23}, we infer that for each $k\in\NN^+,$
\begin{equation}\label{eq-w-22-23-17}
\begin{split}
&\sup_{|\gamma|=k}\left\|\langle \cdot\rangle^{\beta}\Lambda^{\{s\}}D^\gamma\vv_N\right\|_{L^2(\RR^3)}\\
\leq& C\sup_{|\gamma|=k-1}\left\|\langle \cdot\rangle^{\beta}\Lambda^{\{s\}}D^\gamma\vv_N\right\|_{L^2(\RR^3)}+C2^{-M\alpha}\sup_{|\gamma|=k}\left\|\langle \cdot\rangle^{\beta}\Lambda^{\{s\}}D^\gamma\vv_N\right\|_{\dot{H}^\alpha(\RR^3)}.
\end{split}
\end{equation}
Inserting \eqref{eq-w-22-2-17}-\eqref{eq-w-22-23-17} into \eqref{eq-V-H-weight-2-1} and then taking the suitable integer $M$, we immediately get that for each $s\in(k,k+1),$
\begin{align*}
&\sup_{|\gamma|=k} \left\|\langle \cdot\rangle^{\beta} \Lambda^{\{s\}} D^\gamma\vv_N\right\| _{H^\alpha(\RR^3)}  \\
  \leq&  C \sup_{|\gamma|=k}\left(\|\Lambda^{\{s\}} D^\gamma\vv_N\|_{H^\alpha(\RR^3)}+ \left\|\langle \cdot\rangle^{\beta}\Lambda^{\{s\}}D^\gamma\ff_1\right\|_{L^2(\RR^3)}\right)\\
 & + C \sup_{|\gamma|=k}\left( \left\|\langle \cdot\rangle^{\beta}\Lambda^{\{s\}}D^\gamma\Div\ff_2\right\|_{H^{-\alpha}(\RR^3)}+\left\|\Lambda^{\{s\}}D^\gamma \ff_2\right\|_{H^1(\RR^3)}\right)\\
  &+C \sup_{|\gamma|=k-1}\left\|\langle \cdot\rangle^{\beta}\Lambda^{\{s\}}D^\gamma\vv_N\right\|_{L^2(\RR^3)}.
\end{align*}
Plugging \eqref{eq-V-H-weight-22} and \eqref{eq-w-22-23} into the above inequality leads to that for each $s\in(k,k+1),$
\begin{equation}\label{eq-w-22-25-17}
\begin{split}
 &\sup_{|\gamma|=k} \left\|\langle \cdot\rangle^{\beta} \Lambda^{\{s\}} D^\gamma\vv_N\right\| _{H^\alpha(\RR^3)}  \\
\leq &C \sup_{|\gamma|=k}  \left\|\langle \cdot\rangle^{\beta}(I_{d}+\Lambda^{\{s\}}D^\gamma)\ff_1\right\|_{L^2(\RR^3)}+ C \sup_{|\gamma|=k} \left\|\Lambda^{\{s\}}D^\gamma \ff_2\right\|_{H^1(\RR^3)} \\
&+C \sup_{|\gamma|=k}  \left\|\langle \cdot\rangle^{\beta}\Lambda^{\{s\}}D^\gamma\Div\ff_2\right\|_{H^{-\alpha}(\RR^3)} +C \sup_{|\gamma|=k-1}\left\|\langle \cdot\rangle^{\beta}\Lambda^{\{s\}}D^\gamma\vv_N\right\|_{L^2(\RR^3)}.
\end{split}
\end{equation}
Using the estimate  \eqref{eq-w-22-25}, we get from  \eqref{eq-w-22-25-17} that for each $s\in[k,k+1),$
\begin{align*}
&\sup_{|\gamma|=k} \left\|\langle \cdot\rangle^{\beta} \Lambda^{\{s\}} D^\gamma\vv_N\right\| _{H^\alpha(\RR^3)}  \\
\leq &C   \sup_{|\gamma|=k}  \left\|\langle \cdot\rangle^{\beta}(I_{d}+\Lambda^{\{s\}}D^\gamma)\ff_1\right\|_{L^2(\RR^3)}+ C \sup_{|\gamma|=k} \left\|\Lambda^{\{s\}}D^\gamma \ff_2\right\|_{H^1(\RR^3)} \\
&+C \sup_{|\gamma|=k}  \left\|\langle \cdot\rangle^{\beta}\Lambda^{\{s\}}D^\gamma\Div\ff_2\right\|_{H^{-\alpha}(\RR^3)}.
\end{align*}
By induction, we eventually get that for each $s\geq0,$
\begin{align*}
&\sup_{|\gamma|=[s]} \left\|\langle \cdot\rangle^{\beta} \Lambda^{\{s\}} D^\gamma\vv_N\right\| _{H^\alpha(\RR^3)}  \\
 \leq&C    \sup_{|\gamma|=[s]} \left\|\langle \cdot\rangle^{\beta}(I_{d}+\Lambda^{\{s\}}D^\gamma)\ff_1\right\|_{L^2(\RR^3)}+ C \sup_{|\gamma|=[s]} \left\|\Lambda^{\{s\}}D^\gamma \ff_2\right\|_{H^1(\RR^3)} \\
&+C \sup_{|\gamma|=[s]}  \left\|\langle \cdot\rangle^{\beta}\Lambda^{\{s\}}D^\gamma\Div\ff_2\right\|_{H^{-\alpha}(\RR^3)}.
\end{align*}
Taking $N\to\infty$ implies the desired estimate \eqref{eq-V-H-weight-h-s}.

In the same way as in the proof of \eqref{eq-V-H-weight-h-s}, we can show the  desired estimate~\eqref{eq-V-H-weight-h-s-delta} of Theorem \ref{thm-V-H-L2-weight-22}.
\end{proof}

%%%%%%%%%%%%%%%%%%%%%%%%%%%%%%%%%%%%%%%%%%%%%%%%%%%%%%%%%%%%%%%%%%%%%%%%%%%%%%%%%%%%%%%%%%%%%%%%%%%
\section{High Regularity for weak solutions to Leray problem }\label{HighR}
%%%%%%%%%%%%%%%%%%%%%%%%%%%%%%%%%%%%%%%%%%%%%%%%%%%%%%%%%%%%%%%%%%%%%%%%%%%%%%%%%%%%%%%%%%%%%%
\setcounter{section}{3}\setcounter{equation}{0}
In this section, we are going to show the high regularity for weak solutions to Leray problem~\eqref{E-forwad-leray} and the perturbed Leray problem \eqref{E}. For simplicity, we just need to consider the following hybrid problem in the whole space
\begin{align}\label{E-2}
\left\{
\begin{aligned}
&(-\Delta)^{\alpha}\vv- \frac{2\alpha-1}{2\alpha}\vv-\frac{1}{2\alpha}\xx\cdot \nabla\vv+\nabla P=-\vv\cdot\nabla \vv  +L_{\uu_0}(\vv)+\operatorname{div}\ff,\\
&\textnormal{div}\,\vv=0,
\end{aligned}  \right.
\end{align}
where
\[L_{\uu_0}\vv=- \uu_{0} \cdot\nabla \vv-\vv\cdot\nabla \uu_{0}.\]
Let $\ff_1=-\vv\cdot\nabla \uu_{0}+\operatorname{div}\ff.$  Then the system \eqref{E-2} can be rewritten as
\begin{align}\label{E-G-Leray}
\left\{
\begin{aligned}
& (-\Delta)^{\alpha}\vv- \frac{2\alpha-1}{2\alpha}\vv-\frac{1}{2\alpha}\xx\cdot \nabla\vv+(\uu_0+\vv)\cdot\nabla \vv+\nabla P=\ff_1, \\
& \textnormal{div}\,\vv=0.
\end{aligned}  \right.
\end{align}

%%%%%%%%%%%%%%%%%%%%%%%%%%%%%%%%%%%%%%%%%%%%%%%%%%%%%%%%%%%%%%%%%%%%%%%%%%%%%%%%%%%%%%%%%%%%%%%%%%%
\subsection{High regularity for weak solutions to the perturbed Leray problem}
%%%%%%%%%%%%%%%%%%%%%%%%%%%%%%%%%%%%%%%%%%%%%%%%%%%%%%%%%%%%%%%%%%%%%%%%%%%%%%%%%%%%%%%%%%%%%%

This subsection is devoted to proving $L^{p}$-type elliptic regularity for the weak solutions to the problem~\eqref{E-2}.
\begin{theorem}\label{thm-Lp-U-G}
Assume $\alpha\in[5/6,1]$, $p\in[2,+\infty)$, $\operatorname{div}\ff \in L^2(\RR^3)\cap \dot{B}^0_{p,p}(\RR^3)$ and $\sigma\in C^{1,0}(\mathbb{S}^2)$. Let $(\vv,P)\in H^\alpha(\RR^3)\times L^{\frac{3}{3-2\alpha}}(\RR^3)$ be a weak solution to the system \eqref{E-2}. Then there exists a constant $C= C_{p,\sigma}>0$ such that
\begin{equation}\label{eq-Lp-vvvv}
\left\|\uu\right\|_{\dot{B}^{\frac{5}{3}}_{p,p}(\RR^3)}+\left\|P\right\|_{\dot{B}^{1}_{p,p}(\RR^3)}
\leq  C_{p,\sigma}\left(1+\left\|\operatorname{div}\ff \right\|_{\dot{B}^{0}_{p,p}(\RR^3)}+ \|\operatorname{div}\ff \|_{L^2(\RR^3)}\right).
\end{equation}
\end{theorem}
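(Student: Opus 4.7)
The plan is to move the full convective structure and the perturbation caused by $\uu_0$ to the right-hand side, and then invoke the linearized $L^p$-type regularity of Theorem \ref{thm-V-H} (with the advection coefficient set to zero). Using $\operatorname{div}\uu_0 = \operatorname{div}\vv = 0$, I would rewrite the nonlinear term as $(\uu_0+\vv)\cdot\nabla\vv = \operatorname{div}(\uu_0\otimes\vv) + \operatorname{div}(\vv\otimes\vv)$ and view $\vv\cdot\nabla\uu_0$ as a forcing term. Applying Theorem \ref{thm-V-H} at $s=0$ with $\alpha=5/6$ then yields
\[
\|\vv\|_{\dot{B}^{5/3}_{p,p}(\RR^3)} + \|P\|_{\dot{B}^{1}_{p,p}(\RR^3)} \leq C_{p}\Bigl(\|\operatorname{div}\ff\|_{\dot{B}^{0}_{p,p}} + \|\operatorname{div}(\uu_0\otimes\vv)\|_{\dot{B}^{0}_{p,p}} + \|\operatorname{div}(\vv\otimes\vv)\|_{\dot{B}^{0}_{p,p}} + \|\vv\cdot\nabla\uu_0\|_{\dot{B}^{0}_{p,p}} + \|\vv\|_{\dot{B}^{0}_{p,p}}\Bigr).
\]

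The next step is to tame the two divergence-form convective pieces by invoking the Compactness Lemma \ref{lem-SYB} twice. First with $(\uu,\vv)=(\uu_0,\vv)$, which is legitimate because $\uu_0 = e^{-(-\Delta)^{5/6}}\uuu_0$ lies in $\dot{H}^{5/6}(\RR^3)$ by Lemma \ref{lem-exe-decay} applied to the scale-invariant datum $\sigma(\xx)/|\xx|^{2/3}$. Second with $(\uu,\vv)=(\vv,\vv)$, which is legitimate because the hypothesis $\vv\in H^{\alpha}=H^{5/6}$ is already in hand. Each application delivers a bound of the form $\varepsilon\|\vv\|_{\dot{B}^{5/3}_{p,p}} + C_{\varepsilon,p,\uu_0\text{ or }\vv}\|\vv\|_{\dot{B}^{0}_{p,p}}$; choosing $\varepsilon$ small allows us to absorb the principal part into the left-hand side. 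The forcing $\vv\cdot\nabla\uu_0$ is controlled via Bony's paraproduct combined with the pointwise decay of $\nabla\uu_0$ furnished by Lemma \ref{lem-exe-decay} (this is where the heat-kernel-in-annulus estimate \eqref{eq-exe-decay} plays its role in producing integrable bounds for $\nabla\uu_0$), and $\|\vv\|_{\dot{B}^{0}_{p,p}}$ is dominated by the a priori $H^{5/6}$-norm via Besov embedding as long as $p$ is not too large.

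The argument closes directly when $p\in[2,9/2)$, the range dictated by Lemma \ref{lem-SYB}. For $p\geq 9/2$ I would bootstrap: having secured $\vv\in\dot{B}^{5/3}_{p_1,p_1}(\RR^3)$ for some $p_1\in[2,9/2)$, Besov embedding produces $\vv\in L^q(\RR^3)$ for any $q\in[p_1,\infty]$ (indeed in $L^\infty$ once $5/3>3/p_1$), so that $\operatorname{div}(\vv\otimes\vv)$ is controlled in $\dot{B}^0_{p,p}$ by H\"older/paraproduct estimates for arbitrary finite $p$; a second application of Theorem \ref{thm-V-H} then reaches the desired value of $p$. The principal obstacle is that at the critical index $\alpha=5/6$ there is no spare scaling to treat the quadratic nonlinearity as a small perturbation, so the compactness mechanism must be used in an essential way to extract the small factor $\varepsilon$ from high-frequency decay of $\vv$ in $\dot{H}^{5/6}$; a secondary difficulty is the borderline decay $|\xx|^{-5/3}$ (with logarithmic loss) of $\nabla\uu_0$, which forces us to exploit the semigroup estimate \eqref{eq-exe-decay} rather than a cruder pointwise bound when handling the term $\vv\cdot\nabla\uu_0$.
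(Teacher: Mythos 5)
Your overall plan --- linearize via Theorem~\ref{thm-V-H}, absorb the quadratic part with the Compactness Lemma, then bootstrap for large $p$ --- is the one the paper follows, but two of your intermediate claims are not justified, and both correspond to essential steps that the paper handles differently.

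\textbf{Circularity and the missing approximation.} Applying Theorem~\ref{thm-V-H} at $s=0$ with the full nonlinearity treated as forcing yields a bound whose right side contains $\|\operatorname{div}(\vv\otimes\vv)\|_{\dot B^0_{p,p}}$, and the Compactness Lemma~\ref{lem-SYB} that you invoke to control it carries the hypothesis $\vv\in B^{5/3}_{p,r}(\RR^3)$. For a bare weak solution $\vv\in H^{\alpha}(\RR^3)$, neither that membership nor the finiteness of $\|\vv\|_{\dot B^{5/3}_{p,p}}$ is known in advance, so the ``absorb $\varepsilon\|\vv\|_{\dot B^{5/3}_{p,p}}$ into the left side'' step has no content --- one cannot subtract a quantity not yet known to be finite. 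This is exactly the difficulty of the critical index: there is no excess regularity from which to bootstrap. The paper resolves it by introducing the regularized \emph{linear} problems~\eqref{E-Approx}, in which the advecting coefficient is the mollified $\vv_k=J_k\vv$ and the advected field is $J_k\ww_k$: the unknowns $\ww_k$ are smooth, so Theorem~\ref{thm-V-H} and Lemma~\ref{lem-SYB} apply legitimately, the estimates are uniform in $k$, and the limit is identified with $\vv$ by uniqueness of the linearized problem. Some regularization of this type is indispensable; a purely formal a priori estimate on $\vv$ does not establish the theorem.

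\textbf{The claim $\uu_0\in\dot H^{5/6}(\RR^3)$ is false.} You justify the second application of Lemma~\ref{lem-SYB} (with $\uu=\uu_0$) by asserting that $\uu_0=e^{-(-\Delta)^{5/6}}\uuu_0$ lies in $\dot H^{5/6}(\RR^3)$, but it does not. Since $\uuu_0=\sigma(\xx)/|\xx|^{2\alpha-1}$ is exactly scale-invariant, $\widehat{\uuu_0}$ is homogeneous of degree $-(4-2\alpha)$, and the multiplier $e^{-|\xi|^{2\alpha}}$ contributes nothing near $\xi=0$; for $\alpha=5/6$ the density of $\|\uu_0\|^2_{\dot H^{5/6}}$ behaves like $|\xi|^{-3}$ as $\xi\to 0$, and the integral diverges logarithmically. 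At best $\uu_0\in\dot B^{5/6}_{2,\infty}$, which is not what Lemma~\ref{lem-SYB} requires. The paper never applies the Compactness Lemma to the $\uu_0$-terms: $\operatorname{div}(\uu_0\otimes J_k\ww_k)$ and $\vv\cdot\nabla\uu_0$ are instead controlled by direct Leibniz estimates, exploiting that the heat semigroup makes $\uu_0$ and $\nabla\uu_0$ bounded pointwise (see~\eqref{eq-p-1} and~\eqref{eq-p-p-2}), followed by interpolation between $\dot B^0_{p,p}$ and $\dot B^{2\alpha}_{p,p}$ to extract the small factor. Only the genuinely critical piece $\operatorname{div}(\vv\otimes\vv)$ calls for Lemma~\ref{lem-SYB}. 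As a secondary remark, your ``logarithmic loss in the decay of $\nabla\uu_0$'' is not a feature of $\uu_0$ itself; the logarithmic loss in this paper comes from the Leray projector acting on $\vv$, not from $\nabla\uu_0$.
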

\begin{proof}
 Let $\vv_k=J_k\vv$ be a sequence of smooth functions approximating $\vv$, and  $\ww_k\in H^\alpha_{\sigma}(\RR^3)$ be a weak solution to the corresponding linearized equations  in the whole space
 \begin{align}\label{E-Approx}
\left\{
\begin{aligned}
 &(-\Delta)^{\alpha}\ww_k- \frac{2\alpha-1}{2\alpha}\ww_k-\frac{1}{2\alpha}\xx\cdot \nabla \ww_k
+J_k\big((\uu_0+\vv_k)\cdot\nabla J_k\ww_k \big) +\nabla P_k=\ff_1, \\
 &\textnormal{div}\,\ww_k=0.
\end{aligned}  \right.
\end{align}
Since $\uu_0\in L^2(\RR^3),\,\vv\in H^\alpha(\RR^3)$ and $\nabla \uu_0\in L^\infty(\RR^3)$, we readily have that $\ff_1\in L^2(\RR^3),$ moreover, we get the following estimate by the standard $L^2$-estimate
\begin{equation}\label{eq-V-H-L-five}
\|\ww_k\| _{ {H}^{\alpha}(\RR^3)} \leq C_\alpha \|\ff_1\|_{L^2(\RR^3)}.
\end{equation}
Taking $s=0$ in Theorem \ref{thm-V-H}, we find that weak solution $(\ww_k,P_k)$ fulfils
\begin{equation}\label{eq-V-H-L}
\begin{split}
&\left\|\ww_k\right\|_{\dot{B}^{2\alpha}_{p,p}(\RR^3)}+\left\|P_k\right\|_{\dot{B}^{1}_{p,p}(\RR^3)}\\
\leq& C_{p}\left(\left\|\operatorname{div}\left((\uu_0+\vv_k)\otimes J_k\ww_k \right)\right\|_{\dot{B}^{0}_{p,p}(\RR^3)}
+\left\|\ff_1\right\|_{\dot{B}^{0}_{p,p}(\RR^3)}+\left\|\ww_k\right\|_{\dot{B}^{0}_{p,p}(\RR^3)}\right).
\end{split}
\end{equation}
Let us first consider the case where $p\in[2,{9}/{2})$. Since $\vv\in H^\alpha(\RR^3)$ with $\alpha\in[5/6,1],$  we  obtain by Lemma \ref{lem-SYB} that for each $p\in[2,{9}/{2})$,
\begin{equation}\label{eq-KY-Fractional-BS-R}
\|\operatorname{div}\left(\vv_k\otimes J_k\ww_k\right)\|_{\dot{B}^{0}_{p,p}(\RR^3)}\leq \varepsilon\|\ww_k\|_{\dot{B}^{\frac{5}{3}}_{p,p}(\RR^3)}+C_\varepsilon\|\ww_k\|_{\dot{B}^0_{p,p}(\RR^3)}.
\end{equation}
Since $\alpha\geq\frac{5}{6}$, one has by using the interpolation inequality and the Young inequality that
\begin{align*}
\|\ww_k\|_{\dot{B}^{\frac{5}{3}}_{p,p}(\RR^3)}\leq &\|\ww_k\|^{\frac{6\alpha-5}{6\alpha}}_{\dot{B}^{0}_{p,p}(\RR^3)}\|\ww_k\|^{\frac{5}{6\alpha}}_{\dot{B}^{2\alpha}_{p,p}(\RR^3)}\\
\leq &\|\ww_k\|_{\dot{B}^{2\alpha}_{p,p}(\RR^3)}+C_\alpha\|\ww_k\|_{\dot{B}^{0}_{p,p}(\RR^3)}.
\end{align*}
 This estimate together with \eqref{eq-KY-Fractional-BS-R} leads to
\begin{equation}\label{eq-KY-Fractional-BS-RR}
\|\operatorname{div}\left(\vv_k\otimes J_k\ww_k\right)\|_{\dot{B}^{0}_{p,p}(\RR^3)}\leq \varepsilon\|\ww_k\|_{\dot{B}^{2\alpha}_{p,p}(\RR^3)}+C_\varepsilon\|\ww_k\|_{\dot{B}^0_{p,p}(\RR^3)}.
\end{equation}
Using the Leibniz estimate, we see that
\begin{equation}\label{eq-p-1}
\begin{split}
\|\operatorname{div}\left(\uu_0\otimes J_k\ww_k\right)\|_{\dot{B}^{0}_{p,p}(\RR^3)}\leq & C\|\uu_0\|_{L^\infty(\RR^3)}\|\ww_k\|_{\dot{B}^1_{p,p}(\RR^3)}\\
&+C\|\nabla\uu_0\|_{L^\infty(\RR^3)}\|\ww_k\|_{\dot{B}^0_{p,p}(\RR^3)}.
\end{split}
\end{equation}
Since $\alpha\in[5/6,1],$  we get by the interpolation inequality and the Young inequality that
\begin{align*}
\|\ww_k\|_{\dot{B}^1_{p,p}(\RR^3)}\leq & C\|\ww_k\|^{\frac{2\alpha-1}{2\alpha}}_{\dot{B}^0_{p,p}(\RR^3)}\|\ww_k\|^{\frac{1}{2\alpha}}_{\dot{B}^{2\alpha}_{p,p}(\RR^3)}\\
\leq & C_\varepsilon\|\ww_k\|_{\dot{B}^0_{p,p}(\RR^3)}+\varepsilon\|\ww_k\|_{\dot{B}^{2\alpha}_{p,p}(\RR^3)}.
\end{align*}
Plugging this estimate into \eqref{eq-p-1} leads to
\begin{equation}\label{eq-p-2}
\|\operatorname{div}\left(\uu_0\otimes J_k\ww_k\right)\|_{\dot{B}^{0}_{p,p}(\RR^3)} \leq \varepsilon\|\ww_k\|_{\dot{B}^{2\alpha}_{p,p}(\RR^3)}+C_\varepsilon\|\ww_k\|_{\dot{B}^0_{p,p}(\RR^3)}.
\end{equation}
Inserting estimates \eqref{eq-KY-Fractional-BS-RR} and \eqref{eq-p-2} into \eqref{eq-V-H-L}, we have
\begin{equation}\label{eq-V-H-L-sec}
\begin{split}
&\left\|\ww_k\right\|_{\dot{B}^{2\alpha}_{p,p}(\RR^3)}+\left\|P_k\right\|_{\dot{B}^{1}_{p,p}(\RR^3)}\\
\leq& C_{p}\left(\left\|\ff_1\right\|_{\dot{B}^{0}_{p,p}(\RR^3)}+C_\varepsilon\|\ww_k\|_{\dot{B}^0_{p,p}(\RR^3)}\right)+\varepsilon C_{p,\uu_0}\|\ww_k\|_{\dot{B}^{2\alpha}_{p,p}(\RR^3)}.
\end{split}
\end{equation}
Since $p\geq2,$ we obtain the following sharp  inequality
\begin{equation}\label{eq-sharp-1}
\begin{split}
\| \ww_k \|_{ {B}^{0}_{p,1}(\RR^3)}\leq&\sum_{q\leq N}\big\|\dot{\Delta}_q\ww_k\big\|_{L^p(\RR^3)}+\sum_{q> N}\big\|\dot{\Delta}_q\ww_k\big\|_{L^p(\RR^3)}\\
\leq & C2^{\frac{p-2}{2p}N}\|\ww_k\|_{L^2(\RR^3)}+2^{-2\alpha N}\left\|\ww_k\right\|_{\dot{B}^{2\alpha}_{p,\infty}(\RR^3)},
\end{split}
\end{equation}
where $N$ is an integer to be chosen later.

Plugging this sharp inequality into estimate \eqref{eq-V-H-L-sec} allows us to conclude
\begin{equation}\label{eq-V-H-L-third}
\begin{split}
&\left\|\ww_k\right\|_{\dot{B}^{2\alpha}_{p,p}(\RR^3)}+\left\|P_k\right\|_{\dot{B}^{1}_{p,p}(\RR^3)}\\
\leq& C_{p }\left(\left\|\ff_1\right\|_{\dot{B}^{0}_{p,p}(\RR^3)}+2^{\frac{p-2}{2p}N}\|\ww_k\|_{L^2(\RR^3)}\right) +C_{p }\left(\varepsilon+C_\varepsilon2^{-\frac53N}\right)\|\ww_k\|_{\dot{B}^{\frac{5}{3}}_{p,p}(\RR^3)}.
\end{split}
\end{equation}
Choosing a suitable number $\varepsilon$ and a sufficiently large  integer $N$ in \eqref{eq-V-H-L-third}, one has
\begin{equation} \label{eq-V-H-L-four}
 \left\|\ww_k\right\|_{\dot{B}^{2\alpha}_{p,p}(\RR^3)}+\left\|P_k\right\|_{\dot{B}^{1}_{p,p}(\RR^3)}
\leq  C_{p }\left(\left\|\ff_1\right\|_{\dot{B}^{0}_{p,p}(\RR^3)}+\|\ww_k\|_{L^2(\RR^3)}\right).
\end{equation}
 Plugging \eqref{eq-V-H-L-five} into \eqref{eq-V-H-L-four}, we immediately obtain
\begin{equation} \label{eq-V-H-L-six}
 \left\|\ww_k\right\|_{\dot{B}^{2\alpha}_{p,p}(\RR^3)}+\left\|P_k\right\|_{\dot{B}^{1}_{p,p}(\RR^3)}
\leq  C_{p }\left(\left\|\ff_1\right\|_{\dot{B}^{0}_{p,p}(\RR^3)}+ \|\ff_1\|_{L^2(\RR^3)}\right)
\end{equation}
According to $\ff_1=-\vv\cdot\nabla \uu_{0}+\operatorname{div}\ff,$ we have by the H\"older inequality that
\begin{equation}\label{eq-p-p-1}
\|\ff_1\|_{L^2(\RR^3)}\leq\|\nabla \uu_0\|_{L^\infty(\RR^3)}\|\vv\|_{L^2(\RR^3)}+\|\operatorname{div}\ff\|_{L^2(\RR^3)},
\end{equation}
and by the  Leibniz estimate  that for each $p\in[2,9/2),$
\begin{equation}\label{eq-p-p-2}
\begin{split}
\|\ff_1\|_{\dot{B}^0_{p,p}(\RR^3)}\leq  C\|\ff_1\|_{\dot{F}^0_{p,2}(\RR^3)}
\leq&C\|\nabla \uu_0\|_{L^\infty(\RR^3)}\|\vv\|_{L^p(\RR^3)}+\|\operatorname{div}\ff\|_{L^p(\RR^3)}\\
\leq&C\|\nabla \uu_0\|_{L^\infty(\RR^3)}\|\vv\|_{H^\alpha(\RR^3)}+\|\operatorname{div}\ff\|_{L^p(\RR^3)}.
\end{split}
\end{equation}
Plugging \eqref{eq-p-p-1} and \eqref{eq-p-p-2} into \eqref{eq-V-H-L-six}, we immediately have that for each $p\in[2,9/2),$
\begin{equation} \label{eq-V-H-L-seven}
 \left\|\ww_k\right\|_{\dot{B}^{2\alpha}_{p,p}(\RR^3)}+\left\|P_k\right\|_{\dot{B}^{1}_{p,p}(\RR^3)}
\leq  C_{p,\sigma }\left(1+\left\|\ff_1\right\|_{\dot{B}^{0}_{p,p}(\RR^3)}+ \|\operatorname{div}\ff\|_{L^2(\RR^3)}\right).
\end{equation}
The above uniform estimate \eqref{eq-V-H-L-seven} uniformly in $k $ enables us to infer that subsequences of $\ww_k$ and $P_k$
converge locally weakly in  $H^{2\alpha}(\RR^3)$ and in  $H^{1}(\RR^3)$ to a solution $(\ww,P)$ of Cauchy problem
 \begin{align} \label{eq-Limit}
\left\{
\begin{aligned}
& (-\Delta)^{\alpha}\ww - \frac{2\alpha-1}{2\alpha}\ww -\frac{1}{2\alpha}\xx\cdot \nabla \ww +\big(\uu_0+\vv\big)\cdot\nabla\ww +\nabla P =\ff_1 \\
 &\textnormal{div}\,\ww =0.
\end{aligned}\ \right.
\end{align}
Since $\vv$ and $\ww$ are both weak solutions to the equations \eqref{eq-Limit}, we get by uniqueness that  $\vv = \ww,$ and we have
\begin{equation} \label{eq-V-H-L-seven-vv}
 \left\|\vv\right\|_{\dot{B}^{\frac{5}{3}}_{p,p}(\RR^3)}+\left\|P\right\|_{\dot{B}^{1}_{p,p}(\RR^3)}
\leq  C_{p,\sigma }\left(1+\left\|\operatorname{div}\ff\right\|_{\dot{B}^{0}_{p,p}(\RR^3)}+ \|\operatorname{div}\ff\|_{L^2(\RR^3)}\right).
\end{equation}
Next we consider the case where $\operatorname{div}\ff\in L^2(\RR^3)\cap \dot{B}^0_{p,p}(\RR^3)$ with $\frac92\leq p<\infty$. By the interpolation theorem, we have  that for each $r\in(2,p),$
\[\|\operatorname{div}\ff\|_{\dot{B}^0_{r,r}(\RR^3)}\leq \|\operatorname{div}\ff\|^{\frac{2(p-r)}{r(p-2)}}_{L^2(\RR^3)}\|\operatorname{div}\ff\|^{\frac{(r-2)p}{r(p-2)}}_{\dot{B}^0_{p,p}(\RR^3)},\]
which implies that
$\operatorname{div}\ff\in L^2(\RR^3)\cap \dot{B}^0_{p,p}(\RR^3)$ for each $p\in[2,9/2)$. We know from \eqref{eq-V-H-L-seven-vv} that for each $p\in[2,9/2)$,
\[\vv\in H^{2\alpha}(\RR^3)\cap\dot{B}^{2\alpha}_{p,p}(\RR^3)  \quad \text { with } \alpha \in[5/6,1].\]
Hence,  by using the Sobolev embedding theorem, we have $\vv\in L^\infty(\RR^3)$  and
$$
\nabla  \vv\in L^p(\RR^3) \quad \text { for each } p \in[2, \infty).
$$
These facts imply the  Leibniz estimate
\begin{equation}\label{eq-2023}
\begin{split}
&\left\|(\vv+\uu_0)\cdot\nabla \vv\right\|_{\dot{B}^0_{p,p}(\RR^3)} +\left\|\vv\cdot\nabla \uu_0\right\|_{\dot{B}^0_{p,p}(\RR^3)}+\left\|\operatorname{div}\ff\right\|_{\dot{B}^0_{p,p}(\RR^3)}\\
\leq &C\left(\|\vv\|_{L^\infty(\RR^3)}+\|\uu_0\|_{L^\infty(\RR^3)}\right)\left\|  \nabla \vv\right\|_{L^p(\RR^3)}+C\left\|\vv\right\|_{L^\infty(\RR^3)}\left\|\nabla \uu_0\right\|_{L^p(\RR^3)}\\&+\left\|\operatorname{div}\ff\right\|_{\dot{B}^0_{p,p}(\RR^3)}<\infty.
\end{split}
\end{equation}
Since $\vv$ is  weak solution to the system \eqref{E-2} with $\alpha\in[5/6,1]$, we have by Theorem \ref{thm-V-H} that
\begin{equation}\label{eq-V-H-L-HHHH}
\begin{split}
&\left\|\vv\right\|_{\dot{B}^{2\alpha}_{p,p}(\RR^3)}+\left\|P \right\|_{\dot{B}^{1}_{p,p}(\RR^3)}\\
\leq& C_{p}\left(\left\|\operatorname{div}\left((\uu_0+\vv )\otimes  \vv \right)\right\|_{\dot{B}^{0}_{p,p}(\RR^3)}
+\left\|\ff_1\right\|_{\dot{B}^{0}_{p,p}(\RR^3)}+\left\|\ww_k\right\|_{\dot{B}^{0}_{p,p}(\RR^3)}\right).
\end{split}
\end{equation}
Inserting \eqref{eq-sharp-1} and \eqref{eq-2023} into \eqref{eq-V-H-L-HHHH} and then choosing the suitable $N$, we finally obtain
Theorem \ref{thm-Lp-U-G} for the case where $p>\frac92$.
\end{proof}
\begin{theorem}\label{thm-Lp-V-H}
Assume $\alpha\in[5/6,1]$, $s\geq0$, $p\in[2,+\infty)$, $\operatorname{div}\ff \in L^2(\RR^3)\cap \dot{B}^s_{p,p}(\RR^3)$ and $\sigma\in C^{1,0}(\mathbb{S}^2)$.
Let $\vv$ be a weak solution to the system \eqref{E-2}. Then there exists a constant $C$ such that
\begin{equation}\label{eq-Lp-vvvv}
\left\|\vv\right\|_{\dot{B}^{s+2\alpha}_{p,p}(\RR^3)}+\left\|P\right\|_{\dot{B}^{s+1}_{p,p}(\RR^3)}
\leq  C\left(p,\sigma,\|\operatorname{div}\ff\|_{L^2(\RR^3)\cap\dot{B}^s_{p,p}(\RR^3)}\right).
\end{equation}
\end{theorem}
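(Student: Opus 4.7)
The plan is to bootstrap from the base case established in Theorem \ref{thm-Lp-U-G} (which corresponds to $s=0$) using the linear $L^p$--type Besov regularity of Theorem \ref{thm-V-H} to gain derivatives incrementally. Rewrite \eqref{E-2} as a linear Leray equation with an effective right--hand side,
\begin{equation*}
(-\Delta)^{\alpha}\vv-\tfrac{2\alpha-1}{2\alpha}\vv-\tfrac{1}{2\alpha}\xx\cdot\nabla\vv+\nabla P=\operatorname{div}\widetilde{\ff},\qquad \widetilde{\ff}:=\ff-\vv\otimes\vv-\uu_{0}\otimes\vv-\vv\otimes\uu_{0},
\end{equation*}
and apply Theorem \ref{thm-V-H} with $r=p$ to reduce the task to estimating $\|\operatorname{div}\widetilde{\ff}\|_{\dot{B}^{s}_{p,p}(\RR^{3})}$ in terms of a weaker Besov norm of $\vv$, together with controlled data.

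By Theorem \ref{thm-Lp-U-G}, we already know $\vv\in \dot{B}^{2\alpha}_{p,p}(\RR^{3})\cap H^{\alpha}(\RR^{3})$ for every $p\in[2,\infty)$, so Sobolev embedding yields $\vv\in L^{\infty}(\RR^{3})$ and $\nabla\vv\in L^{q}(\RR^{3})$ for every $q\in[2,\infty)$. For the background field $\uu_{0}=e^{-(-\Delta)^{\alpha}}\uuu_{0}$ with $\uuu_{0}(\xx)=\sigma(\xx)/|\xx|^{2\alpha-1}$, the action of the fractional heat semigroup (Lemma \ref{lem-exe-decay}) guarantees that $\uu_{0}$ and each of its derivatives lie in $L^{\infty}\cap L^{q}$ for every $q\geq 2$, and in every homogeneous Besov space that will appear, with bounds depending only on $\|\sigma\|_{W^{1,\infty}(\mathbb{S}^{2})}$.

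The inductive step is to show that, for every $s\geq 0$, the regularity $\vv\in\dot{B}^{s+2\alpha}_{p,p}(\RR^{3})$ upgrades to $\vv\in\dot{B}^{s'+2\alpha}_{p,p}(\RR^{3})$ for any $s'\leq s+(2\alpha-1)$; since $\alpha\geq 5/6>1/2$, a finite number of iterations reaches any prescribed $s$. The Bony paraproduct decomposition applied to each bilinear piece of $\widetilde{\ff}$ yields the tame Leibniz estimate
\begin{equation*}
\|\vv\otimes\vv\|_{\dot{B}^{s+1}_{p,p}(\RR^{3})}\leq C\|\vv\|_{L^{\infty}(\RR^{3})}\|\vv\|_{\dot{B}^{s+1}_{p,p}(\RR^{3})},
\end{equation*}
and analogous inequalities for $\uu_{0}\otimes\vv$ and $\vv\otimes\uu_{0}$ in which the $\uu_{0}$--factor is absorbed into the constant. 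Combining with Theorem \ref{thm-V-H} and the interpolation inequality $\|\vv\|_{\dot{B}^{s+1}_{p,p}}\leq \varepsilon\|\vv\|_{\dot{B}^{s+2\alpha}_{p,p}}+C_{\varepsilon}\|\vv\|_{\dot{B}^{s}_{p,p}}$ (valid because $s+1<s+2\alpha$), the highest--order term is absorbed into the left--hand side, closing the induction. The estimate for the pressure then follows from the same application of Theorem \ref{thm-V-H}.

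The main obstacle will be keeping the product estimates strictly \emph{tame} throughout the iteration: if any placement of paraproduct/remainder factors forced the $\dot{B}^{s+2\alpha}_{p,p}$--norm to appear on the right with a non--small constant, the bootstrap would stall. The divergence--free condition, which allows one to rewrite $\operatorname{div}(\vv\otimes\vv)=\vv\cdot\nabla\vv$ when convenient, and the systematic use of $\|\vv\|_{L^{\infty}}$ for the low--frequency paraproduct factor (available from Sobolev embedding at every stage of the induction) are the key ingredients that keep the bootstrap moving. A secondary (minor) obstacle is verifying that the $\uu_{0}$--dependent constants remain uniform in $s$ and $p$; this is handled by the bounds on $\uu_{0}$ produced by Lemma \ref{lem-exe-decay}, and accounts for the dependence on $\sigma$ in the final constant of \eqref{eq-Lp-vvvv}.
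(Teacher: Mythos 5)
Your bootstrap strategy --- treat \eqref{E-2} as the linear Leray system with effective right--hand side $\operatorname{div}\widetilde{\ff}$, apply Theorem~\ref{thm-V-H}, control $\|\operatorname{div}\widetilde{\ff}\|_{\dot B^{s}_{p,p}}$ via tame paraproduct estimates, and iterate from the base regularity of Theorem~\ref{thm-Lp-U-G} --- is exactly the paper's strategy, and your identification of the step increment $2\alpha-1$ is correct.

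The flaw is in how you propose to close each inductive step. You invoke
\begin{equation*}
\|\vv\|_{\dot B^{s+1}_{p,p}}\le \varepsilon\|\vv\|_{\dot B^{s+2\alpha}_{p,p}}+C_\varepsilon\|\vv\|_{\dot B^{s}_{p,p}}
\end{equation*}
and then ``absorb the highest--order term into the left.'' At the moment you need this, you do not yet know $\|\vv\|_{\dot B^{s+2\alpha}_{p,p}}<\infty$ --- that finiteness is precisely what the step is supposed to produce --- and one cannot subtract a possibly infinite quantity from both sides. In fact the absorption is both circular and unnecessary: since you have already chosen the step size $2\alpha-1$, any target order $s'$ in the new step satisfies $s'+1\le s+2\alpha$, so the Leibniz term $\|\vv\|_{\dot B^{s'+1}_{p,p}}$ is already finite by the inductive hypothesis $\vv\in\dot B^{s+2\alpha}_{p,p}\cap\dot B^{0}_{p,p}$ and ordinary interpolation, and Theorem~\ref{thm-V-H} then directly delivers finiteness of $\|\vv\|_{\dot B^{s'+2\alpha}_{p,p}}$ with a bound. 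This is what the paper does. If you nevertheless wish to keep the $\varepsilon$--absorption, you must first carry out the whole estimate on a regularized approximation for which the high Besov norm is a priori finite (as the paper does, e.g., with $\vv_k=J_k\vv$ in Theorem~\ref{thm-Lp-U-G} or with $\vv_N$ in Theorem~\ref{thm-V-H-L2-weight-22}) and pass to the limit; your proposal omits that.

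A minor inaccuracy: $\uu_0\in L^q$ for all $q\ge 2$ is false in general (one needs $q>3/(2\alpha-1)$, e.g.\ $q>9/2$ at $\alpha=5/6$; similarly $\nabla\uu_0\in L^q$ needs $q>3/(2\alpha)$). This does not damage the argument, since the Leibniz estimates only require $\|\uu_0\|_{L^\infty}$, $\|\nabla\uu_0\|_{L^\infty}$ and the Besov bounds for $\uu_0$, $\nabla\uu_0$ that follow from Lemma~\ref{lem-exe-decay}, but the blanket claim as written is incorrect.
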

\begin{proof}
 Applying Theorem \ref{thm-V-H} to the equations \eqref{E-2}, we get that for all $s>0,$
 \begin{equation}\label{eq-V-H-L-s}
\begin{split}
\left\|\vv\right\|_{\dot{B}^{s+2\alpha}_{p,p}(\RR^3)}+\left\|P\right\|_{\dot{B}^{s+1}_{p,p}(\RR^3)}
\leq C_{p}\left(
\left\|\overline{\ff}\right\|_{\dot{B}^{s}_{p,p}(\RR^3)}+\left\|\vv\right\|_{\dot{B}^{s}_{p,p}(\RR^3)}\right),
\end{split}
\end{equation}
where
\[\overline{\ff}=-\operatorname{div}\left((\vv+\uu_0)\otimes\vv\right)-\operatorname{div}\left(\vv\otimes\uu_0\right)+\Div \ff.\]
Since $\Div\ff\in L^2(\RR^3)\cap\dot{B}^s_{p,p}(\RR^3)$ with  $s\geq0$, we have by the interpolation theorem that there exists $\theta=\frac{2ps}{2ps+3(p-2)} $ such that
\begin{equation*}
\|\Div\ff\|_{\dot{B}^0_{p,p}(\RR^3)}\leq\|\Div\ff\|^\theta_{L^2(\RR^3)}\|\Div\ff\|^{1-\theta}_{\dot{B}^s_{p,p}(\RR^3)}<\infty.
\end{equation*}
 Moreover, it follows from Theorem \ref{thm-Lp-U-G} that
\begin{equation}\label{eq.eq-r-1}
\vv\in H^{2\alpha}(\RR^3)\cap \dot{B}^{2\alpha}_{p,p}(\RR^3).
\end{equation}
Thanks to the Bony paraproduct decomposition, we have by the Leibniz estimates that
 \begin{align*}
  &\left\|\operatorname{div}\left((\vv+\uu_0)\otimes\vv\right)\right\|_{\dot{B}^{s}_{p,p}(\RR^3)}\\
 \leq &C\|\vv\|_{L^\infty(\RR^3)} \|\vv\|_{\dot{B}^{s+1}_{p,p}(\RR^3)}+C\|\vv\|_{L^p(\RR^3)}
 \|\uu_0\|_{\dot{W}^{s+1,\infty}(\RR^3)}  + C\|\uu_0\|_{L^\infty(\RR^3)} \left\| \vv \right\|_{\dot{B}^{s+1}_{p,p}(\RR^3)},
\end{align*}
and
\begin{align*}
\left\|\operatorname{div}\left(\vv\otimes\uu_0\right)\right\|_{\dot{B}^{s}_{p,p}(\RR^3)}
 \leq &C\|\nabla \uu_0\|_{L^\infty(\RR^3)} \left\| \vv \right\|_{\dot{B}^{s}_{p,p}(\RR^3)}+C\|  \vv\|_{L^\infty(\RR^3)} \left\| \nabla \uu_0 \right\|_{\dot{B}^{s}_{p,p}(\RR^3)}.
\end{align*}
Both estimates above imply
\begin{align*}
\|\overline{\ff}\|_{\dot{B}^s_{p,p}(\RR^3)} \leq &\|\Div \ff \|_{\dot{B}^s_{p,p}(\RR^3)} +C\|\vv\|_{L^\infty(\RR^3)} \|\vv\|_{\dot{B}^{s+1}_{p,p}(\RR^3)}+C\|\vv\|_{\dot{B}^s_{p,p}}
 \|\uu_0\|_{\dot{W}^{s+1,\infty}(\RR^3)} \\
&+C\|\nabla \uu_0\|_{L^\infty(\RR^3)} \left\| \vv \right\|_{\dot{B}^{s}_{p,p}(\RR^3)}+C\|  \vv\|_{L^\infty(\RR^3)} \left\| \nabla \uu_0 \right\|_{\dot{B}^{s}_{p,p}(\RR^3)}.
\end{align*}
Inserting this estimate into \eqref{eq-V-H-L-s} yields
 \begin{equation}\label{eq-V-H-L-s-2}
\begin{split}
&\left\|\vv\right\|_{\dot{B}^{s+2\alpha}_{p,p}(\RR^3)}+\left\|P\right\|_{\dot{B}^{s+1}_{p,p}(\RR^3)}\\
\leq&C_p\|\Div \ff\|_{\dot{B}^s_{p,p}(\RR^3)} +C\|\vv\|_{L^\infty(\RR^3)} \|\vv\|_{\dot{B}^{s+1}_{p,p}(\RR^3)}+C\|\vv\|_{L^p(\RR^3)}
 \|\uu_0\|_{\dot{W}^{s+1,\infty}(\RR^3)} \\
&+C\|\nabla \uu_0\|_{L^\infty(\RR^3)} \left\| \vv \right\|_{\dot{B}^{s}_{p,p}(\RR^3)}+C\|  \vv\|_{L^\infty(\RR^3)} \left\| \nabla \uu_0 \right\|_{\dot{B}^{s}_{p,p}(\RR^3)}+C_p\left\|\vv\right\|_{\dot{B}^{s}_{p,p}(\RR^3)}.
\end{split}
\end{equation}
Since $ \sigma\in C^{1,0}(\mathbb{S}^2),$ we have by Lemma \ref{lem-exe-decay} that  for each $s\geq0$,
\begin{equation}\label{eq.est-u0}
\begin{split}
\|\uu_{0}\|_{\dot{B}^s_{p,1}(\RR^3)}\leq & C\|\Delta_{-1}\uu_{0}\|_{L^p(\RR^3)}+\sum_{q\geq0}e^{-2^{qs}}\|\Delta_{q}\uu_{0}\|_{L^p(\RR^3)}\\
\leq&C\|\sigma\|_{L^\infty(\mathbb{S}^2)}\left\|1/|\cdot|^{2\alpha-1}\right\|_{L^{\frac{3}{2\alpha-1},\infty}(\RR^3)}
\end{split}
\end{equation}
for each $p>\frac{3}{2\alpha-1}$ and
\begin{equation}\label{eq.est-nablau0}
\|\nabla\uu_{0}\|_{\dot{B}^s_{q,1}(\RR^3)}\leq C\|\sigma\|_{W^{1,\infty}(\mathbb{S}^2)}\left\|1/|\cdot|^{2\alpha}\right\|_{L^{\frac{3}{2\alpha},\infty}(\RR^3)}\quad\text{for each}\,\,\,q>\frac{3}{2\alpha}.
\end{equation}
Thanks to \eqref{eq.eq-r-1}, \eqref{eq.est-u0} and \eqref{eq.est-nablau0}, we easily find by condition $\overline{\ff}\in L^2(\RR^3)\cap\dot{B}^{s}_{p,p}(\RR^3)$ that all terms in the right side of \eqref{eq-V-H-L-s-2} are bound as long as $s\leq 1-\alpha$, that is, for each $0\leq s\leq1-\alpha,$
 \begin{equation}\label{eq-V-H-L-s-3}
 \left\|\uu\right\|_{\dot{B}^{s+2\alpha}_{p,p}(\RR^3)}+\left\|P \right\|_{\dot{B}^{s+1}_{p,p}(\RR^3)}\\
\leq C\left(p,\sigma,\|\Div\ff\|_{L^2(\RR^3)\cap\dot{B}^s_{p,p}(\RR^3)}\right).
\end{equation}
With estimate \eqref{eq-V-H-L-s-3} in hand, we can show by repeating the above step  that
 for each $0\leq s\leq2(1-\alpha),$
 \begin{equation*}
 \left\|\uu\right\|_{\dot{B}^{s+2\alpha}_{p,p}(\RR^3)}+\left\|P \right\|_{\dot{B}^{s+1}_{p,p}(\RR^3)}\\
\leq C\left(p,\sigma,\|\Div\ff\|_{L^2(\RR^3)\cap\dot{B}^s_{p,p}(\RR^3)}\right).
\end{equation*}
Repeat the above process until the termination condition of Theorem \ref{thm-Lp-V-H} is reached.
\end{proof}
Taking $\sigma=0$ in Theorem \ref{thm-Lp-U-G} and Theorem \ref{thm-Lp-V-H}, they become Theorem \ref{thm-leray}, (ii)-(1). Now we come back to show the high regularity of weak solutions in Theorem \ref{thm-leray-per}. Since $\sigma\in C^{1,0}(\mathbb{S}^2),$
we have by the Leibniz estimate, \eqref{eq.est-u0} and \eqref{eq.est-nablau0} that for each $s\geq0$ and $p\geq2,$
\begin{align*}
&\left\|\Div( \uu_0\otimes\uu_0)\right\|_{\dot{B}^{s}_{p,p}(\RR^3)}\\\leq&C\|\uu_0\|_{L^{3p}(\RR^3)}\left\| \nabla \uu_0 \right\|_{\dot{B}^{s}_{3p/2,1}(\RR^3)}+C\|\nabla\uu_0\|_{L^{3p/2}(\RR^3)}\left\| \uu_0 \right\|_{\dot{B}^{s}_{3p ,1}(\RR^3)}<\infty.
\end{align*}
Taking $\Div\ff=\Div( \uu_0\otimes\uu_0)$ in Theorem \ref{thm-Lp-U-G} and Theorem \ref{thm-Lp-V-H}, we immediately obtain the high regularity of weak solutions in Theorem \ref{thm-leray-per}.
%%%%%%%%%%%%%%%%%%%%%%%%%%%%%%%%%%%%%%%%%%%%%%%%%%%%%%%%%%%%%%%%%%%%%%%%%%%%%%%%%%%%%%%%%%%%%%%%%%%
\subsection{ Regularity in the framework of the weighted Hilbert space}
%%%%%%%%%%%%%%%%%%%%%%%%%%%%%%%%%%%%%%%%%%%%%%%%%%%%%%%%%%%%%%%%%%%%%%%%%%%%%%%%%%%%%%%%%%%%%%
In this subsection, we will study the regularity in the framework of the weighted Hilbert space, which helps us to analysis the behaviour of weak solution for large $|\xx|.$
\begin{theorem}\label{thm-Lp-V-H-W}
Let $s\geq0,$  $\sigma\in C^{1,0}(\mathbb{S}^2)$ and $\ff\in H^{s+1}(\RR^3)$. Assume that $\vv$ is a weak solution to the system \eqref{E-2}. Then there exists a constant $C$ such that  \begin{itemize}
                       \item[(i)]   if $\alpha\in[\frac56,1],$ $\Div\ff\in H^{s}_{\langle\xx\rangle^{2\beta}}$, we have that for each $\beta\in(0,\alpha),$
                       \begin{equation}\label{eq-Lp-vvvv-L2}
  \left\|\vv\right\| _{H^{s+\alpha}_{\langle\xx\rangle^{2\beta}}(\RR^3)} +\left\|  P\right\| _{H^{s+\alpha}_{\langle\xx\rangle^{2\beta}}(\RR^3)}
  \leq C\Big(\sigma,\left\|\Div\ff\right\|_{H^{s}_{\langle\xx\rangle^{2\beta}}(\RR^3)}  \Big);
\end{equation}
                       \item[(ii)]   if $\alpha=1,$ $\Div\ff\in H^{s}_{\langle\xx\rangle^2}$,  we have
                       \begin{equation}\label{eq-Lp-vvvv-L2-2}
  \left\|\vv\right\| _{H^{s+1}_{\langle\xx\rangle^{2}}(\RR^3)} +\left\|  P\right\| _{H^{s+1}_{\langle\xx\rangle^{2}}(\RR^3)}
  \leq C\Big(\sigma,\left\|\Div\ff\right\|_{H^{s}_{\langle\xx\rangle^{2}}(\RR^3)}  \Big).
\end{equation}
                     \end{itemize}
\end{theorem}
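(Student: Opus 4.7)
The plan is to recast the perturbed Leray problem \eqref{E-2} as a linearized problem of the form \eqref{E-L} with zero drift, and then apply the weighted $L^2$-theory established in Theorems \ref{thm-V-H-L2-weight} and \ref{thm-V-H-L2-weight-22}. Specifically, I would rewrite the equation as
\begin{equation*}
(-\Delta)^\alpha\vv - \frac{2\alpha-1}{2\alpha}\vv - \frac{1}{2\alpha}\xx\cdot\nabla\vv + \nabla P = \widetilde{\ff}_1 + \Div\widetilde{\ff}_2,
\end{equation*}
where $\widetilde{\ff}_1 = -\vv\cdot\nabla\uu_0$ and $\widetilde{\ff}_2 = -(\uu_0+\vv)\otimes\vv + \ff$. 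The problem is thereby reduced to verifying that these two forcings satisfy the weighted hypotheses of the linearized theory.

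For the base step $s=0$, I would invoke Theorem \ref{thm-V-H-L2-weight} to obtain $\vv \in H^\alpha_{\langle\xx\rangle^{2\beta}}(\RR^3)$. The two inputs that must be checked are: first, $\langle\cdot\rangle^{\beta}\widetilde{\ff}_1 \in L^2(\RR^3)$, which follows from the unweighted bound $\vv \in L^2\cap L^\infty$ (available through Theorem \ref{thm-Lp-V-H} and the Sobolev embedding $\dot{B}^{s+2\alpha}_{p,p}\hookrightarrow L^\infty$) combined with the pointwise decay of $\nabla\uu_0$ inherited from $\uuu_0 = \sigma(\xx)/|\xx|^{2\alpha-1}$ via the heat-semigroup action in Lemma \ref{lem-exe-decay}; second, $\|\langle\cdot\rangle^{\beta}\Div\widetilde{\ff}_2\|_{H^{-\alpha}(\RR^3)}<\infty$, which I would obtain by duality, distributing the weight onto the test function and estimating $(\uu_0+\vv)\otimes\vv$ by the same ingredients combined with the hypothesis on $\Div\ff$. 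The higher regularity statement $s>0$ is then obtained inductively: given the bound at level $s_0\geq 0$, I would feed the nonlinearity into Theorem \ref{thm-V-H-L2-weight-22}; Bony's paraproduct decomposition, the Leibniz rule, the already established weighted regularity at level $s_0$, and the derivative estimates for $\uu_0$ from Lemma \ref{lem-exe-decay} produce the hypotheses needed at level $s_0+\alpha$. Iterating in finitely many steps yields the bound for arbitrary $s\geq 0$.

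The pressure estimate I would handle separately, starting from $-\Delta P = \Div\widetilde{\ff}_1 + \Div\Div\widetilde{\ff}_2$ and writing $\nabla P = \nabla(-\Delta)^{-1}\Div(\widetilde{\ff}_1 + \Div\widetilde{\ff}_2)$; since $|\xx|^{2\beta}$ belongs to the Muckenhoupt $A_2$ class for $\beta\in(0,1]$, the Riesz transforms are bounded on $L^2_{\langle\xx\rangle^{2\beta}}(\RR^3)$, and combining this with the weighted estimates on $\widetilde{\ff}_1$ and $\widetilde{\ff}_2$ already obtained transfers the bound to $P$. The principal obstacle, I expect, lies in the inductive step: one must carry the regularity index $s$ and the weight exponent $\beta$ simultaneously, and closing the iteration requires a careful paraproduct split so that the bad factor in $(\uu_0+\vv)\otimes\vv$ absorbs at most $\alpha$ fractional derivatives, which is what forces the final estimate to reach only $H^{s+\alpha}$ rather than $H^{s+2\alpha}$. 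Part (ii) (the case $\alpha=1$) follows by the same scheme, substituting the $\alpha=1$ branches of Theorems \ref{thm-V-H-L2-weight} and \ref{thm-V-H-L2-weight-22} and using the weight $\langle\xx\rangle^2$ throughout.
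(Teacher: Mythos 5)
Your proposal has a genuine gap at the base step $s=0$. You propose to work with \emph{zero drift} and put the whole convection $-(\uu_0+\vv)\cdot\nabla\vv=\operatorname{div}\widetilde{\ff}_2$ into the forcing, then invoke Theorem \ref{thm-V-H-L2-weight}. But that theorem requires $\|\langle\cdot\rangle^{\beta}\operatorname{div}\widetilde{\ff}_2\|_{H^{-\alpha}(\RR^3)}<\infty$ as a fixed input on the right-hand side of \eqref{eq-V-H-weight}; it is not an absorbable term. Checking this by duality, you pair $\langle\cdot\rangle^{\beta}\operatorname{div}\widetilde{\ff}_2$ against $\psi\in H^\alpha$ and integrate by parts; the resulting piece $\int\langle\cdot\rangle^{\beta}\widetilde{\ff}_2:\nabla\psi$ forces you to control $\|\langle\cdot\rangle^{\beta}(\uu_0+\vv)\otimes\vv\|_{\dot H^{1-\alpha}(\RR^3)}$ (so as to dualize $\nabla\psi\in\dot{H}^{\alpha-1}$). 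This cannot follow from $\vv\in L^2\cap L^\infty$ and the decay of $\nabla\uu_0$ alone --- it needs a weighted bound on $\vv$, which is precisely what you are trying to establish. Your ``same ingredients'' therefore do not close the loop.

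The paper sidesteps the circularity by \emph{not} moving the convection into the forcing at $s=0$: it applies Theorem \ref{thm-V-H-L2-weight} with the divergence-free drift $\uu=\vv+\uu_0$ (which lies in $L^\infty\cap\dot W^{1,3/\alpha}$ by the unweighted regularity from Theorems \ref{thm-Lp-U-G}--\ref{thm-Lp-V-H}), keeping only $\ff_1=-\vv\cdot\nabla\uu_0$ and $\ff_2=\ff$ on the right. In the proof of Theorem \ref{thm-V-H-L2-weight}, the drift term $\int\uu\cdot\nabla\vv\,g_R^2\langle\cdot\rangle^{2\beta}\vv$ is handled by integrating by parts using $\operatorname{div}\uu=0$, and the resulting quantities need only $\uu\in L^\infty$ and unweighted $\vv\in L^2$ --- no weighted tensor estimate at all. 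Once this produces the first weighted bound $\langle\cdot\rangle^{\beta}\vv\in H^\alpha$, switching to the zero-drift formulation for $s>0$ (via Theorem \ref{thm-V-H-L2-weight-22}) is legitimate, as the previously established weighted bound supplies the needed hypotheses, and your iteration scheme is essentially the paper's --- though the gain per step is $2\alpha-1$, not $\alpha$, coming from the inequality $\|\langle\cdot\rangle^{\beta}\vv\|_{H^{s+\alpha}}\lesssim\|\langle\cdot\rangle^{\beta}\vv\|_{H^{s+1-\alpha}}+\cdots$. Your pressure argument via the $A_2$ weight is consistent with the paper. The defect is entirely in how you start.
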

\begin{proof}
First of all, we have from  Theorem \ref{thm-Lp-U-G} and Theorem \ref{thm-Lp-V-H} that for each $s\geq0,$
\begin{equation}\label{eq.s2alpha}
\|\vv\|_{H^{s+2\alpha}(\RR^3)}+\|\nabla P\|_{H^{s}(\RR^3)}\leq C\|\Div\ff\|_{H^{s}(\RR^3)}.
\end{equation}
Since $\alpha\in [5/6,1]$, we have that
\[\vv+\uu_0\in L^\infty(\RR^3)\cap \dot{W}^{1,\frac3\alpha}(\RR^3).\]
By Theorem \ref{thm-V-H-L2-weight}, we readily have
\begin{align*}
 \left\|\langle \cdot\rangle^{\beta} \vv\right\| _{H^{\alpha}(\RR^3)}
 \leq& C\left(\left\|\langle \cdot\rangle^{\beta}\vv\cdot\nabla\uu_0\right\|_{L^2(\RR^3)}+\left\|\langle \cdot\rangle^{\beta}\Div\ff\right\|_{L^2(\RR^3)}  \right)\\
  \leq& C\left(\left\|\langle \cdot\rangle^{\beta}\nabla\uu_0\right\|_{L^\infty(\RR^3)}\|\vv\|_{L^2(\RR^3)}+\left\|\langle \cdot\rangle^{\beta}\Div\ff\right\|_{L^2(\RR^3)}  \right).
 \end{align*}
This estimate together with the fact $\uu_0=\frac{\sigma}{|\xx|^{2\alpha-1}}$ and $\sigma\in C^{1,0}(\mathbb{S}^2)$ implies
\[\left\|\langle \cdot\rangle^{\beta}\nabla\uu_0\right\|_{L^\infty(\RR^3)}\leq C,\]
which together with \eqref{eq.s2alpha} gives
\begin{equation}\label{eq.s2alpha-x}
\left\|\langle \cdot\rangle^{\beta} \vv\right\| _{H^{\alpha}(\RR^3)}\leq C_\sigma\left\|\langle \cdot\rangle^{\beta}\Div\ff\right\|_{L^2(\RR^3)}.
\end{equation}
Thanks to Lemma \ref{lem-com-two} and \eqref{eq.s2alpha}, it follows from \eqref{eq.s2alpha-x} that
\begin{equation}\label{eq.s2alpha-xx}
\left\|\langle \cdot\rangle^{\beta}(I_d+\Lambda^\alpha) \vv\right\| _{L^{2}(\RR^3)}\leq C_\sigma\left\|\langle \cdot\rangle^{\beta}\Div\ff\right\|_{L^2(\RR^3)}.
\end{equation}
Next, we consider the case where $s>0.$ By Theorem \ref{thm-V-H-L2-weight-22}, we get for each $s>0,$
\begin{equation}\label{eq-II-1}
 \begin{split}
&\sup_{|\gamma|=[s]} \left\|\langle \cdot\rangle^{\beta} \Lambda^{\{s\}} D^\gamma\vv\right\| _{H^\alpha(\RR^3)}  \\
 \leq&C   \sup_{|\gamma|=[s]} \left( \left\|\langle \cdot\rangle^{\beta}(I_{d}+\Lambda^{\{s\}}D^\gamma)\Div\ff \right\|_{L^2(\RR^3)}+\left\|\langle \cdot\rangle^{\beta}(I_{d}+\Lambda^{\{s\}}D^\gamma)\widetilde\ff_1\right\|_{L^2(\RR^3)}\right)   \\
&+C \sup_{|\gamma|=[s]}   \left\|\langle \cdot\rangle^{\beta}\Lambda^{\{s\}}D^\gamma\Div\ff_2\right\|_{H^{-\alpha}(\RR^3)}+   C\left\| \ff_2\right\|_{H^{1+s}(\RR^3)}  .
\end{split}
\end{equation}
where
\[\widetilde\ff_1=-\operatorname{div}\left(\vv\otimes\uu_0\right)\quad\text{and}\quad\ff_2=-(\vv+\uu_0)\otimes\vv.\]
Since $\alpha\in[5/6,1],$  we have by the Leibniz estimate and \eqref{eq.s2alpha} that
\begin{equation}\label{eq-II}
\begin{split}
  \left\|  \ff_2\right\|_{H^{1+s}(\RR^3)}\leq& C\left\|(\vv+\uu_0)\otimes\vv\right\|_{H^{1+s}(\RR^3)}\\
\leq &C\left(\|\vv\|_{L^\infty(\RR^3)}+\|\uu_0\|_{W^{1+s,\infty}(\RR^3)}\right)\|\vv\|_{H^{1+s}(\RR^3)}\\
\leq&C\|\Div\ff\|^2_{H^s(\RR^3)}+C_\sigma\|\Div\ff\|_{H^s(\RR^3)}.
\end{split}
\end{equation}
Now we deal with the term involving $\widetilde\ff_1$. For the case $\{s\}=0$, it is obvious that
\begin{align*}
\sup_{|\gamma|=s}\left\|\langle \cdot\rangle^{\beta}(I_{d}+\Lambda^{\{s\}}D^\gamma)\widetilde\ff_1\right\|_{L^2(\RR^3)}=&\sup_{|\gamma|=s}\left\|\langle \cdot\rangle^{\beta}(I_{d}+ D^\gamma)\widetilde\ff_1\right\|_{L^2(\RR^3)}\\
\leq& \left\|\langle \cdot\rangle^{\beta} \widetilde\ff_1\right\|_{L^2 (\RR^3)}+ \sup_{|\gamma|=s}\left\|\langle \cdot\rangle^{\beta}   D^\gamma \widetilde\ff_1\right\|_{L^2(\RR^3)}.
\end{align*}
The Leibniz formula enables us to conclude that
\begin{align*}
&\sup_{|\gamma|=s}\left\|\langle \cdot\rangle^{\beta}  D^\gamma \widetilde\ff_1\right\|_{L^2(\RR^3)}\\
\leq&\sup_{|\gamma|=s}\left\| D^\gamma \left(\langle \cdot\rangle^{\beta} \widetilde\ff_1\right)\right\|_{L^2(\RR^3)}+C\sum_{|\gamma_1|=1}^s\sup_{{|\gamma_1|+|\gamma_2|=s}}\left\|  \partial^{\gamma_1}\langle \cdot\rangle^{\beta} \partial^{\gamma_2}\widetilde\ff_1 \right\|_{L^2(\RR^3)}\\
\leq& C\left\| \langle \cdot\rangle^{\beta} \widetilde\ff_1 \right\|_{\dot{H}^s(\RR^3)}+C\|\widetilde\ff_1\|_{H^{s-1}(\RR^3)}\leq   C\left\| \langle \cdot\rangle^{\beta} \widetilde\ff_1 \right\|_{H^s(\RR^3)}.
\end{align*}
As for the case $\{s\}\in(0,1)$, we know by the triangle inequality that
\begin{align*}
 \sup_{|\gamma|=[s]}\left\|\langle \cdot\rangle^{\beta}(I_{d}+\Lambda^{\{s\}}D^\gamma)\widetilde\ff_1\right\|_{L^2(\RR^3)}
\leq&  \left\|\langle \cdot\rangle^{\beta} \widetilde\ff_1\right\|_{L^2(\RR^3)}+\sup_{|\gamma|=[s]}\left\|\langle \cdot\rangle^{\beta }  \Lambda^{\{s\}}D^\gamma\widetilde \ff_1\right\|_{L^2(\RR^3)}.
\end{align*}
By Lemma  \ref{lem-com-two}, we get
\begin{align*}
 &\sup_{|\gamma|=[s]}\left\|\langle \cdot\rangle^{\beta} \Lambda^{\{s\}}D^\gamma \widetilde\ff_1\right\|_{L^2(\RR^3)}\\
 \leq& \sup_{|\gamma|=[s]}\left\| \Lambda^{\{s\}}\left(\langle \cdot\rangle^{\beta}D^\gamma\widetilde \ff_1\right)\right\|_{L^2(\RR^3)}+\sup_{|\gamma|=[s]}\left\|[\Lambda^{\{s\}},\langle \cdot\rangle^{\beta }]  D^\gamma \widetilde \ff_1\right\|_{L^2(\RR^3)}\\
\leq&  \sup_{|\gamma|=[s]}\left\| \Lambda^{\{s\}}\left(\langle \cdot\rangle^{\beta}D^\gamma \widetilde\ff_1\right)\right\|_{L^2(\RR^3)}+ C\sup_{|\gamma|=[s]}\left\| \langle \cdot\rangle^{\beta}D^\gamma \widetilde\ff_1 \right\|_{L^2(\RR^3)}.
\end{align*}
According to the Leibniz formula, we get by Leibniz estimates that
\begin{align*}
&\left\| \Lambda^{\{s\}}\left(\langle \cdot\rangle^{\beta}D^\gamma \widetilde\ff_1\right)\right\|_{L^2(\RR^3)}\\
\leq&\left\| \Lambda^{\{s\}}D^\gamma\left(\langle \cdot\rangle^{\beta}\widetilde \ff_1\right)\right\|_{L^2(\RR^3)}+C\sum_{|\gamma_1|=1}^{[s]}\sup_{{|\gamma_1|+|\gamma_2|=[s]}}\left\| \Lambda^{\{s\}}\left(\partial^{\gamma_1}\langle \cdot\rangle^{\beta}\partial^{\gamma_2} \widetilde\ff_1\right)\right\|_{L^2(\RR^3)}\\
\leq&\left\| \Lambda^{\{s\}}D^\gamma\left(\langle \cdot\rangle^{\beta}\widetilde \ff_1\right)\right\|_{L^2(\RR^3)}+C\|\widetilde\ff_1\|_{H^{s }(\RR^3)},
\end{align*}
and
\begin{align*}
 \sup_{|\gamma|=[s]}\left\|\langle \cdot\rangle^{\beta}  D^\gamma \widetilde\ff_1\right\|_{L^2(\RR^3)}
 \leq& C\left\|\langle \cdot\rangle^{\beta} \widetilde\ff_1\right\|_{\dot{H}^s(\RR^3)}+\sup_{|\gamma|=[s]}\left\|\langle \cdot\rangle^{\beta }  D^\gamma\widetilde \ff_1\right\|_{L^2(\RR^3)}.
\end{align*}
Hence, we have that for each $s>0$,
\begin{equation}\label{eq-add-s}
\sup_{|\gamma|=s}\left\|\langle \cdot\rangle^{\beta}(I_{d}+\Lambda^{\{s\}}D^\gamma)\widetilde\ff_1\right\|_{L^2(\RR^3)}\leq C\left\|\langle \cdot\rangle^{\beta}\widetilde \ff_1\right\|_{H^s(\RR^3)}.
\end{equation}
Inserting \eqref{eq-II} and \eqref{eq-add-s} into \eqref{eq-II-1} sand using $\Div\ff\in H^s(\RR^3)$, we get
\begin{equation}\label{eq-II-s}
 \begin{split}
&\sup_{|\gamma|=[s]} \left\|\langle \cdot\rangle^{\beta} \Lambda^{\{s\}} D^\gamma\vv \right\| _{H^\alpha(\RR^3)}  \\
 \leq&C   \sup_{|\gamma|=[s]}  \left\|\langle \cdot\rangle^{\beta}(I_{d}+\Lambda^{\{s\}}D^\gamma)\Div\ff\right\|_{L^2(\RR^3)}+C   \left\|\langle \cdot\rangle^{\beta} \widetilde\ff_1\right\|_{H^s(\RR^3)}+C
 \\&+C \sup_{|\gamma|=[s]}  \left\|\langle \cdot\rangle^{\beta}\Lambda^{\{s\}}D^\gamma\Div\ff_2\right\|_{H^{-\alpha}(\RR^3)}.
\end{split}
\end{equation}
Since
\[
\langle\xx\rangle^\beta\widetilde\ff_1=
-\langle\xx\rangle^\beta \vv\cdot\nabla\uu_0,
\]
 we get by the Leibniz estimate that for each $s\geq0,$
\begin{align*}
&\left\|\langle\cdot\rangle^\beta \vv\cdot\nabla\uu_0\right\|_{{H}^s(\RR^3)}\\
\leq&C\left\|\langle\cdot\rangle^\beta \vv\right\|_{{H}^s(\RR^3)}\left\|\nabla\uu_0\right\|_{L^\infty(\RR^3)}+C\left\|\langle\cdot\rangle^\beta \vv\right\|_{L^2(\RR^3)}\left\|\nabla\uu_0\right\|_{W^{s,\infty}(\RR^3)}\\
 \leq&C\|\sigma\|_{W^{s+1,\infty}(\mathbb{S}^2)}\left\|\langle\cdot\rangle^\beta \vv\right\|_{{H}^s(\RR^3)}.
\end{align*}
Hence, we have
\begin{equation}\label{eq.taiw}
 \sup_{|\gamma|=s}\left\|\langle \cdot\rangle^{\beta}(I_{d}+\Lambda^{\{s\}}D^\gamma)\widetilde\ff_1\right\|_{L^2(\RR^3)}\leq C\left\|\langle\cdot\rangle^\beta \vv\right\|_{{H}^s(\RR^3)}.
\end{equation}
We turn to deal with the term involving $\ff_2.$ As long as $\{s\}=0$, we see that
\begin{align*}
&\sup_{|\gamma|=[s]}  \left\|\langle \cdot\rangle^{\beta}\Lambda^{\{s\}}D^\gamma\Div\ff_2\right\|_{H^{-\alpha}(\RR^3)}\\
=&\sup_{|\gamma|= s }  \left\|\langle \cdot\rangle^{\beta} D^\gamma\Div\ff_2\right\|_{H^{-\alpha}(\RR^3)}\\
\leq&\sup_{|\gamma|= s }  \left\|D^\gamma\left(\langle \cdot\rangle^{\beta}\Div\ff_2\right)\right\|_{H^{-\alpha}(\RR^3)}+C\sum_{|\gamma_1|=1}^{s}\sup_{ |\gamma_1|+|\gamma_2|= s  }  \left\|\partial^{\gamma_1}\langle \cdot\rangle^{\beta}\partial^{\gamma_2}\Div \ff_2\right\|_{H^{-\alpha}(\RR^3)}.
\end{align*}
By the H\"older inequality, one has
\begin{align*}
 &\sum_{|\gamma_1|=1}^{s}\sup_{|\gamma_1|+|\gamma_2|= s } \left\|\partial^{\gamma_1}\langle \cdot\rangle^{\beta}\partial^{\gamma_2} \Div\ff_2\right\|_{H^{-\alpha}(\RR^3)}\\
 \leq& C\sum_{|\gamma_1|=1}^{s} \sup_{ |\gamma_1|+|\gamma_2|= s } \left\|\partial^{\gamma_1}\langle \cdot\rangle^{\beta}\partial^{\gamma_2} \Div\ff_2\right\|_{L^{2}(\RR^3)}\leq C\left\|\ff_2\right\|_{H^{s}(\RR^3)}.
\end{align*}
Therefore, we have that  for the case $\{s\}=0$,
\begin{equation}\label{eq-s-0}
\sup_{|\gamma|=[s]}  \left\|\langle \cdot\rangle^{\beta}\Lambda^{\{s\}}D^\gamma\Div\ff_2\right\|_{H^{-\alpha}(\RR^3)}\leq C \left\| \langle \cdot\rangle^{\beta}\Div\ff_2 \right\|_{H^{s-\alpha}(\RR^3)}+ C\left\|\ff_2\right\|_{H^{s}(\RR^3)}.
\end{equation}
For the case $\{s\}\in(0,1),$ we observe that
\begin{equation}\label{eq.-case-00}
\begin{split}
&\sup_{|\gamma|=[s]}  \left\|\langle \cdot\rangle^{\beta}\Lambda^{\{s\}}D^\gamma\Div\ff_2\right\|_{H^{-\alpha}(\RR^3)}\\
\leq&\sup_{|\gamma|=[s]}  \left\|\Div\left(\langle \cdot\rangle^{\beta}\Lambda^{\{s\}}D^\gamma\ff_2)\right)\right\|_{H^{-\alpha}(\RR^3)}+\sup_{|\gamma|=[s]}  \left\| \nabla\langle \cdot\rangle^{\beta} \cdot\Lambda^{\{s\}} D^\gamma \ff_2\right\|_{H^{-\alpha}(\RR^3)}\\
\leq&C\sup_{|\gamma|=[s]}  \left\|\langle\cdot\rangle^{\beta}\Lambda^{\{s\}}D^\gamma \ff_2\right\|_{H^{1-\alpha}(\RR^3)}+ C\sup_{|\gamma|=[s]}  \left\| \nabla\langle \cdot\rangle^{\beta} \cdot\Lambda^{\{s\}} D^\gamma \ff_2\right\|_{L^2 (\RR^3)}.
\end{split}
\end{equation}
By the H\"older inequality, we get
\begin{equation}\label{eq-add-2022-1}
\left\| \nabla\langle \cdot\rangle^{\beta} \cdot\Lambda^{\{s\}} D^\gamma \ff_2\right\|_{L^2 (\RR^3)}\leq \left\| \nabla\langle \cdot\rangle^{\beta}\right\|_{L^\infty(\RR^3)} \left\|\Lambda^{\{s\}} D^\gamma \ff_2\right\|_{L^2 (\RR^3)}\leq C\|\ff_2\|_{H^s(\RR^3)}.
\end{equation}
By Lemma \ref{lem-com-two}, we obtain
\begin{equation}\label{eq.-case-0}
\begin{split}
&\left\|\langle\cdot\rangle^{\beta}\Lambda^{\{s\}}D^\gamma \ff_2\right\|_{H^{1-\alpha}(\RR^3)}\\
\leq&\left\|\langle\cdot\rangle^{\beta}\Lambda^{\{s\}}D^\gamma \ff_2\right\|_{L^{2}(\RR^3)}+\left\|\Lambda^{1-\alpha}\left(\langle\cdot\rangle^{\beta}\Lambda^{\{s\}}D^\gamma \ff_2\right)\right\|_{L^{2}(\RR^3)}\\
\leq&\left\|\Lambda^{\{s\}}\left(\langle\cdot\rangle^{\beta}D^\gamma \ff_2\right)\right\|_{L^{2}(\RR^3)}+\left\|[\langle\cdot\rangle^{\beta},\Lambda^{\{s\}}]D^\gamma \ff_2\right\|_{L^{2}(\RR^3)}\\
&+\left\|\langle\cdot\rangle^{\beta}\Lambda^{1+\{s\}-\alpha} D^\gamma \ff_2\right\|_{L^{2}(\RR^3)}+\left\|[\Lambda^{1-\alpha},\langle\cdot\rangle^{\beta}]\left(\Lambda^{\{s\}}D^\gamma \ff_2\right)\right\|_{L^{2}(\RR^3)}\\
\leq&\left\|\Lambda^{\{s\}}\left(\langle\cdot\rangle^{\beta}D^\gamma \ff_2\right)\right\|_{L^{2}(\RR^3)}+C\left\| \langle\cdot\rangle^{\beta} D^\gamma \ff_2\right\|_{L^{2}(\RR^3)}\\
&+\left\|\langle\cdot\rangle^{\beta}\Lambda^{1+\{s\}-\alpha} D^\gamma \ff_2\right\|_{L^{2}(\RR^3)}+C\left\| \langle\cdot\rangle^{\beta} \left(\Lambda^{\{s\}}D^\gamma \ff_2\right)\right\|_{L^{2}(\RR^3)}.
\end{split}
\end{equation}
Our task is now to bound the first term in the last line of \eqref{eq.-case-0}. By Lemma \ref{lem-com-two} again,  we have that  for each $\{s\}\in(0,\alpha),$
\begin{equation}\label{eq.-case-1}
\begin{split}
&\left\|\langle\cdot\rangle^{\beta}\Lambda^{1+\{s\}-\alpha} D^\gamma \ff_2\right\|_{L^{2}(\RR^3)}
 \\
\leq&\left\|\Lambda^{1+\{s\}-\alpha}\left(\langle\cdot\rangle^{\beta} D^\gamma \ff_2\right)\right\|_{L^{2}(\RR^3)} +\left\|[\langle\cdot\rangle^{\beta},\Lambda^{1+\{s\}-\alpha}] D^\gamma \ff_2\right\|_{L^{2}(\RR^3)} \\
\leq&\left\|\Lambda^{1+\{s\}-\alpha}\left(\langle\cdot\rangle^{\beta}D^\gamma \ff_2\right)\right\|_{L^{2}(\RR^3)}+C\left\| \langle\cdot\rangle^{\beta} D^\gamma \ff_2\right\|_{L^{2}(\RR^3)}.
\end{split}
\end{equation}
Similarly, we can show that
\begin{equation}\label{eq.-case-2}
\begin{split}
\left\| \langle\cdot\rangle^{\beta} \left(\Lambda^{\{s\}}D^\gamma \ff_2\right)\right\|_{L^{2}(\RR^3)}
\leq&\left\| \Lambda^{\{s\}}\left( \langle\cdot\rangle^{\beta}D^\gamma \ff_2\right)\right\|_{L^{2}(\RR^3)}+\left\| [\langle\cdot\rangle^{\beta}, \Lambda^{\{s\}}]D^\gamma \ff_2 \right\|_{L^{2}(\RR^3)}\\
\leq&\left\|\Lambda^{\{s\}}\left(\langle\cdot\rangle^{\beta}D^\gamma \ff_2\right)\right\|_{L^{2}(\RR^3)}+C\left\| \langle\cdot\rangle^{\beta} D^\gamma \ff_2\right\|_{L^{2}(\RR^3)}.
\end{split}
\end{equation}
Inserting \eqref{eq.-case-1} and \eqref{eq.-case-2} into \eqref{eq.-case-0} leads to that for each $\{s\}\in(0,\alpha),$
\begin{equation}\label{eq.aa}
\begin{split}
&\sup_{|\gamma|=[s]}  \left\|\langle \cdot\rangle^{\beta}\Lambda^{\{s\}}D^\gamma\Div\ff_2\right\|_{H^{-\alpha}(\RR^3)}\\
\leq&\sup_{|\gamma|=[s]} \left\|\Lambda^{1+\{s\}-\alpha}\left(\langle\cdot\rangle^{\beta}D^\gamma \ff_2\right)\right\|_{L^{2}(\RR^3)}+C\sup_{|\gamma|=[s]} \left\| \langle\cdot\rangle^{\beta}D^\gamma \ff_2 \right\|_{L^{2}(\RR^3)}.
\end{split}
\end{equation}
In terms of the Leibniz rule,  one has
\begin{equation}\label{eq.bb}
\begin{split}
\left\| \langle\cdot\rangle^{\beta}D^\gamma \ff_2 \right\|_{L^{2}(\RR^3)}\leq &\left\| D^\gamma\left(\langle\cdot\rangle^{\beta} \ff_2 \right)\right\|_{L^{2}(\RR^3)}+C\sum_{|\gamma_1|=1}^{[s]}\sup_{|\gamma_1|+|\gamma_2|=[s]}\left\| \partial^{\gamma_1}\langle\cdot\rangle^{\beta}\partial^{\gamma_2} \ff_2 \right\|_{L^{2}(\RR^3)}\\
\leq &\left\| D^\gamma\left(\langle\cdot\rangle^{\beta} \ff_2 \right)\right\|_{L^{2}(\RR^3)}+C\|\ff_2\|_{H^s(\RR^3)}.
\end{split}
\end{equation}
Also, we have
\begin{equation}\label{eq.cc}
\begin{split}
&\left\| \langle\cdot\rangle^{\beta}D^\gamma \ff_2 \right\|_{\dot{H}^{1+\{s\}-\alpha}(\RR^3)}\\
\leq &\left\| D^\gamma\left(\langle\cdot\rangle^{\beta} \ff_2 \right)\right\|_{\dot{H}^{1+\{s\}-\alpha}(\RR^3)}+C\sum_{|\gamma_1|=1}^{[s]}\sup_{|\gamma_1|+|\gamma_2|=[s]}\left\| \partial^{\gamma_1}\langle\cdot\rangle^{\beta}\partial^{\gamma_2} \ff_2 \right\|_{\dot{H}^{1+\{s\}-\alpha}(\RR^3)}\\
\leq &C\left\| \langle\cdot\rangle^{\beta} \ff_2  \right\|_{\dot{H}^{1+ s -\alpha}(\RR^3)}+C\|\ff_2\|_{H^s(\RR^3)}.
\end{split}
\end{equation}
Inserting \eqref{eq.bb} and  \eqref{eq.cc} into \eqref{eq.aa} yields  that for each $\{s\}\in(0,\alpha),$
\begin{equation}\label{eq.aa-a}
 \sup_{|\gamma|=[s]}  \left\|\langle \cdot\rangle^{\beta}\Lambda^{\{s\}}D^\gamma\Div\ff_2\right\|_{H^{-\alpha}(\RR^3)}
\leq C\left\| \langle\cdot\rangle^{\beta} \ff_2  \right\|_{\dot{H}^{1+ s -\alpha}(\RR^3)}+C\|\ff_2\|_{H^{s}(\RR^3)}.
\end{equation}
Now we consider the case where $\{s\}\in[ \alpha,1),$ which means $1+\{s\}-\alpha\geq1$. In terms of Lemma \ref{weighted-h-equiv}, we immediately have
 \begin{equation}\label{eq-22-crr}
 \left\|\langle\cdot\rangle^{\beta}\Lambda^{1+\{s\}-\alpha} D^\gamma \ff_2\right\|_{L^{2}(\RR^3)}\
 \leq C\sum_{i=1}^3\left\|\langle\cdot\rangle^{\beta} \partial_{x_i}\Lambda^{ \{s\}-\alpha} D^\gamma \ff_2\right\|_{L^{2}(\RR^3)}.
 \end{equation}
 Moreover, we have by Lemma \ref{lem-com-two} that
 \begin{align*}
 &\left\|\langle\cdot\rangle^{\beta} \partial_{x_i}\Lambda^{ \{s\}-\alpha} D^\gamma \ff_2\right\|_{L^{2}(\RR^3)}\\
 \leq &\left\|\Lambda^{ \{s\}-\alpha}\left(\langle\cdot\rangle^{\beta} \partial_{x_i} D^\gamma \ff_2\right)\right\|_{L^{2}(\RR^3)}+\left\|[\Lambda^{ \{s\}-\alpha},\langle\cdot\rangle^{\beta}] \partial_{x_i} D^\gamma \ff_2\right\|_{L^{2}(\RR^3)}\\
 \leq &\left\|\Lambda^{ \{s\}-\alpha}\left(\langle\cdot\rangle^{\beta} \partial_{x_i} D^\gamma \ff_2\right)\right\|_{L^{2}(\RR^3)}+C\left\| \langle\cdot\rangle^{\beta}  \partial_{x_i} D^\gamma \ff_2\right\|_{L^{2}(\RR^3)}.
 \end{align*}
 By the Leibniz rule,  one has
 \begin{equation}\label{eq-22-crr-1}
 \begin{split}
 &\left\|\Lambda^{ \{s\}-\alpha}\left(\langle\cdot\rangle^{\beta} \partial_{x_i} D^\gamma \ff_2\right)\right\|_{L^{2}(\RR^3)}\\
 \leq&\left\|\Lambda^{ \{s\}-\alpha}\partial_{x_i} D^\gamma\left(\langle\cdot\rangle^{\beta}  \ff_2\right)\right\|_{L^{2}(\RR^3)}+C\sum_{|\gamma_1|=1}^{[s]+1} \left\|\Lambda^{ \{s\}-\alpha}\left(\partial^{\gamma_1} \langle\cdot\rangle^{\beta} \partial^{\gamma_2} \ff_2\right)\right\|_{L^{2}(\RR^3)}\\
  \leq&C\left\|  \langle\cdot\rangle^{\beta}  \ff_2 \right\|_{H^{s+1-\alpha}(\RR^3)}+C \left\| \ff_2 \right\|_{H^{s-\alpha}(\RR^3)},
 \end{split}
 \end{equation}
 and
 \begin{equation}\label{eq-22-crr-2}
 \left\| \langle\cdot\rangle^{\beta} \partial_{x_i} D^\gamma \ff_2 \right\|_{L^{2}(\RR^3)}
  \leq C\left\|  \langle\cdot\rangle^{\beta}  \ff_2 \right\|_{H^{[s]+1}(\RR^3)}+C \left\| \ff_2 \right\|_{H^{s-\alpha}(\RR^3)}.
 \end{equation}
 Inserting \eqref{eq-22-crr-1} and \eqref{eq-22-crr-2} into  \eqref{eq-22-crr}, we have that for each $\{s\}\in[ \alpha,1),$
 \begin{equation}\label{eq-add-2022-2}
  \left\|\langle\cdot\rangle^{\beta}\Lambda^{1+\{s\}-\alpha} D^\gamma \ff_2\right\|_{L^{2}(\RR^3)}\leq C\left\|  \langle\cdot\rangle^{\beta}  \ff_2 \right\|_{H^{s+1-\alpha}(\RR^3)}+C \left\| \ff_2 \right\|_{H^{s}(\RR^3)}.
 \end{equation}
Plugging \eqref{eq-add-2022-1}, \eqref{eq.aa-a} and \eqref{eq-add-2022-2} into  \eqref{eq.-case-00} yields that for each $\{s\}\in(0,1),$
\begin{equation}\label{eq.aa-aa}
 \sup_{|\gamma|=[s]}  \left\|\langle \cdot\rangle^{\beta}\Lambda^{\{s\}}D^\gamma\Div\ff_2\right\|_{H^{-\alpha}(\RR^3)}
\leq C\left\| \langle\cdot\rangle^{\beta} \ff_2  \right\|_{\dot{H}^{1+ s -\alpha}(\RR^3)}+C\|\ff_2\|_{H^{s}(\RR^3)}.
\end{equation}
Collecting both estimates \eqref{eq-s-0} and \eqref{eq.aa-aa}, we obtain that for each $s>0,$
\begin{equation}\label{eq-tai-0}
\sup_{|\gamma|=[s]}  \left\|\langle \cdot\rangle^{\beta}\Lambda^{\{s\}}D^\gamma\Div\ff_2\right\|_{H^{-\alpha}(\RR^3)}\leq C\left\|\langle \cdot\rangle^{\beta}   \ff_2\right\|_{H^{s+1-\alpha}(\RR^3)}+ C\left\|\ff_2\right\|_{H^{s}(\RR^3)}.
\end{equation}
Since $\ff_2=-(\vv+\uu_0)\otimes\vv,$ we have
\begin{equation}\label{eq-tai}
\left\|\langle \cdot\rangle^{\beta}   \ff_2\right\|_{H^{s+1-\alpha}(\RR^3)}\leq \left\|\langle \cdot\rangle^{\beta}   \vv\otimes\vv\right\|_{H^{s+1-\alpha}(\RR^3)}+\left\|\langle \cdot\rangle^{\beta}   \uu_0\otimes\vv\right\|_{H^{s+1-\alpha}(\RR^3)},
\end{equation}
and
\begin{equation}\label{eq-tai-0-1}
\left\|\ff_2\right\|_{H^{s}(\RR^3)}\leq C \|\vv\|_{L^\infty(\RR^3)}\|\vv\|_{H^s(\RR^3)}+C\|\uu_0\|_{W^{s,\infty}(\RR^3)}\|\vv\|_{H^s(\RR^3)}.
\end{equation}
Thanks to the Leibniz estimate, we get by using the fact $\alpha\in[5/6,1]$ that
\begin{equation}\label{eq-tai-1}
\begin{split}
 &\left\|\langle\cdot\rangle^\beta\vv\otimes\vv\right\|_{{H}^{s+1-\alpha}(\RR^3)}\\
\leq&C\left\|\langle\cdot\rangle^\beta\vv\right\|_{H^{1-\alpha}(\RR^3)}\| \vv\|_{H^{\frac32+s}(\RR^3)}+C\left\|\langle\cdot\rangle^\beta\vv\right\|_{H^{s+1-\alpha}(\RR^3)}\| \vv\|_{L^{\infty}(\RR^3)}\\
\leq&C\|  \vv\|_{H^{s+2\alpha}(\RR^3)}\left\|\langle\cdot\rangle^\beta\vv\right\|_{H^{s+1-\alpha}(\RR^3)}.
\end{split}
\end{equation}
Also, we have
\begin{equation}\label{eq-tai-2}
 \left\|\langle\cdot\rangle^\beta\uu_0\otimes\vv\right\|_{{H}^s(\RR^3)}
 \leq C\|  \uu_0\|_{H^{s+2\alpha}(\RR^3)}\left\|\langle\cdot\rangle^\beta\vv\right\|_{H^{s+1-\alpha}(\RR^3)}.
\end{equation}
Plugging  \eqref{eq-tai-1} and  \eqref{eq-tai-2} into  \eqref{eq-tai} leads to
\begin{equation*}
\left\|\langle \cdot\rangle^{\beta}   \ff_2\right\|_{H^{s+1-\alpha}(\RR^3)}\leq C\left\|\langle\cdot\rangle^\beta\vv\right\|_{H^{s+1-\alpha}(\RR^3)}.
\end{equation*}
Inserting this estimate and \eqref{eq-tai-0-1} into \eqref{eq-tai-0} and then using \eqref{eq.s2alpha},  we get
 \begin{equation}\label{eq-tai-00}
\sup_{|\gamma|=[s]}  \left\|\langle \cdot\rangle^{\beta}\Lambda^{\{s\}}D^\gamma\Div\ff_2\right\|_{H^{-\alpha}(\RR^3)}\leq C\left\|\langle \cdot\rangle^{\beta}   \vv \right\|_{H^{s+1-\alpha}(\RR^3)}+ C.
\end{equation}
Then we put \eqref{eq.taiw} and \eqref{eq-tai-00} into \eqref{eq-II-s} to yield
\begin{equation}\label{eq-II-ss}
 \begin{split}
&\sup_{|\gamma|=[s]} \left\|\langle \cdot\rangle^{\beta} \Lambda^{\{s\}} D^\gamma\vv\right\| _{H^\alpha(\RR^3)}  \\
 \leq&C\left\|\langle \cdot\rangle^{\beta}   \vv\right\|_{H^{s+1-\alpha}(\RR^3)}+ C   \sup_{|\gamma|=[s]}  \left\|\langle \cdot\rangle^{\beta}(I_{d}+\Lambda^{\{s\}}D^\gamma)\Div\ff\right\|_{L^2(\RR^3)}+C.
\end{split}
\end{equation}
By the same argument as used in the proof of \eqref{eq.aa-a}, we can show that
\begin{equation}\label{eq-II-w}
\sup_{|\gamma|=[s]}  \left\|\langle \cdot\rangle^{\beta}(I_{d}+\Lambda^{\{s\}}D^\gamma)\ff\right\|_{L^2(\RR^3)}\leq C\left\|\langle \cdot\rangle^{\beta} \ff\right\| _{H^{s}(\RR^3)}+C\left\|  \ff\right\| _{H^{s}(\RR^3)}.
\end{equation}
Thanks to the definition of  the inhomogeneous Hilbert space, one has
\begin{equation}\label{eq.inhongmo-def}
\left\|\langle \cdot\rangle^{\beta} \vv\right\| _{H^{s+\alpha}(\RR^3)}\leq C\left\|\langle \cdot\rangle^{\beta} \vv\right\| _{L^2(\RR^3)}+C\sup_{|\gamma|=[s+\alpha]}\left\|\Lambda^{\{s+\alpha\}}D^{\gamma}\left(\langle \cdot\rangle^{\beta} \vv\right)\right\| _{L^2 (\RR^3)}.
\end{equation}
In terms of the Leibniz rule, we find that
\begin{align*}
&\sup_{|\gamma|=[s+\alpha]}\left\|\Lambda^{\{s+\alpha\}}D^{\gamma}\left(\langle \cdot\rangle^{\beta} \vv\right)\right\| _{L^2 (\RR^3)}\\
\leq&\sup_{|\gamma|=[s+\alpha]}\left\|\Lambda^{\{s+\alpha\}}\left(\langle \cdot\rangle^{\beta} D^{\gamma}\vv\right)\right\| _{L^2 (\RR^3)}+C\sum_{|\gamma_1|=1}^{[s+\alpha]}\sup_{|\gamma_1|+|\gamma_2|=[s+\alpha]}\left\|  \partial^{\gamma_1}\langle \cdot\rangle^{\beta} \partial^{\gamma_2}\vv \right\| _{\dot{H}^{\{s+\alpha\}} (\RR^3)}\\
\leq&\sup_{|\gamma|=[s+\alpha]}\left\|\Lambda^{\{s+\alpha\}}\left(\langle \cdot\rangle^{\beta} D^{\gamma}\vv\right)\right\| _{L^2 (\RR^3)}+C\|\vv\|_{H^{s+\alpha}(\RR^3)}.
\end{align*}
For $\{s\}\in[0,1-\alpha),$ we have $[s+\alpha]=[s]$ and
\[\alpha\leq\{\alpha+ s \}= \alpha +\{s\}<1.\]
By Lemma \ref{lem-com-two}, we further get
\begin{align*}
&\left\|\Lambda^{\{s+\alpha\}}\left(\langle \cdot\rangle^{\beta} D^{\gamma}\vv\right)\right\| _{L^2 (\RR^3)}\\
\leq& \left\|\langle \cdot\rangle^{\beta}\Lambda^\alpha \Lambda^{\{s\}} D^\gamma\vv\right\| _{L^2(\RR^3)}+\left\|[\Lambda^{\alpha+\{s\}},\langle \cdot\rangle^{\beta}] D^\gamma\vv\right\| _{L^2(\RR^3)} \\
\leq&\left\|\Lambda^\alpha\left(\langle \cdot\rangle^{\beta} \Lambda^{\{s\}} D^\gamma\vv\right)\right\| _{L^2(\RR^3)}
+  \left\|[\Lambda^\alpha,\langle \cdot\rangle^{\beta} ]\Lambda^{\{s\}} D^\gamma\vv\right\| _{L^2(\RR^3)}+C\|\vv\|_{H^s(\RR^3)}\\
\leq&C\left\|\langle \cdot\rangle^{\beta} \Lambda^{\{s\}} D^\gamma\vv\right\| _{\dot{H}^\alpha(\RR^3)}
+C\|\vv\|_{H^s(\RR^3)}.
\end{align*}
Hence, we have
\begin{align*}
\sup_{|\gamma|=[s+\alpha]}\left\|\Lambda^{\{s+\alpha\}}D^{\gamma}\left(\langle \cdot\rangle^{\beta} \vv\right)\right\| _{L^2 (\RR^3)}\leq C\left\|\langle \cdot\rangle^{\beta} \Lambda^{\{s\}} D^\gamma\vv\right\| _{\dot{H}^\alpha(\RR^3)}+C\|\vv\|_{H^{s+\alpha}(\RR^3)}.
\end{align*}
Plugging this estimate into \eqref{eq.inhongmo-def} yields that for each $\{s\}\in[0,1-\alpha),$
\begin{equation}\label{eq.202208}
\begin{split}
&\left\| \langle \cdot\rangle^{\beta} \vv \right\| _{H^{s+\alpha} (\RR^3)}\\
\leq& C\left\|\langle \cdot\rangle^{\beta} \vv\right\| _{L^2(\RR^3)}+C\sup_{|\gamma|=[s]}\left\|\langle \cdot\rangle^{\beta} \Lambda^{\{s\}} D^\gamma\vv\right\| _{\dot{H}^\alpha(\RR^3)}
+C\|\vv\|_{H^s(\RR^3)}.
\end{split}
\end{equation}
For the case where $\{s\}\in[ \alpha,1),$ we have
\[[s+\alpha]=[s]+1\quad\text{and}\quad\{s+\alpha\}=\{s\}+\alpha-1. \]
In view of Lemma \ref{lem-com-two} and Lemma \ref{weighted-h-equiv}, we have
\begin{align*}
&\sup_{|\gamma|=[s+\alpha]}\left\|\Lambda^{\{s+\alpha\}}\left(\langle \cdot\rangle^{\beta} D^{\gamma}\vv\right)\right\| _{L^2 (\RR^3)}\\
\leq&\sup_{|\gamma|=[s]+1} \left\|\langle \cdot\rangle^{\beta} \Lambda^{\alpha +\{s\}-1} D^\gamma\vv\right\| _{L^2(\RR^3)}+ \sup_{|\gamma|=[s]+1} \left\|[\Lambda^{\alpha+\{s\}-1},\langle \cdot\rangle^{\beta}] D^\gamma\vv\right\| _{L^2(\RR^3)}  \\
\leq&\sup_{|\gamma|=[s]} \left\|\langle \cdot\rangle^{\beta} \Lambda^{\alpha}\Lambda^{\{s\}} D^\gamma\vv\right\| _{L^2(\RR^3)}+ \sup_{|\gamma|=[s]+1} \left\| \langle \cdot\rangle^{\beta}  D^\gamma\vv\right\| _{L^2(\RR^3)}.
\end{align*}
By Lemma  \ref{lem-com-two} again, one has
\begin{align*}
&\sup_{|\gamma|=[s]} \left\|\langle \cdot\rangle^{\beta} \Lambda^{\alpha}\Lambda^{\{s\}} D^\gamma\vv\right\| _{L^2(\RR^3)}\\
\leq& \sup_{|\gamma|=[s]} \left\| \Lambda^{\alpha}\left(\langle \cdot\rangle^{\beta}\Lambda^{\{s\}} D^\gamma\vv\right)\right\| _{L^2(\RR^3)}+\sup_{|\gamma|=[s]} \left\|[\langle \cdot\rangle^{\beta}, \Lambda^{\alpha}]\Lambda^{\{s\}} D^\gamma\vv\right\| _{L^2(\RR^3)}\\
\leq& C\sup_{|\gamma|=[s]} \left\| \langle \cdot\rangle^{\beta}\Lambda^{\{s\}} D^\gamma\vv \right\| _{\dot{H}^\alpha(\RR^3)}+\sup_{|\gamma|=[s]} \left\| \Lambda^{\{s\}} D^\gamma\vv\right\| _{L^2(\RR^3)}.
\end{align*}
By Lemma \ref{lem-com-two} and Lemma \ref{weighted-h-equiv},  we obtain
\begin{align*}
\sup_{|\gamma|=[s]+1} \left\| \langle \cdot\rangle^{\beta}  D^\gamma\vv\right\| _{L^2(\RR^3)}\leq&C\sup_{|\gamma|=[s] } \left\| \langle \cdot\rangle^{\beta} \Lambda D^\gamma\vv\right\| _{L^2(\RR^3)}\\
\leq& C\sup_{|\gamma|=[s] } \left\|\Lambda^{1-\{s\}}\left( \langle \cdot\rangle^{\beta} \Lambda^{\{s\}} D^\gamma\vv\right)\right\| _{L^2(\RR^3)}\\&+C\sup_{|\gamma|=[s] } \left\|[\Lambda^{1-\{s\}}, \langle \cdot\rangle^{\beta} ] \Lambda^{\{s\}} D^\gamma\vv\right\| _{L^2(\RR^3)}.
\end{align*}
From this estimate, it follows that
\begin{align*}
\sup_{|\gamma|=[s]+1} \left\| \langle \cdot\rangle^{\beta}  D^\gamma\vv\right\| _{L^2(\RR^3)}\leq& C\sup_{|\gamma|=[s]} \left\| \langle \cdot\rangle^{\beta}\Lambda^{\{s\}} D^\gamma\vv \right\| _{  H^{1-\{s\}} (\RR^3)}.
\end{align*}
Furthermore, we get by the fact $\alpha\in[5/6,1]$ that for each $\{s\}\in[ \alpha,1),$
\begin{align*}
\sup_{|\gamma|=[s+\alpha]}\left\|\Lambda^{\{s+\alpha\}}\left(\langle \cdot\rangle^{\beta} D^{\gamma}\vv\right)\right\| _{L^2 (\RR^3)}\leq C\sup_{|\gamma|=[s]} \left\| \langle \cdot\rangle^{\beta}\Lambda^{\{s\}} D^\gamma\vv \right\| _{  H^{1-\alpha} (\RR^3)}+C\|\vv\|_{H^s(\RR^3)}.
\end{align*}
Plugging this estimate into \eqref{eq.inhongmo-def} gives that for each $\{s\}\in[ \alpha,1),$
\begin{equation}\label{eq.202208-2}
\begin{split}
&\left\| \langle \cdot\rangle^{\beta} \vv \right\| _{H^{s+\alpha} (\RR^3)}\\
\leq& C\left\|\langle \cdot\rangle^{\beta} \vv\right\| _{L^2(\RR^3)}+C\sup_{|\gamma|=[s]}\left\|\langle \cdot\rangle^{\beta} \Lambda^{\{s\}} D^\gamma\vv\right\| _{\dot{H}^\alpha(\RR^3)}
+C\|\vv\|_{H^s(\RR^3)}.
\end{split}
\end{equation}
Inserting \eqref{eq-II-w}, \eqref{eq.202208} and \eqref{eq.202208-2} into \eqref{eq-II-ss}, we readily have for each $s>0,$
\begin{equation}\label{eq-II-sss}
 \left\|\langle \cdot\rangle^{\beta} \vv\right\| _{H^{s+\alpha}(\RR^3)}
 \leq C\left\|\langle \cdot\rangle^{\beta}   \vv\right\|_{H^{s+1-\alpha}(\RR^3)}+  C\left\|\langle \cdot\rangle^{\beta} \Div\ff\right\| _{H^{s}(\RR^3)}+C.
\end{equation}
Using estimate \eqref{eq.s2alpha-x}, we get from \eqref{eq-II-sss} that for each $s\in[0,2\alpha-1],$
\begin{equation}\label{eq-II-ssss}
 \left\|\langle \cdot\rangle^{\beta} \vv\right\| _{H^{s+\alpha}(\RR^3)}
 \leq   C_\sigma\left\|\langle \cdot\rangle^{\beta}\Div\ff\right\|_{L^2(\RR^3)}+  C\left\|\langle \cdot\rangle^{\beta}\Div \ff\right\| _{H^{s}(\RR^3)}+C,
\end{equation}
which means that for each $s\in[0,2\alpha-1],$
\begin{equation*}
 \left\|\langle \cdot\rangle^{\beta} \vv\right\| _{H^{s+\alpha}(\RR^3)}
 \leq   C\left(\sigma,\left\|\langle \cdot\rangle^{\beta}\Div \ff\right\| _{H^{s}(\RR^3)}\right).
\end{equation*}
Plugging \eqref{eq-II-ssss} into \eqref{eq-II-sss} yields that for each $s\in[2\alpha-1,2(2\alpha-1)],$
\begin{equation*}
 \left\|\langle \cdot\rangle^{\beta} \vv\right\| _{H^{s+\alpha}(\RR^3)}
 \leq    C\left(\sigma,\left\|\langle \cdot\rangle^{\beta}\Div \ff\right\| _{H^{s}(\RR^3)}\right).
\end{equation*}
By repeating this process, we can conclude that for each $s\geq0,$
\begin{equation}\label{eq.123333}
 \left\|\langle \cdot\rangle^{\beta} \vv\right\| _{H^{s+\alpha}(\RR^3)}
 \leq   C\left(\sigma,\left\|\langle \cdot\rangle^{\beta}\Div \ff\right\| _{H^{s}(\RR^3)}\right).
\end{equation}
Lastly, we turn to estimate pressure. Since
\[-\Delta P=\Div\Div(\vv\otimes\vv+\uu_0\otimes\vv+\vv\otimes\uu_0-\ff) \]
 and the fact $\langle\xx\rangle^{2\beta}$ belongs to $A_2$ class, we get by \eqref{eq.est-u0} and  \eqref{eq.s2alpha} that
\begin{align*}\left\|\langle\cdot\rangle^\beta P\right\|_{H^{s+\alpha}(\RR^3)}\leq&C \left\|\left(\langle\cdot\rangle^\beta\vv\otimes\vv+\uu_0\otimes\vv+\vv\otimes\uu_0-\ff\right)\right\|_{H^s(\RR^3)}\\
\leq &C\|\langle\cdot\rangle^\beta\vv\|_{H^{s+\alpha}(\RR^3)}+C\|\langle\cdot\rangle^\beta\ff\|_{H^{s+\alpha}(\RR^3)}<\infty.
\end{align*}
This estimate  together with \eqref{eq.123333} implies the  estimate \eqref{eq-Lp-vvvv-L2}.

The proof of the  estimate \eqref{eq-Lp-vvvv-L2-2} is similar to the proof of \eqref{eq-Lp-vvvv-L2}, so we here omit it.
\end{proof}
Now we come back to show the second estimate $\rm(ii)$-(2) in Theorem \ref{thm-leray}. Taking $\uu_0\equiv0$ in Theorem \ref{thm-Lp-V-H-W}, we immediately have
$$(\uu,P)\in H^{s+\alpha}_{\langle \xx\rangle^{2\beta}}(\RR^3)\times H_{\langle \xx\rangle^{2\beta}}^{s+\alpha}(\RR^3).$$
Lastly, we show  the weighted estimate in Theorem \ref{thm-leray-per}.  According to Theorem \ref{thm-Lp-V-H-W}, it suffice to show that for each $\alpha\in[5/6,1),$
\begin{equation}\label{eq.claim-1}
\left\|\langle\cdot\rangle^\beta\Div\ff_0\right\|_{H^{s}(\RR^3)}<\infty,
\end{equation}
and for $\alpha=1,$
\begin{equation}\label{eq.claim-1-2}
\left\|\langle\cdot\rangle\Div\ff_0\right\|_{H^{1}(\RR^3)}<\infty.
\end{equation}
 Since $\Div\ff_0=-\uu_{0}\cdot\nabla \uu_{0}$ and $\uu_0(\xx)=e^{-(-\Delta)^\alpha}\uuu_0(\xx)$ with $\uuu_{0}(\xx)=\frac{\sigma(\xx)}{|\xx|^{2\alpha-1}}$,  we have
\begin{align*}
 &\left\|\langle\cdot\rangle^\beta \Div\ff_0\right\|_{H^{s}(\RR^3)} \\
\leq&\left\|\Delta_{-1}\big(\langle\cdot\rangle^\beta \Div\ff_0\big)\right\|_{L^2(\RR^3)}+ \bigg(\sum_{k\geq0}2^{2ks}\big\|\dot{\Delta}_k\big(\langle\cdot\rangle^\beta \Div\ff_0\big)\big\|^2_{L^2(\RR^3)}\bigg)^{\frac12}.
\end{align*}
Since $\beta\in(0,\alpha)$ and $\alpha\in[5/6,1]$, we have
\[4\alpha-(\beta+1)>\frac32.\]
Moreover, one has by the H\"older inequality and Proposition 2.8 in \cite{LMZ19} that
\begin{equation}\label{eq.L-2-estimate}
\begin{split}
& \left\|\langle\cdot\rangle^\beta\Div\ff_0\right\|_{L^2(\RR^3)} \\
 \leq &\left\|\langle\cdot\rangle^{2\alpha-1}\uu_0\right\|_{L^\infty(\RR^3)} \left\|\langle\cdot\rangle^{\beta-2\alpha+1}\nabla\uu_0\right\|_{L^2(\RR^3)}\\
 \leq&\left\|\langle\cdot\rangle^{2\alpha-1}\uu_0\right\|_{L^\infty(\RR^3)} \left\|\langle\cdot\rangle^{2\alpha}\nabla\uu_0\right\|_{L^\infty(\RR^3)}\left\|\langle\cdot\rangle^{\beta- 4\alpha+1}\right\|_{L^2(\RR^3)}<\infty.
 \end{split}
\end{equation}
On the other hand, we see by the Bernstein inequality in  Lemma  \ref{lem-bern} that
\begin{align*}
& \bigg(\sum_{k\geq0}2^{2ks}\big\|\dot{\Delta}_k\big(\langle\cdot\rangle^\beta \Div\ff_0\big)\big\|^2_{L^2(\RR^3)}\bigg)^{\frac12} \\
\leq& \bigg(\sum_{k\geq0}2^{2k(s-[s]-1)} \bigg)^{\frac12}\sup_{|\gamma|=[s]+1}\big\|D^{\gamma}\big(\langle\cdot\rangle^\beta \Div\ff_0\big)\big\|_{L^2(\RR^3)}\\
\leq & C\sup_{|\gamma|=[s]+1}\big\|D^{\gamma}\big(\langle\cdot\rangle^\beta \Div\ff_0\big)\big\|_{L^2(\RR^3)}.
\end{align*}
Thanks to the Leibniz formula,  we obtain
\begin{align*}
&\sup_{|\gamma|=[s]+1}\big\|D^{\gamma}\big(\langle\cdot\rangle^\beta \Div\ff_0\big)\big\|_{L^2(\RR^3)}\\
\leq&\sup_{|\gamma|=[s]+1}\big\| \langle\cdot\rangle^\beta D^{\gamma}\Div\ff_0\big\|_{L^2(\RR^3)}+C\sum_{|\gamma_1|=1}^{[s]+1}\sup_{|\gamma_1|+|\gamma_2|=[s]+1}\big\|\partial^{\gamma_1} \langle\cdot\rangle^\beta \partial^{\gamma_2}\Div\ff_0 \big\|_{L^2(\RR^3)}.
\end{align*}
Since $\Div\ff_0=-\uu_{0}\cdot\nabla \uu_{0}$, we obtain by Proposition 2.8 in \cite{LMZ19}  that
\begin{align*}
&\sup_{|\gamma|=[s]+1}\big\| \langle\cdot\rangle^\beta D^{\gamma}\Div\ff_0\big\|_{L^2(\RR^3)}\\
\leq&C\sum_{|\gamma_1|=1}^{[s]+1}\sup_{|\gamma_1|+|\gamma_2|=[s]+1}\big\|\langle\cdot\rangle^\beta\partial^{\gamma_1} \uu_0\partial^{\gamma_2}\nabla\uu_0 \big\|_{L^2(\RR^3)}\\
 \leq&C\sum_{|\gamma_1|=1}^{[s]+1}\sup_{|\gamma_1|+|\gamma_2|=[s]+1}\left\|\langle\cdot\rangle^{2\alpha-1+|\gamma_1|}\partial^{\gamma_1} \uu_0\right\|_{L^\infty(\RR^3)} \left\|\langle\cdot\rangle^{2\alpha+|\gamma_2|}\partial^{\gamma_2}\nabla\uu_0\right\|_{L^\infty(\RR^3)}\times\\
 &\times\left\|\langle\cdot\rangle^{\beta- 4\alpha-[s]}\right\|_{L^2(\RR^3)}<\infty.
\end{align*}
By the H\"older inequality, we get
\begin{align*}
&\sum_{|\gamma_1|=1}^{[s]+1}\sup_{|\gamma_1|+|\gamma_2|=[s]+1}\big\|\partial^{\gamma_1} \langle\cdot\rangle^\beta \partial^{\gamma_2}\Div\ff_0 \big\|_{L^2(\RR^3)}\\
\leq&\sum_{|\gamma_1|=1}^{[s]+1}\sup_{|\gamma_1|+|\gamma_2|=[s]+1}\big\|\partial^{\gamma_1} \langle\cdot\rangle^\beta \big\|_{L^\infty(\RR^3)}\big\|\partial^{\gamma_2}\Div\ff_0 \big\|_{L^2(\RR^3)}\leq C\big\|\Div\ff_0 \big\|_{H^s(\RR^3)}.
\end{align*}
Since $\alpha\in[5/6,1],$ we have by the Leibniz estimate, \eqref{eq.est-u0} and \eqref{eq.est-nablau0} that
\begin{align*}
\big\|\Div\ff_0 \big\|_{H^s(\RR^3)}\leq& C\|\uu_0\|_{L^\infty(\RR^3)}\|\nabla\uu_0\|_{H^s(\RR^3)}\\
&+C\|\nabla\uu_0\|_{L^2\RR^3)}\|\uu_0\|_{W^{s,\infty}(\RR^3)}<\infty.
\end{align*}
So, we have
\begin{equation*}
\bigg(\sum_{k\geq0}2^{2ks}\big\|\dot{\Delta}_k\big(\langle\cdot\rangle^\beta \Div\ff_0\big)\big\|^2_{L^2(\RR^3)}\bigg)^{\frac12}<\infty.
\end{equation*}
This estimate together with \eqref{eq.L-2-estimate} yields the claim \eqref{eq.claim-1}. Next, by the same argument used in the proof of \eqref{eq.claim-1}, we can show the claim \eqref{eq.claim-1-2}. So, we complete the proof of Theorem \ref{thm-leray-per}.

%%%%%%%%%%%%%%%%%%%%%%%%%%%%%%%%%%%%%%%%%%%%%%%%%%%%%%%%%%%%%%%%%%%%%%%%%%%%%%%%%%%%%%%%%%%%%%%%%%%
\section{The optimal decay estimate for the forward self-similar solutions to the generalized Navier-Stokes equations  }\label{DECAY}
%%%%%%%%%%%%%%%%%%%%%%%%%%%%%%%%%%%%%%%%%%%%%%%%%%%%%%%%%%%%%%%%%%%%%%%%%%%%%%%%%%%%%%%%%%%%%%
\setcounter{section}{4}\setcounter{equation}{0}

In this section, we are going to show the optimal decay estimate for the forward self-similar solutions to the generalized Navier-Stokes equations. Firstly, the existence of  the large forward self-similar solutions
\begin{equation}\label{eq-d-1}
\uuu=t^{\frac{2\alpha-1}{2\alpha}} \uu_0(\xx/t^{\frac{1}{2\alpha}})+t^{\frac{2\alpha-1}{2\alpha}} \vv(\xx/t^{\frac{1}{2\alpha}})
 \end{equation}
 to the three-dimensional generalized Navier-Stokes system with $\alpha\in(5/8,1]$ was shown in~\cite{LMZ19}, where the homogeneous part $\uu_{0}=\frac{\sigma(\xx)}{|\xx|^{2\alpha-1}} $ with $\sigma(\xx)=\sigma(\xx/|\xx|)\in C^{1,0}(
 \mathbb{S}^{2})$, and the inhomogeneous part $\vv$ solves
\begin{equation}\label{eq.LMZ19}
\left\{
\begin{aligned}
&(-\Delta)^{\alpha}\vv- \frac{2\alpha-1}{2\alpha}\vv-\frac{1}{2\alpha}\xx\cdot \nabla \vv+\nabla P=-\vv\cdot\nabla \vv +L_{\uu_0}(\vv)-\uu_{0}\cdot\nabla \uu_{0}, \\
&\textnormal{div}\,\vv=0,
\end{aligned}\ \right.
\end{equation}
with
\[L_{\uu_0}\vv=- \uu_{0} \cdot\nabla \vv-\vv\cdot\nabla \uu_{0}.\]
Since $\sigma\in C^{1,0}(\mathbb{S}^2)$, we have by Theorem \ref{thm-Lp-U-G} that for each $2\leq p<\infty,$
\begin{equation}\label{eq.pp-est}
\|\vv\|_{B^{2\alpha}_{p,p}(\RR^3)}\leq C\left(\|\sigma\|_{W^{1,\infty}(\mathbb{S}^2)}\right).
\end{equation}
Thus, our task is devoted to showing the optimal  decay estimate for the solutions $\vv$ to the problem \eqref{eq.LMZ19}. To do this, we firstly construct the approximate solutions $(\vv_k,\,P_k)$ to the corresponding linearized equations
 \begin{align}\label{E-Approx-k}
\left\{
\begin{aligned}
 &(-\Delta)^{\alpha}\vv_k- \frac{2\alpha-1}{2\alpha}\vv_k-\frac{1}{2\alpha}\xx\cdot \nabla \vv_k+\nabla P_k= \mathbf{f}_k, \\
& \textnormal{div}\,\vv_k=0,
\end{aligned}\ \right.
\end{align}
where  \[\mathbf{f}_k(\xx)=-\phi_k(\xx)\Big(\vv\otimes \vv_k+\mathbf{U}_{0}\otimes \vv_k+\vv_k\otimes \mathbf{U}_{0}+\mathbf{U}_{0}\otimes \mathbf{U}_{0}\Big)(\xx)\]
and $\phi_k= \phi(\xx/k)$ with the smooth cut-off function $\phi$ satisfying
\begin{equation*}
\phi(\xx)=\begin{cases}
1\quad&\xx\in B_{\frac{R}{2}}(0),\\
0\quad&\xx\in \RR^3\backslash B_{R }(0).
\end{cases}
\end{equation*}
By the standard degree theory as used in \cite{LMZ19}, it is easy to prove that Cauchy problem~\eqref{E-Approx-k} admits one weak solutions  $\vv_k\in H^\alpha_{\sigma}(\RR^3)$  such that
\begin{equation*}
\|\vv_k\|_{H^\alpha(\RR^3)}\leq C(\|\sigma_0\|_{W^{1,\infty}(\mathbb{S}^2)} ).
\end{equation*}
Since $\operatorname{supp}\mathbf{f}_k\subset B_{k}(0)$, we can write the weak solutions  $\vv_k$  in the integral form in terms of Proposition~4.1 established in \cite{LMZ21} that
\begin{equation}\label{eq.fundamental-solution-k}
\vv_k(\xx)=\int_{0}^{1}e^{-(-\Delta)^{\alpha}(1-s)}\mathbb{P}{\rm div}_{\xx}\left(s^{\frac{1}{\alpha}-2}\mathbf{f}_k\big(\cdot/s^{\frac{1}{2\alpha}}\big)\right)\,{\rm d}s.
\end{equation}
Next, we are going to show the optimal decay estimate for the $\vv_k$  in two steps, which implies the optimal decay estimate for  $\vv$ by taking $k\to\infty$. Let us begin with the lower  decay estimate for $\vv.$
\subsection[The  lower  decay estimate]{The  lower  decay estimate} In this subsection, we proceed  with the lower decay estimate for the $\vv_k$ by using boundedness of the operator $\nabla\mathbb{P}e^{-(-\Delta)^{\alpha}s}$ in the weighted space. Specifically,
\begin{proposition}\label{Prop-optimal-decay-low}
Let
\begin{equation}\label{eq.fundamental-solution}
\ww(\xx)=\int_{0}^{1}e^{-(-\Delta)^{\alpha}(1-s)}\mathbb{P}{\rm div}_{\xx}\left(s^{\frac{1}{\alpha}-2}\mathbf{f}\big(\cdot/s^{\frac{1}{2\alpha}}\big)\right)\,{\rm d}s.
\end{equation}
\begin{enumerate}
  \item[\rm (i)]  If $\alpha\in[5/6,1]$ and $\mathbf{f}(\xx)\in L^6(\RR^3)$ satisfies
\begin{equation*}
\sup_{\xx\in\RR^3}\left|| \xx|^{4\alpha-2}\mathbf{f}(\xx)\right| <\infty,
\end{equation*}
 we have
\begin{equation}\label{eq-est-decay-2}
\sup_{\xx\in\RR^3}\langle\xx\rangle^{4\alpha-2}|\ww|(\xx)\leq C\|\mathbf{f}\|_{L^6(\RR^3)}+C\sup_{\xx\in\RR^3}\left(|\xx|^{4\alpha-2}|\mathbf{f}|(\xx)\right).
\end{equation}
  \item  [\rm (ii)]  If $\alpha\in[5/6,1)$ and $ \mathbf{f}(\xx)\in  \dot{H}^1(\RR^3)\cap \dot{W}^{1,6}(\RR^3)$   satisfies
\begin{equation*}
\sup_{\xx\in\RR^3}\left|| \xx|^{4\alpha-1}\operatorname{div}\mathbf{f}(\xx)\right| <\infty,
\end{equation*}
 we have
\begin{equation}\label{eq-est-decay-2-02}
\sup_{\xx\in\RR^3}\langle\xx\rangle^{4\alpha-1}|\nabla\otimes\ww|(\xx)\leq C\|\operatorname{div}\mathbf{f}\|_{L^2(\RR^3)\cap L^6(\RR^3)}+C\sup_{\xx\in\RR^3}\left(|\xx|^{4\alpha-1}|\operatorname{div}\mathbf{f}|(\xx)\right).
\end{equation}
 \item  [\rm (iii)]  If $\alpha=1$ and $\mathbf{f}(\xx)\in \dot{W}^{1,\frac54}(\RR^3)\cap \dot{W}^{1,6}(\RR^3)$ satisfies
\begin{equation*}
\sup_{\xx\in\RR^3}\left|| \xx|^{3}\operatorname{div}\mathbf{f}(\xx)\right| <\infty,
\end{equation*}
 we have
\begin{equation}\label{eq-est-decay-2-03}
\sup_{\xx\in\RR^3}\langle\xx\rangle^{3}|\nabla\otimes\ww|(\xx)\leq C\|\operatorname{div}\mathbf{f}\|_{L^{\frac54}(\RR^3)\cap L^6(\RR^3)}+C\sup_{\xx\in\RR^3}\left(\langle\xx\rangle^{3}|\operatorname{div}\mathbf{f}|(\xx)\right).
\end{equation}
\end{enumerate}

\end{proposition}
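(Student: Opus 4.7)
The plan is to exploit the integral representation \eqref{eq.fundamental-solution} together with the heat-kernel decay recorded in Lemma~\ref{lem-exe-decay}, and to use a scaling identity that pins down the critical weights $4\alpha-2$ and $4\alpha-1$. Writing $K_t := \nabla\otimes\mathbb{P}G_\alpha(\cdot,t)$ and $\mathbf{f}_s(\yy) := s^{1/\alpha-2}\mathbf{f}(\yy/s^{1/(2\alpha)})$, the computation
\begin{equation*}
|\yy|^{4\alpha-2}\mathbf{f}_s(\yy) = \bigl(|\cdot|^{4\alpha-2}\mathbf{f}\bigr)\bigl(\yy/s^{1/(2\alpha)}\bigr)
\end{equation*}
follows from the exponent identity $\tfrac{1}{\alpha}-2+\tfrac{4\alpha-2}{2\alpha}=0$, and an analogous identity with $m=4\alpha-1$ holds for $(\operatorname{div}\mathbf{f})_s$. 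These identities render the hypothesized weighted $L^\infty$ norms \emph{independent of $s$}, which is the linchpin of the argument.

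For part (i), I will use the inequality $|\xx|^m\le C_m(|\xx-\yy|^m+|\yy|^m)$ with $m=4\alpha-2$ to split
\begin{equation*}
|\xx|^m|\ww(\xx)|\le C\int_0^1\bigl|(|\cdot|^m K_{1-s})\ast\mathbf{f}_s\bigr|(\xx)\,\mathrm{d}s + C\int_0^1\bigl|K_{1-s}\ast(|\cdot|^m\mathbf{f}_s)\bigr|(\xx)\,\mathrm{d}s.
\end{equation*}
The scaling identity reduces the second integral to $\||\cdot|^m\mathbf{f}\|_{L^\infty}\int_0^1\sum_q\|\dot\Delta_q K_{1-s}\|_{L^1}\,\mathrm{d}s$, and Lemma~\ref{lem-exe-decay} (applied with $|\gamma|=1$, $m=0$, $r=p=1$) yields $\|\dot\Delta_q K_{1-s}\|_{L^1}\lesssim 2^q e^{-c(1-s)2^{2q\alpha}}$; the high-frequency tail $\sum_{q\ge 0}2^{q(1-2\alpha)}$ converges since $\alpha\ge 5/6>1/2$, and the low-frequency tail is geometric. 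For the first integral, I will decompose $\mathbf{f}$ dyadically and apply Lemma~\ref{lem-exe-decay} with the moment weight $m$, producing the bound $2^{q(1-m+3/r)}e^{-c(1-s)2^{2q\alpha}}\|\dot\Delta_q\mathbf{f}\|_{L^r}$; taking $r=6$, splitting the frequency sum at $2^q\sim|\xx|^{-1}$, and integrating in $s$ absorbs losses via $\|\mathbf{f}\|_{L^6}$ at low frequencies and $\||\cdot|^m\mathbf{f}\|_{L^\infty}$ at high frequencies. For $|\xx|\le 1$ the weight $\langle\xx\rangle^m$ is bounded, and $\|\ww\|_{L^\infty}\lesssim\|\mathbf{f}\|_{L^6}$ follows directly from Lemma~\ref{lem-exe-decay}.

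Parts (ii) and (iii) will follow by the same template applied to $\nabla\otimes\ww$, after noting that $\mathbb{P}\operatorname{div}\mathbf{f}_s=\mathbb{P}((\operatorname{div}\mathbf{f})_s)$ up to the scaling factor $s^{1/(2\alpha)-2}$. The weight $m=4\alpha-1$ then makes $|\yy|^m(\operatorname{div}\mathbf{f})_s(\yy)$ again $s$-independent in $L^\infty$; the kernel carries one derivative ($|\gamma|=1$ in Lemma~\ref{lem-exe-decay}), and the summability of the time integral against the heat dissipation is what dictates the required Lebesgue hypothesis on $\operatorname{div}\mathbf{f}$: for $\alpha\in[5/6,1)$ the tail $\sum_{q\ge 0}2^{q(1-2\alpha)}$ is still summable so $L^2\cap L^6$ suffices, whereas at $\alpha=1$ the endpoint analysis at low frequencies (where the scaled Bernstein weight $s^{(2+3/r)/(2\alpha)-2}$ becomes marginally integrable at $s\to 0^+$) forces the sharper assumption $\operatorname{div}\mathbf{f}\in L^{5/4}\cap L^6$.

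The main technical obstacle will be the coupled management of (a) the dyadic frequency sum, whose upper tail costs $2^{q(1-2\alpha)}$ per block and is marginally summable at $\alpha=1/2$, and (b) the $s$-integration near $s=0^+$, where $\mathbf{f}_s$ carries frequencies of size $\sim 2^q s^{-1/(2\alpha)}$ and forces a careful choice of the low/high frequency threshold in terms of $|\xx|$. The scaling identity is precisely what converts the $s$-dependence of the weighted norm into a purely convolutive question, after which the heat-kernel decay of Lemma~\ref{lem-exe-decay} becomes the central workhorse.
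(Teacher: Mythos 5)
Your core ingredients match the paper's: you dyadically decompose the kernel of $\mathbb{P}e^{-(-\Delta)^\alpha t}$, invoke Lemma~\ref{lem-exe-decay} with moment weights, and exploit the exponent identity $\tfrac{1}{\alpha}-2 = -\tfrac{4\alpha-2}{2\alpha}$ to make the weighted $L^\infty$ norm of $\mathbf{f}_s$ time-independent --- that scaling observation is indeed the linchpin, and the paper uses it implicitly in exactly the same way. Where you diverge from the paper is in how you detach the weight $|\xx|^m$ from the solution: you use the algebraic inequality $|\xx|^m\le C(|\xx-\yy|^m+|\yy|^m)$, whereas the paper uses a \emph{spatial} dichotomy on the source variable, $\yy\in B_{|\xx|/2}(0)$ versus $\yy\in\RR^3\setminus B_{|\xx|/2}(0)$, in addition to the low/high frequency split.

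This is where your proposal has a genuine gap. After the algebraic split, both terms must be controlled for \emph{every} frequency block $q$; you cannot select, per $q$, which of the two you estimate. The second term (weight on $\mathbf{f}_s$) is fine for all $q$, as you observe. But the first term, with the weight $|\cdot|^m$ on the kernel and $\|\mathbf{f}_s\|_{L^6}$ as the $f$-factor, produces the Bernstein exponent $|\gamma|-m+\tfrac{3}{r} = 1-(4\alpha-2)+\tfrac12 = \tfrac72-4\alpha$ per block; integrating $s^{5/(4\alpha)-2}e^{-c(1-s)2^{2q\alpha}}$ gives an $O(1)$ factor for $q<0$, so the low-frequency tail is $\sum_{q<0}2^{q(7/2-4\alpha)}\|\mathbf{f}\|_{L^6}$, which \emph{diverges} once $\alpha\ge 7/8$. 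Your proposed frequency threshold $2^q\sim|\xx|^{-1}$ does not cure this, since it truncates at a finite $N_\xx<0$ but the divergence comes from $q\to-\infty$. The paper avoids this by never putting the weight $m=4\alpha-2$ on the kernel: in the near region $\{|\yy|<|\xx|/2\}$, where $|\xx-\yy|\ge|\xx|/2$, it pulls out $|\xx|^{-3}|\xx-\yy|^3$ (the fixed exponent $3$ is chosen so that $\||\cdot|^3\nabla O_{\alpha,q}(1-s,\cdot)\|_{L^\infty}\lesssim 2^q e^{-c(1-s)2^{2q\alpha}}$ by Lemma~\ref{lem-exe-decay} with $m=3$, $r=1$, $p=\infty$), and the residual singularity $|\yy|^{-(4\alpha-2)}$ is absorbed by $\int_{B_{|\xx|/2}}|\yy|^{-(4\alpha-2)}\,\mathrm{d}\yy\sim|\xx|^{5-4\alpha}$, giving exactly $|\xx|^{-(4\alpha-2)}$ after combining with the $|\xx|^{-3}$ prefactor. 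This works uniformly over $\alpha\in[5/6,1]$ and over all $q\in\ZZ$ with no threshold. A secondary, more cosmetic issue is your ``decompose $\mathbf{f}$ dyadically'' step producing $\|\dot\Delta_q\mathbf{f}\|_{L^r}$: since $|\cdot|^m$ with non-integer $m$ delocalizes the Fourier support of $\dot\Delta_q K_{1-s}$, the frequency blocks of $\mathbf{f}_s$ do not match cleanly, so this should be $\|\mathbf{f}_s\|_{L^r}$, not a localized piece.

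To salvage your route one would need to replace $r=6$ by $r$ close to $\tfrac{3}{4\alpha-2}$ in a weak-$L^r$ sense (using that $\sup|\xx|^{4\alpha-2}|\mathbf{f}|<\infty$ means $\mathbf{f}\in L^{3/(4\alpha-2),\infty}$), which pushes the Bernstein exponent up to $1$ and restores convergence of the low-frequency sum; the paper records the relevant weak-$L^p$ Bernstein inequalities \eqref{eq.bern-g-infty}--\eqref{eq.bern-g-non-infty}, so this is available, but as written your argument does not close for $\alpha\in[7/8,1]$.
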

\begin{proof}
From \eqref{eq.fundamental-solution}, we know that
\begin{equation*}
\ww(\xx) =\int_{0}^{1}\int_{\RR^3}\nabla\big( O_{\alpha}(1-s,\xx-\yy) \big) \left(s^{\frac{1}{\alpha}-2}\mathbf{f}\big(\yy/s^{\frac{1}{2\alpha}}\big)\right)\,\mathrm{d}\yy{\rm d}s,
\end{equation*}
where $O_\alpha$ is the generalized Oseen kernel of operator $\mathbb{P}G_\alpha$ with $G_\alpha=e^{-(-\Delta)^{\alpha}s}$.

Since $\alpha\in[5/6,1]$ and $\mathbf{f}\in L^6(\RR^3),$ we have by the Young inequality that
\begin{equation}\label{eq-est-infty}
\begin{split}
\sup_{\xx\in\RR^3}|\ww|(\xx)\leq &\int_0^1s^{-\frac{1}{2\alpha}}\|(\nabla O_\alpha)(1-s,\cdot)\|_{L^{\frac65}(\RR^3)}\left\| \mathbf{f}\big(\cdot/s^{\frac{1}{2\alpha}}\big)\right\|_{L^6(\RR^3)}s^{\frac{1}{\alpha}-2}\,\mathrm{d}s\\
\leq&C\left\| \mathbf{f} \right\|_{L^6(\RR^3)}
\int_0^1 (1-s)^{-\frac{3}{4\alpha}}s^{\frac{5}{4\alpha}-2}\,\mathrm{d}s\leq C\left\| \mathbf{f} \right\|_{L^6(\RR^3)}.
\end{split}
\end{equation}
Noting that $O_{q,\alpha}=\dot{\Delta}_q O_{\alpha}$,  we rewrite $\ww $  as
\begin{align*}
\ww(\xx)=&\int_0^1\int_{\RR^3}\nabla O_\alpha(1-s,\xx-\yy)\cdot\mathbf{f}\big(\cdot/s^{\frac{1}{2\alpha}}\big)\,\mathrm{d}\yy \,s^{\frac{1}{\alpha}-2}\,\mathrm{d}s\\
=&\left(\sum_{q<0}+\sum_{q\geq0}\right)\int_0^1\int_{\RR^3}\nabla O_{q,\alpha}(1-s,\xx-\yy)\cdot\mathbf{f}\big(\yy/s^{\frac{1}{2\alpha}}\big)\,\mathrm{d}\yy \,s^{\frac{1}{\alpha}-2}\,\mathrm{d}s\\
=:&\ww^\natural+\ww^\sharp.
\end{align*}
We first proceed  to bound $ \ww^\sharp$, which can be decomposed   into two parts
\begin{align*}
 \ww^\sharp=&\sum_{q\geq0} \int_0^1\bigg(\int_{B_{\frac{|\xx|}{2}}(0)}+\int_{\RR^3\backslash B_{\frac{|\xx|}{2}}(0)}\bigg)\nabla O_{q,\alpha}(1-s,\xx-\yy)\cdot\mathbf{f}\big(\yy/s^{\frac{1}{2\alpha}}\big)\,\mathrm{d}\yy \,s^{\frac{1}{\alpha}-2}\,\mathrm{d}s\\
 =:& \ww^\sharp_1+ \ww^\sharp_2.
\end{align*}
Since $\alpha\in[5/6,1]$, we have
\begin{equation}\label{eq-est-sharp-2}
\begin{split}
\big|\ww^\sharp_2\big|\leq &\frac{C}{|\xx|^{4\alpha-2}}\sum_{q\geq0} \int_0^1 \int_{\RR^3 } |\nabla O_{q,\alpha}(1-s,\xx-\yy)|\,\left||\yy|^{4\alpha-2}\mathbf{f}\right|\big(\yy/s^{\frac{1}{2\alpha}}\big)\,\mathrm{d}\yy \,s^{\frac{1}{\alpha}-2}\,\mathrm{d}s\\
\leq&C\sup_{\xx\in\RR^3}\left(|\xx|^{4\alpha-2}|\mathbf{f}|(\xx)\right)\,\frac{1}{|\xx|^{4\alpha-2}}\sum_{q\geq0} \int_0^1 2^qe^{-c2^{2q\alpha}(1-s)}\,\mathrm{d}s\\
\leq& C\sup_{\xx\in\RR^3}\left(|\xx|^{4\alpha-2}|\mathbf{f}|(\xx)\right)\,\frac{1}{|\xx|^{4\alpha-2}}.
\end{split}
\end{equation}
For the term $\ww^\sharp_1$, we see that
\begin{align*}
\big|\ww^\sharp_1\big|\leq \frac{C}{|\xx|^{3}}\sum_{q\geq0} \int_0^1 \int_{B_{\frac{|\xx|}{2}}(0)}|\xx-\yy|^{3} |\nabla O_{q,\alpha}|(1-s,\xx-\yy)\cdot\mathbf{f}\big(\yy/s^{\frac{1}{2\alpha}}\big)\,\mathrm{d}\yy \,s^{\frac{1}{\alpha}-2}\,\mathrm{d}s.
\end{align*}
By the Young inequality and Lemma \ref{lem-exe-decay}, we have
\begin{equation}\label{eq-est-sharp-1}
\begin{split}
\big|\ww^\sharp_1\big|\leq& \frac{C}{|\xx|^{3}}\sup_{\xx\in\RR^3}\left(|\xx|^{4\alpha-2}|\mathbf{f}|(\xx)\right)\sum_{q\geq0} \int_0^1 2^{q}e^{-c2^{2q\alpha}(1-s)}\,\mathrm{d}s\int_{B_{\frac{|\xx|}{2}}(0)}\frac{1}{|\yy|^{4\alpha-2}}\,\mathrm{d}\yy\\
\leq& C\sup_{\xx\in\RR^3}\left(|\xx|^{4\alpha-2}|\mathbf{f}|(\xx)\right)\,\frac{1}{|\xx|^{4\alpha-2}}.
\end{split}
\end{equation}
Now we turn to bound $ \vv^\natural$. Firstly, one writes
\begin{align*}
 \ww^\natural=&\sum_{q<0} \int_0^1\bigg(\int_{B_{\frac{|\xx|}{2}}(0)}+\int_{\RR^3\backslash B_{\frac{|\xx|}{2}}(0)}\bigg)\nabla O_{q,\alpha}(1-s,\xx-\yy)\cdot\mathbf{f}\big(\yy/s^{\frac{1}{2\alpha}}\big)\,\mathrm{d}\yy \,s^{\frac{1}{\alpha}-2}\,\mathrm{d}s\\
 =:& \ww^\natural_1+ \ww^\natural_2.
\end{align*}
Similar to  the proof of \eqref{eq-est-sharp-2}, we can show that
\begin{equation}\label{eq-est-natural-2}
\begin{split}
\big|\ww^\natural_2\big|\leq &\frac{C}{|\xx|^{4\alpha-2}}\sum_{q<0} \int_0^1 \int_{\RR^3\backslash B_{\frac{|\xx|}{2}}(0) } |\nabla O_{q,\alpha}(1-s,\xx-\yy)|\times\\
&\qquad\qquad\qquad \qquad\qquad\quad \times\left||\yy|^{4\alpha-2}\mathbf{f}\right|\big(\yy/s^{\frac{1}{2\alpha}}\big)\,\mathrm{d}\yy \,s^{\frac{1}{\alpha}-2}\,\mathrm{d}s\\
\leq&C\sup_{\xx\in\RR^3}\left(|\xx|^{4\alpha-2}|\mathbf{f}|(\xx)\right)\,\frac{1}{|\xx|^{4\alpha-2}}\sum_{q<0}2^q \int_0^1 \,\mathrm{d}s\\
\leq& C\sup_{\xx\in\RR^3}\left(|\xx|^{4\alpha-2}|\mathbf{f}|(\xx)\right)\,\frac{1}{|\xx|^{4\alpha-2}}.
\end{split}
\end{equation}
Lastly, we obtain
\begin{equation}\label{eq-est-natural-1}
\begin{split}
\big|\ww^\natural_1\big|\leq& \frac{C}{|\xx|^{3}}\sup_{\xx\in\RR^3}\left(|\xx|^{4\alpha-2}|\mathbf{f}|(\xx)\right)\sum_{q<0}2^{q} \int_0^1 \,\mathrm{d}s\int_{B_{\frac{|\xx|}{2}}(0)}\frac{1}{|\yy|^{4\alpha-2}}\,\mathrm{d}\yy\\
\leq& C\sup_{\xx\in\RR^3}\left(|\xx|^{4\alpha-2}|\mathbf{f}|(\xx)\right)\,\frac{1}{|\xx|^{4\alpha-2}}.
\end{split}
\end{equation}
Collecting estimates \eqref{eq-est-sharp-2}, \eqref{eq-est-sharp-1}, \eqref{eq-est-natural-2} and \eqref{eq-est-natural-1}, we readily have
\begin{equation*}
\sup_{|\xx|\geq1}|\xx|^{4\alpha-2}|\ww|(\xx)\leq C\sup_{\xx\in\RR^3}\left(|\xx|^{4\alpha-2}|\mathbf{f}|(\xx)\right).
\end{equation*}
This inequality together with \eqref{eq-est-infty} leads to
\begin{equation*}
\sup_{\xx\in\RR^3}\langle\xx\rangle^{4\alpha-2}|\ww|(\xx)\leq C\| \mathbf{f}\|_{L^6(\RR^3)}+C\sup_{\xx\in\RR^3}\langle\xx\rangle^{4\alpha-2}|\mathbf{f}|(\xx),
\end{equation*}
which implies the first desired estimate in Proposition \ref{Prop-optimal-decay-low}.

Next, we turn to show decay estimate for $\nabla\otimes\ww.$ Taking derivative of $\ww$ with respect to $x_k$, it follows from \eqref{eq.fundamental-solution} that  $\forall\,k\in\{1,2,3\},$
\begin{equation}\label{eq.fundamental-solution-der}
\partial_{x_k}\ww(\xx)=\int_{0}^{1}\int_{\RR^3}\partial_{x_k}\big( O_{\alpha}(1-s,\xx-\yy) \big) {\rm div} \left(s^{\frac{1}{\alpha}-2}\mathbf{f}\big(\yy/s^{\frac{1}{2\alpha}}\big)\right)\,\mathrm{d}\yy{\rm d}s.
\end{equation}
By the Young inequality, one has
\begin{equation}\label{eq-der-1}
\begin{split}
\sup_{\xx\in\RR^3}\left|\partial_{x_k}\ww\right|(\xx)\leq &\int_0^{\frac12}\left\|\partial_{x_k}\big( O_{\alpha}(1-s,\cdot) \big)\right\|_{L^2(\RR^3)}\left\|\operatorname{div} \mathbf{f}\big(\cdot/s^{\frac{1}{2\alpha}}\big)\right\|_{L^2(\RR^3)}s^{\frac{1}{2\alpha}-2}\,\mathrm{d}s\\
 &+\int_{\frac12}^1\left\|\partial_{x_k}\big( O_{\alpha}(1-s,\cdot) \big)\right\|_{L^{\frac65}(\RR^3)}\left\|\operatorname{div} \mathbf{f}\big(\cdot/s^{\frac{1}{2\alpha}}\big)\right\|_{L^6(\RR^3)}s^{\frac{1}{2\alpha}-2}\,\mathrm{d}s\\
 \leq &C\int_0^{\frac12} \left\|\operatorname{div} \mathbf{f} \right\|_{L^2(\RR^3)}s^{\frac{5}{4\alpha}-2}\,\mathrm{d}s
 +C\int_{\frac12}^1 \left\|\operatorname{div} \mathbf{f} \right\|_{L^6(\RR^3)}(1-s)^{-\frac{3}{4\alpha} }\,\mathrm{d}s\\
\leq & C\left(\left\|\operatorname{div} \mathbf{f} \right\|_{L^2(\RR^3)}+\left\|\operatorname{div} \mathbf{f} \right\|_{L^6(\RR^3)}\right).
\end{split}
\end{equation}
According to \eqref{eq.fundamental-solution-der}, we decompose $\ww$  into the following two parts
\begin{align*}
\partial_{x_k}\ww =&\int_{0}^{1}\bigg(\int_{B_{\frac{|\xx|}{2}}(0)}+\int_{\RR^3\backslash B_{\frac{|\xx|}{2}}(0)}\bigg)\partial_{x_k}\big( O_{\alpha}(1-s,\xx-\yy) \big) \operatorname{div}\mathbf{f}\big(\yy/s^{\frac{1}{2\alpha}}\big) s^{\frac{1}{2\alpha}-2}\,\mathrm{d}\yy{\rm d}s\\
=:&\ww_{kS}+\ww_{kL}.
\end{align*}
Noting that
\begin{align*}
\left|\ww_{kL}\right|\leq &\frac{ C}{|\xx|^{4\alpha-1}}\int_{0}^{1}  \int_{\RR^3\backslash B_{\frac{|\xx|}{2}}(0)}\big| \partial_{x_k}\big( O_{\alpha}(1-s,\xx-\yy) \big) \big| |\yy|^{4\alpha-1}\operatorname{div}\mathbf{f}\big(\yy/s^{\frac{1}{2\alpha}}\big) s^{\frac{1}{2\alpha}-2}\,\mathrm{d}\yy{\rm d}s,
\end{align*}
we get by the Young inequality that
\begin{equation}\label{eq-der-2}
\begin{split}
\left|\ww_{kL}\right|(\xx)\leq &\frac{ C}{|\xx|^{4\alpha-1}}\|\partial_{x_k}O_\alpha(1,\cdot)\|_{L^1(\RR^3)}\sup_{\xx\in\RR^3}|\xx|^{4\alpha-1}|\operatorname{div}\mathbf{f}|(\xx)\int_0^1(1-s)^{-\frac{1}{2\alpha}}\,\mathrm{d}s\\
\leq &\frac{ C}{|\xx|^{4\alpha-1}} \sup_{\xx\in\RR^3}|\xx|^{4\alpha-1}|\operatorname{div}\mathbf{f}|(\xx),
\end{split}
\end{equation}
in the last line of \eqref{eq-der-2}, we have used the fact that $\partial_{x_k}O_\alpha(1,\xx)\in L^1(\RR^3)$. Indeed, we have by the Bernstein inequality in  Lemma  \ref{lem-bern} that
\begin{align*}
\|\partial_{x_k}O_\alpha(1,\cdot)\|_{L^1(\RR^3)}\leq& C\sum_{q\in\ZZ}2^{q}\|\dot{\Delta}_q\mathbb{P}G_\alpha(1,\cdot)\|_{L^1(\RR^3)}\\
\leq& C\sum_{q\leq 0}2^{q}\|\dot{\Delta}_qG_\alpha(1,\cdot)\|_{L^1(\RR^3)}+ C\sum_{q>0}2^{-q}\|D^2\dot{\Delta}_qG_\alpha(1,\cdot)\|_{L^1(\RR^3)}\\
\leq&C\|G_\alpha(1,\cdot)\|_{W^{2,1}(\RR^3)}<\infty.
\end{align*}
As for  the term $\ww_{kS},$ we write in terms of the high-low decomposition
\begin{align*}
\ww_{kS}=&\left(\sum_{q\leq0}+\sum_{q>0}\right)\int_{0}^{1}\int_{B_{\frac{|\xx|}{2}}(0)}\partial_{x_k}\big( O_{q,\alpha}(1-s,\xx-\yy) \big) \operatorname{div}\mathbf{f}\big(\yy/s^{\frac{1}{2\alpha}}\big) s^{\frac{1}{2\alpha}-2}\,\mathrm{d}\yy{\rm d}s\\
=:&\ww_{kS}^\natural+\ww_{kS}^\sharp.
\end{align*}
For the high frequency regime, we easily find that
\begin{align*}
\left|\ww_{kS}^\sharp\right|\leq\frac{ C}{|\xx|^{3}}\sum_{q\geq0}\int_{0}^{1}  \int_{ \RR^3} &|\xx-\yy|^{3}\big| \partial_{x_k}\big( O_{q,\alpha}(1-s,\xx-\yy) \big)\big|\times\\
 &\times\big| \operatorname{div}\mathbf{f}\big(\yy/s^{\frac{1}{2\alpha}}\big) \big| s^{\frac{1}{2\alpha}-2}\,\mathrm{d}\yy{\rm d}s.
\end{align*}
Moreover, we have by Lemma \ref{lem-exe-decay} and $\alpha\in[5/6,1)$ that
\begin{equation}\label{eq-der-3}
\begin{split}
\big|\ww_{kS}^\sharp\big|\leq& \frac{C}{|\xx|^{3}}\sup_{\xx\in\RR^3}\left(|\xx|^{4\alpha-1}|\operatorname{div}\mathbf{f}|(\xx)\right)\sum_{q\geq0}2^{q} \int_0^1 e^{-c2^{2q\alpha}(1-s)} \,\mathrm{d}s\int_{B_{\frac{|\xx|}{2}}(0)}\frac{1}{|\yy|^{4\alpha-1}}\,\mathrm{d}\yy\\
\leq& C\sup_{\xx\in\RR^3}\left(|\xx|^{4\alpha-2}|\operatorname{div}\mathbf{f}|(\xx)\right)\,\frac{1}{|\xx|^{4\alpha-1}}.
\end{split}
\end{equation}
For the low frequency regime, we see that
 \begin{equation}\label{eq-der-4}
 \begin{split}
\big|\ww_{kS}^\natural\big|\leq \frac{C}{|\xx|^{3}}\sum_{q\leq0} \int_0^1 \int_{B_{\frac{|\xx|}{2}}(0)}&|\xx-\yy|^{3} |\partial_{x_k} \left( O_{q,\alpha}(1-s,\xx-\yy)\right)|\times\\
&\times\left|\operatorname{div}\mathbf{f}\big(\yy/s^{\frac{1}{2\alpha}}\big)\right|\,\mathrm{d}\yy \,s^{\frac{1}{2\alpha}-2}\,\mathrm{d}s.
\end{split}
\end{equation}
Thus we have that for  each $\alpha\in[5/6,1),$
\begin{align*}
\big|\ww_{kS}^\natural\big|\leq& \frac{C}{|\xx|^{3}}\sup_{\xx\in\RR^3}\left(|\xx|^{4\alpha-1}|\operatorname{div}\mathbf{f}|(\xx)\right)\sum_{q<0}2^{q} \int_0^1 \,\mathrm{d}s\int_{B_{\frac{|\xx|}{2}}(0)}\frac{1}{|\yy|^{4\alpha-1}}\,\mathrm{d}\yy\\
\leq& C\sup_{\xx\in\RR^3}\left(|\xx|^{4\alpha-2}|\operatorname{div}\mathbf{f}|(\xx)\right)\,\frac{1}{|\xx|^{4\alpha-1}}.
\end{align*}
Combining this estimate with \eqref{eq-der-1}, \eqref{eq-der-2} and \eqref{eq-der-4} yields the second desired estimate.

For $\alpha=1,$ we apply the Young inequality and Lemma \ref{lem-exe-decay} to $\ww_{kS}^\sharp$ to obtain
\begin{align*}
&\sum_{q>0}\int_{0}^{1}  \int_{ \RR^3}|\xx-\yy|^{3}\big| \partial_{x_k}\big( O_{q,1}(1-s,\xx-\yy) \big) \big| \operatorname{div}\mathbf{f}\big(\yy/s^{\frac{1}{2 }}\big) s^{\frac{1}{2 }-2}\,\mathrm{d}\yy{\rm d}s\\
\leq&C\sum_{q>0}\int_{0}^{1}  \int_{ \RR^3}\left\||\cdot|^{3} \partial_{x_k}  O_{q,1}(1-s,\cdot)  \right\|_{L^2(\RR^3)}\left \|\operatorname{div}\mathbf{f}\big(\yy/s^{\frac{1}{2 }}\big) \right\|_{L^2(\RR^3)} s^{\frac{1}{2 }-2}\, {\rm d}s\\
\leq&C\|\operatorname{div}\mathbf{f}\|_{L^2(\RR^3)} \sum_{q>0}2^{-\frac{q}{2}}\int_0^1s^{-\frac{3}{4 } }\,\mathrm{d}s\leq C\|\operatorname{div}\mathbf{f}\|_{L^2(\RR^3)},
\end{align*}
which implies that for $\alpha=1,$
\begin{equation}\label{eq-der-33}
\big|\ww_{kS}^\sharp\big|(\xx)\leq  \frac{ C}{|\xx|^{3}}\|\operatorname{div}\mathbf{f}\|_{L^2(\RR^3)}.
\end{equation}
Similarly, we have from  \eqref{eq-der-3} that for $\alpha=1,$
\begin{equation}\label{eq-der-5}
\begin{split}
\big|\ww_{kS}^\natural\big|\leq& \frac{C}{|\xx|^{3}}\sum_{q\leq0} \int_0^1  \left\||\cdot|^{3} |\partial_{x_k} \left( O_{q,1}(1-s,\cdot)\right)|\right\|_{L^5(\RR^3)}\times\\
&\qquad\qquad\qquad\times\left\|\operatorname{div}\mathbf{f}\big(\cdot/s^{\frac{1}{2 }}\big)\right\|_{L^{\frac54}(\RR^3)} \,s^{\frac{1}{2 }-2}\,\mathrm{d}s\\
\leq& \frac{C}{|\xx|^{3}}\sum_{q\leq0}2^{q\frac25}  \left\|\operatorname{div}\mathbf{f}\right\|_{L^{\frac54}(\RR^3)} \, \int_0^1 s^{-\frac{3}{10}}\,\mathrm{d}s
\leq \frac{C}{|\xx|^{3}}\left\|\operatorname{div}\mathbf{f}\right\|_{L^{\frac54}(\RR^3)}.
\end{split}
\end{equation}
Collecting estimates \eqref{eq-der-1}, \eqref{eq-der-2}, \eqref{eq-der-33} and \eqref{eq-der-5} give the third desired estimate, and then we finish the proof of Proposition \ref{Prop-optimal-decay-low}.
\end{proof}
We apply Proposition \ref{Prop-optimal-decay-low} to $\vv_k$ to obtain
\begin{equation}\label{eq-est-decay-2-k}
\sup_{\xx\in\RR^3}\langle\xx\rangle^{4\alpha-2}|\vv_k|(\xx)\leq  C\| \mathbf{f}_k\|_{L^6(\RR^3)}+C\sup_{\xx\in\RR^3}\left(|\xx|^{4\alpha-2}|\mathbf{f}_k|(\xx)\right).
\end{equation}
Since $\sigma\in C^{1,0}(\mathbb{S}^2)$, we get by using Theorem \ref{thm-Lp-U-G} that for all $p\in[2,\infty)$
\begin{equation}\label{eq.vkh2}
 \|\vv_k\|_{B_{p,p}^{2\alpha}(\RR^3)}\leq C(\|\sigma\|_{W^{1,\infty}(\mathbb{S}^2)}).
\end{equation}
This estimate enables us to conclude by the H\"older inequality, \eqref{eq.est-u0} and \eqref{eq.est-nablau0} that
\begin{align*}
\|\mathbf{f}_k\|_{L^6(\RR^3)}\leq &C\left(\|\vv_k\|_{L^\infty(\RR^3)}+\|\mathbf{U}_0\|_{L^\infty(\RR^3)}\right)\|\nabla \vv\|_{L^2(\RR^3)}\\
&+C\left(\|\vv \|_{L^\infty(\RR^3)}+\|\mathbf{U}_0\|_{L^\infty(\RR^3)}\right)\|\nabla  \mathbf{U}_0\|_{L^2(\RR^3)}\\
\leq&C\|\vv_k\|_{H^{2\alpha}(\RR^3)}^2+C\|\vv\|^2_{H^{2\alpha}(\RR^3)}+C\|\mathbf{U}_0\|_{L^\infty(\RR^3)}^2+C\|\nabla \mathbf{U}_0\|_{L^2(\RR^3)}^2\\
\leq& C(\|\sigma\|_{W^{1,\infty}( \mathbb{S}^2)}).
\end{align*}
By the H\"older inequality, one has
\begin{equation}\label{eq-R-1}
\begin{split}
&\sup_{\xx\in\RR^3} \langle\xx\rangle^{4\alpha-2}| \vv\otimes\vv_k|(\xx)\\
\leq &R^{4\alpha-2}\|\vv\|_{L^\infty(\RR^3)}\|\vv_k\|_{L^\infty(\RR^3)}+\|\vv\|_{L^\infty(\RR^3\backslash B_R(0))}\sup_{|\xx|\geq R} \langle\xx\rangle^{4\alpha-2}|\vv_k|(\xx),
\end{split}
\end{equation}
where $R$ to be fixed later.

By the H\"older inequality, the interpolation inequality and  Proposition 2.8 in \cite{LMZ19}, we get
\begin{align*}
&\sup_{\xx\in\RR^3} \langle\xx\rangle^{4\alpha-2}| \mathbf{U}_0\otimes\vv_k+\mathbf{U}_0\otimes\vv_k+ \mathbf{U}_0\otimes\mathbf{U}_0|(\xx)\\
\leq &\sup_{\xx\in\RR^3} \langle\xx\rangle^{2\alpha-1}|\mathbf{U}_0|(\xx)\sup_{\xx\in\RR^3} \langle\xx\rangle^{2\alpha-1}|\vv_k|(\xx)+\left(\sup_{\xx\in\RR^3} \langle\xx\rangle^{2\alpha-1}|\mathbf{U}_0|(\xx)\right)^2\\
\leq& C\|\vv_k\|^{\frac12}_{L^\infty(\RR^3)}\left(\sup_{\xx\in\RR^3} \langle\xx\rangle^{4\alpha-2}|\vv_k|(\xx)\right)^{\frac12}+C.
\end{align*}
Moreover,  we get by the Young inequality and $\sigma\in C^{1,0}(\mathbb{S}^2)$ that
\begin{equation}\label{eq-R-2}
\sup_{\xx\in\RR^3} \langle\xx\rangle^{4\alpha-2}| \mathbf{U}_0\otimes\vv_k+\mathbf{U}_0\otimes\vv_k+ \mathbf{U}_0\otimes\mathbf{U}_0|(\xx)\leq C+\frac14\sup_{\xx\in\RR^3} \langle\xx\rangle^{4\alpha-2}|\vv_k|(\xx).
\end{equation}
Inserting \eqref{eq-R-1} and \eqref{eq-R-2} into \eqref{eq-est-decay-2-k} gives
 \begin{equation}\label{eq-est-decay-2-k-2}
 \begin{split}
\sup_{\xx\in\RR^3}\langle\xx\rangle^{4\alpha-2}|\vv_k|(\xx)\leq & C+\frac14\sup_{\xx\in\RR^3} \langle\xx\rangle^{4\alpha-2}|\vv_k|(\xx) \\
&+\|\vv\|_{L^\infty(\RR^3\backslash B_R(0))}\sup_{|\xx|\geq R} \langle\xx\rangle^{4\alpha-2}|\vv_k|(\xx).
\end{split}
\end{equation}
With the help of the interpolation theorem, we have
\begin{equation}\label{eq-inter-1}
\begin{split}
\|\vv\|_{L^\infty(\RR^3\backslash B_R(0))}\leq&\|(1-\phi_R)\vv\|_{L^\infty(\RR^3) }\\
\leq&C \|(1-\phi_R)\vv\|^{\frac{1}{10}}_{L^2(\RR^3) }\big\|\nabla\big((1-\phi_R)\vv\big)\big\|^{\frac{9}{10}}_{L^{\frac{18}{5}}(\RR^3) }\\
\leq&C \|\vv\|^{\frac{1}{10}}_{L^2(\RR^3) }\big\|\nabla\big((1-\phi_R)\vv\big)\big\|^{\frac{9}{10}}_{L^{\frac{18}{5}}(\RR^3) }.
\end{split}
\end{equation}
The Leibniz estimate allows us to conclude
\begin{align*}
\big\|\nabla\big((1-\phi_R)\vv\big)\big\|_{L^{\frac{18}{5}}(\RR^3) }\leq &\frac{C}{R}\big\| \vv \big\|_{L^{\frac{18}{5}}(\RR^3) }+C\big\| \nabla\vv \big\|_{L^{\frac{18}{5}}(\RR^3\backslash B_{\frac{R}{2}(0)}) }.
\end{align*}
Plugging this estimate into \eqref{eq-inter-1}, we readily have
\begin{equation}\label{eq-inter-2}
\|\vv\|_{L^\infty(\RR^3\backslash B_R(0))} \leq\frac{C}{R^{\frac{9}{10}}} +C\big\| \nabla\vv \big\|^{\frac{9}{10}}_{L^\frac{18}{5} (\RR^3\backslash B_{\frac{R}{2}(0)}) }.
\end{equation}
 Since  $\vv\in H^{2\alpha}(\RR^3)$ with $\alpha\in[5/6,1]$, we get by the inclusion $$H^{2\alpha}(\RR^3)\hookrightarrow \dot{W}^{1,\frac{18}{5}}(\RR^3)$$ and \eqref{eq-inter-2} that
 \[\lim_{R\to+\infty}\|\nabla \vv\|_{L^\infty(\RR^3\backslash B_R(0))}=0.\]
 Thus, there exists a number $R_0>0$ such that for all $R>R_0$,
 \[\|\nabla\vv\|_{L^\infty(\RR^3\backslash B_R(0))}<\frac14.\]
Then \eqref{eq-est-decay-2-k-2} becomes
  \begin{equation*}\label{eq-est-decay-2-k-2}
\sup_{\xx\in\RR^3}\langle\xx\rangle^{4\alpha-2}|\vv_k|(\xx)\leq   C(\|\sigma\|_{W^{1,\infty}(\mathbb{S}^2)})+\frac12\sup_{\xx\in\RR^3} \langle\xx\rangle^{4\alpha-2}|\vv_k|(\xx),
\end{equation*}
which implies
  \begin{equation}\label{eq-est-decay-2-k-f}
\sup_{\xx\in\RR^3}\langle\xx\rangle^{4\alpha-2}|\vv_k|(\xx)\leq   C(\|\sigma\|_{W^{1,\infty}(\mathbb{S}^2)}).
\end{equation}
Since $\vv$ is the limit of $\vv_k$, we get from Fatou's lemma and \eqref{eq-est-decay-2-k-f} that
  \begin{equation}\label{eq-est-decay-2-f}
\sup_{\xx\in\RR^3}\langle\xx\rangle^{4\alpha-2}|\vv|(\xx)\leq   C(\|\sigma\|_{W^{1,\infty}(\mathbb{S}^2)}).
\end{equation}
Next, we begin to  show the decay estimate for $\nabla \vv_k $ which fulfills
\begin{equation*}
\nabla\vv_k(\xx) =\int_{0}^{1}\int_{\RR^3}\nabla O_{\alpha}(1-s,\xx-\yy){\rm div}_{\xx}\left(s^{\frac{1}{\alpha}-2}\mathbf{f}\big(\yy/s^{\frac{1}{2\alpha}}\big)\right)\,\mathrm{d}\yy{\rm d}s.
\end{equation*}
Based on this equality, we readily have by Proposition \ref{Prop-optimal-decay-low} that for each $\alpha\in[5/6,1),$
\begin{equation}\label{eq-est-decay-2-kkk}
\sup_{\xx\in\RR^3}\langle\xx\rangle^{4\alpha-1}|\nabla\vv_k|(\xx)\leq  C\|\operatorname{div}\mathbf{f}_k\|_{L^2(\RR^3)\cap L^6(\RR^3)}+C\sup_{\xx\in\RR^3}\left(|\xx|^{4\alpha-1}|\operatorname{div}\mathbf{f}_k|(\xx)\right)
\end{equation}
and for $\alpha=1$
\begin{equation}\label{eq-est-decay-2-kkkk}
\sup_{\xx\in\RR^3}\langle\xx\rangle^{3}|\nabla\vv_k|(\xx)\leq  C\|\operatorname{div}\mathbf{f}_k\|_{L^{\frac54}(\RR^3)\cap L^6(\RR^3)}+C\sup_{\xx\in\RR^3}\left(|\xx|^{3}|\operatorname{div}\mathbf{f}_k|(\xx)\right).
\end{equation}
By the triangle inequality, \eqref{eq.est-u0}, \eqref{eq.est-nablau0}, \eqref{eq.pp-est} and \eqref{eq-est-decay-2-f}, we easily find that
\begin{align*}
&\sup_{\xx\in\RR^3}\left(|\xx|^{4\alpha-1}|\operatorname{div}\mathbf{f}_k|(\xx)\right)\\
\leq&\sup_{\xx\in\RR^3} |\xx|^{4\alpha-1}| \vv\cdot\nabla\vv_k+\vv_k\cdot\nabla\mathbf{U}_0+ \mathbf{U}_0\cdot\nabla \mathbf{U}_0|(\xx)+\sup_{\xx\in\RR^3} |\xx|^{4\alpha-1}| \mathbf{U}_0\cdot\nabla\vv_k|\\
\leq &C+C \sup_{\xx\in\RR^3} \langle\xx\rangle^{2\alpha}|\nabla\vv_k|(\xx).
\end{align*}
Since
\[\sup_{\xx\in\RR^3} \langle\xx\rangle^{2\alpha}|\nabla\vv_k|(\xx)\leq \|\nabla\vv_k\|_{L^\infty(\RR^3)}\left(\sup_{\xx\in\RR^3} \langle\xx\rangle^{4\alpha-1}|\nabla\vv_k|(\xx)\right)^{\frac{2\alpha}{4\alpha-1}},\]
we have by the Young inequality that
\begin{align*}
 \sup_{\xx\in\RR^3}\left(|\xx|^{4\alpha-1}|\operatorname{div}\mathbf{f}_k|(\xx)\right)
\leq &C+C\left(\sup_{\xx\in\RR^3} \langle\xx\rangle^{2\alpha-1}|\nabla\vv_k|(\xx)\right)^{\frac{2\alpha}{4\alpha-1}}\\
\leq &C+\frac14 \sup_{\xx\in\RR^3} \langle\xx\rangle^{4\alpha-1}|\nabla\vv_k|(\xx).
\end{align*}
Inserting this estimate into \eqref{eq-est-decay-2-kkk}  and \eqref{eq-est-decay-2-kkkk}, respectively, yields that for each $\alpha\in[5/6,1),$
\begin{equation}\label{eq-est-decay-2-kkkkkk}
\sup_{\xx\in\RR^3}\langle\xx\rangle^{4\alpha-1}|\nabla\vv_k|(\xx)\leq  C\|\operatorname{div}\mathbf{f}_k\|_{L^2(\RR^3)\cap L^6(\RR^3)}+C
\end{equation}
and for $\alpha=1$,
\begin{equation}\label{eq-est-decay-2-kkkkkk-22}
\sup_{\xx\in\RR^3}\langle\xx\rangle^{3}|\nabla\vv_k|(\xx)\leq  C\|\operatorname{div}\mathbf{f}_k\|_{L^\frac{4}{5}(\RR^3)\cap L^6(\RR^3)}+C.
\end{equation}
By the H\"older inequality, \eqref{eq.est-u0}, \eqref{eq.est-nablau0}  and \eqref{eq.pp-est}, one has that for each $\alpha\in[5/6,1],$
\begin{align*}
\|\operatorname{div}\mathbf{f}_k\|_{L^6(\RR^3)}\leq& \|\vv\|_{L^\infty(\RR^3)}\|\nabla \vv_k\|_{L^6(\RR^3)}+\|\vv_k\|_{L^\infty(\RR^3)}\|\nabla \uu_0\|_{L^6(\RR^3)}\\
&+\|\uu_0\|_{L^\infty(\RR^3)}\|\nabla \vv_k\|_{L^6(\RR^3)}+\|\uu_0\|_{L^\infty(\RR^3)}\|\nabla \uu_0\|_{L^6(\RR^3)}\\
\leq &  C(\|\sigma\|_{W^{1,\infty}(\mathbb{S}^2)})
\end{align*}
and
\begin{align*}
\|\operatorname{div}\mathbf{f}_k\|_{L^2(\RR^3)}\leq& \|\vv\|_{L^\infty(\RR^3)}\|\nabla \vv_k\|_{L^2(\RR^3)}+\|\vv_k\|_{L^\infty(\RR^3)}\|\nabla \uu_0\|_{L^2(\RR^3)}\\
&+\|\uu_0\|_{L^\infty(\RR^3)}\|\nabla \vv_k\|_{L^2(\RR^3)}+\|\uu_0\|_{L^\infty(\RR^3)}\|\nabla \uu_0\|_{L^2(\RR^3)}\\
\leq &  C(\|\sigma\|_{W^{1,\infty}(\mathbb{S}^2)}).
\end{align*}
Plugging both estimates into \eqref{eq-est-decay-2-kkkkkk} gives that for each $\alpha\in[5/6,1),$
  \begin{equation*}
\sup_{\xx\in\RR^3}\langle\xx\rangle^{4\alpha-1}|\nabla\vv_k|(\xx)\leq   C(\|\sigma\|_{W^{1,\infty}(\mathbb{S}^2)}).
\end{equation*}
For $\alpha=1,$ we can show by  the H\"older inequality \eqref{eq.est-u0}, \eqref{eq.est-nablau0} and \eqref{eq.pp-est} again that
\begin{align*}
\|\operatorname{div}\mathbf{f}_k\|_{L^{\frac54}(\RR^3)}\leq& \|\vv\|_{L^\frac52(\RR^3)}\|\nabla \vv_k\|_{L^\frac52(\RR^3)}+\|\vv_k\|_{L^\frac52(\RR^3)}\|\nabla \uu_0\|_{L^\frac52(\RR^3)}\\
&+\|\uu_0\|_{L^\frac{10}{3}(\RR^3)}\|\nabla \vv_k\|_{L^2(\RR^3)}+\|\uu_0\|_{L^5(\RR^3)}\|\nabla \uu_0\|_{L^{\frac53}(\RR^3)}\\
\leq &  C(\|\sigma\|_{W^{1,\infty}(\mathbb{S}^2)}).
\end{align*}
Inserting this estimate into \eqref{eq-est-decay-2-kkkkkk-22} implies that for $\alpha=1,$
  \begin{equation*}
\sup_{\xx\in\RR^3}\langle\xx\rangle^{3}|\nabla\vv_k|(\xx)\leq   C(\|\sigma\|_{W^{1,\infty}(\mathbb{S}^2)}).
\end{equation*}
Finally, we get   by taking $k\to\infty$ that  for each $\alpha\in[5/6,1],$
  \begin{equation}\label{eq-est-nabla-decay-2-k-f}
\sup_{\xx\in\RR^3}\langle\xx\rangle^{4\alpha-1}|\nabla\vv|(\xx)\leq   C(\|\sigma\|_{W^{1,\infty}(\mathbb{S}^2)}).
\end{equation}
\subsection[The decay estimate   with  logarithmic loss]{The decay estimate   with  logarithmic loss} In this subsection, we are going to show the almost optimal estimate   with  logarithmic loss which is caused by   the Leray projector $\mathbb{P}$.
\begin{proposition}\label{Prop-optimal-decay}
Let $\alpha\in[5/6,1]$ and $\mathbf{f}(\xx)\in \dot{H}^1(\RR^3)\cap W^{1,\infty}(\RR^3)$ satisfies
\begin{equation}\label{eq-con-optimal}
\sup_{\xx\in\RR^3}\left|| \xx|^{4\alpha-2}\mathbf{f}(\xx)\right|+\sup_{\xx\in\RR^3}\left||\xx|^{4\alpha-1}\nabla \mathbf{f}(\xx)\right|<\infty,
\end{equation}
and
\begin{equation}\label{eq.fundamental-solution-2}
\ww(\xx)=\int_{0}^{1}e^{-(-\Delta)^{\alpha}(1-s)}\mathbb{P}{\rm div}_{\xx}\left(s^{\frac{1}{\alpha}-2}\mathbf{f}\big(\cdot/s^{\frac{1}{2\alpha}}\big)\right)\,{\rm d}s.
\end{equation}
Then $\ww$ enjoys the decay estimate
\begin{equation}\label{eq-0830-1}
|\ww|(\xx)\leq C\langle \xx\rangle^{-(4\alpha-1)}\log\langle \xx\rangle\ \  \mbox{for each}\ \,\,  \xx\in\RR^{3}.
\end{equation}
\end{proposition}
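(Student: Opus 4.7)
The plan is to sharpen the decay rate $\langle\xx\rangle^{-(4\alpha-2)}$ obtained in Proposition \ref{Prop-optimal-decay-low} by exactly one power of $|\xx|$, paying only a logarithmic factor; the decisive new ingredient is the gradient bound $\sup_\xx|\xx|^{4\alpha-1}|\nabla\mathbf{f}(\xx)|<\infty$. Starting from the representation
\begin{equation*}
\ww(\xx)=\int_0^1\int_{\RR^3}\nabla O_\alpha(1-s,\xx-\yy)\,s^{\frac{1}{\alpha}-2}\mathbf{f}\big(\yy/s^{\frac{1}{2\alpha}}\big)\,\mathrm{d}\yy\,\mathrm{d}s,
\end{equation*}
I would dyadically decompose $\nabla O_\alpha=\sum_{q\in\ZZ}\nabla O_{q,\alpha}$ with $O_{q,\alpha}=\dot{\Delta}_q\mathbb{P}G_\alpha$, and split the spatial integration into the bulk region $\{|\yy|\leq|\xx|/2\}$, the far field $\{|\yy|\geq 2|\xx|\}$, and the critical annulus $A(\xx):=\{|\xx|/2<|\yy|<2|\xx|\}$. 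As for $|\xx|\leq 1$, Proposition \ref{Prop-optimal-decay-low} already gives $\|\ww\|_{L^\infty}<\infty$, which handles the short-range regime.

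In the bulk and far-field regions, where $|\xx-\yy|$ is comparable to $\max(|\xx|,|\yy|)$, I would transfer a spatial weight $|\xx-\yy|^3$ onto the kernel and invoke Lemma \ref{lem-exe-decay} with $m=3$, $|\gamma|=1$ to produce the factor $2^{-2q}e^{-c(1-s)2^{2q\alpha}}$. Coupled with the bound $|\mathbf{f}(\yy/s^{\frac{1}{2\alpha}})|\leq C(|\yy|/s^{\frac{1}{2\alpha}})^{-(4\alpha-2)}$, integration over $\yy$ in the bulk (which converges since $4\alpha-2<3$) and over $\yy$ in the far field (which converges since $4\alpha-2>2$ for $\alpha\geq 5/6$), together with the $s$-integration and geometric summation in $q$, gives a contribution bounded by $C|\xx|^{-(4\alpha-1)}$ without any logarithmic loss.

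The critical annulus $A(\xx)$ is the source of the logarithm. Here $|\xx|\sim|\yy|$ but $|\xx-\yy|$ can be arbitrarily small, so the weighted-kernel trick fails; instead I would integrate by parts in $\yy$ to transfer the $\nabla$ from $O_{q,\alpha}$ onto $\mathbf{f}$, producing a kernel of Calderón--Zygmund type convolved against the rescaled $\nabla\mathbf{f}$. Using $|\nabla\mathbf{f}(\yy/s^{\frac{1}{2\alpha}})|\leq C s^{\frac{4\alpha-1}{2\alpha}}|\yy|^{-(4\alpha-1)}\sim|\xx|^{-(4\alpha-1)}$ on $A(\xx)$ together with the $L^1$ bound $\|O_{q,\alpha}(1-s,\cdot)\|_{L^1}\leq Ce^{-c(1-s)2^{2q\alpha}}$ from Lemma \ref{lem-exe-decay}, the $q$-th dyadic contribution is of order $|\xx|^{-(4\alpha-1)}\cdot s^{\frac{4\alpha-1}{2\alpha}-\frac{1}{2\alpha}}e^{-c(1-s)2^{2q\alpha}}$. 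Summation over the range $2^{-q}\in[\,1,|\xx|\,]$ — scales larger than the lattice spacing but smaller than $|\xx|$ itself — then yields exactly $\log\langle\xx\rangle$, whereas the complementary scales are geometrically summable. The boundary contributions produced by the integration by parts across $\partial A(\xx)$ should be absorbed into the neighbouring bulk/far-field estimates by matching powers of $|\xx|$.

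\textbf{Main obstacle.} The hardest point will be to justify that the critical-annulus contribution produces precisely a single power of $\log\langle\xx\rangle$, rather than a larger power or an actual polynomial loss. This requires a careful two-parameter book-keeping in $(q,s)$: the fast dissipation $e^{-c(1-s)2^{2q\alpha}}$ cuts off the high-frequency tail at scale $2^{2q\alpha}(1-s)\gg 1$, while the polynomial weight $s^{1/\alpha-2}$ (respectively $s^{(4\alpha-2)/(2\alpha)}$ after integrating by parts) controls the low-frequency tail near $s=0$. Only the dyadic annulus $2^{-q}\sim|\xx|$ escapes exponential cancellation, and the resulting one-dimensional sum over this range is exactly the logarithm. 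Handling the boundary terms from the integration by parts to ensure they match across the regions $A(\xx)$ and $\{|\yy|\leq|\xx|/2\}\cup\{|\yy|\geq 2|\xx|\}$ without generating additional logarithms is the secondary technical issue.
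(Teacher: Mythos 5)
Your overall strategy---use the weaker decay of Proposition \ref{Prop-optimal-decay-low} for $|\xx|\lesssim 1$, exploit the extra input $\sup_\xx|\xx|^{4\alpha-1}|\nabla\mathbf f|<\infty$ to gain one more power, identify the intermediate dyadic scales $2^{-q}\in[1,|\xx|]$ as the source of the logarithm---matches the paper's conception, and the paper's proof is likewise organized around the three frequency regimes $q\geq 0$, $N_\xx\leq q<0$, $q<N_\xx$ with $N_\xx=-[\log_2|\xx|]+1$, combined with a spatial cutoff near the origin. However, the execution contains two genuine gaps.

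First, the bulk/far-field estimate does not close with the weight you propose. Pairing $\||\cdot|^3\nabla O_{q,\alpha}\|_{L^1}\sim 2^{-2q}$ with $\sup_{|\yy|\leq|\xx|/2}|\mathbf f(\yy/s^{1/(2\alpha)})|$, or pairing $\||\cdot|^3\nabla O_{q,\alpha}\|_{L^\infty}\sim 2^{q}$ with $\int_{B_{|\xx|/2}}|\yy|^{-(4\alpha-2)}\,\mathrm d\yy\sim|\xx|^{5-4\alpha}$, together with the out-front factor $|\xx|^{-3}$ from $|\xx-\yy|\geq|\xx|/2$, yields only $|\xx|^{-(4\alpha-2)}$---exactly the rate you already have from Proposition \ref{Prop-optimal-decay-low}, with no gain. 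The issue is that with $m=3$ the $q$-sum is simply $O(1)$ and contributes no compensating power of $|\xx|$. The paper tunes the weight exponent to the frequency regime precisely to avoid this: for $N_\xx\leq q<0$ it uses $m=4\alpha+2$ (so $\sum_{N_\xx\leq q<0}2^{-q(m-4)}\sim|\xx|^{4\alpha-2}$, which recombines with $|\xx|^{-(4\alpha+2)}\cdot|\xx|^{5-4\alpha}$ to give $|\xx|^{-(4\alpha-1)}$), and for $q\geq 0$ or $q<N_\xx$ it uses $m=4\alpha-1$ with $\operatorname{div}\mathbf f$ rather than $\mathbf f$. You would need a similarly regime-dependent choice of $m$; a single exponent $m=3$ cannot work. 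Relatedly, your justification for the far field, ``$4\alpha-2>2$ for $\alpha\geq 5/6$,'' is simply false ($4\cdot\tfrac56-2=\tfrac43<2$); in fact $|\yy|^{-(4\alpha-2)}$ is not even integrable at infinity for the allowed range of $\alpha$, so the far-field integral cannot be controlled without either the kernel's spatial decay or the stronger decay of $\operatorname{div}\mathbf f$.

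Second, your integration by parts across the moving annulus $A(\xx)=\{|\xx|/2<|\yy|<2|\xx|\}$ generates surface integrals on $\partial A(\xx)$ which you defer to a ``secondary technical issue.'' The paper avoids this entirely by using a smooth cutoff $\phi_{|\xx|/2}$ rather than a sharp characteristic function: the extra term $\nabla\phi_{|\xx|/2}\cdot\mathbf f$ is then estimated pointwise using $|\nabla\phi_{|\xx|/2}|\lesssim|\xx|^{-1}$ and $|\mathbf f|\lesssim|\yy|^{-(4\alpha-2)}$ on $\operatorname{supp}\nabla\phi_{|\xx|/2}$, which already gives $|\xx|^{-(4\alpha-1)}$ without any log; the remaining term $(1-\phi_{|\xx|/2})\operatorname{div}\mathbf f$ has decay $|\yy|^{-(4\alpha-1)}\lesssim|\xx|^{-(4\alpha-1)}$ and, after an $L^1$ kernel bound, the $\sum_{N_\xx\leq q<0}1\sim\log\langle\xx\rangle$ produces the logarithm. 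If you want to carry out your three-region spatial split cleanly, I would strongly recommend replacing sharp indicators by this smooth partition; as written, the boundary terms are not innocent and your proposal does not explain how they recombine without introducing further losses.
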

\begin{proof}
Thanks to \eqref{eq-est-infty}, it suffices to show the decay estimate of $\ww$ for large $|\xx|.$ So we always assume that $|\xx|\geq200$ in the remaining part of the proof. In terms  of the low-high frequency decomposition, one splits $\ww$ into three parts as follows
\begin{align*}
 \ww=&\left(\sum_{q\geq0}+\sum_{N_{\xx}\leq q<0}+\sum_{q<N_{\xx}}\right)\int_{0}^{1}\dot{\Delta}_q\mathbb{P} e^{-(-\Delta)^{\alpha}(1-s)}s^{\frac{1}{2 \alpha}-2}\left(\operatorname{div}\mathbf{f}\right)(\cdot/s^{\frac{1}{2\alpha}})\,{\rm d}s\\
=:&{\rm I}+{\rm II}+{\rm III},
\end{align*}
where $N_{\xx}=-[\log_2|\xx|]+1.$

The first term $I $  which can be rewritten as
\begin{align*}
{\rm I}=&\sum_{q\geq0}\int_{0}^{1}\int_{B_{\frac{|\xx|}{2}}(0)}O_{\alpha,q}(1-s,\xx-\yy)s^{\frac{1}{ 2\alpha}-2}\left(\operatorname{div}\mathbf{f}\right)(\yy/s^{\frac{1}{2\alpha}})\,{\rm d}\yy{\rm d}s\\
&+\sum_{q\geq0}\int_{0}^{1}\int_{\mathbf{R}^3\backslash B_{\frac{| \xx|}{2}}(0)}O_{\alpha,q}(1-s,\xx-\yy)s^{\frac{1}{ 2\alpha}-2}\left(\operatorname{div}\mathbf{f}\right)(\yy/s^{\frac{1}{2\alpha}})\,{\rm d}\yy{\rm d}s\\
=:&{\rm I}_1+{\rm I}_2.
\end{align*}
The term $I_2$ can be bounded as follows
\begin{align*}
{\rm I}_2=&\sum_{q\geq0}\int_{0}^{1}\int_{\mathbf{R}^3\backslash B_{\frac{| \xx|}{2}}(0)}O_{\alpha,q}(1-s,\xx-\yy)\frac{1}{|\yy|^{4\alpha-1}} \left(\frac{|\yy| }{s^{\frac{1}{ 2\alpha}}}\right)^{4\alpha-1}\left(\operatorname{div}\mathbf{f}\right)(\yy/s^{\frac{1}{2\alpha}})\,{\rm d}\yy{\rm d}s\\
\leq&\frac{4}{|\xx|^{4\alpha-1}}\sum_{q\geq0}\int_{0}^{1}\int_{\mathbf{R}^3 }\big|O_{\alpha,q}\big|(1-s,\xx-\yy) \left(\frac{|\yy| }{s^{\frac{1}{ 2\alpha}}}\right)^{4\alpha-1}\left|\operatorname{div}\mathbf{f}\right|(\yy/s^{\frac{1}{2\alpha}})\,{\rm d}\yy{\rm d}s.
\end{align*}
Lemma \ref{lem-exe-decay} enables us to conclude that
\begin{equation}\label{eq-d-I2}
\begin{split}
|{\rm I}_2|\leq &\frac{4}{|\xx|^{4\alpha-1}}\sup_{\xx\in\RR^3}\left||\xx|^{4\alpha-1}\operatorname{div}\mathbf{f}(\xx)\right|\sum_{q\geq0}\int_{0}^{1}e^{-c(1-s)2^{2q\alpha}}\,\mathrm{d}s\\
\leq&\frac{C}{|\xx|^{4\alpha-1}}\sup_{\xx\in\RR^3}\left||\xx|^{4\alpha-1}\operatorname{div}\mathbf{f}(\xx)\right|\sum_{q\geq0}2^{-2q\alpha}
\leq C\sup_{\xx\in\RR^3}\left||\xx|^{4\alpha-1}\operatorname{div}\mathbf{f}(\xx)\right|\frac{1}{|\xx|^{4\alpha-1}}.
\end{split}
\end{equation}
As for ${\rm I}_1$, we  see that
\begin{align*}
{\rm I}_1=&\frac{1}{|\xx|^{4\alpha-1}}\sum_{q\geq0}\int_{0}^{1}\int_{B_{\frac{\langle \xx\rangle}{2}}(0)}|\xx|^{4\alpha-1}O_{\alpha,q}(1-s,\xx-\yy)s^{\frac{1}{ 2\alpha}-2}\left(\operatorname{div}\mathbf{f}\right)(\yy/s^{\frac{1}{2\alpha}})\,{\rm d}\yy{\rm d}s\\
\leq&\frac{C}{|\xx|^{4\alpha-1}}\sum_{q\geq0}\int_{0}^{1}\int_{\RR^3}|\xx-\yy|^{4\alpha-1}\big|O_{\alpha,q}\big|(1-s,\xx-\yy)s^{\frac{1}{ 2\alpha}-2}\left|\operatorname{div}\mathbf{f}\right|(\yy/s^{\frac{1}{2\alpha}})\,{\rm d}\yy{\rm d}s.
\end{align*}
Moreover, we have by Lemma \ref{lem-exe-decay} that
\begin{equation}\label{eq-d-I1}
\begin{split}
|{\rm I}_1|\leq &\frac{C}{|\xx|^{4\alpha-1}}\sum_{q\geq0}2^{-q(4\alpha-1-\frac32)}\int_{0}^{1}e^{-c(1-s)2^{2q\alpha}}s^{\frac{1}{ 2\alpha}-2}\left\|\operatorname{div}\mathbf{f}(\cdot/s^{\frac{1}{2\alpha}})\right\|_{L^2(\RR^3)}\,\mathrm{d}s\\
\leq &C\|\operatorname{div}\mathbf{f}\|_{ L^2(\RR^3) }\frac{1}{|\xx|^{4\alpha-1}}\int_{0}^{1} s^{\frac{1}{ 2\alpha}-2+\frac{3}{4\alpha}}  \,\mathrm{d}s\leq C\| \mathbf{f}\|_{ \dot{H}^1(\RR^3) }\frac{1}{|\xx|^{4\alpha-1}}.
\end{split}
\end{equation}
For the term ${\rm II}$, we need to use the smooth cut-off function $\phi_{\frac{|\xx|}{2}}(\yy)$ to decompose  the force $\mathbf{f}(\yy/s^{\frac{1}{2\alpha}})$ into two parts as follows
\begin{align*}
\mathbf{f}(\yy/s^{\frac{1}{2\alpha}})=\phi_{\frac{|\xx|}{2}}(\yy)\mathbf{f}(\yy/s^{\frac{1}{2\alpha}})+\left(1-\phi_{\frac{|\xx|}{2}}(\yy)\right)\mathbf{f}(\yy/s^{\frac{1}{2\alpha}})=:  \mathbf{f}^\natural(\yy/s^{\frac{1}{2\alpha}})+  \mathbf{f}^\sharp (\yy/s^{\frac{1}{2\alpha}}).
\end{align*}
Thus,  we can  rewrite  in terms of the support of $\phi_{\frac{|\xx|}{2}}(\yy)$ the second term ${\rm II}$  as
\begin{equation}\label{eq-decom-II}
\begin{split}
{\rm II}=&\sum_{N_{\xx}\leq q<0}\int_{0}^{1}\int_{B_{\frac{|\xx|}{2}}(0)}\nabla O_{\alpha,q}(1-s,\xx-\yy)s^{\frac{1}{ \alpha}-2}\cdot\mathbf{f}^\natural (\yy/s^{\frac{1}{2\alpha}})\,{\rm d}\yy{\rm d}s\\
&+\sum_{N_{\xx}\leq q<0}\int_{0}^{1}\int_{\mathbf{R}^3\backslash B_{\frac{|\xx|}{4}}(0)}O_{\alpha,q}(1-s,\xx-\yy)s^{\frac{1}{ 2\alpha}-2}\left(\operatorname{div}\mathbf{f}^\sharp\right)(\yy/s^{\frac{1}{2\alpha}})\,{\rm d}\yy{\rm d}s\\
=:&{\rm II}_1+{\rm II}_2.
\end{split}
\end{equation}
Following the same method as used in the proof of \eqref{eq-d-I1}, we can show that $|{\rm II}_1|$ can be bounded by
\begin{align*}
&\frac{C}{|\xx|^{4\alpha+2}}\sum_{N_{\xx}\leq q<0}\int_{0}^{1}\int_{B_{\frac{|\xx|}{2}}(0)}|\xx-\yy|^{4\alpha+2}\big|\nabla O_{\alpha,q}\big|(1-s,\xx-\yy)s^{\frac{1}{ \alpha}-2}\left|\mathbf{f}\right|(\yy/s^{\frac{1}{2\alpha}})\,{\rm d}\yy{\rm d}s\\
\leq& C\sup_{\xx\in\RR^3}\left||\xx|^{4\alpha-2} \mathbf{f}(\xx)\right|\frac{1}{|\xx|^{4\alpha+2}}\sum_{N_{\xx}\leq q<0}2^{-q(4\alpha+2-4)}\int_{B_{\frac{|\xx|}{2}}(0)} \frac{1}{|\yy|^{4\alpha-2}}\,\mathrm{d}\yy \\
\leq& C\sup_{\xx\in\RR^3}\left||\xx|^{4\alpha-2} \mathbf{f}(\xx)\right|\frac{1}{|\xx|^{4\alpha+2}}|\xx|^{4\alpha+2-4}|\xx|^{3-(4\alpha-2)}
\leq  C\sup_{\xx\in\RR^3}\left||\xx|^{4\alpha-2} \mathbf{f}(\xx)\right|\frac{1}{|\xx|^{4\alpha-1}},
\end{align*}
which implies
\begin{equation}\label{eq-est-ii1}
|{\rm II}_1|\leq  C\sup_{\xx\in\RR^3}\left||\xx|^{4\alpha-2} \mathbf{f}(\xx)\right|\frac{1}{|\xx|^{4\alpha-1}}.
\end{equation}
In the same fashion in \eqref{eq-d-I2}, it is easy to show by the fact $|\nabla\phi_{\frac{\xx}{2}} |\leq \frac{C}{|\xx|}$ that
\begin{equation}\label{eq-est-ii2}
\begin{split}
|{\rm II}_2|\leq &\frac{C}{|\xx|^{C\alpha-1}}\sup_{\xx\in\RR^3}\left(\left||\xx|^{4\alpha-1}\operatorname{div}\mathbf{f}(\xx)\right|+\left||\xx|^{4\alpha-2}\mathbf{f}(\xx)\right|\right)\sum_{N_{\xx}\leq q<0}\int_{0}^{1}e^{-c(1-s)2^{2q\alpha}}\,\mathrm{d}s\\
\leq&\frac{C}{|\xx|^{4\alpha-1}}\sup_{\xx\in\RR^3}\left(\left||\xx|^{4\alpha-1}\operatorname{div}\mathbf{f}(\xx)\right|+\left||\xx|^{4\alpha-2}\mathbf{f}(\xx)\right|\right)\sum_{N_{\xx}\leq q<0}1\\
\leq &C\sup_{\xx\in\RR^3}\left(\left||\xx|^{4\alpha-1}\operatorname{div}\mathbf{f}(\xx)\right|+\left||\xx|^{4\alpha-2}\mathbf{f}(\xx)\right|\right)\frac{1}{|\xx|^{4\alpha-1}}\log\langle\xx\rangle.
\end{split}
\end{equation}
For the last term ${\rm III}$, we decompose it in terms of \eqref{eq-decom-II} that
\begin{align*}
{\rm III}=&\sum_{q<N_{\xx} }\int_{0}^{1}\int_{B_{\frac{| \xx|}{2}}(0)}\nabla O_{\alpha,q}(1-s,\xx-\yy)s^{\frac{1}{  \alpha}-2}\cdot\mathbf{f}^\natural (\yy/s^{\frac{1}{ 2\alpha}})\,{\rm d}\yy{\rm d}s\\
&+\sum_{  q<N_{\xx}}\int_{0}^{1}\int_{\mathbf{R}^3\backslash B_{\frac{| \xx|}{4}}(0)}\nabla O_{\alpha,q}(1-s,\xx-\yy)s^{\frac{1}{  \alpha}-2} \cdot\mathbf{f}^\sharp (\yy/s^{\frac{1}{2 \alpha}})\,{\rm d}\yy{\rm d}s\\
=:&{\rm III}_1+{\rm III}_2.
\end{align*}
We perform the process as used in deriving \eqref{eq-d-I1} to get that $|{\rm III}_1|$ can be bounded by
\begin{align*}
&\frac{C}{|\xx|^{4\alpha-1}}\sum_{ q<  N_{\xx}}\int_{0}^{1}\int_{B_{\frac{|\xx|}{2}}(0)}|\xx-\yy|^{4\alpha-1}\big|\nabla O_{\alpha,q}\big|(1-s,\xx-\yy)s^{\frac{1}{ \alpha}-2}\left|\mathbf{f}\right|(\yy/s^{\frac{1}{2\alpha}})\,{\rm d}\yy{\rm d}s\\
\leq& C\sup_{\xx\in\RR^3}\left||\xx|^{4\alpha-2} \mathbf{f}(\xx)\right|\frac{1}{|\xx|^{4\alpha-1}}\sum_{ q<N_{\xx} }2^{-q(4\alpha-1-4)}\int_{B_{\frac{|\xx|}{2}}(0)} \frac{1}{|\yy|^{4\alpha-2}}\,\mathrm{d}\yy \\
\leq& C\sup_{\xx\in\RR^3}\left||\xx|^{4\alpha-2} \mathbf{f}(\xx)\right|\frac{1}{|\xx|^{4\alpha-1}}|\xx|^{4\alpha-1-4}|\xx|^{3-(4\alpha-2)}
\leq  C\sup_{\xx\in\RR^3}\left||\xx|^{4\alpha-2} \mathbf{f}(\xx)\right|\frac{1}{|\xx|^{4\alpha-1}}.
\end{align*}
This estimate guarantees that
\begin{equation}\label{eq-est-iii1}
|{\rm III}_1|\leq  C\sup_{\xx\in\RR^3}\left||\xx|^{4\alpha-2} \mathbf{f}(\xx)\right|\frac{1}{|\xx|^{4\alpha-1}}.
\end{equation}
Now it remains  to deal with $ {\rm III}_2$. Since  $|\yy|\geq\frac14|\xx|$, we have
\begin{align*}
{\rm III}_2
\leq&\frac{4}{|\xx|^{4\alpha-2}}\sum_{q<  N_{\xx}}\int_{0}^{1}\int_{\mathbf{R}^3 }\big|\nabla O_{\alpha,q}\big|(1-s,\xx-\yy) \left(\frac{|\yy| }{s^{\frac{1}{ 2\alpha}}}\right)^{4\alpha-2}\left|\mathbf{f}\right|(\yy/s^{\frac{1}{2\alpha}})\,{\rm d}\yy{\rm d}s\\
\leq&  C\sup_{\xx\in\RR^3}\left||\xx|^{4\alpha-2} \mathbf{f}(\xx)\right|\frac{1}{|\xx|^{4\alpha-2}}\sum_{ q<N_{\xx} }2^{q}\leq C\sup_{\xx\in\RR^3}\left||\xx|^{4\alpha-2} \mathbf{f}(\xx)\right|\frac{1}{|\xx|^{4\alpha-1}}.
\end{align*}
Combining this estimate with \eqref{eq-d-I2}, \eqref{eq-d-I1}, \eqref{eq-est-ii1},   \eqref{eq-est-ii2} and \eqref{eq-est-iii1} yields
\begin{equation*}
 \sup_{|\xx|\geq2}|\ww(\xx)|\leq C\frac{1}{|\xx|^{4\alpha-1}}\log\langle\xx\rangle.
\end{equation*}
This inequality together with \eqref{eq-est-infty} implies the estimate \eqref{eq-0830-1}.
\end{proof}
By Proposition \ref{Prop-optimal-decay}, we immediately get the decay estimate in Theorem \ref{thm-leray}. Now we turn to show the decay estimate in Theorem \ref{thm-leray-per}.
With both estimates \eqref{eq-est-decay-2-kkkkkk} and \eqref{eq-est-nabla-decay-2-k-f} in hand,  we write  $\vv$ in the integral form in terms of Proposition~4.1 established in \cite{LMZ21} that
\begin{equation}\label{eq.fundamental-solution-l}
\vv(\xx)= \int_{0}^{1}e^{-(-\Delta)^{\alpha}(1-s)}\mathbb{P}{\rm div}_{\xx}\left(s^{\frac{1}{\alpha}-2}\mathbf{f}\big(\cdot/s^{\frac{1}{2\alpha}}\big)\right)\,{\rm d}s,
\end{equation}
where \[\mathbf{f}(\xx)=-\Big(\vv\otimes \vv+\mathbf{U}_{0}\otimes \vv+\vv\otimes \mathbf{U}_{0}+\mathbf{U}_{0}\otimes \mathbf{U}_{0}\Big)(\xx).\]
Since $\alpha\in[5/6,1]$ and $\sigma\in C^{1,0}(\mathbb{S}^2)$, it is easy to verify  by using both estimates \eqref{eq-est-decay-2-kkkkkk} and \eqref{eq-est-nabla-decay-2-k-f} again that
\begin{align*}
\sup_{\xx\in\RR^3}\langle\xx \rangle^{4\alpha-1}\operatorname{div}\Big(\vv\otimes \vv+\mathbf{U}_{0}\otimes \vv+\vv\otimes \mathbf{U}_{0}+\mathbf{U}_{0}\otimes \mathbf{U}_{0}\Big)(\xx)<\infty,
\end{align*}
which meets the condition \eqref {eq-con-optimal} in Proposition \ref{Prop-optimal-decay}. Thus we have
\begin{equation}\label{eq-decay-lossssss}
 \left|\vv(\xx)\right|\leq C( \|\sigma\|_{W^{1,\infty}(\mathbb{S}^2)})(1+|\xx|)^{4\alpha-1}\log^{-1}(1+|\xx|),
\end{equation}
which is the decay estimate in Theorem \ref{thm-leray-per}.

%%%%%%%%%%%%%%%%%%%%%%%%%%%%%%%%%%%%%%%%%%%%%%%%%%%%%%%%%%%%%%%%%%%%%%%%%%%%%%%%%%%%%%%%%%%%%%%%%%%%%%%%%%%%%%%%%%%%%%%%%%%%%%%%
\subsection[The optimal decay estimate]{The optimal decay estimate}
%%%%%%%%%%%%%%%%%%%%%%%%%%%%%%%%%%%%%%%%%%%%%%%%%%%%%%%%%%%%%%%%%%%%%%%%%%%%%%%%%%%%%%%%%%%%%%%%%%%%%%%%%%%%%%%%%%%%%%%%%%%%%%%%%%%%%

In this section, we are going to show the optimal decay estimate for $\vv$ with the special force $\mathbf{f}$. Since  the decay rate for each term of $\mathbf{f}$  is different, we decompose it into two parts as follows
\begin{equation*}
\mathbf{f}(\xx)= -\Big(\vv\otimes \vv+\mathbf{U}_{0}\otimes \vv+\vv\otimes \mathbf{U}_{0}\Big)(\xx)-\left(\mathbf{U}_{0}\otimes \mathbf{U}_{0}\right)(\xx),
 \end{equation*}
 where  $\mathbf{f}_1=-\Big(\vv\otimes \vv+\mathbf{U}_{0}\otimes \vv+\vv\otimes \mathbf{U}_{0}\Big)(\xx).$
\begin{proposition}\label{prop-decay-begin-third}
Let $\alpha\in[\frac56,1]$, $\sigma\in C^{1,0}(\mathbb{S}^2)$, and  $\vv$ be a weak solution to the system~\eqref{E}.
Assume $\mathbf{f}_2(\xx)\in \dot{H}^1(\RR^3)\cap W^{1,\infty}(\RR^3)$ satisfies
\begin{equation}\label{eq-con-optimal-last}
\sup_{\xx\in\RR^3}\left|| \xx|^{4\alpha-2}\mathbf{f}_2(\xx)\right|+\sup_{\xx\in\RR^3}\left||\xx|^{4\alpha-1}\nabla \mathbf{f}_2(\xx)\right|<\infty,
\end{equation}
and
\begin{equation}\label{eq.fundamental-solution-last}
\ww(\xx)=\int_{0}^{1}e^{-(-\Delta)^{\alpha}(1-s)}\mathbb{P}{\rm div}_{\xx}\left(s^{\frac{1}{\alpha}-2}(\mathbf{f}_1+ \mathbf{f}_2)\big(\cdot/s^{\frac{1}{2\alpha}}\big)\right)\,{\rm d}s.
\end{equation}
If, moreover, there exists  some  $\gamma\in(0,1)$ such that
\begin{equation}\label{eq.con-cr}
 \left\||\cdot|^{4\alpha-1+\gamma}\operatorname{div}\mathbf{f}_2\right\|_{\dot{C}^\gamma (\RR^3)}<\infty,
 \end{equation}
then we have
\begin{equation*}
 \left|\ww(\xx)\right|\leq C\left(\|\sigma\|_{W^{1,\infty}(\RR^3)},\left\||\cdot|^{4\alpha-1+\gamma}\operatorname{div}\mathbf{f}_2\right\|_{\dot{C}^\gamma (\RR^3)}\right)(1+|\xx|)^{4\alpha-1}.
\end{equation*}
\end{proposition}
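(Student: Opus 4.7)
The strategy is to split $\ww = \ww_1 + \ww_2$ according to $\mathbf{f} = \mathbf{f}_1 + \mathbf{f}_2$ and treat the two contributions separately, using the already-established almost-optimal decay of $\vv$ for $\ww_1$ and the H\"older hypothesis \eqref{eq.con-cr} for $\ww_2$. For $\ww_1$, the bound \eqref{eq-decay-lossssss} (namely $|\vv(\yy)|\lesssim \log\langle\yy\rangle/\langle\yy\rangle^{4\alpha-1}$) together with the gradient bound \eqref{eq-est-nabla-decay-2-k-f} and the pointwise bounds $|\mathbf{U}_0(\yy)|\lesssim |\yy|^{-(2\alpha-1)}$, $|\nabla \mathbf{U}_0(\yy)|\lesssim|\yy|^{-2\alpha}$ yield
\[
|\mathbf{f}_1(\yy)|+|\operatorname{div}\mathbf{f}_1(\yy)|\lesssim \frac{\log\langle\yy\rangle}{\langle\yy\rangle^{6\alpha-2}}.
\]
Since $6\alpha-2>4\alpha-1$ for $\alpha\geq 5/6$, retracing the proof of Proposition~\ref{Prop-optimal-decay} with these strictly better weighted bounds makes the logarithmic loss arising in its term ${\rm II}_2$ harmless: it yields an extra factor $\log\langle\xx\rangle\,\langle\xx\rangle^{-(2\alpha-1)}\to 0$ relative to the target decay, so that $|\ww_1(\xx)|\lesssim \langle\xx\rangle^{-(4\alpha-1)}$.

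For $\ww_2$ I follow the frequency/spatial decomposition scheme of Proposition~\ref{Prop-optimal-decay}. All contributions except the analogue of ${\rm II}_2$ are already $\lesssim\langle\xx\rangle^{-(4\alpha-1)}$ under only the critical assumption \eqref{eq-con-optimal-last}; the sole source of the logarithmic loss is
\[
\mathcal{J}(\xx)=\sum_{N_{\xx}\leq q<0}\int_0^1\!\!\int_{\RR^3\setminus B_{|\xx|/4}(0)} O_{\alpha,q}(1-s,\xx-\yy)\,s^{\frac{1}{2\alpha}-2}(\operatorname{div}\mathbf{f}_2^\sharp)\bigl(\yy/s^{\frac{1}{2\alpha}}\bigr)\,{\rm d}\yy\, {\rm d}s.
\]
On this term the idea is to exploit the vanishing mean $\int_{\RR^3}O_{\alpha,q}(1-s,\zz)\,{\rm d}\zz=0$, which holds for every $q\in\ZZ$ because $\dot{\Delta}_q$ has annular Fourier support. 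Subtracting the constant $(\operatorname{div}\mathbf{f}_2^\sharp)\bigl(\xx/s^{1/(2\alpha)}\bigr)$ inside the inner integral reduces the problem to a difference integral plus a boundary term coming from the restriction to $\RR^3\setminus B_{|\xx|/4}(0)$.

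The H\"older hypothesis \eqref{eq.con-cr}, with $h:=|\cdot|^{4\alpha-1+\gamma}\operatorname{div}\mathbf{f}_2\in \dot{C}^\gamma(\RR^3)$, then supplies (after rescaling by $s^{1/(2\alpha)}$ and using $|\xx-\zz|\sim|\xx|$ on the main region)
\[
\Bigl|\operatorname{div}\mathbf{f}_2^\sharp\bigl((\xx-\zz)/s^{\frac{1}{2\alpha}}\bigr)-\operatorname{div}\mathbf{f}_2^\sharp\bigl(\xx/s^{\frac{1}{2\alpha}}\bigr)\Bigr|\lesssim \frac{|\zz|^\gamma\, s^{(4\alpha-1)/(2\alpha)}}{|\xx|^{4\alpha-1+\gamma}}\quad\text{for}\ |\zz|\leq|\xx|/2.
\]
Combined with Lemma~\ref{lem-exe-decay} applied with $m=\gamma$, which gives $\big\||\cdot|^\gamma O_{\alpha,q}(1-s,\cdot)\big\|_{L^1(\RR^3)}\lesssim 2^{-q\gamma}e^{-c(1-s)2^{2q\alpha}}$, each low-frequency band in $\mathcal{J}$ contributes at most $C\,2^{-q\gamma}/|\xx|^{4\alpha-1+\gamma}$. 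Summing the resulting geometric series gives
\[
\sum_{q=N_{\xx}}^{-1}\frac{2^{-q\gamma}}{|\xx|^{4\alpha-1+\gamma}}\lesssim \frac{2^{-N_{\xx}\gamma}}{|\xx|^{4\alpha-1+\gamma}}\lesssim \frac{|\xx|^\gamma}{|\xx|^{4\alpha-1+\gamma}}=\frac{1}{|\xx|^{4\alpha-1}},
\]
which is exactly the optimal decay with no logarithmic loss.

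The hard part will be the careful execution of the cancellation in the last step: one must (i) control the boundary contribution from restricting the spatial integration to $\RR^3\setminus B_{|\xx|/4}(0)$, since the mean-zero property of $O_{\alpha,q}$ holds only over the whole space, which forces a Schwartz-tail estimate $\int_{B_{|\xx|/4}(\xx)}|O_{\alpha,q}(\zz)|\,{\rm d}\zz\lesssim (2^q|\xx|)^{-1}$ whose sum over $q\in[N_{\xx},0)$ is geometric and yields $O(\langle\xx\rangle^{-(4\alpha-1)})$; (ii) track the $s$-dependent scaling of the weighted H\"older norm of $\operatorname{div}\mathbf{f}_2(\cdot/s^{1/(2\alpha)})$ uniformly for $s\in(0,1)$; and (iii) absorb the far-field region $|\zz|\gtrsim 2^{-q}$, where the H\"older bound is not pointwise useful, via the rapid decay of $O_{\alpha,q}$ supplied by Lemma~\ref{lem-exe-decay}. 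Once these technicalities are managed, the geometric summation over low frequencies closes the estimate without loss.
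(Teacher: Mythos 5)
Your strategy is sound and produces a valid proof, but the central technical step is implemented differently from the paper's, so the comparison is worth spelling out. Both proofs share the skeleton: first invoke the already-proved bound $|\ww|\lesssim\langle\xx\rangle^{-(4\alpha-1)}\log\langle\xx\rangle$, then observe that this improved decay of $\vv$ and $\nabla\vv$ (together with the decay of $\uu_0,\nabla\uu_0$) upgrades $\mathbf{f}_1$ and $\operatorname{div}\mathbf{f}_1$ to decay \emph{strictly faster} than the critical rates in \eqref{eq-con-optimal-last}, so the logarithmic loss in the analogue of the paper's term ${\rm II}_{2}$ coming from $\mathbf{f}_1$ is absorbed. (Your numerology $\log\langle\yy\rangle/\langle\yy\rangle^{6\alpha-2}$ is the dominant contribution from $\uu_0\otimes\vv$; the paper states the slightly different exponent $4\alpha$ for $\operatorname{div}\mathbf{f}_1$ in its display \eqref{eq-decay-f1}, but what the argument actually needs and what you supply is merely that the improved exponent exceeds $4\alpha-1$, which holds since $6\alpha-2>4\alpha-1$ once $\alpha>1/2$.) Where you diverge from the paper is the crux, the $\mathbf{f}_2$ contribution to the low-mid-frequency shell ${\rm II}_{2,2}$: the paper stays on the Fourier side, uses $O_{\alpha,q}\ast g = O_\alpha\ast\widetilde{\dot{\Delta}}_q g$ to land a projector on $\operatorname{div}\mathbf{f}_2^\sharp$, and estimates $\|\operatorname{div}\mathbf{f}_2^\sharp(\cdot/s^{1/(2\alpha)})\|_{\dot{B}^\gamma_{\infty,\infty}}$ via the weighted H\"older norm to extract the factor $2^{-q\gamma}$ per band. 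You instead work in physical space: subtract the constant $(\operatorname{div}\mathbf{f}_2^\sharp)(\xx/s^{1/(2\alpha)})$ using the vanishing mean of $O_{\alpha,q}$ (which does hold, since $\hat\varphi(0)=0$, and all higher moments vanish too), estimate the resulting difference via the weighted H\"older modulus of continuity, and use $\big\||\cdot|^\gamma O_{\alpha,q}\big\|_{L^1}\lesssim 2^{-q\gamma}$ from Lemma~\ref{lem-exe-decay} to produce the same $2^{-q\gamma}$ gain. These are two standard, essentially equivalent ways of converting $\dot{C}^\gamma$-regularity into a Littlewood--Paley frequency gain; the paper's version has the advantage of not introducing the boundary correction from restricting the integral, while yours avoids any explicit appeal to almost-orthogonality.

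One minor gap you should tend to when writing this up: your H\"older bound
\[
\Bigl|\operatorname{div}\mathbf{f}_2^\sharp\bigl((\xx-\zz)/s^{\frac{1}{2\alpha}}\bigr)-\operatorname{div}\mathbf{f}_2^\sharp\bigl(\xx/s^{\frac{1}{2\alpha}}\bigr)\Bigr|\lesssim \frac{|\zz|^\gamma\, s^{(4\alpha-1)/(2\alpha)}}{|\xx|^{4\alpha-1+\gamma}}
\]
accounts only for the variation of $h:=|\cdot|^{4\alpha-1+\gamma}\operatorname{div}\mathbf{f}_2$; writing $\operatorname{div}\mathbf{f}_2=h\cdot|\cdot|^{-(4\alpha-1+\gamma)}$, there is a second contribution from the variation of the weight $|\cdot|^{-(4\alpha-1+\gamma)}$, controlled via the mean-value theorem together with $|h(\mathbf{b})|\lesssim|\mathbf{b}|^\gamma$, which in turn needs the pointwise bound $\sup_{\yy}|\yy|^{4\alpha-1}|\operatorname{div}\mathbf{f}_2|(\yy)<\infty$ from hypothesis \eqref{eq-con-optimal-last}; this second piece produces a factor $|\zz|$ rather than $|\zz|^\gamma$ and is paired with $\||\cdot|O_{\alpha,q}\|_{L^1}\lesssim 2^{-q}$, which is even better. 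Similarly, the cutoff-derivative term $\nabla(\phi^c_{|\xx|/2})\cdot\mathbf{f}_2$ inside $\operatorname{div}\mathbf{f}_2^\sharp$ needs the bound $\sup|\yy|^{4\alpha-2}|\mathbf{f}_2|(\yy)<\infty$ and the $\dot{C}^\gamma$-norm of the rescaled cutoff (both at your disposal). These are precisely the manipulations the paper carries out under the names ${\rm L}_1,{\rm L}_2,{\rm K}_1,{\rm K}_2^1,{\rm K}_2^2,{\rm A},{\rm B}$; you've flagged them as technicalities, and they do indeed close, but they need to be executed, not merely acknowledged.
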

\begin{proof}
From \eqref{eq-decay-lossssss}, we have
\begin{equation}\label{eq-20220819}
 \left|\ww(\xx)\right|\leq C( \|\sigma\|_{W^{1,\infty}(\mathbb{S}^2)})(1+|\xx|)^{4\alpha-1}\log^{-1}(1+|\xx|).
\end{equation}
This decay estimate allows us to write
\begin{align*}
\ww(\xx)=&\int_{0}^{1}e^{-(-\Delta)^{\alpha}(1-s)}\mathbb{P}{\rm div}_{\xx}\left(s^{\frac{1}{\alpha}-2}\left(\mathbf{f}_1+\mathbf{f}_2\right)\big(\cdot/s^{\frac{1}{2\alpha}}\big)\right)\,\mathrm{d}s\\
=:&\ww^\natural+\ww^\sharp.
\end{align*}
For $|\xx|\geq200,$ as in the proof of Proposition \ref{Prop-optimal-decay}, we decompose $ \ww^\natural$ as follows
\begin{align*}
 \ww^\natural=&\left(\sum_{q\geq0}+\sum_{N_{\xx}\leq q<0}+\sum_{q<N_{\xx}}\right)\int_{0}^{1}\dot{\Delta}_q\mathbb{P} e^{-(-\Delta)^{\alpha}(1-s)}s^{\frac{1}{2 \alpha}-2}\left(\operatorname{div}\mathbf{f}\right)(\cdot/s^{\frac{1}{2\alpha}})\,{\rm d}s\\
=:&{\rm I}+{\rm II}+{\rm III},
\end{align*}
where $N_{\xx}=-[\log_2|\xx|]+1.$

From the proof of Proposition \ref{Prop-optimal-decay},  we know that
 \begin{equation}\label{eq-f-1}
  \left(|{\rm I}|+|{\rm II}_1|+|{\rm III}|\right)(\xx) |\leq C (1+|\xx|)^{4\alpha-1},
 \end{equation}
 where ${\rm II}_1$ comes from the following decomposition
 \begin{align*}
{\rm II}=&\sum_{N_{\xx}\leq q<0}\int_{0}^{1}\int_{B_{\frac{|\xx|}{2}}(0)}\nabla O_{\alpha,q}(1-s,\xx-\yy)s^{\frac{1}{ \alpha}-2}\cdot\left(\phi_{\frac{ |\xx|}{2}}\mathbf{f}  \right)(\yy/s^{\frac{1}{2\alpha}})\,{\rm d}\yy{\rm d}s\\
&+\sum_{i=1}^2\sum_{N_{\xx}\leq q<0}\int_{0}^{1}\int_{\mathbf{R}^3\backslash B_{\frac{|\xx|}{4}}(0)}O_{\alpha,q}(1-s,\xx-\yy)s^{\frac{1}{ 2\alpha}-2}\operatorname{div}\left(\phi^c_{\frac{ |\xx|}{2}}\mathbf{f}_i \right)(\yy/s^{\frac{1}{2\alpha}})\,{\rm d}\yy{\rm d}s\\
=:&{\rm II}_1+{\rm II}_{2,1}+{\rm II}_{2,2}
\end{align*}
with $\phi^c_{\frac{ |\xx|}{2}}=1-\phi_{\frac{ |\xx|}{2}}.$

So our task is now to bound ${\rm II}_{2,1}$ and ${\rm II}_{2,2}$. To begin with, we estimate ${\rm II}_{2,1}$. In terms of estimates \eqref{eq.est-u0}, \eqref{eq.est-nablau0} and  \eqref{eq-20220819}, it is easy to check that there exist a positive constant $C$ such that
\begin{equation}\label{eq-decay-f1}
\sup_{\xx\in\RR^3}|\xx|^{4\alpha-1}| \mathbf{f}_1|(\xx)+\sup_{\xx\in\RR^3}|\xx|^{4\alpha}|\operatorname{div}\mathbf{f}_1|(\xx)\leq C.
\end{equation}
With the decay estimate \eqref{eq-decay-f1} in hand, by using the triangle inequality, we readily have  that for all $|\xx|\geq 200,$
\begin{equation*}
|{\rm II}_{2,1}|\leq \frac{C}{|\xx|^{4\alpha}}\sum_{N_{\xx}\leq q<0}\int_{0}^{1}\int_{\mathbf{R}^3 }|O_{\alpha,q}|(1-s,\xx-\yy)s^{\frac{1}{ 2\alpha}-2}\left||\yy|^{4\alpha}\operatorname{div}\left(\phi^c_{\frac{ |\xx|}{2}}\mathbf{f}_1\right)\right|(\yy/s^{\frac{1}{2\alpha}})\,{\rm d}\yy{\rm d}s.
\end{equation*}
By the Young inequality  and \eqref{eq-decay-f1}, we obtain
\begin{equation}\label{eq-f-2}
\begin{split}
|{\rm II}_{2,1}|\leq &\frac{C}{|\xx|^{4\alpha}}\bigg(\sup_{\xx\in\RR^3}|\xx|^{4\alpha-1}| \mathbf{f}_1|(\xx)+\sup_{\xx\in\RR^3}|\xx|^{4\alpha}|\operatorname{div}\mathbf{f}_1|(\xx)\bigg)\sum_{N_{\xx}\leq q<0}\int_0^1s^{\frac{1}{2\alpha}}\,\mathrm{d}s\\
\leq&\frac{C}{|\xx|^{4\alpha}}\log\langle \xx\rangle.
\end{split}
\end{equation}
Collecting \eqref{eq-f-1} and \eqref{eq-f-2} yields that for all $|\xx|\geq 200,$
 \begin{equation}\label{eq-f-3}
  \left|\vv^\natural\right|(\xx)  \leq C (1+|\xx|)^{4\alpha-1}.
 \end{equation}
  It remains for us to bound ${\rm II}_{2,2}$ for large $|\xx|$. Firstly, we note that
\begin{equation}\label{sharp-1}
\begin{split}
{\rm II}_{2,2}=&\sum_{N_{\xx}\leq q<0}\int_{0}^{1}\int_{\mathbf{R}^3}O_{\alpha,q}(1-s,\xx-\yy)s^{\frac{1}{ 2\alpha}-2}\widetilde{\dot{\Delta}}_q\left(\operatorname{div}\mathbf{f}_2^\sharp\right)(\yy/s^{\frac{1}{2\alpha}})\,{\rm d}\yy{\rm d}s\\
\leq&C  \sum_{N_{\xx}\leq q<0}2^{-q\gamma}\int_{0}^{1}s^{\frac{1}{2\alpha}-2}\|O_{\alpha}\|_{L^1(\RR^3)} \left\|\left(\operatorname{div}\mathbf{f}_2^\sharp\right)(\cdot/s^{\frac{1}{2\alpha}})\right\|_{\dot{B}^\gamma_{\infty,\infty}(\RR^3)}{\rm d}s,
\end{split}
\end{equation}
where
\[\mathbf{f}_2^\sharp (\yy/s^{\frac{1}{2\alpha}})=\left(1-\phi_{\frac{|\xx|}{2}}(\yy)\right)\mathbf{f}_2(\yy/s^{\frac{1}{2\alpha}}). \]
Since
\begin{equation*}
\left(\operatorname{div}\mathbf{f}^\sharp\right)(\yy/s^{\frac{1}{2\alpha}})=-\nabla\left(\phi_{\frac{|\xx|}{2}}(\yy)\right)\cdot\mathbf{f}_2(\yy/s^{\frac{1}{2\alpha}})
+\left(1-\phi_{\frac{|\xx|}{2}}(\yy)\right)\left(\operatorname{div}\mathbf{f}_2\right)(\yy/s^{\frac{1}{2\alpha}}),
\end{equation*}
we further obtain by the fact the H\"older space $\dot{C}^\gamma(\RR^3)=\dot{B}^\gamma_{\infty,\infty}(\RR^3)$ for all $\gamma\in(0,1)$ that
\begin{align*}
&\left\|\left(\operatorname{div}\mathbf{f}_2^\sharp\right)(\cdot/s^{\frac{1}{2\alpha}})\right\|_{\dot{B}^\gamma_{\infty,\infty}(\RR^3)}\\
\leq&\left\|\nabla\left(\phi_{\frac{|\xx|}{2}}(\cdot)\right)\cdot\mathbf{f}_2(\cdot/s^{\frac{1}{2\alpha}})\right\|_{\dot{C}^\gamma(\RR^3)}
+\left\|\left(1-\phi_{\frac{|\xx|}{2}}(\cdot)\right)\left(\operatorname{div}\mathbf{f}_2\right)(\cdot/s^{\frac{1}{2\alpha}})\right\|_{\dot{C}^\gamma(\RR^3)}=:{\rm L_1}+{\rm L_2}.
\end{align*}
According to the definition of H\"older space and the support of $\phi$, we have
\begin{align*}
{\rm L_1}=&\sup_{\substack{\yy,\,\zz\in\RR^3\\ \yy\neq\zz }}\frac{\left|\nabla\left(\phi_{\frac{|\xx|}{2}}(\yy)\right)\cdot\mathbf{f}_2(\yy/s^{\frac{1}{2\alpha}})
-\nabla\left(\phi_{\frac{|\xx|}{2}}(\zz)\right)\cdot\mathbf{f}_2(\zz/s^{\frac{1}{2\alpha}})\right|}{|\yy-\zz|^\gamma}\\
=&\sup_{\substack{\yy,\,\zz\in\RR^3\backslash B_{\frac{|\xx|}{2}}(0)\\ \yy\neq\zz }}\frac{\left|\nabla\left(\phi_{\frac{|\xx|}{2}}(\yy)\right)\cdot\mathbf{f}_2(\yy/s^{\frac{1}{2\alpha}})
-\nabla\left(\phi_{\frac{|\xx|}{2}}(\zz)\right)\cdot\mathbf{f}_2(\zz/s^{\frac{1}{2\alpha}})\right|}{|\yy-\zz|^\gamma}.
\end{align*}
Moreover, one has by the triangle inequality that
\begin{align*}
{\rm L_1}\leq&\sup_{\substack{\yy,\,\zz\in\RR^3\backslash B_{\frac{|\xx|}{2}}(0)\\ \yy\neq\zz }}\frac{\left|\nabla\left(\phi_{\frac{|\xx|}{2}}(\yy)\right)\cdot\mathbf{f}_2(\yy/s^{\frac{1}{2\alpha}})
-\nabla\left(\phi_{\frac{|\xx|}{2}}(\zz)\right)\cdot\mathbf{f}_2(\yy/s^{\frac{1}{2\alpha}})\right|}{|\yy-\zz|^\gamma}\\
&+\sup_{\substack{\yy,\,\zz\in\RR^3\backslash B_{\frac{|\xx|}{2}}(0)\\ \yy\neq\zz }}\frac{\left|\nabla\left(\phi_{\frac{|\xx|}{2}}(\zz)\right)\cdot\mathbf{f}_2(\yy/s^{\frac{1}{2\alpha}})
-\nabla\left(\phi_{\frac{|\xx|}{2}}(\zz)\right)\cdot\mathbf{f}_2(\zz/s^{\frac{1}{2\alpha}})\right|}{|\yy-\zz|^\gamma}.
\end{align*}
Thus, we have
\begin{align*}
{\rm L_1}\leq&\left\|\nabla\left(\phi_{\frac{|\xx|}{2}}(\cdot)\right)\right\|_{\dot{C}^\gamma(\RR^3)}\sup_{\yy\in\RR^3\backslash B_{\frac{|\xx|}{2}}(0)}|\mathbf{f}_2|(\yy/s^{\frac{1}{2\alpha}})\\
&+\left\|\nabla\left(\phi_{\frac{|\xx|}{2}}(\cdot)\right)\right\|_{L^\infty(\RR^3)}
\sup_{\substack{\yy,\,\zz\in\RR^3\backslash B_{\frac{|\xx|}{2}}(0)\\ \yy\neq\zz }}\frac{\left|\mathbf{f}_2(\yy/s^{\frac{1}{2\alpha}})
-\mathbf{f}_2(\zz/s^{\frac{1}{2\alpha}})\right|}{|\yy-\zz|^\gamma}=:\mathrm{K}_1+\mathrm{K}_2.
\end{align*}
On one hand, using the property of $\phi_{\frac{|\xx|}{2}}(\yy),$ one has
\begin{align*}
 \mathrm{K}_1
\leq&\frac{Cs^{\frac{2\alpha-1}{\alpha}}}{|\xx|^{1+\gamma}}\sup_{\yy\in\RR^3\backslash B_{\frac{|\xx|}{2}}(0)}\frac{1}{|\yy|^{4\alpha-2}} \sup_{\yy\in\RR^3\backslash B_{\frac{|\xx|}{2}}(0)}\left|\yy/s^{\frac{1}{2\alpha}}\right|^{4\alpha-2}|\mathbf{f}_2|(\yy/s^{\frac{1}{2\alpha}})\\
\leq&\frac{Cs^{\frac{2\alpha-1}{\alpha}}}{|\xx|^{4\alpha-1+\gamma}} \sup_{\yy\in\RR^3 }\left|\yy \right|^{4\alpha-2}|\mathbf{f}_2|(\yy ).
\end{align*}
On the other hand, we split $\mathrm{K}_2$ into two parts as follows:
\begin{align*}
\mathrm{K}_2
 =&\left\|\nabla\left(\phi_{\frac{|\xx|}{2}}(\cdot)\right)\right\|_{L^\infty(\RR^3)}
\sup_{\substack{\yy,\,\zz\in\RR^3\backslash B_{\frac{|\xx|}{2}}(0)\\ |\yy-\zz|\geq|\frac{\xx|}{100}}}\frac{\left|\mathbf{f}_2(\yy/s^{\frac{1}{2\alpha}})
-\mathbf{f}_2(\zz/s^{\frac{1}{2\alpha}})\right|}{|\yy-\zz|^\gamma}\\
&+\left\|\nabla\left(\phi_{\frac{|\xx|}{2}}(\cdot)\right)\right\|_{L^\infty(\RR^3)}
\sup_{\substack{\yy,\,\zz\in\RR^3\backslash B_{\frac{|\xx|}{2}}(0)\\ |\yy-\zz|<\frac{|\xx|}{100}}}\frac{\left|\mathbf{f}_2(\yy/s^{\frac{1}{2\alpha}})
-\mathbf{f}_2(\zz/s^{\frac{1}{2\alpha}})\right|}{|\yy-\zz|^\gamma}:=\mathrm{K}_2^1+\mathrm{K}_2^2.
\end{align*}
Obviously, we have
\begin{align*}
\mathrm{K}_2^1\leq &\frac{C}{|\xx|^{1+\gamma}}\sup_{\yy,\,\zz\in\RR^3\backslash B_{\frac{|\xx|}{2}}(0)} {\left|\mathbf{f}_2(\yy/s^{\frac{1}{2\alpha}})
-\mathbf{f}_2(\zz/s^{\frac{1}{2\alpha}})\right|} \\
\leq& \frac{C}{|\xx|^{1+\gamma}}\sup_{\yy \in\RR^3\backslash B_{\frac{|\xx|}{2}}(0)}\frac{1}{|\yy|^{4\alpha-2}} \left(|\yy|^{4\alpha-2}\left|\mathbf{f}_2(\yy/s^{\frac{1}{2\alpha}})
 \right| \right)
 \leq  \frac{Cs^{\frac{2\alpha-1}{\alpha}}}{|\xx|^{4\alpha-1+\gamma}}\sup_{\yy\in\RR^3} |\yy|^{4\alpha-2}\left|\mathbf{f}_2\right|(\yy ).
\end{align*}
For the term $\mathrm{K}_2^2,$ by the classical mean value theorem of the differential calculus, we know there exist $\theta\in(0,1)$ such  that
\begin{align*}
\mathrm{K}_2^2\leq&\frac{C}{|\xx|}
\sup_{\substack{\yy,\,\zz\in\RR^3\backslash B_{\frac{|\xx|}{2}}(0)\\ |\yy-\zz|<\frac{|\xx|}{100} }} \left|D\mathbf{f}_2\right|(\yy+\theta(\zz-\yy)/s^{\frac{1}{2\alpha}})  |\yy-\zz|^{1-\gamma}  \\
\leq& \frac{C}{|\xx|^\gamma}
\sup_{\substack{\yy,\,\zz\in\RR^3\backslash B_{\frac{|\xx|}{2}}(0)\\ |\yy-\zz|<\frac{|\xx|}{100} }}\frac{1}{\left|\yy+\theta(\zz-\yy)\right|^{4\alpha-1}} \left|\yy+\theta(\zz-\yy)\right|^{4\alpha-1}\left|D\mathbf{f}_2\right|(\yy+\theta(\zz-\yy)/s^{\frac{1}{2\alpha}})\\
\leq& \frac{Cs^{\frac{2\alpha-1}{\alpha}}}{|\xx|^{4\alpha-1+\gamma}}
\sup_{ \yy  \in\RR^3 } \left|\yy \right|^{4\alpha-1}\left|D\mathbf{f}_2\right|(\yy ).
\end{align*}
Collecting estimates for $\mathrm{K}_1$, $\mathrm{K}_2^1$ and $\mathrm{K}_2^2$, we readily have
\begin{equation}\label{eq-estimate-L1}
{\rm L_1}\leq \frac{Cs^{\frac{2\alpha-1}{\alpha}}}{|\xx|^{4\alpha-1+\gamma}}\bigg(\sup_{\yy\in\RR^3} |\yy|^{4\alpha-2}\left|\mathbf{f}_2\right|(\yy )+
\sup_{ \yy  \in\RR^3 } \left|\yy \right|^{4\alpha-1}\left|D\mathbf{f}_2\right|(\yy )\bigg).
\end{equation}
Next, we deal with the term ${\rm L}_2.$  By the triangle inequality, we obtain
\begin{align*}
{\rm L}_2\leq&\sup_{\substack{\yy,\,\zz\in\RR^3 \backslash B_{\frac{|\xx|}{2}}(0)\\ \yy\neq\zz }}\frac{\big|\phi_{\frac{|\xx|}{2}}^c(\yy)
-\phi_{\frac{|\xx|}{2}}^c(\zz)\big|\,|\operatorname{div}\mathbf{f}_2(\yy /s^{\frac{1}{2\alpha}})|}{|\yy-\zz|^\gamma} \\
&+\sup_{\substack{\yy,\,\zz\in\RR^3 \backslash B_{\frac{|\xx|}{2}}(0)\\ \yy\neq\zz }}\frac{\phi_{\frac{|\xx|}{2}}^c(\zz)| \operatorname{div}\mathbf{f}_2(\yy /s^{\frac{1}{2\alpha}})
-\operatorname{div}\mathbf{f}_2(\zz /s^{\frac{1}{2\alpha}} )\big|}{|\yy-\zz|^\gamma}=:{\rm L}_2^1+{\rm L}_2^2.
\end{align*}
Since
\[\|\phi_{\frac{|\xx|}{2}}\|_{\dot{C}^\gamma(\RR^3)}\leq C\|\phi_{\frac{|\xx|}{2}}\|^{1-\gamma}_{L^\gamma(\RR^3)}\|\nabla\phi_{\frac{|\xx|}{2}}\|^{\gamma}_{L^\gamma(\RR^3)}\leq \frac{C}{|\xx|^\gamma},\]
we have
\begin{equation}\label{eq-ii-1}
\begin{split}
{\rm L}_2^1=&\sup_{\substack{\yy,\,\zz\in\RR^3 \backslash B_{\frac{|\xx|}{2}}(0)\\ \yy\neq\zz }}\frac{\big|\phi_{\frac{|\xx|}{2}}(\yy)
-\phi_{\frac{|\xx|}{2}}(\zz)\big|\,|\operatorname{div}\mathbf{f}_2(\yy /s^{\frac{1}{2\alpha}})|}{|\yy-\zz|^\gamma}\\
\leq& \frac{Cs^{\frac{4\alpha-1}{2\alpha}}}{|\xx|^{4\alpha-1+\gamma}} \sup_{\yy\in\RR^3 }\left|\yy \right|^{4\alpha-1}|\operatorname{div}\mathbf{f}_2|(\yy ).
\end{split}
\end{equation}
As for the last term ${\rm L}_2^2$,  we note that
\begin{align*}
{\rm L}_2^2\leq&\sup_{\substack{\yy,\,\zz\in\RR^3 \backslash B_{\frac{|\xx|}{2}}(0)\\ |\yy-\zz|\geq\frac{|\xx|}{100} }}\frac{ | \operatorname{div}\mathbf{f}_2(\yy /s^{\frac{1}{2\alpha}})
-\operatorname{div}\mathbf{f}_2(\zz /s^{\frac{1}{2\alpha}} )\big|}{|\yy-\zz|^\gamma}\\
&+\sup_{\substack{\yy,\,\zz\in\RR^3 \backslash B_{\frac{|\xx|}{2}}(0)\\ |\yy-\zz|<\frac{|\xx|}{100} }}\frac{ | \operatorname{div}\mathbf{f}_2(\yy /s^{\frac{1}{2\alpha}})
-\operatorname{div}\mathbf{f}_2(\zz /s^{\frac{1}{2\alpha}} )\big|}{|\yy-\zz|^\gamma}=:{\rm L}_2^{2,1}+{\rm L}_2^{2,2}.
\end{align*}
In the same way as in the proof of ${\rm K}_2^1,$ it is obvious
\begin{equation}\label{eq-ii-2-1}
{\rm L}_2^{2,1}\leq\frac{Cs^{\frac{4\alpha-1}{2\alpha}}}{|\xx|^{4\alpha-1+\gamma}} \sup_{\yy\in\RR^3 }\left|\yy \right|^{4\alpha-1}|\operatorname{div}\mathbf{f}_2|(\yy ).
\end{equation}
For the term ${\rm L}_2^{2,2}$, we observe that
\begin{align*}
{\rm L}_2^{2,2}\leq &\frac{C }{|\xx|^{4\alpha-1+\gamma}}\sup_{\substack{\yy,\,\zz\in\RR^3 \backslash B_{\frac{|\xx|}{2}}(0)\\ |\yy-\zz|<\frac{|\xx|}{100} }}\frac{|\yy|^{4\alpha-1+\gamma} | \operatorname{div}\mathbf{f}_2(\yy /s^{\frac{1}{2\alpha}})
-\operatorname{div}\mathbf{f}_2(\zz /s^{\frac{1}{2\alpha}} )\big|}{|\yy-\zz|^\gamma}
\end{align*}
By the triangle inequality, we have
\begin{align*}
&\sup_{\substack{\yy,\,\zz\in\RR^3 \backslash B_{\frac{|\xx|}{2}}(0)\\ |\yy-\zz|<\frac{|\xx|}{100} }}\frac{|\yy|^{4\alpha-1+\gamma} | \operatorname{div}\mathbf{f}_2(\yy /s^{\frac{1}{2\alpha}})
-\operatorname{div}\mathbf{f}_2(\zz /s^{\frac{1}{2\alpha}} )\big|}{|\yy-\zz|^\gamma}\\
\leq&\sup_{\substack{\yy,\,\zz\in\RR^3 \backslash B_{\frac{|\xx|}{2}}(0)\\ |\yy-\zz|<\frac{|\xx|}{100} }}\frac{\big| |\yy|^{4\alpha-1+\gamma} \operatorname{div}\mathbf{f}_2(\yy /s^{\frac{1}{2\alpha}})
-|\zz|^{4\alpha-1+\gamma} \operatorname{div}\mathbf{f}_2(\zz /s^{\frac{1}{2\alpha}} )\big|}{|\yy-\zz|^\gamma}\\
&+\sup_{\substack{\yy,\,\zz\in\RR^3 \backslash B_{\frac{|\xx|}{2}}(0)\\ |\yy-\zz|<\frac{|\xx|}{100} }}\frac{\left||\yy|^{4\alpha-1+\gamma}-|\zz|^{4\alpha-1+\gamma} \right| \big| \operatorname{div}\mathbf{f}_2(\yy /s^{\frac{1}{2\alpha}})
-\operatorname{div}\mathbf{f}_2(\zz /s^{\frac{1}{2\alpha}} )\big|}{|\yy-\zz|^\gamma}\\
=:&{\rm A}+{\rm B}.
\end{align*}
According to the definition of the H\"older space,  we find that
\begin{align*}
{\rm A}\leq Cs^{\frac{4\alpha-1}{2\alpha}}\left\||\cdot|^{4\alpha-1+\gamma}  \operatorname{div}\mathbf{f}_2\right\|_{\dot{C}^\gamma(\RR^3)}.
\end{align*}
On the other hand,  the symmetry of ${\rm B}$ helps us to conclude
\begin{align*}
{\rm B}\leq&2\sup_{\substack{\yy,\,\zz\in\RR^3 \backslash B_{\frac{|\xx|}{2}}(0)\\ |\yy-\zz|<\frac{|\xx|}{100} }}\frac{\left||\yy|^{4\alpha-1+\gamma}-|\zz|^{4\alpha-1+\gamma} \right| }{|\yy-\zz|^\gamma}\big | \operatorname{div}\mathbf{f}_2 \big|(\yy /s^{\frac{1}{2\alpha}})\\
\leq &C \sup_{\substack{\yy,\,\zz\in\RR^3 \backslash B_{\frac{|\xx|}{2}}(0)\\ |\yy-\zz|<\frac{|\xx|}{100} }} \max\{|\yy|^{4\alpha-2+\gamma},|\zz|^{4\alpha-2+\gamma} \} |\yy-\zz|^{1-\gamma} \big|\operatorname{div}\mathbf{f}_2 \big|(\yy /s^{\frac{1}{2\alpha}})\\
\leq &C \sup_{\substack{\yy,\,\zz\in\RR^3 \backslash B_{\frac{|\xx|}{2}}(0)\\ |\yy-\zz|<\frac{|\xx|}{100} }}  |\yy|^{4\alpha-1}  \big|\operatorname{div}\mathbf{f}_2 \big|(\yy /s^{\frac{1}{2\alpha}})
\leq Cs^{\frac{4\alpha-1}{2\alpha}}\sup_{ \yy \in\RR^3   }|\yy|^{4\alpha-1}| \operatorname{div}\mathbf{f}_2 |(\yy ).
\end{align*}
Collecting both estimates for $\rm A$ and $\rm B$, we have
\begin{equation*}
{\rm L}_2^{2,2}\leq  \frac{C s^{\frac{4\alpha-1}{2\alpha}}}{|\xx|^{4\alpha-1+\gamma}} \bigg(\left\||\cdot|^{4\alpha-1+\gamma}  \operatorname{div}\mathbf{f}_2\right\|_{\dot{C}^\gamma(\RR^3)}+\sup_{ \yy \in\RR^3   }|\yy|^{4\alpha-1}| \operatorname{div}\mathbf{f}_2 |(\yy )\bigg).
\end{equation*}
This estimate together with \eqref{eq-ii-1} and \eqref{eq-ii-2-1} gives
\begin{equation}\label{eq-estimate-L2}
{\rm L}_2\leq  \frac{C s^{\frac{4\alpha-1}{2\alpha}}}{|\xx|^{4\alpha-1+\gamma}} \bigg(\left\||\cdot|^{4\alpha-1+\gamma}  \operatorname{div}\mathbf{f}_2\right\|_{\dot{C}^\gamma(\RR^3)}+\sup_{ \yy \in\RR^3   }|\yy|^{4\alpha-1}| \operatorname{div}\mathbf{f}_2 |(\yy )\bigg).
\end{equation}
Inserting \eqref{eq-estimate-L1} and \eqref{eq-estimate-L2} into \eqref{sharp-1} and then using the conditions \eqref{eq-con-optimal-last} and \eqref{eq.con-cr}, we eventually obtain
\begin{align*}
{\rm II}_{2,2}
\leq&C \frac{1}{|\xx|^{4\alpha-1+\gamma}} \sum_{N_{\xx}\leq q<0}2^{-q\gamma}\int_{0}^{1}\left(1+s^{-\frac{1}{2\alpha}}\right) {\rm d}s\\
\leq&  \frac{C}{|\xx|^{4\alpha-1+\gamma}}2^{-\gamma N_{\xx}}\leq \frac{C}{|\xx|^{4\alpha-1}}.
\end{align*}
Thus, we complete the proof of Proposition \ref{prop-decay-begin-third}.
\end{proof}
With Proposition \ref{prop-decay-begin-third} in hand, we turn to show the optimal estimate in Theorem \ref{thm-leray-per}. To do this, we just need to prove \eqref{eq-con-optimal-last} and \eqref{eq.con-cr}. Since $\sigma\in C^{1,0}(\mathbb{S}^2)$ and $\mathbf{f}_2=\Div(\uu_0\otimes\uu_0)$,  we have in terms of Proposition~4.1 established in \cite{LMZ21} that
\begin{equation}\label{eq-0824-1}
\begin{split}
&\sup_{\xx\in\RR^3}\left|| \xx|^{4\alpha-2}\mathbf{f}_2(\xx)\right|+\sup_{\xx\in\RR^3}\left||\xx|^{4\alpha-1}\nabla \mathbf{f}_2(\xx)\right|\\
\leq& C\bigg(\sup_{\xx\in\RR^3}|\xx|^{2\alpha-1}|\uu_0|(\xx)\bigg)^{2}+\sup_{\xx\in\RR^3}|\xx|^{2\alpha-1}|\uu_0|(\xx)\sup_{\xx\in\RR^3}|\xx|^{2\alpha }|\Div\uu_0|(\xx)<\infty.
\end{split}
\end{equation}
Thus, our task is now to show
\begin{equation}\label{eq.claim-2}
 \left\||\cdot|^{4\alpha-1+\gamma}\operatorname{div}\mathbf{f}_2\right\|_{\dot{B}^\gamma_{\infty,\infty}(\RR^3)}
<\infty.
\end{equation}
In view of the Bony paraproduct, we write
\begin{equation*}
|\xx|^{4\alpha-1+\gamma}\Div \mathbf{f}_2=\dot{T}_{|\xx|^{4\alpha-3+\gamma} }|\xx|^{2}\Div \mathbf{f}_2+\dot{T}_{|\xx|^{2} \Div\mathbf{f}_2}{|\xx|^{4\alpha-3+\gamma} }+\dot{R}\left(|\xx|^{4\alpha-3+\gamma} ,|\xx|^{2} \Div\mathbf{f}_2\right).
\end{equation*}
In terms of the property of support, we can show by using $\alpha\in[5/6,1]$ and $\gamma\in(0,1)$ that
\begin{align*}
&\left\|\dot{T}_{|\cdot|^{2}\Div \mathbf{f}_2}{|\cdot|^{4\alpha-3+\gamma} }\right\|_{\dot{B}^{ \gamma}_{\infty,\infty}(\RR^3)}\\
\leq&C\sup_{q\in\ZZ}2^{q\gamma}\left\|\dot{S}_{q-1}\big(|\cdot|^{2}\Div \mathbf{f}_2\big)\right\|_{L^\infty(\RR^3)}\left\|\dot{\Delta}_{q}\big(|\cdot|^{4\alpha-3+\gamma}\big)\right\|_{L^\infty(\RR^3)}\\
\leq &C\left\| |\cdot|^{2}\Div \mathbf{f}_2\right\|_{\dot{B}^{3-4\alpha}_{\infty,\infty}(\RR^3)}\left\| |\cdot|^{4\alpha-3+\gamma} \right\|_{\dot{B}^{4\alpha-3+\gamma}_{\infty,\infty}(\RR^3)}\leq C\left\| |\cdot|^{2}\Div \mathbf{f}_2\right\|_{\dot{B}^{3-4\alpha}_{\infty,\infty}(\RR^3)},
\end{align*}
and
 \begin{align*}
&\left\|\dot{R}\left(|\cdot|^{4\alpha-3+\gamma} ,|\cdot|^{2} \Div\mathbf{f}_2\right)\right\|_{\dot{B}^{ \gamma}_{\infty,\infty}(\RR^3)}\\
\leq&C\sup_{q\in\ZZ}2^{q\gamma}\left\|\dot{\Delta}_{q}\big(|\cdot|^{2}\Div \mathbf{f}_2\big)\right\|_{L^\infty(\RR^3)}\left\|\widetilde{\dot{\Delta}}_{q}\big(|\cdot|^{4\alpha-3+\gamma}\big)\right\|_{L^\infty(\RR^3)}\\
\leq &C\left\| |\cdot|^{2}\Div \mathbf{f}_2\right\|_{\dot{B}^{3-4\alpha}_{\infty,\infty}(\RR^3)}\left\| |\cdot|^{4\alpha-3+\gamma} \right\|_{\dot{B}^{4\alpha-3+\gamma}_{\infty,\infty}(\RR^3)}\leq C\left\| |\cdot|^{2}\Div \mathbf{f}_2\right\|_{\dot{B}^{3-4\alpha}_{\infty,\infty}(\RR^3)}.
\end{align*}
Since $\mathbf{f}_2=-\uu_0\otimes\uu_0$ and $\sigma\in C^{1,0}(\mathbb{S}^2),$ we have by \eqref{eq.bern-g-infty}  that
\begin{equation}\label{eq-22-0825-1}
\begin{split}
&\left\| |\cdot|^{2}\Div \mathbf{f}_2\right\|_{\dot{B}^{3-4\alpha}_{\infty,\infty}(\RR^3)}\\
\leq &C\left\| |\cdot|^{2}\Div \mathbf{f}_2\right\|_{L^{ \frac{3}{4\alpha-3},\infty} (\RR^3)}\\
\leq&C\sup_{\xx\in\RR^3}|\xx|^{2\alpha-1}|\uu_0|(\xx)\sup_{\xx\in\RR^3}|\xx|^{2\alpha }|\Div\uu_0|(\xx)\left\| |\cdot|^{3-4\alpha} \right\|_{L^{ \frac{3}{4\alpha-3},\infty} (\RR^3)}.
\end{split}
\end{equation}
This estimate together with   Proposition~4.1 established in \cite{LMZ21} means that
\begin{equation}\label{eq.20220827-1}
\left\|\dot{T}_{|\cdot|^{2}\Div \mathbf{f}_2}{|\cdot|^{4\alpha-3+\gamma} }\right\|_{\dot{B}^{ \gamma}_{\infty,\infty}(\RR^3)}+\left\|\dot{R}\left(|\cdot|^{4\alpha-3+\gamma} ,|\cdot|^{2} \Div\mathbf{f}_2\right)\right\|_{\dot{B}^{ \gamma}_{\infty,\infty}(\RR^3)}
<\infty.
\end{equation}
Next, we turn to bound the paraproduct term $\dot{T}_{|\xx|^{4\alpha-3+\gamma} }|\xx|^{2}\Div \mathbf{f}_2.$ Since
\[\int_{\RR^3}h_q(\xx-\yy)\,\mathrm{d}\yy=\int_{\RR^3}h (\xx-\yy)\,\mathrm{d}\yy=\hat{h}(0)=1\quad\text{for each}\,\,q\in\ZZ,\]
we have
\begin{align*}
&\dot{S}_{q-1}\big(|\cdot|^{4\alpha-3+\gamma}\big)\\=&\left(\dot{S}_{q-1} |\cdot|^{4\alpha-3+\gamma}-2^{4\alpha-3+\gamma} |\xx|^{4\alpha-3+\gamma}\right)+2^{4\alpha-3+\gamma} |\xx|^{4\alpha-3+\gamma}\\
=&\int_{\RR^3}h_q(\xx-\yy)\left(|\yy|^{4\alpha-3+\gamma}-2^{4\alpha-3+\gamma} |\xx|^{4\alpha-3+\gamma}\right)\,\mathrm{d}\yy+2^{4\alpha-3+\gamma} |\xx|^{4\alpha-3+\gamma}.
\end{align*}
Performing the fact $(a+b)^p\leq 2^{p}(a^p+b^p)$ with $p\geq0,$ one has
\begin{align*}
&\left|\int_{\RR^3}h_q(\xx-\yy)\left(|\yy|^{4\alpha-3+\gamma}-2^{4\alpha-3+\gamma} |\xx|^{4\alpha-3+\gamma}\right)\,\mathrm{d}\yy\right|\\
\leq&2^{4\alpha-3+\gamma}\int_{\RR^3}|\xx-\yy|^{4\alpha-3+\gamma}|h_q|(\xx-\yy)\,\mathrm{d}\yy\leq C2^{-q(4\alpha-3+\gamma)}.
\end{align*}
Thus, we have
\begin{equation*}
\left|\dot{S}_{q-1}\big(|\cdot|^{4\alpha-3+\gamma}\big)\right|\leq C2^{-q(4\alpha-3+\gamma)}+2^{4\alpha-3+\gamma} |\xx|^{4\alpha-3+\gamma}.
\end{equation*}
With the inequality in hand, we can show by \eqref{eq.bern-g-infty} that
\begin{align*}
  \left\|\dot{T}_{|\cdot|^{4\alpha-3+\gamma} }|\cdot|^{2}\Div \mathbf{f}_2\right\|_{\dot{B}^{ \gamma}_{\infty,\infty}(\RR^3)}
\leq&C\sup_{q\in\ZZ}2^{q\gamma}\left\|\dot{S}_{q-1}\big(|\cdot|^{4\alpha-3+\gamma}\big)\dot{\Delta}_{q}\big(|\cdot|^{2}\Div \mathbf{f}_2\big)\right\|_{L^\infty(\RR^3)}\\
\leq &C\sup_{q\in\ZZ}2^{q(4\alpha-3+\gamma)}\left\| |\cdot|^{4\alpha-3+\gamma} \dot{\Delta}_{q}\big(|\cdot|^{2}\Div \mathbf{f}_2\big)\right\|_{L^{\frac{3}{4\alpha-3},\infty}(\RR^3)}\\
 &+C\sup_{q\in\ZZ} \left\| \dot{\Delta}_{q}\big(|\cdot|^{2}\Div \mathbf{f}_2\big)\right\|_{L^{\frac{3}{4\alpha-3},\infty}(\RR^3)}=:{\rm A}_1+{\rm A}_2.
\end{align*}
Since $\sigma\in C^{1,0}(\mathbb{S}^2)$ and $\mathbf{f}_2=\Div(\uu_0\otimes\uu_0)$, we have by Proposition~4.1 established in \cite{LMZ21} that
\begin{equation}\label{eq.20220827-2}
{\rm A}_2
\leq C\sup_{\xx\in\RR^3}|\xx|^{2\alpha-1}|\uu_0|(\xx)\sup_{\xx\in\RR^3}|\xx|^{2\alpha }|\Div\uu_0|(\xx)\left\| |\cdot|^{3-4\alpha} \right\|_{L^{ \frac{3}{4\alpha-3},\infty} (\RR^3)}<\infty.
\end{equation}
It remains for us to bound ${\rm A}_2$.  Since $\mathbf{f}_2=-\uu_0\otimes\uu_0,$ we rewrite it in  terms of the Bony paraproduct that
\begin{equation*}
\mathbf{f}_{2}=-\sum_{i,j=1}^3\left(\dot{T}_{\uu_0^i}\uu_0^j+\dot{T}_{\uu_0^j}\uu_0^i+\dot{R}\left(\uu_0^i,\uu_0^j\right)\right).
\end{equation*}
Moreover, taking Fourier transform and property of support, we observe that
\begin{align*}
&\mathscr{F}\left(\dot{\Delta}_{q}\big(|\cdot|^{2}\Div \mathbf{f}_2\big)\right)(\xi)\\
=&-\varphi\left( {\xi}/{2^q}\right)\Delta_\xi\left(i\xi\widehat{\mathbf{f}}_2(\xi)\right)\\
=&\sum_{i,j =1}^3\sum_{|k-q|\leq5}\varphi\left( {\xi}/{2^q}\right) \Delta_\xi\Big(i\xi_i\widehat{\dot{S}_{k-1}\uu_0^i}(\xi)\widehat{\dot{\Delta}_k\uu_0^j}(\xi)+i\xi_i\widehat{\dot{S}_{k-1}\uu_0^j}(\xi)\widehat{\dot{\Delta}_k\uu_0^i}(\xi)\Big)\\
&+\sum_{i,j =1}^3\sum_{k\geq q-5}\varphi\left( {\xi}/{2^q}\right)\Delta_\xi\Big( i\xi_i\widehat{\dot{\Delta}_{k}\uu_0^i}(\xi)\widehat{\dot{\Delta}_k\uu_0^j}(\xi)\Big).
\end{align*}
Furthermore, we have by the triangle inequality that
\begin{align*}
{\rm A}_1\leq&C \sup_{q\in\ZZ}\sum_{|k-q|\leq5}2^{q(4\alpha-3+\gamma)}\left\| |\cdot|^{4\alpha-3+\gamma} \dot{\Delta}_{q}\big(|\cdot|^{2}\dot{S}_{k-1}\uu_0\cdot\nabla\dot{\Delta}_k\uu_0 \big)\right\|_{L^{\frac{3}{4\alpha-3},\infty}(\RR^3)}\\
&+C \sup_{q\in\ZZ}\sum_{|k-q|\leq5}2^{q(4\alpha-3+\gamma)}\left\| |\cdot|^{4\alpha-3+\gamma} \dot{\Delta}_{q}\big(|\cdot|^{2}\dot{\Delta}_k\uu_0\cdot\nabla\dot{S}_{k-1}\uu_0 \big)\right\|_{L^{\frac{3}{4\alpha-3},\infty}(\RR^3)}\\
&+ C\sup_{q\in\ZZ}\sum_{k\geq q-5}2^{q(4\alpha-3+\gamma)}\left\| |\cdot|^{4\alpha-3+\gamma} \dot{\Delta}_{q}\big(|\cdot|^{2}\dot{\Delta}_k\uu_0\cdot\nabla\widetilde{\dot{\Delta}}_{k}\uu_0 \big)\right\|_{L^{\frac{3}{4\alpha-3},\infty}(\RR^3)}\\
=:&{\rm A}_1^1+{\rm A}_1^2+{\rm A}_1^3.
\end{align*}
To bound these terms, we need the following lemma.
\begin{lemma}\label{lem-last}
Let $m\in(0,\infty),$ $1<r\leq p<\infty$ and $f\in L^{r,\infty}(\RR^3)$ satisfying
\[\||\cdot|^mf\|_{L^{r,\infty}(\RR^3)}<\infty.\]
  Then there holds that
\begin{equation}\label{eq-last-1}
\big\||\cdot|^{m} \dot{\Delta}_{q}f\big\|_{L^{p,\infty}(\RR^3)}\leq C\big\|\dot{\Delta}_{q}\big(|\cdot|^{m} f\big)\big\|_{L^{p,\infty}(\RR^3)}+C2^{-q(m-3( 1/r- 1/p) )}\|f\|_{L^{r,\infty}(\RR^3)}.
\end{equation}
\end{lemma}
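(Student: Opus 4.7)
\textbf{Proof proposal for Lemma \ref{lem-last}.}

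The plan is to prove the identity
\[
|\xx|^m\dot\Delta_q f(\xx)=\dot\Delta_q(|\cdot|^m f)(\xx)+K_q(\xx),\qquad K_q(\xx):=\int_{\RR^3}\varphi_q(\xx-\yy)\bigl(|\xx|^m-|\yy|^m\bigr)f(\yy)\,{\rm d}\yy,
\]
which is immediate from the definition $\dot\Delta_q g(\xx)=\int\varphi_q(\xx-\yy)g(\yy)\,{\rm d}\yy$. The first term on the right-hand side already appears as the first term in the desired bound, so the whole task reduces to proving
\[
\|K_q\|_{L^{p,\infty}(\RR^3)}\leq C\,2^{-q(m-3(1/r-1/p))}\,\|f\|_{L^{r,\infty}(\RR^3)}.
\]

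First I would treat the regime $m\in(0,1]$, where the concavity-type inequality $\bigl||\xx|^m-|\yy|^m\bigr|\leq|\xx-\yy|^m$ gives the pointwise control
\[
|K_q(\xx)|\leq\bigl(|\cdot|^m|\varphi_q|\bigr)\ast|f|\,(\xx).
\]
Applying the generalized Young inequality in Lorentz spaces (Theorem 1.2.13 of Grafakos, cited earlier in the paper) with exponents satisfying $1+1/p=1/s+1/r$ yields
\[
\|K_q\|_{L^{p,\infty}}\leq C\,\bigl\||\cdot|^m\varphi_q\bigr\|_{L^{s,1}}\,\|f\|_{L^{r,\infty}}.
\]
A direct change of variables in $\varphi_q(\xx)=2^{3q}\varphi(2^q\xx)$ shows that $\bigl||\cdot|^m\varphi_q\bigr\|_{L^{s,1}}=2^{q(3-m-3/s)}\,\bigl\||\cdot|^m\varphi\bigr\|_{L^{s,1}}$; using $3/s=3+3/p-3/r$, the prefactor becomes exactly $2^{-q(m-3(1/r-1/p))}$, and the remaining integral is finite because $\varphi\in\mathscr{S}(\RR^3)$.

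For $m>1$, the integral form of the mean value theorem combined with $(a+b)^{m-1}\leq C_m(a^{m-1}+b^{m-1})$ gives the slightly weaker pointwise estimate
\[
\bigl||\xx|^m-|\yy|^m\bigr|\leq C_m\bigl(|\xx-\yy|^m+|\xx-\yy|\,|\yy|^{m-1}\bigr).
\]
The first piece is handled exactly as in the previous paragraph. The second piece produces a convolution of the form $(|\cdot|\,|\varphi_q|)\ast(|\cdot|^{m-1}|f|)$. My plan for closing this is an induction on $\lceil m\rceil$: assuming the lemma for exponents below $m$, I would dyadically partition space into shells $\{|\yy|\sim 2^k\}$ using a smooth partition of unity $\{\chi_k\}$ and on each shell trade the weight $|\yy|^{m-1}$ for $2^{k(m-1)}$. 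The geometric decay of $|\varphi_q|$ (coming from its Schwartz regularity) together with the telescoping of the $2^{k(m-1)}$ factors against the $L^{r,\infty}$ norms of $\chi_k f$ should then sum to reproduce the clean right-hand side $2^{-q(m-3(1/r-1/p))}\|f\|_{L^{r,\infty}}$. The main technical obstacle is organizing this shell decomposition so that the constants telescope and so that no new weighted norm of $f$ is needed on the right-hand side; this relies crucially on the stability of $L^{r,\infty}$ under the truncations $\chi_k f$ and on choosing the Schwartz decay order large enough to dominate $2^{k(m-1)}$ on far shells.
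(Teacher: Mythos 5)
Your approach coincides with the paper's at the structural level, and is in fact more careful. Like the paper, you decompose $|\xx|^m\dot{\Delta}_q f=\dot{\Delta}_q(|\cdot|^m f)+K_q$ and treat $K_q$ with the Lorentz-space Young inequality together with the dilation identity $\||\cdot|^m\varphi_q\|_{L^{s,1}}\sim 2^{-q(m-3(1/r-1/p))}$, where $1+1/p=1/s+1/r$. The paper writes the identity with an artificial coefficient $2^m$ in front of $\dot{\Delta}_q(|\cdot|^m f)$ and then asserts, invoking $(a+b)^p\leq 2^p(a^p+b^p)$, that $\left||\xx|^m-2^m|\yy|^m\right|\leq 2^m|\xx-\yy|^m$; that inequality fails for every $m>0$ (take $\xx=\yy\neq 0$: the left side is $(2^m-1)|\yy|^m>0$, the right side is $0$), because the elementary inequality only supplies the one-sided bound $|\xx|^m-2^m|\yy|^m\leq 2^m|\xx-\yy|^m$. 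Your version, which drops the $2^m$ and uses concavity $\left||\xx|^m-|\yy|^m\right|\leq|\xx-\yy|^m$ for $m\in(0,1]$, is the clean and complete argument on that range; the $2^m$ in the paper is cosmetic and is absorbed into $C$, so your writeup of the $m\leq 1$ case is essentially the proof the paper intended.

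For $m>1$ you are right to be uneasy, but your plan does not close, and the paper's does not either. Your mean-value estimate $\left||\xx|^m-|\yy|^m\right|\leq C_m\left(|\xx-\yy|^m+|\xx-\yy|\,|\yy|^{m-1}\right)$ is correct and exposes the real obstruction: the second piece contributes $(|\cdot|\varphi_q)\ast(|\cdot|^{m-1}|f|)$, whose $L^{p,\infty}$ norm is of order $2^{-q(1-3(1/r-1/p))}\||\cdot|^{m-1}f\|_{L^{r,\infty}}$, and this is not dominated by $2^{-q(m-3(1/r-1/p))}\|f\|_{L^{r,\infty}}$ — the power of $2^{-q}$ is too small and the weighted norm cannot be reduced to $\|f\|_{L^{r,\infty}}$ without a constant depending on $f$. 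Your shell decomposition cannot repair this: summing $2^{k(m-1)}\|\chi_k f\|_{L^{r,\infty}}$ against the Schwartz decay of $\varphi_q$ simply reconstructs $\||\cdot|^{m-1}f\|_{L^{r,\infty}}$, which is the very quantity you need to eliminate. A correct statement for $m>1$ must carry extra terms of the form $2^{-q(a-3(1/r-1/p))}\||\cdot|^{m-a}f\|_{L^{r,\infty}}$ for $1\leq a<m$ (finite under the hypothesis, and harmless in the paper's later applications, where they are dominated by the other contributions); as written, the right-hand side of \eqref{eq-last-1} is not established for $m>1$ by either argument.
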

\begin{proof}
First of all, we observe that
\begin{equation*}
 |\xx|^{m} \dot{\Delta}_{q}f =2^m\dot{\Delta}_{q}\big(|\cdot|^{m} f\big)+\int_{\RR^3}h_q(\xx-\yy)\left( |\xx|^{m}-2^m|\yy|^m\right)f(\yy)\,\mathrm{d}\yy.
\end{equation*}
It is obvious by using the fact $(a+b)^p\leq 2^p (a^p+b^p)$ with $p\geq 1$ that
\begin{equation}\label{eq.220826-1}
\begin{split}
I:=&\left|\int_{\RR^3}h_q(\xx-\yy)\left( |\xx|^{m}-2^m|\xx-\yy|^m\right)f(\yy)\,\mathrm{d}\yy\right|\\
\leq& 2^m2^{3q}\int_{\RR^3}|\xx-\yy|^m|h|\big(2^q(\xx-\yy)\big)|f|(\yy)\, \mathrm{d}\yy.
\end{split}
\end{equation}
Moreover, by the Young inequality, we obtain
\begin{align*}
 \left\|I\right\|_{L^{p,\infty}(\RR^3)}
\leq& C2^{-q(m-3( 1/r- 1/p) )}\big\||\cdot|^m h\big\|_{L^s(\RR^3)}\|f\|_{L^{r,\infty}(\RR^3)} \\
 \leq&  C2^{-q(m-3( 1/r- 1/p) )}\|f\|_{L^{r,\infty}(\RR^3)},
\end{align*}
where $s$ satisfies $1+\frac1p=\frac1s+\frac1r.$

Thus, it follows the  desired estimate  by the triangle inequality that
\begin{equation*}
\big\||\cdot|^{m} \dot{\Delta}_{q}f\big\|_{L^{p,\infty}(\RR^3)}\leq C\big\|\dot{\Delta}_{q}\big(|\cdot|^{m} f\big)\big\|_{L^{p,\infty}(\RR^3)}+C2^{-q(m-3( 1/r- 1/p) )}\|f\|_{L^{r,\infty}(\RR^3)}.
\end{equation*}
And then we complete the proof of Lemma \ref{lem-last}.
\end{proof}
By \eqref{eq-last-1} in Lemma \ref{lem-last}, we have
\begin{align*}
{\rm A}_1^1\leq&C \sup_{q\in\ZZ}\sum_{|k-q|\leq5}2^{q(4\alpha-3+\gamma)}\left\| \dot{\Delta}_{q}\big(|\cdot|^{4\alpha-1+\gamma}\dot{S}_{k-1}\uu_0\cdot\nabla \dot{\Delta}_k\uu_0 \big)\right\|_{L^{\frac{3}{4\alpha-3},\infty}(\RR^3)}\\
&+C \sup_{q\in\ZZ}\sum_{|k-q|\leq5} \left\| |\cdot|^{2}\dot{S}_{k-1}\uu_0\cdot\nabla \dot{\Delta}_k\uu_0 \right\|_{L^{\frac{3}{4\alpha-3},\infty}(\RR^3)}=:{\rm A}_1^{1,1}+{\rm A}_1^{1,2}.
\end{align*}
Since
\begin{align*}
\big\||\cdot|^{2\alpha-1}\dot{S}_{q-1}\uu_0\big\|_{L^\infty(\RR^3)}\leq &\sup_{k\leq q-1}\big\||\cdot|^{2\alpha-1}\dot{\Delta}_{k}\uu_0\big\|_{L^\infty(\RR^3)}\\
\leq&C\sup_{k\leq q-1}2^{-k(2\alpha-1)}\left\|\dot{\Delta}_{k}\left( {\sigma}/{|\cdot|^{2\alpha-1}}\right)\right\|_{L^\infty(\RR^3)}\\
\leq& C\left\| {\sigma}/{|\cdot|^{2\alpha-1}}\right\|_{L^{\frac{3}{2\alpha-1},\infty}(\RR^3)}
 \leq C\|\sigma\|_{L^\infty(\mathbb{S}^2)},
\end{align*}
we have by  \eqref{eq.bern-g-infty} and Lemma \ref{lem-exe-decay} that
\begin{align*}
{\rm A}_1^{1,2}\leq &C\big\||\cdot|^{2\alpha-1}\dot{S}_{q-1}\uu_0\big\|_{L^\infty(\RR^3)}\sup_{q\in\ZZ} \big\| |\cdot|^{3-2\alpha }\nabla \dot{\Delta}_q\uu_0 \big\|_{L^{\frac{3}{4\alpha-3},\infty}(\RR^3)}\\
\leq&C\|\sigma\|_{L^\infty(\mathbb{S}^2)}\sup_{q\in\ZZ} 2^{q2(\alpha-1)}\left\| \dot{\Delta}_q\big ( {\sigma}/{|\cdot|^{2\alpha-1}} \big)\right\|_{L^{\frac{3}{4\alpha-3},\infty}(\RR^3)}\\
\leq&C\|\sigma\|_{L^\infty(\mathbb{S}^2)} \left\| {\sigma}/{|\cdot|^{2\alpha-1}}\right\|_{L^{\frac{3}{2\alpha-1},\infty}(\RR^3)} \leq C\|\sigma\|^2_{L^\infty(\mathbb{S}^2)}.
\end{align*}
By the H\"older inequality,  \eqref{eq.bern-g-infty} and Lemma \ref{lem-exe-decay}, we find that
\begin{align*}
{\rm A}_1^{1,1}\leq&C\sup_{q\in\ZZ}2^{q(4\alpha-3+\gamma)} \big\||\cdot|^{2\alpha-1}\dot{S}_{q-1}\uu_0\big\|_{L^\infty(\RR^3)}\left\||\cdot|^{2\alpha+\gamma}
\dot{\Delta}_q\nabla\uu_0 \right\|_{L^{\frac{3}{4\alpha-3},\infty}(\RR^3)}\\
\leq&C \sup_{q\in\ZZ}\big\||\cdot|^{2\alpha-1}\dot{S}_{q-1}\uu_0\big\|_{L^{\frac{3}{4\alpha-3}}(\RR^3)}
\sup_{q\in\ZZ}2^{2q(\alpha-1)}\left\|
\dot{\Delta}_q\big ( {\sigma}/{|\cdot|^{2\alpha-1}}\big )\right\|_{L^{\frac{3}{4\alpha-3}}(\RR^3)}\\
\leq&C\|\sigma\|_{L^\infty(\mathbb{S}^2)} \left\| {\sigma}/{|\cdot|^{2\alpha-1}}\right\|_{L^{\frac{3}{2\alpha-1},\infty}(\RR^3)} \leq C\|\sigma\|^2_{L^\infty(\mathbb{S}^2)}.
\end{align*}
Collecting estimates for ${\rm A}_1^{1,1}$ and ${\rm A}_1^{1,2}$, we readily have
\begin{equation}\label{eq-A11}
{\rm A}_1^{1} \leq C\|\sigma\|^2_{L^\infty(\mathbb{S}^2)}.
\end{equation}
Similarly, we can show
\begin{equation}\label{eq-A12}
{\rm A}_1^{2} \leq C\|\sigma\|^2_{L^\infty(\mathbb{S}^2)}.
\end{equation}
Now we focus on the term  ${\rm A}_1^{3}$. By \eqref{eq-last-1} in Lemma \ref{lem-last}, we have
\begin{align*}
 {\rm A}_1^3\leq&C\sup_{q\in\ZZ}\sum_{k\geq q-5}2^{q(4\alpha-3+\gamma)}\left\| \dot{\Delta}_{q}\left(|\cdot|^{4\alpha-1+\gamma} \big(\dot{\Delta}_k\uu_0\cdot\nabla\widetilde{\dot{\Delta}}_{k}\uu_0 \big)\right)\right\|_{L^{\frac{3}{4\alpha-3},\infty}(\RR^3)}\\
 &+C\sup_{q\in\ZZ}\sum_{k\geq q-5}2^{q2(1-\alpha)}\left\|   |\cdot|^{2} \dot{\Delta}_k\uu_0 \cdot\nabla\widetilde{\dot{\Delta}}_{k}\uu_0  \right\|_{L^{\frac{3}{2\alpha-1},\infty}(\RR^3)}\\
=:& {\rm A}_1^{3,1}+ {\rm A}_1^{3,2}.
\end{align*}
By Lemma \ref{lem-exe-decay}, we have that for each $\alpha\in[5/6,1),$
\begin{align*}
{\rm A}_1^{3,2}\leq&C\sup_{q\in\ZZ}\sum_{k\geq q-5}2^{q2(1-\alpha)}\left\| \big( |\cdot|^{2} \dot{\Delta}_k\uu_0\cdot\nabla\widetilde{\dot{\Delta}}_{k}\uu_0 \big)\right\|_{L^{\frac{3}{2\alpha-1},\infty}(\RR^3)}\\
\leq&C\sup_{q\in\ZZ}\sum_{k\geq q-5}2^{q2(1-\alpha)}\left\|  |\cdot|^2  \dot{\Delta}_k\uu_0\right\|_{L^\infty(\RR^3)}\big\|\nabla\widetilde{\dot{\Delta}}_{k}\uu_0  \big\|_{L^{\frac{3}{2\alpha-1},\infty}(\RR^3)}\\
\leq&C\sup_{q\in\ZZ} 2^{q (1-2\alpha)}\big\|    \dot{\Delta}_q\uuu_0\big\|_{L^\infty(\RR^3)}\left\|{\sigma}/{|\cdot|^{2\alpha-1}} \right\|_{L^{\frac{3}{2\alpha-1},\infty}(\RR^3)}\\
\leq&C\left\|{\sigma}/{|\cdot|^{2\alpha-1}} \right\|^2_{L^{\frac{3}{2\alpha-1},\infty}(\RR^3)}\leq C\|\sigma\|^2_{L^\infty(\mathbb{S}^2)}.
\end{align*}
As for $\alpha=1,$  we observe by \eqref{eq-last-1} in Lemma \ref{lem-last} that
\begin{align*}
{\rm A}_1^{3 }=&C\sup_{q\in\ZZ}\sum_{k\geq q-5}2^{q(1+\gamma)}\left\|  |\cdot|^{1+\gamma} \dot{\Delta}_{q} \big( |\cdot|^{2} \dot{\Delta}_k\uu_0 \cdot\nabla\widetilde{\dot{\Delta}}_{k}\uu_0 \big)\right\|_{L^{3,\infty}(\RR^3)}\\
\leq&{\rm A}_1^{3,1}+\sup_{q\in\ZZ}\sum_{k\geq q-5}2^{ q }\left\|  |\cdot|^{2} \dot{\Delta}_k\uu_0 \cdot\nabla\widetilde{\dot{\Delta}}_{k}\uu_0 \right\|_{L^{\frac{3}{2},\infty}(\RR^3)}=: {\rm A}_1^{3,1}+ {\rm A}_1^{3,3}.
\end{align*}
By the discrete Young inequality and the H\"older inequality and Lemma \ref{lem-exe-decay}, we have
\begin{align*}
{\rm A}_1^{3,3}\leq&\sup_{q\in\ZZ} 2^{q}\left\| \big( |\cdot|^{2} \dot{\Delta}_q\uu_0\cdot\nabla\widetilde{\dot{\Delta}}_{q}\uu_0 \big)\right\|_{L^{\frac32,\infty}(\RR^3)}\\
\leq&C\sup_{q\in\ZZ} 2^{q}\left\|  |\cdot|^2  \dot{\Delta}_q\uu_0\right\|_{L^\infty(\RR^3)}\big\|\nabla\widetilde{\dot{\Delta}}_{q}\uu_0  \big\|_{L^{\frac32,\infty}(\RR^3)}\\
\leq&C\sup_{q\in\ZZ} 2^{-q }\big\|    \dot{\Delta}_q\uuu_0\big\|_{L^\infty(\RR^3)} \sup_{q\in\ZZ}  \| \nabla\widetilde{\dot{\Delta}}_{q}\uu_0  \big\|_{L^{\frac32,\infty}(\RR^3)}
\\
\leq&C\left\|{\sigma}/{|\cdot|^{2\alpha-1}} \right\|_{L^{\frac{3}{2\alpha-1},\infty}(\RR^3)}\| \nabla \uuu_0  \big\|_{L^{\frac32,\infty}(\RR^3)} \leq C\|\sigma\|_{L^\infty(\mathbb{S}^2)}\| \nabla \uuu_0  \big\|_{L^{\frac32,\infty}(\RR^3)}.
\end{align*}
Since $\uuu_0(\xx)=\frac{\sigma}{|\xx|}$ as $\alpha=1$ and $\sigma\in C^{1,0}(\mathbb{S}^2)$, we have by the Leibniz formula that
\begin{align*}
\| \nabla\uuu_0  \big\|_{L^{\frac32,\infty}(\RR^3)}\leq &C\|\sigma\|_{L^\infty(\mathbb{S}^2)} \| 1/|\cdot|^2 \big\|_{L^{\frac32,\infty}(\RR^3)}\\
&+C\|\sigma\|_{\dot{W}^{1,\infty}(\mathbb{S}^2)} \| 1/|\cdot|^2 \big\|_{L^{\frac32,\infty}(\RR^3)}.
\end{align*}
Thus, we have
\begin{equation*}
{\rm A}_1^{3,3}\leq C\|\sigma\|^2_{ {W}^{1,\infty}(\mathbb{S}^2)}.
\end{equation*}
In view of \eqref{eq-last-1} in Lemma \ref{lem-last}, we see that
\begin{align*}
{\rm A}_1^{3,1}\leq&C\sup_{q\in\ZZ}\sum_{k\geq q-5}2^{q(4\alpha-3+\gamma)}\left\| \dot{\Delta}_{q}\big(|\cdot|^{4\alpha-1+\gamma} \big(\dot{\Delta}_k\uu_0\cdot\nabla\widetilde{\dot{\Delta}}_{k}\uu_0 \big)\big)\right\|_{L^{\frac{3}{4\alpha-3},\infty}(\RR^3)}\\
\leq&C\sup_{k\in\ZZ}2^{k(4\alpha-3+\gamma)} \big\|  |\cdot|^{4\alpha-1+\gamma} \big(\dot{\Delta}_k\uu_0\cdot\nabla\widetilde{\dot{\Delta}}_{k}\uu_0 \big) \big\|_{L^{\frac{3}{4\alpha-3},\infty}(\RR^3)}\\
\leq&C\sup_{k\in\ZZ}2^{k(4\alpha-3+\gamma)} \big\|  |\cdot|^{4\alpha-1+\gamma}  \dot{\Delta}_k\uu_0\big\|_{L^\infty(\RR^3)}\big\|\nabla\widetilde{\dot{\Delta}}_{k}\uu_0  \big\|_{L^{\frac{3}{4\alpha-3},\infty}(\RR^3)}.
\end{align*}
Moreover, we have by Lemma \ref{lem-exe-decay} and the Bernstein inequality in Lemma \ref{lem-bern} that
\begin{align*}
&\big\|  |\cdot|^{4\alpha-1+\gamma}   \dot{\Delta}_k\uu_0\big\|_{L^\infty(\RR^3)}\big\|\nabla\widetilde{\dot{\Delta}}_{k}\uu_0  \big\|_{L^{\frac{3}{4\alpha-3},\infty}(\RR^3)}\\
\leq&C2^{-k(4\alpha-1+\gamma)}2^{k(3-2\alpha)}\big\|   \dot{\Delta}_k\uuu_0\big\|_{L^\infty(\RR^3)}\big\| \widetilde{\dot{\Delta}}_{k}\uu_0  \big\|_{L^{\frac{3}{2\alpha-1},\infty}(\RR^3)}\\
\leq&C2^{-k(4\alpha-1+\gamma)}2^{k(3-2\alpha)}\big\|   \dot{\Delta}_k\uuu_0\big\|_{L^\infty(\RR^3)}\big\| \widetilde{\dot{\Delta}}_{k}\uu_0  \big\|_{L^{\frac{3}{2\alpha-1},\infty}(\RR^3)}\\
\leq&C2^{-k(4\alpha-3+\gamma)} \left\|{\sigma}/{|\cdot|^{2\alpha-1}} \right\|^2_{L^{\frac{3}{2\alpha-1},\infty}(\RR^3)},
\end{align*}
which helps us to infer that
\begin{equation*}
{\rm A}_1^{3,1}\leq   C\|\sigma\|^2_{L^\infty(\mathbb{S}^2)}.
\end{equation*}
This estimate together with estimates for ${\rm A}_1^{3,2}$ and ${\rm A}_1^{3,3}$ leads to
\begin{equation}\label{eq-A13}
{\rm A}_1^{3}\leq   C\|\sigma\|^2_{L^\infty(\mathbb{S}^2)}.
\end{equation}
Collecting estimates \eqref{eq-A11}, \eqref{eq-A12} and \eqref{eq-A13}, we immediately have
\begin{equation*}
{\rm A}_1\leq   C\|\sigma\|^2_{L^\infty(\mathbb{S}^2)}.
\end{equation*}
This inequality together with \eqref{eq.20220827-1} and \eqref{eq.20220827-2} yields the claim \eqref{eq.claim-2}. So, we complete the proof of Theorem \ref{thm-leray-per}.

\section*{Acknowledgments}   This work is supported in part by the National Natural Science Foundation of China
 under grant    No.11831004,  No. 12026407,  and No.11826005.

%%%%%%%%%%%%%%%%%%%%%%%%%%%%%%%%%%%%%%%%%%%%%%%%%%%%%%%%%%%%%%%%%%%%%%%%%%%%%%%%%%%%%%%%%%%%%%%%%%%%%%%%%%%%%%%%%%%%%%%%%%%%%%%

%%%%%%%%%%%%%%%%%%%%%%%%%%%%%%%%%%%%%%%%%%%%%%%%%%%%%%%%%%%%%%%%%%%%%%%%%%%%%%%%%%%%%%%%%%%%%%%%%%%%
\end{document}